\begin{document}

\newcommand{\C}{{\mathbb{C}}}
\newcommand{\R}{{\mathbb{R}}}
\newcommand{\Q}{{\mathbb{Q}}}
\newcommand{\Z}{{\mathbb{Z}}}
\newcommand{\N}{{\mathbb{N}}}
\newcommand{\q}{\left}
\newcommand{\w}{\right}
\newcommand{\Vol}[1]{\mathrm{Vol}\q(#1\w)}
\newcommand{\B}[4]{B_{\q(#1,#2\w)}\q(#3,#4\w)}
\newcommand{\Bsub}[5]{B_{\q(#1,#2\w)_{#3}}\q(#4,#5\w)}
\newcommand{\Btsub}[5]{\widetilde{B}_{\q(#1,#2\w)_{#3}}\q(#4,#5\w)}
\newcommand{\Bs}[3]{B_{#1}\q(#2,#3\w)}
\newcommand{\Bt}[4]{\widetilde{B}_{\q(#1,#2\w)}\q(#3,#4\w)}
\newcommand{\Bts}[3]{\widetilde{B}_{#1}\q(#2,#3\w)}
\newcommand{\Bm}[2]{B_{\q(X^1,d^1\w),\ldots, \q(X^\nu,d^\nu\w)}\q(#1,#2\w)}
\newcommand{\Bmt}[2]{\widetilde{B}_{\q(X^1,d^1\w),\ldots, \q(X^\nu,d^\nu\w)}\q(#1,#2\w)}
\newcommand{\sI}[2]{\mathcal{I}\q( #1,#2\w)}
\newcommand{\Det}[2]{\det_{#1\times #1}\q( #2\w)}
\newcommand{\Span}[1]{\mathrm{span}{\q\{#1\w\}}}
\newcommand{\Lpp}[2]{L^{#1}\q(#2\w)}
\newcommand{\Lppn}[3]{\q\|#3\w\|_{\Lpp{#1}{#2}}}
\newcommand{\Texp}[1]{\stackrel{\longrightarrow}{\exp}\q( #1 \w)}
\newcommand{\Cj}[2]{C^{#1}\q(#2\w)}
\newcommand{\Cjn}[3]{\q\|#3\w\|_{C^{#1}\q(#2\w)}}
\newcommand{\sL}{\mathcal{L}}
\newcommand{\pd}[1]{\frac{\partial}{\partial #1}}
\newcommand{\ct}{\tilde{c}}
\newcommand{\ch}{\hat{c}}
\newcommand{\rp}[2]{\rho\q(#1,#2\w)}
\newcommand{\grad}{\bigtriangledown}
\newcommand{\sM}{\mathcal{M}}
\newcommand{\sMt}{\widetilde{\mathcal{M}}}
\newcommand{\ad}[1]{\mathrm{ad}\q(#1\w)}
\newcommand{\sd}{\sum d}
\newcommand{\prel}{\precsim}
\newcommand{\prea}{\simeq}
\newcommand{\numsub}{\nu}
\newcommand{\Bmus}[2]{{B_{\q(X^1, d^1\w),\ldots, \q(X^\nu, d^\nu\w)}\q(#1,#2\w)}}
\newcommand{\Bmust}[2]{{B_{\q(X^1, d^1\w),\q(X^2, d^2\w)}\q(#1,#2\w)}}
\newcommand{\hd}{\hat{d}}
\newcommand{\hX}{\widehat{X}}
\newcommand{\dm}{d_{q+1}^{\vee}}
\newcommand{\sPo}[5]{\mathcal{P}_{1}^{#1}\q(#2,#3,#4,#5\w)}
\newcommand{\sPt}[4]{\mathcal{P}_{2}^{#1}\q(#2,#3,#4\w)}
\newcommand{\sPr}[4]{\mathcal{P}_{3}^{#1}\q(#2,#3,#4\w)}
\newcommand{\sPn}[2]{\mathcal{P}_{#1}^{#2}}
\newcommand{\Ho}{\mathbb{H}^1}
\newcommand{\OpL}[1]{ {\mathrm{Op}_L} \q(#1\w) }
\newcommand{\OpR}[1]{ {\mathrm{Op}_R} \q(#1\w) }
\newcommand{\sC}{\mathcal{C}}
\newcommand{\sA}{\mathcal{A}}
\newcommand{\sInp}{\mathcal{I}}
\newcommand{\sD}{\mathcal{D}}

\newtheorem{thm}{Theorem}[section]
\newtheorem{cor}[thm]{Corollary}
\newtheorem{prop}[thm]{Proposition}
\newtheorem{lemma}[thm]{Lemma}
\newtheorem{conj}[thm]{Conjecture}

\theoremstyle{remark}
\newtheorem{rmk}[thm]{Remark}

\theoremstyle{definition}
\newtheorem{defn}[thm]{Definition}

\theoremstyle{remark}
\newtheorem{example}[thm]{Example}

\numberwithin{equation}{section}

\title{Multi-parameter Carnot-Carath\'eodory balls and the theorem of Frobenius}
\author{Brian Street\footnote{The author was partially supported by NSF DMS-0802587.}}
\date{}

\maketitle
\tableofcontents

\begin{abstract}
We study multi-parameter Carnot-Carath\'eodory balls, generalizing
results due to Nagel, Stein, and Wainger in the single parameter
setting.
The main technical
result is seen as a uniform version of the theorem of Frobenius.
In addition, we study maximal functions associated to certain
multi-parameter families of Carnot-Carath\'eodory balls.

\end{abstract}


\section{Introduction}
In the seminal paper \cite{NagelSteinWaingerBallsAndMetricsDefinedByVectorFields},
Nagel, Stein, and Wainger gave a detailed study of Carnot-Carath\'eodory
balls.  The main purpose of this paper is to develop an analogous theory
of {\it multi-parameter} Carnot-Carath\'eodory balls:  a situation where
the methods of \cite{NagelSteinWaingerBallsAndMetricsDefinedByVectorFields}
do not apply in general.
We will see that the main results for multi-parameter Carnot-Carath\'eodory
balls follow from a certain ``uniform'' version of the theorem
of Frobenius on involutive distributions.\footnote{Here, and in the rest
of the paper, we are considering (possibly) singular distributions.  That is, the dimension of the distribution may vary from point to point.}
We will prove this version of the theorem of Frobenius
by building on the work of \cite{NagelSteinWaingerBallsAndMetricsDefinedByVectorFields}
along with work of Tao and Wright \cite{TaoWrightLpImprovingBoundsForAverages}.
Our primary motivation is to obtain the properties of multi-parameter
balls which are relevant for developing a theory of multi-parameter
singular integrals, which will be the subject of a future paper.
To this end, we will estimate the volume of certain multi-parameter
balls, and we will study maximal functions associated to certain
families of multi-parameter balls.  In addition, we will study
the composition of certain ``unit operators.''

We begin by introducing the notion of a Carnot-Carath\'eodory ball.
Suppose we are given $q$ $C^1$ vector fields, $X_1,\ldots, X_q$ on 
an open set $\Omega\subseteq \R^n$; denote this list of vector fields by $X$.
We define the Carnot-Carath\'eodory ball of unit radius, centered at $x_0\in \Omega$,  with
respect to the list $X$ by:\footnote{Here, and in the rest of the paper, we write
$\gamma'\q( t\w) = Z\q( t\w)$ to mean
$\gamma\q( t\w) = \gamma\q( 0\w) + \int_0^t Z\q(  s\w)\: ds$.}
\begin{equation*}
\begin{split}
B_X\q( x_0\w):=\bigg\{y\in \Omega \:\bigg|\: &\exists \gamma:\q[0,1\w]\rightarrow \Omega, \gamma\q( 0\w) =x_0, \gamma\q( 1\w) =y,  \\
&\gamma'\q( t\w) = \sum_{j=1}^q a_j\q( t\w) X_j\q( \gamma\q( t\w)\w),
 a_j\in L^\infty\q( \q[0,1\w]\w), \\
&\Lppn{\infty}{\q[0,1\w]}{\q(\sum_{1\leq j\leq q} \q|a_j\w|^2\w)^{\frac{1}{2}}}<1\bigg\}.
\end{split}
\end{equation*}
Now that we have the definition for Carnot-Carath\'eodory balls
with unit radius, we may define Carnot-Carath\'eodory balls of any
radius merely by scaling the vector fields.  This leads us
directly to multi-parameter balls, of which the single parameter
balls of \cite{NagelSteinWaingerBallsAndMetricsDefinedByVectorFields}
are a special case.

Fix $\nu\geq 1$, an integer.  We will discuss $\nu$-parameter balls.
To each vector field $X_j$, we associate a formal degree
$0\ne d_j\in\q[0,\infty\w)^\nu$.  We denote by $\q( X,d\w)$ the list of vector fields
$$\q(X_1,d_1 \w),\ldots, \q( X_q, d_q\w).$$
Furthermore, for $\delta\in \q[ 0,\infty\w)^\nu$, we denote by $\delta^{d} X$
the list of vector fields:
$$\delta^{d_1} X_1, \ldots, \delta^{d_q} X_q$$
where $\delta^{d_j}$ is defined by the standard multi-index notation.
That is, $\delta^{d_j} = \prod_{\mu=1}^\nu \delta_\mu^{d_j^\mu}$.
Then we define the multi-parameter Carnot-Carath\'eodory ball
centered at $x_0\in \Omega$ of radius $\delta$ by:
$$\B{X}{d}{x_0}{\delta}:= B_{\delta^d X}\q( x_0 \w).$$
The theory in \cite{NagelSteinWaingerBallsAndMetricsDefinedByVectorFields}
concerns the case when $\nu=1$ (see Section \ref{SectionNSW} for a discussion of their results).
One of the main goals of this paper is to develop appropriate conditions
on the list $\q( X,d\w)$ to allow for a general theory of such
multi-parameter balls.

It has long been understood that singular integrals corresponding
to the single parameter balls of \cite{NagelSteinWaingerBallsAndMetricsDefinedByVectorFields}
play a fundamental role in many questions in the regularity of 
linear partial differential operators that are defined by vector fields;
in particular, they arise in many questions in several complex variables.
This began in \cite{FollandSteinEstimatesForTheDbarComplex,RothschildSteinHypoelltipicDifferentialOperators},
and was followed by \cite{SanchezCalleFundamentalSolutionsAndGeometry,FeffermanSanchezCalleFundamentalSolutionsForSecondOrderSubellipticOperators,JerisonSanchezCalleSubellipticSecondOrderDifferential}.  
These works were followed by many others; too many to offer a detailed
account here.  In the area of several complex variables, some examples are
\cite{ChristRegularityPropertiesOfTheDbarbEquation,NagelRosaySteinWaingerEstimatesForTheBergmanAndSzegoKernels,ChangNagelSteinEstimatesForTheDbarNeumannProblem,KoenigOnMaximalSobolevAndHolderEstimatesForTheTangentialCR}.
Recently, however,
{\it multi-parameter }singular integrals, where the underlying geometries
are non-Euclidean, have been shown to arise in
various special cases in several complex variables
and in the parametricies for certain linear partial differential
operators.  Moreover, these examples are not even amenable to the
usual product theory of singular integrals (as is covered in, for example, \cite{NagelSteinOnTheProductTheoryOfSingularIntegrals}):  the geometries overlap
in a non-trivial way.  In this vein see 
\cite{MullerRicciSteinMarcinMultipliersAndMultiParam,NagelRicciSteinSingularIntegralsWithFlagKernels,NagelSteinTheDbarComplexOnDecoupled,StreetAdvances}.
It is our hope that this paper will help play a role in unlocking
more general theories.

	\subsection{Informal statement of results and outline of the paper}\label{SectionResults}
	In this section, we offer a brief overview of some of the key results
of the paper.  One of the main aspects of the proofs, and of the
interrelationships between the results, is keeping careful track
of parameters the constants in the results depend on.  This
makes the rigorous formulation of these results somewhat
technical.  Because of this, in this section, we state
the results only in the $C^\infty$ category (while we will
later deal with less smoothness) and are not precise about
what parameters the constants depend on.  
After each result
we will refer the reader to the part of the paper which contains
the precise formulation of the result.
In addition, Theorem \ref{ThmIntroMainThm} represents only a special case of the
main result of the paper (Theorem \ref{ThmMultiBallsAtPoint}).

Before we begin, we need a few pieces of notation.
Given two integers $1\leq m\leq n$, we let $\sI{m}{n}$ be the set of
all lists of integers $\q( i_1,\ldots, i_m\w)$, such that:
$$1\leq i_1<i_2<\cdots<i_m\leq n.$$
Furthermore, suppose $A$ is an $n\times q$ matrix, and suppose $1\leq n_0\leq n\wedge q$, for $I\in \sI{n_0}{n}$, $J\in \sI{n_0}{q}$ define the $n_0\times n_0$
matrix $A_{I,J}$ by using the rows from $A$ which are listed in $I$
and the columns of $A$ which are listed in $J$.
We define:
$$\det_{n_0\times n_0} A = \q( \det A_{I,J}\w)_{\substack{I\in \sI{n_0}{n}\\J\in \sI{n_0}{q}}}.$$
In particular, $\det_{n_0\times n_0} A$ is a {\it vector}.
It will not be important to us in which order the coordinates are arranged.
For further information on this object, see Appendix \ref{AppendixLinear}.

For a vector $v\in \R^n$, we write $\q|v\w|$ for the usual $\ell^2$
norm, and $\q|v\w|_\infty$ and $\q|v\w|_1$ for the $\ell^\infty$
and $\ell^1$ norms, respectively.  For a matrix $A$, we write
$\q\|A\w\|$ for the usual operator norm.
Finally, we write $B_n\q( \eta\w)$ for the ball in $\R^n$,
centered at $0$, of
radius $\eta>0$ in the $\q|\cdot\w|$ norm.

The setup of the main result is as follows.  We are given $q$
$C^\infty$ vector fields $X_1,\ldots, X_q$
defined on a fixed open set $\Omega\subseteq \R^n$.
Corresponding to each vector field we are given a formal
degree $0\ne d_j \in \q[0,\infty\w)^\nu$, where $\nu$ is a fixed
positive integer.  We let $\q( X,d\w)$ denote the list
of vector fields with formal degrees $\q( X_1,d_1\w), \ldots, \q( X_q,d_q\w)$,
and we let $X$ denote the list of vector fields $X_1,\ldots, X_q$.
At times, we will identify $X$ 
with the $n\times q$ matrix whose columns are given by $X_1,\ldots, X_q$ (similarly for other lists of vector fields).
Our main assumption is that for every $\delta\in \q[0,1\w)^\nu$,
with $\q|\delta\w|$ sufficiently small,\footnote{Throughout the rest of this introduction, $\delta$ will always denote a small element of $\q[0,1\w)^\nu$.}
 we have:
\begin{equation}\label{EqnIntroMainAssump}
\q[\delta^{d_j}X_j, \delta^{d_k}X_k\w] = \sum_{l=1}^q c_{j,k}^{l,\delta} \delta^{d_l}X_l.
\end{equation}
We assume that $c_{j,k}^{l,\delta}\in C^\infty$ {\it uniformly} in $\delta$;
i.e., that as $\delta$ varies, $c_{j,k}^{l,\delta}$ varies over a bounded
subset of $C^\infty$.\footnote{Even in the smooth case, the assumptions
in Section \ref{SectionBallsAtAPoint} require less than we outline here.}

\begin{rmk}\label{RmkIntroLeaves}
Note that we have not assumed that the list of vector fields $X$ spans the tangent space.
This will prove the be an essential point in much of what follows.
One thing to observe is that while $X$ may not span the
tangent space, \eqref{EqnIntroMainAssump} implies that the
distribution spanned by $\delta^d X$
is involutive, and therefore the classical
theorem of Frobenius applies to show that these vector fields
foliate $\Omega$ into leaves (see Section \ref{SectionClassicalFrob} for a review of
the classical theorem of Frobenius).  The Carnot-Carath\'eodory ball
$\B{X}{d}{x_0}{\delta}$ is then an open subset of the leaf passing
through $x_0$ generated by this distribution.
In what follows, we will estimate the volume of this ball (denoted
by $\Vol{\B{X}{d}{x_0}{\delta}}$).  This volume is taken in the
sense of the induced Lebesgue measure on the leaf.
\end{rmk}

For $n_0\leq q$ and $J=\q(j_1,\ldots, j_{n_0}\w)\in \sI{n_0}{q}$, we write $\q( X,d\w)_{J}$ to denote
the list of vector fields with formal degrees $\q( X_{j_1}, d_{j_1}\w), \ldots, \q( X_{j_{n_0}}, d_{j_{n_0}}\w)$,
and we write $X_{J}$ to denote the list of vector fields
$X_{j_1},\ldots, X_{j_{n_0}}$, similarly we write $d_J$ for the list of
formal degrees $d_{j_1},\ldots, d_{j_{n_0}}$.
For each $x\in \Omega$, let $n_0\q( x,\delta\w)=\dim\Span{\delta^{d_1}X_1\q( x\w) ,\ldots, \delta^{d_q}X_q\q( x\w)}$,\footnote{Note,
the dependence of $n_0\q( x,\delta\w)$ 
on $\delta$ only involves which of the coordinates of $\delta$ are $0$.}
and for each $x\in \Omega$,
 and $\delta$ sufficiently small
pick $J\q( x,\delta\w)\in \sI{n_0\q( x,\delta\w)}{q}$ such that:
$$\q|\det_{n_0\q( x,\delta\w)\times n_0\q(x,\delta\w)} \delta^{d_{J\q(x,\delta\w)}} X_{J\q( x,\delta\w)}\q( x\w) \w|_\infty= \q|\det_{n_0\q(x,\delta\w)\times n_0\q(x,\delta\w)} \delta^d X\q( x\w) \w|_\infty.$$

For $u\in \R^{n_0\q( x,\delta\w)}$ with $\q| u\w|$ sufficiently small, define the map\footnote{If $Z$ is a $C^1$ vector field, then $e^Z x$ is defined in the following way.  Let $E\q(t\w)$ be the unique solution to the ODE $\frac{d}{dt} E\q(t\w) = Z\q(E\q(t\w)\w)$, $E\q(0\w)=x$.  Then, $e^{Z}x$ is defined to be $E\q(1\w)$, provided this solution exists up to $t=1$ (which it will if $Z$ has sufficiently small $C^1$ norm).  See Appendix \ref{AppendCalc} for further details.}
$$\Phi_{x,\delta}\q( u\w) = e^{u\cdot \q(\delta^d X\w)_{J\q( x,\delta\w)}}x=e^{u\cdot \delta^{d_{J\q( x,\delta\w)}}X_{J\q( x,\delta\w)}}x.$$
Our main theorem is:
\begin{thm}\label{ThmIntroMainThm}
Let $K$ be a compact subset of $\Omega$.  Then, there exist constants $\eta, \xi\approx 1$ such that for all $\delta$
sufficiently small and all $x\in K$:
$$\B{X}{d}{x}{\xi \delta}\subseteq \Phi_{x,\delta}\q(B_{n_0\q( x,\delta\w)}\q(\eta\w)\w)\subseteq \B{X}{d}{x}{\delta}$$
and
\begin{enumerate}
\item\label{ItemOnetoOne} $\Phi_{x,\delta}: B_{n_0\q( x,\delta\w)}\q( \eta\w) \rightarrow \B{X}{d}{x_0}{\delta}$ is one-to-one.
\item\label{ItemConstDet} For all $u\in B_{n_0\q( x,\delta\w)}\q( \eta\w)$, $\q|\det_{n_0\q( x,\delta\w) \times n_0\q( x,\delta\w)}d\Phi_{x,\delta}\q(u\w)\w|\approx \q|\det_{n_0\q(x,\delta\w) \times n_0\q( x,\delta\w)} \delta^d X\q( x\w)\w|$.
\item\label{ItemVol} $\Vol{\B{X}{d}{x}{\delta}}\approx \q|\det_{n_0\q( x,\delta\w)\times n_0\q( x,\delta\w)} \delta^d X\q( x\w)\w|.$  This is essentially a consequence of Items \ref{ItemOnetoOne} and \ref{ItemConstDet}.
\item\label{ItemDoubling} $\Vol{\B{X}{d}{x}{2\delta}}\lesssim \Vol{\B{X}{d}{x}{\delta}}$.  This is essentially a consequence of Item \ref{ItemVol}.
\end{enumerate}
\end{thm}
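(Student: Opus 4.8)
\medskip

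\noindent\emph{Proof strategy.} Fix $x\in K$ and $\delta$ small, and abbreviate $n_0:=n_0(x,\delta)$ and $J:=J(x,\delta)=(j_1,\ldots,j_{n_0})$. The plan is to deduce all four items from a quantitative (``uniform'') version of the theorem of Frobenius describing $\delta^d X$ in the exponential coordinates $\Phi_{x,\delta}$; establishing that uniform Frobenius statement is the technical heart of the matter (cf.\ the abstract), and I expect it to be the main obstacle. Two cheap preliminaries first. Since $\delta^d X(x)$ has rank $n_0$ and $J$ maximizes $\q|\det_{n_0\times n_0}\q(\cdot\w)\w|_\infty$ over the $n_0\times n_0$ minors of $\delta^d X(x)$, Cramer's rule shows each $\delta^{d_j}X_j(x)$ lies in $\Span{(\delta^d X)_J(x)}$ with coordinates bounded by a dimensional constant (each is a ratio of $n_0\times n_0$ minors whose denominator is maximal); this is the only use of the particular choice of $J$. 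Also $d\Phi_{x,\delta}(0)$ is precisely the $n\times n_0$ matrix with columns $(\delta^d X)_J(x)$, so $\Phi_{x,\delta}$ is an immersion at $0$ and $\q|\det_{n_0\times n_0}d\Phi_{x,\delta}(0)\w|\approx\q|\det_{n_0\times n_0}\delta^d X(x)\w|$.

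\medskip

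\noindent\emph{The uniform Frobenius step.} By \eqref{EqnIntroMainAssump} the distribution $\Span{\delta^d X}$ is involutive, so classical Frobenius foliates $\Omega$, and near $x$ the leaf $L$ through $x$ is an $n_0$-dimensional submanifold, tangent to every $\delta^{d_l}X_l$, into which $\Phi_{x,\delta}$ maps (Remark \ref{RmkIntroLeaves}). The claim is: there is $\eta\approx 1$ so that $\Phi_{x,\delta}$ restricts to a diffeomorphism of $B_{n_0}(\eta)$ onto an open subset of $L$ (this is Item \ref{ItemOnetoOne}), and the pulled-back vector fields $Y_l:=(d\Phi_{x,\delta})^{-1}\q(\delta^{d_l}X_l\circ\Phi_{x,\delta}\w)$ -- well-defined $C^\infty$ vector fields on $B_{n_0}(\eta)$ since $\Phi_{x,\delta}$ is an immersion onto an open subset of $L$ and $\delta^{d_l}X_l$ is tangent to $L$ (here is where involutivity enters) -- lie in a bounded subset of $C^\infty(B_{n_0}(\eta))$, uniformly in $x\in K$ and $\delta$, and satisfy $\q|\det(Y_{j_1}(u),\ldots,Y_{j_{n_0}}(u))\w|\approx 1$ on $B_{n_0}(\eta)$ (the value at $u=0$ is $1$, since $Y_{j_k}(0)=\partial_{u_k}$). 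The \emph{uniform} $C^\infty$ control -- together with the uniform injectivity radius $\eta$ and the determinant lower bound -- is the crux. One obtains it by writing, using \eqref{EqnIntroMainAssump}, the system of ODEs governing the $Y_l$ along the one-parameter flows whose composition realizes $\Phi_{x,\delta}$: the derivative of $Y_l$ along such a flow is a bounded $C^\infty$-combination of the $Y_m$ with coefficients assembled from the uniformly bounded structure functions $c^{l,\delta}_{j,k}$; a Gronwall/Picard iteration then propagates the bounds from $u=0$ outward. This is where the methods of \cite{NagelSteinWaingerBallsAndMetricsDefinedByVectorFields} and \cite{TaoWrightLpImprovingBoundsForAverages} enter, as do the normalizations needed because $\Phi_{x,\delta}$ is a ``small'' map when $\delta$ is small.

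\medskip

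\noindent\emph{The sandwich.} Granting the above, the right-hand inclusion holds once $\eta<1$: for $u\in B_{n_0}(\eta)$ the curve $\gamma(t):=e^{t\,u\cdot(\delta^d X)_J}x$ satisfies $\gamma'(t)=\sum_k u_k\,\delta^{d_{j_k}}X_{j_k}(\gamma(t))$ with constant coefficient vector of $\ell^2$-norm $|u|<\eta<1$, so $\Phi_{x,\delta}(u)=\gamma(1)\in\B{X}{d}{x}{\delta}$ (compactness of $K$ and smallness of $\delta$ keep $\gamma$ inside $\Omega$). For the left-hand inclusion, let $\gamma$ be any admissible curve for $\B{X}{d}{x}{\xi\delta}$, i.e.\ $\gamma(0)=x$, $\gamma'(t)=\sum_j a_j(t)\,\delta^{d_j}X_j(\gamma(t))$, $\q\|(\sum_j|a_j|^2)^{1/2}\w\|_{L^\infty([0,1])}<\xi$. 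Since each $\delta^{d_j}X_j$ is tangent to $L$, $\gamma$ stays on $L$, hence lies in $\Phi_{x,\delta}(B_{n_0}(\eta))$ for $t$ near $0$; writing $\tilde\gamma:=\Phi_{x,\delta}^{-1}\circ\gamma$ gives $\tilde\gamma'(t)=\sum_j a_j(t)\,Y_j(\tilde\gamma(t))$. By the first preliminary and the uniform $C^\infty$ bound the matrix $(Y_j(u))_j$ has operator norm $\lesssim 1$ on $B_{n_0}(\eta)$, so $|\tilde\gamma'(t)|\lesssim\xi$ as long as $\tilde\gamma(t)\in B_{n_0}(\eta)$; a continuity/bootstrap argument then shows that, for $\xi\approx 1$ chosen small relative to $\eta$, $|\tilde\gamma(t)|\le C\xi<\eta$ on all of $[0,1]$, so $\gamma(1)\in\Phi_{x,\delta}(B_{n_0}(\eta))$.

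\medskip

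\noindent\emph{Items \ref{ItemConstDet}--\ref{ItemDoubling}.} From the identity $d\Phi_{x,\delta}(u)\cdot(Y_{j_1}(u),\ldots,Y_{j_{n_0}}(u))=(\delta^d X)_J(\Phi_{x,\delta}(u))$ of $n\times n_0$ matrices and the fact that $\det_{n_0\times n_0}(AB)=\det(B)\,\det_{n_0\times n_0}(A)$ for $A$ of size $n\times n_0$ and $B$ of size $n_0\times n_0$, one gets $\q|\det_{n_0\times n_0}d\Phi_{x,\delta}(u)\w|=\q|\det(Y_J(u))\w|^{-1}\q|\det_{n_0\times n_0}(\delta^d X)_J(\Phi_{x,\delta}(u))\w|$; here $\q|\det(Y_J(u))\w|\approx 1$ by the uniform Frobenius step, while $\q|\det_{n_0\times n_0}(\delta^d X)_J(y)\w|$ (and likewise $\q|\det_{n_0\times n_0}\delta^d X(y)\w|$) is slowly varying along $L$ near $x$ -- again a Gronwall argument, now applied to $\mathcal{L}_{Z}(\delta^{d_l}X_l)=\sum_k\omega_k\sum_m c^{m,\delta}_{j_k,l}\,\delta^{d_m}X_m$ with $Z=\omega\cdot(\delta^d X)_J$ -- so both are $\approx\q|\det_{n_0\times n_0}\delta^d X(x)\w|$, which is Item \ref{ItemConstDet}. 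For Item \ref{ItemVol}, the area formula for the injective immersion $\Phi_{x,\delta}$ gives $\Vol{\Phi_{x,\delta}(B_{n_0}(\eta))}=\int_{B_{n_0}(\eta)}\q|\det_{n_0\times n_0}d\Phi_{x,\delta}(u)\w|\,du\approx\q|\det_{n_0\times n_0}\delta^d X(x)\w|$ by Items \ref{ItemOnetoOne} and \ref{ItemConstDet} (using $\Vol{B_{n_0}(\eta)}\approx 1$); together with $\Phi_{x,\delta}(B_{n_0}(\eta))\subseteq\B{X}{d}{x}{\delta}$ this is the lower bound for $\Vol{\B{X}{d}{x}{\delta}}$, and applying the sandwich at parameter $\delta/\xi$ (shrinking the allowed range of $\delta$ if necessary, and noting that $n_0$ is unchanged while the relevant minors change by bounded factors under $\delta\mapsto\delta/\xi$) yields the matching upper bound. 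Item \ref{ItemDoubling} is then immediate from Item \ref{ItemVol}, since $\q|\det_{n_0\times n_0}(2\delta)^d X(x)\w|\le 2^{\,n_0\max_j|d_j|_1}\q|\det_{n_0\times n_0}\delta^d X(x)\w|$ with bounded exponent (and $n_0(x,2\delta)=n_0(x,\delta)$).
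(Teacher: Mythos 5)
Your proposal follows the same overall architecture as the paper's proof: reduce to a single application of a ``uniform Frobenius'' statement by pulling back $\delta^d X$ under the exponential map $\Phi_{x,\delta}$, establish uniform $C^\infty$ bounds on the pulled-back vector fields $Y_l$ via an ODE propagated outward from $u=0$ (this is Theorem \ref{ThmMainFrobThm}), use Cramer's rule and Gronwall to handle dependent $X_j$ and slowly-varying minors along the leaf (Lemmas \ref{LemmaLieDerivOfDetCarnot}--\ref{LemmaQuotientOfDets}), and then derive Items \ref{ItemConstDet}--\ref{ItemDoubling} from the change of variables and Cauchy--Binet. You have correctly identified the uniform Frobenius step as the crux, and your ``sandwich'' argument and derivation of Items \ref{ItemConstDet}--\ref{ItemDoubling} match the paper's Propositions \ref{PropCanPutBallInPhi} and \ref{PropBigBallInSmallerBall} and Corollary \ref{CorMainUnitScaleTwice}.

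That said, your sketch glosses over precisely the two points where the paper has to work hardest, and as written these are genuine gaps.

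First, the ODE is not a plain Picard/Gronwall situation: in polar coordinates it takes the Riccati form $\partial_r(rA)=-A^2-C_uA-C_u$ for the matrix $A$ with $Y_j=\partial_{u_j}+\sum_k a_j^k\partial_{u_k}$. The $r$ in front makes the equation singular at $r=0$, so the standard existence/uniqueness/smooth-dependence machinery does not directly apply. The paper resolves this in Theorem \ref{ThmExistUniqDiffEq} by a hand-built contraction mapping in a weighted space, together with an Izzo-type lemma (Lemma \ref{LemmaIzzoLemma}) to get $C^m$ regularity with admissible bounds; your phrase ``a Gronwall/Picard iteration then propagates the bounds'' does not account for this degeneracy and is not obviously repairable without essentially recreating that argument. (Also, a small inaccuracy: $\Phi_{x,\delta}(u)=e^{u\cdot(\delta^dX)_J}x$ is a single time-one flow along radial rays, not a composition of one-parameter flows; the relevant ODE is in the radial variable $r$ with $u=r\omega$.)

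Second, one-to-one-ness of $\Phi_{x,\delta}$ on a ball of radius $\eta\approx 1$ does not follow from the immersion property plus uniform $C^\infty$ bounds on $Y$ and the determinant lower bound: local injectivity near each point is automatic, but uniform global injectivity on the whole ball is a separate matter. The paper establishes it by a distinct argument (Lemma \ref{LemmaTWNoFixed} and Proposition \ref{PropPhiInjec}): if $\Phi(u_1)=\Phi(u_2)$ with $u_1\ne u_2$ close, then writing $u_2=e^{u_0\cdot Y}u_1$ produces a nontrivial period-one orbit of a vector field whose $C^1$ norm is admissibly small, which is ruled out by a quantitative ODE estimate. You would need something of this nature; it is not supplied by the uniform Frobenius step as you state it.
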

In addition to what is stated in Theorem \ref{ThmIntroMainThm}, a number
of other technical results hold which are essential for applications.
In particular, the map $\Phi_{x,\delta}$ can be used as a ``scaling''
map.  This is because the pullback of the vector fields $\delta^d X$
via the map $\Phi_{x,\delta}$ to $B_{n_0\q( x,\delta\w)}\q( \eta\w)$ satisfy
good properties uniformly in $x$ and $\delta$.\footnote{See
Section \ref{SectionLiftingSingleParam} to see a scaling technique
in action.}
  We refer the
reader to Section \ref{SectionMultiParamBalls} for a discussion
of these results along with the rigorous statement of Theorem \ref{ThmIntroMainThm}.
Note that in the single parameter case, Item \ref{ItemDoubling} is the
main inequality that must be satisfied for the balls $\B{X}{d}{x}{\delta}$
to form a space of homogeneous type (when paired with Lebesgue measure).
This is the first sign that these multi-parameter balls, in this generality,
will yield analogs to some results from the single-parameter
Calder\'on-Zygmund theory.

As was mentioned earlier, Theorem \ref{ThmIntroMainThm} will follow
from a ``uniform'' version of the theorem of Frobenius.
To understand this connection, one must first understand
the connection between multi-parameter balls and single parameter balls.
Given the multi-parameter formal degrees $0\ne d_j\in \q[0,\infty\w)^\nu$,
we obtain corresponding single parameter degrees, which we denote by $\sum d$,
and are defined by $\q(\sum d\w)_j:= \sum_{\mu=1}^\nu d_j^{\mu}=\q|d_j\w|_1$.
Given $\delta$, we decompose $\delta=\delta_0\delta_1$,
where $\delta_0\in \q[0,\infty\w)$ and $\delta_1\in \q[0,\infty\w)^\nu$.\footnote{Of course this decomposition is not unique.}
Then, directly from the definition, we obtain:
$$\B{X}{d}{x_0}{\delta} = \B{\delta_1^d X}{\sum d}{x_0}{\delta_0}=\B{\delta^d X}{\sum d}{x_0}{1}.$$
Because of this, to prove Theorem \ref{ThmIntroMainThm} for a fixed
$x\in K$ and a fixed $\delta$, it suffices to prove a result for
a list of vector fields with single-parameter formal degrees:  the vector fields
$\q( \delta^d X, \sum d\w)$.

At this point, we change notation.  We now work in the single-parameter
case $\nu=1$.  We suppose we are given $q$ $C^\infty$ vector fields
$X_1,\ldots, X_q$ on a fixed open set $\Omega\subseteq \R^n$ and
associated to each $X_j$ we are given a formal degree $d_j\in \q(0,\infty\w)$.
We further suppose that we are given a fixed point $x_0\in \Omega$.\footnote{In addition, we need to assume that $x_0$ is not too close to the boundary of $\Omega$, but we ignore such technicalities in this introduction.}
One should think of this single-parameter list $\q( X,d\w)$ as coming
from a multi-parameter list via $\q( \delta^d X,\sum d\w)$ as in Theorem \ref{ThmIntroMainThm}.
Our main assumption is that we have:
\begin{equation}\label{EqnIntroAssumpFrob}
\q[X_j, X_k\w] =\sum_{l=1}^q c_{j,k}^l X_l
\end{equation}
where $c_{j,k}^l\in C^\infty$.

Let $n_0=\dim \Span{X_1\q( x_0\w),\ldots, X_q\q( x_0\w)}$, and
pick $J\in \sI{n_0}{q}$ such that:
$$\q|\det_{n_0\times n_0} X_J\q( x_0\w)\w|_\infty= \q|\det_{n_0\times n_0} X\q( x_0\w)\w|_\infty.$$
For $u\in \R^{n_0}$ with $\q|u\w|$ sufficiently small, define the map
$$\Phi\q( u\w) = e^{u\cdot X_J}x_0.$$
In what follows, the constants 
can be chosen uniformly as the $X_j$ and $c_{j,k}^l$ vary over bounded
subsets of $C^{\infty}$. 
The constants do {\it not} depend
on a lower bound for, say, $\q|\det_{n_0\times n_0} X\q( x_0\w)\w|$.
Our ``uniform'' version of the theorem of Frobenius is:
\begin{thm}\label{ThmIntroFrob}
There exist $\eta, \xi\approx 1$ such that:
$$\B{X}{d}{x_0}{\xi} \subseteq \Phi\q( B_{n_0}\q( \eta\w)\w) \subseteq \B{X}{d}{x_0}{1}$$
and
\begin{itemize}
\item $\Phi:B_{n_0}\q( \eta\w) \rightarrow \B{X}{d}{x_0}{1}$ is one-to-one.
\item For all $u\in B_{n_0}\q( \eta\w)$, $\q|\det_{n_0\times n_0} d\Phi\q( u\w)\w|\approx \q|\det_{n_0\times n_0} X\q( x_0\w)\w|$.
\end{itemize}
Furthermore, if we let $Y_j$ be the pullback of $X_j$ via the map $\Phi$,
then the list of vector fields $Y_1,\ldots, Y_q$ satisfy good estimates.
See Theorem \ref{ThmMainUnitScale} for details.
\end{thm}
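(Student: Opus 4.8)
The plan is to reduce everything to a careful analysis of the exponential map $\Phi\q(u\w)=e^{u\cdot X_J}x_0$, combining the variational (Jacobi) equation for the flow defining $\Phi$ with the involutivity relations \eqref{EqnIntroAssumpFrob}. The mechanism that makes everything uniform is that, after substituting \eqref{EqnIntroAssumpFrob}, the variational equation becomes a \emph{linear} ODE whose coefficients are controlled entirely by the structure functions $c_{j,k}^l$; consequently every quantity produced along the way inherits $C^\infty$ bounds depending only on bounds for the $X_j$ and the $c_{j,k}^l$, with no dependence on a lower bound for $\q|\Det{n_0}{X\q(x_0\w)}\w|$. For notational convenience I would first relabel the vector fields and permute the coordinates of $\R^n$ so that $J=\q(1,\ldots,n_0\w)$ and the single largest $n_0\times n_0$ minor of $X\q(x_0\w)$ uses the row set $I_0=\q(1,\ldots,n_0\w)$; write $A$ for this minor, so $\q|A\w|=\q|\Det{n_0}{X\q(x_0\w)}\w|_\infty\approx\q|\Det{n_0}{X\q(x_0\w)}\w|$.

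The first real step is a matrix factorization of $d\Phi$. Differentiating the flow defining $\Phi$ in $u$ gives the variational equation for $\partial_{u_i}\Phi$; substituting \eqref{EqnIntroAssumpFrob} turns it into a linear ODE, yielding $\partial_{u_i}\Phi\q(u\w)=\sum_{m=1}^q b_i^m\q(u\w)X_m\q(\Phi\q(u\w)\w)$ with the $b_i^m$ being $C^\infty$ in $u$, uniformly bounded, and satisfying $b_i^m\q(0\w)=\delta_{j_i}^m$. Pulling each $X_m$ back along the flow of $u\cdot X_J$ in the same way gives $X\q(\Phi\q(u\w)\w)=L\q(u\w)X\q(x_0\w)P\q(u\w)$ with $L\q(u\w)=d\q(e^{u\cdot X_J}\w)\big|_{x_0}$ and $P\q(u\w)$ both equal to the identity plus an $O\q(|u|\w)$ error controlled by the $c_{j,k}^l$. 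Combining, $d\Phi\q(u\w)=L\q(u\w)X\q(x_0\w)\q(S+O\q(|u|\w)\w)$, where $S$ is the $q\times n_0$ matrix selecting the columns in $J$. Now Cauchy--Binet (Appendix \ref{AppendixLinear}) computes $\Det{n_0}{d\Phi\q(u\w)}$ and $\Det{n_0}{X\q(\Phi\q(u\w)\w)}$: the key point is that \emph{every} $n_0\times n_0$ minor of $X\q(x_0\w)$ has absolute value at most $\q|A\w|$, so all off-diagonal error terms are $O\q(|u|\,|A|\w)$ rather than merely $O\q(|u|\w)$. Hence $\Det{n_0}{d\Phi\q(u\w)}=\Det{n_0}{X_J\q(x_0\w)}+O\q(|u|\,|A|\w)$, so $\q|\Det{n_0}{d\Phi\q(u\w)}\w|\approx\q|A\w|$ for $|u|<\eta$ with $\eta\approx 1$. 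This is exactly the second bulleted claim, it shows $\Phi$ is an immersion on $B_{n_0}\q(\eta\w)$, and it also gives $\q|\Det{n_0}{X\q(\Phi\q(u\w)\w)}\w|\approx\q|A\w|$, i.e. the generalized Jacobian is essentially constant along the leaf.

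Next I would produce the pullback vector fields $Y_j$ and their estimates (the content referenced as Theorem \ref{ThmMainUnitScale}). By maximality of the minor $A$ and Cramer's rule, each $X_m\q(x_0\w)$ is a linear combination of $X_{j_1}\q(x_0\w),\ldots,X_{j_{n_0}}\q(x_0\w)$ with coefficients bounded by $1$; transporting this relation to $\Phi\q(u\w)$ through the factorization above gives $X_m\q(\Phi\q(u\w)\w)=\sum_i \tilde g_m^i\q(u\w)X_{j_i}\q(\Phi\q(u\w)\w)$ with $\tilde g_m^i$ $C^\infty$ and uniformly bounded. Since the columns of $X_J\q(\Phi\q(u\w)\w)$ and of $d\Phi\q(u\w)$ both span the $n_0$-dimensional tangent space of the leaf and the transition matrix between them is the identity plus $O\q(|u|\w)$, the pullbacks $Y_j$ are well defined, $C^\infty$, and \emph{uniformly bounded} on $B_{n_0}\q(\eta\w)$, with $Y_{j_i}\q(0\w)=e_i$ and $\q|\Det{n_0}{Y_{j_1}\q(u\w),\ldots,Y_{j_{n_0}}\q(u\w)}\w|\approx 1$; the involutivity relations for the $Y_j$ follow by applying $\Phi^{*}$ to \eqref{EqnIntroAssumpFrob}. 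With this the right-hand containment $\Phi\q(B_{n_0}\q(\eta\w)\w)\subseteq\B{X}{d}{x_0}{1}$ is immediate, since $t\mapsto\Phi\q(tu\w)$ is an admissible Carnot--Carath\'eodory curve with constant coefficient vector $u$, $|u|<\eta<1$.

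The remaining, and hardest, point is injectivity of $\Phi$ together with the left-hand containment. The idea is a continuation argument: a curve $\gamma$ realizing a point $y\in\B{X}{d}{x_0}{\xi}$ stays on the leaf through $x_0$ by \eqref{EqnIntroAssumpFrob}, and as long as $\gamma$ can be lifted through $\Phi$ the lift $\tilde\gamma$ satisfies $\tilde\gamma'\q(t\w)=\sum_j a_j\q(t\w)\xi^{d_j}Y_j\q(\tilde\gamma\q(t\w)\w)$; since the $Y_j$ are uniformly bounded, $|\tilde\gamma\q(t\w)|\lesssim\xi^{\min_j d_j}$, so for $\xi\approx 1$ sufficiently small $\tilde\gamma$ cannot leave $B_{n_0}\q(\eta\w)$ and one recovers $y=\Phi\q(\tilde\gamma\q(1\w)\w)$. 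Making this rigorous --- in particular guaranteeing that $\Phi$ is a homeomorphism of $B_{n_0}\q(\eta\w)$ onto its image with $\eta\approx 1$ --- is the crux of the whole argument: a nonvanishing generalized Jacobian does not give global injectivity, and the naive inverse-function-theorem bounds degenerate as $\q|\Det{n_0}{X\q(x_0\w)}\w|\to 0$. I expect this to be the main obstacle; I would address it by a monodromy/continuation argument in the spirit of \cite{NagelSteinWaingerBallsAndMetricsDefinedByVectorFields} and \cite{TaoWrightLpImprovingBoundsForAverages}, using the uniform estimates on the $Y_j$ established above to transport the local picture along Carnot--Carath\'eodory curves without any loss of uniformity, and proving injectivity and the left containment simultaneously with the continuation.
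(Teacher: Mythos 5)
Your high-level structure --- reduce to the maximal-minor subset $X_J$, parametrize the ray $s\mapsto\Phi(su)$ by the flow parameter, use \eqref{EqnIntroAssumpFrob} to turn the variational equation into a structure-controlled ODE for the coefficient matrix, extract the Jacobian estimate, and produce the pullback vector fields $Y_j$ --- does match the skeleton of Sections \ref{SectionThmOfFrob}--\ref{SectionUnitScale}. In fact your linear $s$-ODE for $B=(b_i^m)$ is exactly the Riccati linearization $B=(I+\widehat A)^{-1}$ of the singular nonlinear equation \eqref{EqnHatDiffEq} the paper solves, and your Cauchy--Binet factorization of $d\Phi$ reproduces the combination of $d\Phi(I+A)=X_{J_0}\circ\Phi$ with Lemma \ref{LemmaDetsDontChange}. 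But two steps you treat as consequences are in fact the crux, and your sketch does not close them.

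First, asserting that the $b_i^m$ (and the transport matrix $P(u)$) are ``$C^\infty$ in $u$, uniformly bounded'' is precisely the point Tao--Wright assume a priori and the paper must supply. Linearity of the $s$-ODE does not give it: differentiating $c_{j,k}^l\circ\gamma_u(s)$ in $u$ produces, via the chain rule, Euclidean derivatives $\partial_x^\beta c_{j,k}^l$, which are not controlled --- only the non-isotropic derivatives $X^\alpha c_{j,k}^l$ are. Converting $u$-derivatives back to $X$-derivatives requires the already-obtained (at lower order) identity $\nabla_u=(I+A)^{-1}\nabla_Y$, producing an induction on $m$. That bootstrap is the content of Theorem \ref{ThmExistUniqDiffEq} and Proposition \ref{PropEstCmOfA}; your ``consequently'' elides it. Second, you correctly flag injectivity and the containment $\B{X}{d}{x_0}{\xi}\subseteq\Phi(B_{n_0}(\eta))$ as the hardest step, but your proposed remedy (a monodromy/continuation argument) is vague exactly where it needs to be sharp, and your worry that inverse-function-theorem bounds ``degenerate as $|\det_{n_0\times n_0}X(x_0)|\to0$'' is a red herring: the paper applies the uniform inverse function theorem (Theorem \ref{ThmInverseFunctionThm}) not to $\Phi$ but to the maps $\Psi_{u_0}(u)=e^{u\cdot Y}u_0$ in the $u$-chart, whose Jacobian is $\det(I+A(u_0))\gtrsim1$ because $\|A\|\leq\tfrac12$ --- no degeneracy whatsoever. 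This gives that any two nearby preimages satisfy $u_2=e^{u_0\cdot Y}u_1$ with $u_0\neq0$ small, hence $\Phi(u_1)=e^{u_0\cdot X}\Phi(u_1)$, which is ruled out by the ``no small period'' Lemma \ref{LemmaTWNoFixed} (a vector field of small $C^1$ norm, nonvanishing at a point, has no period-one orbit through it). That specific mechanism, borrowed from Tao--Wright, is the missing idea; once injectivity is in hand the left containment follows from the continuation argument of Proposition \ref{PropCanPutBallInPhi}, and --- as Remark \ref{RmkDoesntFollowFromFrobProofs} emphasizes --- the classical Frobenius proofs you might try instead do not yield admissible constants.
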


Let us now describe why Theorem \ref{ThmIntroFrob} can be viewed
as a version of the theorem of Frobenius (see Section \ref{SectionClassicalFrob} for further discussion on this point).
Indeed, our main assumption \eqref{EqnIntroAssumpFrob}
is exactly the main assumption of the theorem of
Frobenius.
Hence under the hypotheses of Theorem \ref{ThmIntroFrob}, the
vector fields $X_1,\ldots, X_q$ foliate $\Omega$ into leaves.
As mentioned in Remark \ref{RmkIntroLeaves}, $\B{X}{d}{x_0}{\xi}$
is an open neighborhood of $x_0$ on this leaf.
Moreover $\Phi:B_{n_0}\q( \eta\w)\rightarrow \B{X}{d}{x_0}{1}$
is one-to-one.  Thus, $\Phi$ can be considered as a coordinate
chart on the leaf in a neighborhood of $x_0$.  Hence for each point
$x_0\in \Omega$, Theorem \ref{ThmIntroFrob} yields a coordinate
chart near $x_0$ on the leaf passing through $x_0$.  In this way,
Theorem \ref{ThmIntroFrob} {\it implies} the classical theorem
of Frobenius.  The main point is that not only does Theorem \ref{ThmIntroFrob}
yield a coordinate chart, but it also allows one to take $\xi, \eta\approx 1$ and it gives good estimates on this
coordinate chart; estimates which do not follow from the standard
proofs of the theorem of Frobenius (see Remark \ref{RmkDoesntFollowFromFrobProofs}), nor from the methods of
\cite{NagelSteinWaingerBallsAndMetricsDefinedByVectorFields} (see the discussion
in Section \ref{SectionNSW}).

In Section \ref{SectionPastWork}, we discuss a number of previous,
related works, and relate our results to these works.
In Section \ref{SectionThmOfFrob} we state and prove a precise version
of Theorem \ref{ThmIntroFrob} in the special case when
$X_1\q( x_0\w), \ldots, X_q\q( x_0\w)$ are linearly independent,
and it is in this section that the main technicalities of the
paper lie.
We refer to this result as a uniform theorem of Frobenius.
The proof heavily uses methods from
Section 4 of \cite{TaoWrightLpImprovingBoundsForAverages} 
and methods from  \cite{NagelSteinWaingerBallsAndMetricsDefinedByVectorFields},
but these need to be significantly generalized to adapt them to our situation.
In Section \ref{SectionUnitScale} we use the results of Section
\ref{SectionThmOfFrob} to prove the more general version of
Theorem \ref{ThmIntroFrob} in the case when $X_1\q( x_0\w), \ldots, X_q\q( x_0\w)$ are not necessarily linearly independent.  We refer
to this as studying Carnot-Carath\'eodory balls 
``at the unit scale.''\footnote{Here we mean at the unit scale with respect
to the vector fields $X_j$.  Thus, if the $X_j$ are very small
(as is the case when $X_j=\delta^{d_j} W_j$, where
$\delta$ is small), then one can think of it as being at a small scale.
In addition, we could have equally well referred to this as
a theorem of Frobenius.  We chose this name, though, to
emphasize its role in the proof of Theorem \ref{ThmIntroMainThm}.}
In addition, we use these results to define smooth bump functions
supported on these balls, along the lines of those
used in \cite{NagelSteinDifferentiableControlMetrics}.
In Section \ref{SectionMultiParamBalls} we state and
prove the rigorous version of Theorem \ref{ThmIntroMainThm}.
In Section \ref{SectionMaximalFuncs}, we use the results
from Section \ref{SectionUnitOpsAtUnitScale} to
study multi-parameter maximal functions associated to a certain
subclass of our multi-parameter balls.  In Section
\ref{SectionMaximalFuncs} we also discuss compositions of certain
unit operators (see Corollary \ref{CorSchwartzKer}
and Section \ref{SectionStreet}), and in Section
\ref{SectionMetrics} we use these unit operators
to discuss the relationship between certain quasi-metrics
that arise.

From the discussion proceeding Theorem \ref{ThmIntroFrob}, it is clear
why Theorem \ref{ThmIntroFrob} implies Theorem \ref{ThmIntroMainThm},
provided one has appropriate control over the implicit constants in Theorem \ref{ThmIntroFrob}.
Hence a main aspect of this paper is to keep track of the
appropriate constants in Theorem \ref{ThmIntroFrob}.
At times, this will be quite technical.  In addition, we will state our
main results with only a finite amount of smoothness, further
complicating our notations.\footnote{While it does complicate notation,
working with only a explicit finite amount of smoothness does not
complicate our proof.}  
In the past, there has been some interest in results in the single
parameter case, using as low regularity as possible.  Even
in this single parameter context, our results are new in this
direction.  See Section \ref{SectionNonsmooth} for a discussion
of this.

In an effort to ease the notation in the paper, at the start
of many of the sections of this paper, we will define
a notion of ``admissible constants.''
These will be constants that only depend on certain parameters.
This notion of admissible constant may change from section
to section, but we will be explicit about what it means each time.
In addition, if $\kappa$ is another parameter, and we
say ``there exists an admissible constant $C=C\q( \kappa\w)$,''
we mean that $C$ is allowed to depend on everything an
admissible constant may depend on, and is also allowed
to depend on $\kappa$.
We use the notation $A\lesssim B$ to mean $A\leq CB$,
where $C$ is an admissible constant; so that, in particular,
the meaning of $\lesssim$ may change from section to section.
We use $A\approx B$ to mean $A\lesssim B$ and $B\lesssim A$.
In some sections, we will use different levels of smoothness
assumptions.  In these sections, we will also define a
notion of $m$-admissible constants, where $m\in \N$ denotes
the level of assumed smoothness.  We will write $A \lesssim_m B$
for $A\leq C B$, where $C$ is an $m$-admissible constant,
and we define $\approx_m$ in a similar manner.

We write $Q_n\q( \eta\w)$ to denote the unit ball in 
$\R^n$, centered at $0$, of radius $\eta$ in the $\q|\cdot\w|_\infty$ norm.
All functions in this paper are assumed to be real valued.
Given a, possibly not closed, set $U\subseteq \R^n$, we write:
\begin{equation*}
\Cjn{m}{U}{f} = \sup_{x\in U} \sum_{\q|\alpha\w|\leq m} \q|\partial_x^{\alpha} f\q(x\w)\w|.
\end{equation*}
Finally, $v_1,v_2\in \R^m$ are two vectors, we write $v_1\leq v_2$
to mean that the inequality holds for each coordinate.

\begin{rmk}
Throughout the paper we work on an open subset $\Omega\subseteq \R^n$,
endowed with Lebesgue measure.  At first glance, it might seem useful
to work more generally on a Riemannian manifold (where Lebesgue
measure is replaced by the volume element); and replace the set of vector fields $X_1,\ldots, X_q$ with a locally finitely generated distribution (endowed with an appropriate (multi-parameter) filtration taking the place of the formal degrees).  However, our results
are local in nature, and working in such a setting offers
no new generality and only serves to complicate notation.
\end{rmk}

	\subsection{Past work}\label{SectionPastWork}
	In this section we discuss other results from the literature which
are related to the results in this paper.
In particular, we discuss the work of
\cite{NagelSteinWaingerBallsAndMetricsDefinedByVectorFields}
and the work in Section 4 of \cite{TaoWrightLpImprovingBoundsForAverages}.
Next, we discuss other results concerning Carnot-Carath\'eodory
balls in the case when the vector fields are not smooth.
In particular, we discuss the recent works
\cite{BarmantiBrandoliniPedroniNonsmoothHormander}
and
\cite{MontanariMorbidelliNonsmoothHormanderVectorFields}.
Third, as motivation for our study of ``unit operators'' (and maximal
functions)
in Section \ref{SectionMaximalFuncs}, we discuss the singular integrals from
 \cite{StreetAdvances}.
Finally, we discuss the classical theorem of Frobenius and make some
further remarks on how Theorem \ref{ThmIntroFrob} can be seen
as a ``uniform'' version.

		\subsubsection{Single-parameter balls and the work of Nagel, Stein, and Wainger}\label{SectionNSW}
		In this section, we discuss the main results of
\cite{NagelSteinWaingerBallsAndMetricsDefinedByVectorFields}.
In fact, their main results can be seen as a special case
of Theorem \ref{ThmIntroMainThm}, in the single-parameter
case ($\nu=1$).

We are given an open set $\Omega\subseteq \R^n$ and $C^\infty$ vector
fields $X_1,\ldots, X_q$ on $\Omega$, with corresponding
formal degrees $d_1,\ldots, d_q\in \q( 0,\infty\w)$.
\cite{NagelSteinWaingerBallsAndMetricsDefinedByVectorFields}
assumes two properties of the vector fields and formal degrees:
\begin{enumerate}
\item\label{ItemMainNSWAssump} There exist $c_{i,j}^k\in C^\infty$ such that
\begin{equation}\label{EqnIntroNSWIntegCond}
\q[X_i, X_j\w] = \sum_{d_k\leq d_i+d_j} c_{i,j}^k X_k.
\end{equation}
\item\label{ItemDetNSWAssump} The vector fields $X_1,\ldots, X_q$ span the tangent space at every point.
\end{enumerate}
In this context, Nagel, Stein, and Wainger prove Theorem \ref{ThmIntroMainThm}
(for a fixed compact set $K\Subset \Omega$).
Note that Item \ref{ItemMainNSWAssump} is a special case of
\eqref{EqnIntroMainAssump}.  Indeed, one may take
$$c_{i,j}^{k,\delta} = 
\begin{cases}
\delta^{d_i+d_j-d_k}c_{i,j}^k & \text{if } d_k\leq d_i+d_j,\\
0 & \text{otherwise}.
\end{cases}$$ 
The implicit constants are allowed to depend not only on
upper bounds for a finite number of the $C^m$ norms of the $X_j$ and the $c_{i,j}^k$
(as in Theorem \ref{ThmIntroMainThm}), but also a lower bound
for:
\begin{equation}\label{EqnNSWLowerBound}
\inf_{x\in K}\q|\det_{n\times n} X\q( x\w)\w|.
\end{equation}
This is the fundamental difference between the results
of \cite{NagelSteinWaingerBallsAndMetricsDefinedByVectorFields}
and Theorem \ref{ThmIntroMainThm}.

Indeed, using the connection between single-parameter and multi-parameter
balls discussed in Section \ref{SectionResults}, it is not hard to
see that Theorem \ref{ThmIntroMainThm} is essentially {\it equivalent}
to obtaining the results of \cite{NagelSteinWaingerBallsAndMetricsDefinedByVectorFields}
without allowing the constants to depend on a lower bound for
\eqref{EqnNSWLowerBound}.
Of course, if one does not allow the constants to depend on a lower
bound for \eqref{EqnNSWLowerBound}, one should also consider
the limiting result when the quantity in \eqref{EqnNSWLowerBound}
is equals $0$.  I.e., when the vector fields do not span
the tangent space at every point.
This is precisely the statement of Theorem \ref{ThmIntroMainThm}
in the single parameter case.

Use of a lower bound for \eqref{EqnNSWLowerBound} is essential
to the methods of \cite{NagelSteinWaingerBallsAndMetricsDefinedByVectorFields}.
It is used, for instance, every time the error term in the Campbell-Hausdorff
formula
is estimated.\footnote{See the appendix of
\cite{NagelSteinWaingerBallsAndMetricsDefinedByVectorFields}
for an introduction to the Campbell-Hausdorff formula.}
To explain this, we outline a proof of (a result similar to) Lemma 2.13
of \cite{NagelSteinWaingerBallsAndMetricsDefinedByVectorFields}.
We take the setting as above, and consider the map ($B_{q}\q( \eta\w) \rightarrow\Omega$,
for some small $\eta>0$):
$$\theta_{\delta}\q( s\w) = e^{s_1\delta^{d_1}X_1+\ldots+s_q\delta^{d_q}X_q}x_0.$$
Then one has:
$$ d\theta_\delta\q( \partial_{s_j}\w) = \sum_{j=1}^q c_{j}^{k,\delta} \delta^{d_k}X_k,$$
with $c_{j}^{k,\delta}$ bounded uniformly for $\delta>0$ small.
Indeed, the Campbell-Hausdorff formula allows one to compute the
Taylor series for $d\theta_\delta \q( \partial_{s_j}\w)$.
One has, for every $N>0$,
\begin{equation*}
\begin{split}
d\theta_\delta\q( \partial_{s_j}\w) = &\delta^{d_j}X_j + a_1 \q[s\cdot \delta^d X, \delta^{d_j}X_j\w] + a_2 \q[s\cdot \delta^d X, \q[s\cdot \delta^d X, \delta^{d_j}X_j\w] \w] + \ldots \\
&+ a_{N-1} \q\{\text{commutators of order }N-1\w\} + O\q(\q|\delta^d s\w|^N\w),
\end{split}
\end{equation*}
where the $a_j$ are constants and $\delta^d s = \q(\delta^{d_1} s_1,\ldots, \delta^{d_q} s_q\w)$.  The first $N$ terms are of the desired form
by \eqref{EqnIntroNSWIntegCond} (or more generally, 
\eqref{EqnIntroMainAssump}).  Thus, the goal is to see
that $O\q(\q|\delta^d s\w|^N\w)$ is of the desired form.
This can be seen directly, by taking $N$ so large that
$N\min_j\q\{d_j\w\}\geq \max_j \q\{d_j\w\}$, and using 
the lower bound for \eqref{EqnNSWLowerBound}.  However,
this procedure does not work in the multi-parameter
situation.  Indeed, consider the two-parameter situation.
In the case when $\delta_1<<\delta_2$, then the best one can
say about the error term $O\q(\q|\delta^d s\w|^N\w)$ is that
it is bounded by as large a power of $\delta_2$ as we like (by
taking $N$ large).  However, we would need it to be bounded by
a large power of $\delta_1$ to generalize the above proof.
It turns out that, even in the multi-parameter situation,
the error term {\it is} of the desired form.  This follows
{\it a fortiori} from the results of this paper.
Because of this, one can use the results of this paper to apply
the proofs in \cite{NagelSteinWaingerBallsAndMetricsDefinedByVectorFields}
to the multi-parameter situation.  However, since the results
in \cite{NagelSteinWaingerBallsAndMetricsDefinedByVectorFields} follow
from the results in this paper, this idea does not
improve the main results of this paper.  This idea
does have some uses, though:  one can often ``lift'' results
from the single-parameter setting to the multi-parameter setting by
using the results from this paper.  This is discussed
in more detail in Section \ref{SectionLiftingSingleParam}.


At first glance, one might think that the proper generalization
of \eqref{EqnIntroNSWIntegCond}
to the multi-parameter situation would be:
\begin{equation}\label{EqnNSWGuessIntCond}
\q[X_i,X_j\w]=\sum_{d_k\leq d_i+d_j} c_{i,j}^k X_k,
\end{equation}
where $d_j\in \q[0,\infty\w)^\nu$ and the inequality $d_k\leq d_i+d_j$ is meant coordinatewise.
Just as before this is a special case of
\eqref{EqnIntroMainAssump}, and one may take
$$c_{i,j}^{k,\delta}=
\begin{cases}\delta^{d_i+d_j-d_k} c_{i,j}^k & \text{if }d_k\leq d_i+d_j,\\
0 & \text{otherwise}.
\end{cases}$$
However, unlike in the single-parameter case, \eqref{EqnNSWGuessIntCond}
does not encapsulate a large fraction of the interesting examples.
This is explained in more detail in
Section \ref{SectionControlEveryScale}.

		\subsubsection{Weakly comparable balls, the work of Tao and Wright, and a motivating example}\label{SectionTW}
		In this section, we discuss the work in Section 4
of \cite{TaoWrightLpImprovingBoundsForAverages}
on ``weakly-comparable'' Carnot-Carath\'eodory balls.
While the results discussed in this section do not follow
from Theorem \ref{ThmIntroMainThm}, they do follow
from the more general Theorem \ref{ThmMultiBallsAtPoint}--this
is discussed in Section \ref{SectionWeaklyCompBalls}.

To understand these results, we must first understand the main
motivating example of
\cite{NagelSteinWaingerBallsAndMetricsDefinedByVectorFields}.
Suppose we are given $C^\infty$ vector fields $W_1,\ldots, W_r$ on an open
subset $\Omega\subseteq \R^n$.  Suppose further that
these vector fields satisfy H\"ormander's condition:  i.e.,
$W_1,\ldots,W_r$ along with their commutators up to some fixed finite
order (say, up to order $m\in \N$)
span the tangent space at every point.
We assign to each vector field $W_j$ the formal degree $1$.  We assign
to each commutator $\q[W_i, W_j\w]$ the formal degree $2$.  
We assign to each commutator $\q[W_i, \q[W_j,W_k\w]\w]$ the formal
degree $3$.
We continue
this process up to degree $m$, and we obtain a list of vector fields
with one parameter formal degrees $\q( X_1,d_1\w),\ldots, \q( X_q,d_q\w)$.
As usual, we denote this list by $\q( X,d\w)$.  It is easy to check
that this list of vector fields satisfies the assumptions
in Section \ref{SectionNSW}.
It is also shown in \cite{NagelSteinWaingerBallsAndMetricsDefinedByVectorFields}
that the single-parameter balls
$$B_{\delta W_1,\ldots, \delta W_r}\q( x_0\w)$$
are comparable to the single-parameter balls
$$\B{X}{d}{x_0}{\delta}.$$
Because of this, one can use the results of
\cite{NagelSteinWaingerBallsAndMetricsDefinedByVectorFields}
to study the balls
$$B_{\delta W_1,\ldots, \delta W_r}\q( x_0\w).$$

We now turn to discussing two-parameter weakly-comparable balls.
The restriction to two-parameters is not essential, see
Section \ref{SectionWeaklyCompBalls}.
We suppose again that we are given a list of $C^\infty$
vector fields satisfying H\"ormander's condition, $W_1,\ldots, W_r$.
We now separate this list into two lists:
$$W_1',\ldots, W_{r_1}', \quad W_1'',\ldots, W_{r_2}'',$$
so that the two lists together satisfy H\"ormander's condition,
but they may not satisfy H\"ormander's condition separately.
Suppose we wish to study the two-parameter balls given by
$$B_{\delta_1 W', \delta_2 W''}\q( x_0\w),$$
where $\delta_1,\delta_2\in \q(0,1\w)$ are small.  Thus, when
$\delta_1=\delta_2$ this reduces to the single-parameter case discussed
above.
It is natural to wish for an estimate of the form
\begin{equation}\label{EqnIntroWishFor}
\Vol{B_{2\delta_1 W', 2\delta_2 W''}\q( x_0\w)} \lesssim \Vol{B_{\delta_1 W', \delta_2 W''}\q(x_0\w)}.
\end{equation}
Unfortunately, \eqref{EqnIntroWishFor} does not hold in general (See Example \ref{ExampleWeaklyCompNecessary}).
To obtain \eqref{EqnIntroWishFor}, there are three options:
\begin{enumerate}
\item We could restrict the vector fields we consider.  This is
the perspective taken up in Theorem \ref{ThmIntroMainThm}.
\item We could restrict the form of $\delta=\q( \delta_1,\delta_2\w)$.
This is the perspective taken up in Section 4
of \cite{TaoWrightLpImprovingBoundsForAverages}.
\item We could, more generally, do a combination of the above two methods.
This is taken up in Theorem \ref{ThmMultiBallsAtPoint}.
\end{enumerate}
We briefly discuss the first two methods, and refer the reader to
Theorem \ref{ThmMultiBallsAtPoint} for the third.

The first method is quite straight forward given the
Theorem \ref{ThmIntroMainThm}.
We assign to each of the vector fields $W_1',\ldots, W_{r_1}'$
the formal degree $\q( 1,0\w)$.  We assign to each of the vector
fields $W_1'',\ldots, W_{r_1}''$ the formal degree $\q( 0,1\w)$.
If we have assigned a vector field $Z_1$ the formal degree $d_1$
and $Z_2$ the formal degree $d_2$, we assign to the commutator
$\q[ Z_1,Z_2\w]$ the formal degree $d_1+d_2$.
One then uses this procedure to take commutators of the $W'$ and $W''$
to some large finite order, thereby yielding a list of vector fields
with two-parameter formal degrees $\q( X_1,d_1\w), \ldots, \q( X_q,d_q\w)$.
The restriction we put on the vector fields $W'$ and $W''$ is
merely that this list of vector fields satisfies the conditions
of Theorem \ref{ThmIntroMainThm}.\footnote{It 
is a consequence of the results in Section \ref{SectionControlEveryScale}
that the balls one obtains in this manner are essentially independent
of the order of commutators one takes.  That is, if the vector fields $\q( X,d\w)$ were obtained by taking commutators up to order $M$, and if $\q(X,d\w)$ satisfies the assumptions of 
Theorem \ref{ThmIntroMainThm}, then the vector fields obtained by taking
commutators up to order $M+1$ also satisfy the assumptions of Theorem \ref{ThmIntroMainThm}, and yield comparable balls.}
One can then apply Theorem \ref{ThmIntroMainThm} to study these balls.

For the second method, Tao and Wright noted that one does not
have to restrict the vector fields one considers, provided one
restricts attention to $\delta$ which are ``weakly-comparable.''
To define this notion, fix large constants $\kappa, N$.  We then
restrict our attention to $\delta=\q( \delta_1, \delta_2\w)$
that satisfy:
$$\frac{1}{\kappa} \delta_1^N \leq \delta_2 \leq \kappa \delta_1^{\frac{1}{N}}.$$
In this case, one can prove develop a very satisfactory
theory of the balls with these radii.  In particular,
one has \eqref{EqnIntroWishFor}.  See
Section \ref{SectionWeaklyCompBalls} for more details.

Despite the fact, as is mentioned in
\cite{TaoWrightLpImprovingBoundsForAverages},
that the proofs from
\cite{NagelSteinWaingerBallsAndMetricsDefinedByVectorFields}
generalize to show everything they needed, Tao and Wright
put forth another proof method.
That these methods can be rephrased, generalized, and combined
with the methods of \cite{NagelSteinWaingerBallsAndMetricsDefinedByVectorFields}
and classical methods to prove more general results is one
of the main points of this paper.

\begin{rmk}
One point we have skipped over in this section is to compare the balls
$B_{\delta W}\q( x_0\w)$ to the balls $\B{X}{d}{x_0}{\delta}$ in
the multi-parameter situation (they
turn out to be comparable).
This is taken up in Section \ref{SectionLiftingSingleParam}.
\end{rmk}

		\subsubsection{Nonsmooth H\"ormander vector fields}\label{SectionNonsmooth}
		The results in \cite{NagelSteinWaingerBallsAndMetricsDefinedByVectorFields}
were stated only for $C^\infty$ vector fields.
However, it is clear from their work that there is a finite
number $M$ (depending on various quantities) such that one
need only consider $C^M$ vector fields.
Unfortunately, this $M$ is quite large.
This is due to the fact that the uses of the Campbell-Hausdorff
formula in \cite{NagelSteinWaingerBallsAndMetricsDefinedByVectorFields}
require using very high order Taylor approximations of many
of the functions involved.

Much work has been done, in this single-parameter setting, to reduce the required regularity
in the results of \cite{NagelSteinWaingerBallsAndMetricsDefinedByVectorFields}.
Quite recently, and independently of this paper, two
works have made great strides on this problem:
the work of Barmanti, Brandolini, and Pedroni
\cite{BarmantiBrandoliniPedroniNonsmoothHormander}
and
the work of Montanari and Morbidelli
\cite{MontanariMorbidelliNonsmoothHormanderVectorFields}.
We refer the reader to these to works for the long list
of works that proceeded them and for a description
of applications for such results.

To describe these results, suppose we are given vector fields
$W_1,\ldots, W_r$ satisfying H\"ormander's condition at step $s\geq 1$.
That is, $W_1,\ldots, W_r$ along with their commutators up to order $s$
span the tangent space at each point (see the discussion at the
start of Section \ref{SectionTW}).
Then, \cite{BarmantiBrandoliniPedroniNonsmoothHormander}
shows that one can recreate much of the theory
of \cite{NagelSteinWaingerBallsAndMetricsDefinedByVectorFields}
provided one assumes the vector fields are $C^{s-1}$.
\cite{MontanariMorbidelliNonsmoothHormanderVectorFields}
achieves the same thing assuming the vector fields lie in a 
space that is between $C^{s-2,1}$ and $C^{s-1,1}$ (see
\cite{MontanariMorbidelliNonsmoothHormanderVectorFields} for
a precise statement).

The regularity assumptions in this paper are
incomparable to those discussed above.\footnote{Of course, our results
also apply to the multi-parameter situation, which is not true
of \cite{BarmantiBrandoliniPedroniNonsmoothHormander,MontanariMorbidelliNonsmoothHormanderVectorFields}.}
As far as {\it isotropic} estimates go, the work of
\cite{BarmantiBrandoliniPedroniNonsmoothHormander,MontanariMorbidelliNonsmoothHormanderVectorFields}
requires less regularity than ours.  However, our estimates
are non-isotropic in nature.  To understand this, use the vector fields
$W_1,\ldots, W_r$ to generate a list of vector fields with single-parameter
formal degrees $\q( X_1,d_1\w),\ldots, \q( X_q,d_q\w)$ as in
the start of Section \ref{SectionTW}.
We then assume that each $X_j$ is $C^2$, and assume a non-isotropic
estimate on the $c_{i,j}^k$ which is weaker than assuming the $c_{i,j}^k$ are
in $C^2$ (here, the $c_{i,j}^k$ are as in \eqref{EqnIntroNSWIntegCond}).
Note that if one were to replace this with an isotropic estimate,
we would require that $W_j$ be in $C^{s+2}$, which is much worse
than the results in \cite{BarmantiBrandoliniPedroniNonsmoothHormander,MontanariMorbidelliNonsmoothHormanderVectorFields}.
However, the point here is that we do not need to take derivatives
of $W_j$ in {\it every} direction up to order $s+2$, but instead we
can mostly restrict our attention to derivatives that arise
from taking commutators.

It is likely that the regularity required in this paper is not minimal--even
for the methods we use.
Indeed, we often show that a subset of $C^1$ is precompact by showing
that it is bounded in $C^2$, leaving much room for improvement.
Improving this would require an even more detailed study of the
various ODEs that arise than is already undertaken in this paper,
and this would take us quite afield of the main purpose of this paper (to
understand the multi-parameter situation).
Ideally, one would like to unify the non-isotropic estimates
in this paper with the isotropic estimates of
\cite{BarmantiBrandoliniPedroniNonsmoothHormander,MontanariMorbidelliNonsmoothHormanderVectorFields}, we do not attempt to do so here but we hope that
the results in this paper will help to motivate future work in this direction.

		\subsubsection{Some multi-parameter singular integrals}\label{SectionStreet}
		In \cite{StreetAdvances}, an algebra of singular integral operators 
was developed which contained both the left and right invariant Calder\'on-Zygmund
operators on a stratified Lie group (see \cite{StreetAdvances} for
a precise statement).
In this section, we outline the main technical estimate
that was key to the work of \cite{StreetAdvances}.
One of the main results of Section \ref{SectionMaximalFuncs}
is a generalization\footnote{See Corollary \ref{CorSchwartzKer} for the statement of this generalization.} of this estimate, and we refer to the operators
that come into play in this estimate as ``unit operators.''

For the purposes of this section, we discuss only the case
of the three dimensional Heisenberg group $\Ho$, though
all of the results discussed here hold more generally
on stratified Lie groups.
As a manifold $\Ho$ is diffeomorphic to $\R^3$.  For an
introduction to $\Ho$, the reader may consult
Chapter XII of \cite{SteinHarmonicAnalysis}.
If we write $\q( x,y,t\w)\in \R^3$ for coordinates on $\Ho$,
then the group law is given by:
$$\q(x,y,t \w)\q( x',y',t'\w) = \q( x+x', y+y', t+t' +2\q(yx'-xy'\w) \w).$$
With this group law, $\Ho$ is a three dimensional, nilpotent Lie group.
As such, it has a three dimensional Lie algebra.  The left invariant
vector fields are spanned by:
$$X_L = \partial_x + 2y\partial_t, \quad Y_L=\partial_y-2x\partial_t, \quad T=\partial_t.$$
The right invariant vector fields are spanned by:
$$X_R = \partial_x - 2y\partial_t, \quad Y_R=\partial_y+2x\partial_t, \quad T=\partial_t.$$
Note that we have:
$$\q[X_L,Y_L\w]= -4T,\quad \q[X_R,Y_R\w]=4T,$$
and $T$ commutes with all of the vector fields.  Moreover, the left
invariant vector fields commute with the right invariant vector fields.

Using these vector fields, we may create a two-parameter list of vector fields
given by:
$$\q( X_L,\q( 1,0\w)\w), \q( Y_L, \q( 1,0\w)\w), \q( T,\q( 2,0\w)\w), \q( X_R,\q( 0,1\w)\w), \q( Y_R,\q( 0,1\w)\w), \q( T,\q( 0,2\w)\w),$$
and we denote this list by $\q( X,d\w)$.
It is easy to see that $\q( X,d\w)$ satisfies the assumptions
of Theorem \ref{ThmIntroMainThm}; we therefore obtain a theory
of the two-parameter Carnot-Carath\'eodory balls
$\B{X}{d}{x_0}{\delta}$.

Let $\chi$ be the characteristic function of the unit ball
in $\R^3$, and for $r\in \q( 0,\infty\w)$, define
$$\chi_r\q( x,y,t\w) = r^{4}\chi\q( rx,ry,r^2t\w).$$
It is easy to see that $\chi_r\q( \xi^{-1} \zeta\w)$ is supported
for $\xi$ essentially\footnote{By this we mean it is supported in a comparable ball.} 
in $\B{X}{d}{\zeta}{\q(\frac{1}{r},0\w)}$.
Moreover, it is bounded by a constant times $\Vol{\B{X}{d}{\zeta}{\q(\frac{1}{r},0\w)}}^{-1}$ (and these results are sharp).
For $\chi_r\q(\zeta\xi^{-1}\w)$, the same is true, but one must use
the radius $\q( 0,\frac{1}{r}\w)$ instead of $\q( \frac{1}{r},0\w)$.

Define a left invariant operator and a right invariant operator by:
$$\OpL{\chi_r}:f\mapsto f*\chi_r,\quad \OpR{\chi_r}:f\mapsto \chi_r*f.$$
A key ingredient of the theory in \cite{StreetAdvances} was a
study of the Schwartz kernel of the operator $\OpL{\chi_{r_1}}\OpR{\chi_{r_2}}$
(see Section 5.1 of \cite{StreetAdvances}).
Let $K_{r_1,r_2}\q( \zeta, \xi\w)$ denote this Schwartz kernel.
The results in \cite{StreetAdvances} show:
\begin{itemize}
\item $K_{r_1,r_2}\q( \zeta,\xi\w)$ is supported essentially in $\B{X}{d}{\zeta}{\q(\frac{1}{r_1},\frac{1}{r_2}\w)}$.
\item $K_{r_1,r_2}\q( \zeta, \xi\w)\lesssim \Vol{\B{X}{d}{\zeta}{\q(\frac{1}{r_1},\frac{1}{r_2}\w)}}^{-1}$.
\item The above two results are sharp.  In particular, there is an $\eta>0$ such that for $\xi\in \B{X}{d}{\zeta}{\q(\frac{\eta}{r_1}, \frac{\eta}{r_2}\w)}$, we have
$$K_{r_1,r_2}\q( \zeta,\xi\w)\approx \Vol{\B{X}{d}{\zeta}{\q(\frac{1}{r_1}, \frac{1}{r_2} \w)}}^{-1}.$$
\end{itemize}

Using these results, one can study maximal operators.  Indeed,
define three maximal operators:
\begin{equation*}
\begin{split}
\sM f\q( \zeta\w) &= \sup_{\delta_1,\delta_2>0} \frac{1}{\Vol{\B{X}{d}{\zeta}{\q( \delta_1,\delta_2\w)}}} \int_{\B{X}{d}{\zeta}{\q(\delta_1,\delta_2\w)}} \q|f\q( \xi\w) \w| \: d\xi,\\
\sM_L f\q( \zeta\w) &= \sup_{\delta>0} \frac{1}{\Vol{\B{X}{d}{\zeta}{\q( \delta,0\w)}}} \int_{\B{X}{d}{\zeta}{\q(\delta,0\w)}} \q|f\q( \xi\w) \w| \: d\xi,\\
\sM_R f\q( \zeta\w) &= \sup_{\delta>0} \frac{1}{\Vol{\B{X}{d}{\zeta}{\q( 0,\delta\w)}}} \int_{\B{X}{d}{\zeta}{\q(0,\delta\w)}} \q|f\q( \xi\w) \w| \: d\xi.
\end{split}
\end{equation*}
The results above show
$$\sM f \lesssim \sM_L\sM_R f.$$
However, it is well-known that the one-parameter maximal functions
$\sM_L$ and $\sM_R$ are bounded on $L^p$ ($1<p\leq \infty$); due
to the fact that $\B{X}{d}{\zeta}{\q( \cdot, 0\w)}$ and $\B{X}{d}{\zeta}{\q( 0,\cdot\w)}$ give rise to spaces of homogeneous type.
It follows, then, that $\sM$ is also bounded on $L^p$ ($1<p\leq \infty$).

The goal of Section \ref{SectionMaximalFuncs} is to see how far
this proof (in its entirety) can be generalized.
It was used heavily in \cite{StreetAdvances} that the left invariant
vector fields commuted with the right invariant vector fields.
In Section \ref{SectionMaximalFuncs} we will see that we do not
need the relevant vector fields to commute, but can instead just assume
that they ``almost commute.''  This is made precise
in Section \ref{SectionMaximalFuncs}.

It is extremely likely maximal results hold for a larger
class of our multi-parameter balls than what is
shown in Section \ref{SectionMaximalFuncs}--but the study
of unit operators seems very tied to the (rather strong)
assumptions in Section \ref{SectionMaximalFuncs}.
We content ourselves, in this paper, with studying maximal operators
under these hypotheses.  It would be interesting to generalize these
results further.

		\subsubsection{The classical theorem of Frobenius}\label{SectionClassicalFrob}
		In this section, we remind the reader of the statement of the
theorem of Frobenius.  We keep the exposition brief since
we use the classical theorem of Frobenius only tangentially
in this paper, and this section is more to fix terminology.  Suppose $M$ is a connected manifold, and
$X_1,\ldots, X_q$ are $C^\infty$ vector fields on $M$.
Suppose, further, that for each $i,j$ there exist
$C^\infty$ functions $c_{i,j}^k$ such that:
\begin{equation}\label{EqnAppendFrobInteg}
\q[X_i,X_j\w]=\sum_{k} c_{i,j}^k X_k.
\end{equation}
Conditions like (\ref{EqnAppendFrobInteg}) are referred to
as ``integrability conditions.''  In this case, we have the classical theorem of Frobenius:
\begin{thm}\label{ThmAppendFrob}
For each $x\in M$, there exists a unique, 
maximal, connected, injectively immersed
submanifold $L\subseteq M$
such that:
\begin{itemize}
\item $x\in L$,
\item For each $y\in L$, $T_y L=\Span{X_1\q(y\w),\ldots, X_q\q(y\w)}$.
\end{itemize}
$L$ is called a ``leaf.''
\end{thm}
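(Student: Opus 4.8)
The plan is to realize the leaf $L$ through $x$ as the \emph{orbit} of $x$ under the flows of $X_1,\ldots,X_q$, and to read off its manifold structure from a flow-invariance property of the (possibly singular) distribution $y\mapsto D_y:=\Span{X_1\q(y\w),\ldots,X_q\q(y\w)}\subseteq T_yM$. Write $\Phi^Z_t$ for the time-$t$ flow of a vector field $Z$. For $y,z\in M$, declare $y\sim z$ if $z=\Phi^{X_{i_N}}_{t_N}\circ\cdots\circ\Phi^{X_{i_1}}_{t_1}\q(y\w)$ for some finite list of indices $i_1,\ldots,i_N$ and times $t_1,\ldots,t_N$, chosen so that all the intermediate flows are defined; since each flow map is invertible this is an equivalence relation, and the orbits partition $M$. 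I would take $L$ to be the orbit of $x$ (which is automatically path-connected in $M$), equip it with a manifold topology so that the inclusion $L\hookrightarrow M$ is an injective immersion with $T_yL=D_y$, and then verify that this $L$ is the unique maximal connected integral manifold through $x$.

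The heart of the argument --- and the only place the integrability condition \eqref{EqnAppendFrobInteg} is used --- is the claim that for each $i$, and each $t$ for which it is defined, $d\Phi^{X_i}_t$ carries $D_y$ onto $D_{\Phi^{X_i}_t\q(y\w)}$. To see this I would fix $i$, abbreviate $\Phi_t=\Phi^{X_i}_t$, work in a coordinate chart around a segment of the flow line through $y$, and set $V_j\q(t\w):=\q(d\Phi_{-t}\w)\q(X_j\q(\Phi_t y\w)\w)\in T_yM$. One has $V_j\q(0\w)=X_j\q(y\w)$, and differentiating (this is the standard formula $\frac{d}{dt}\q(\Phi_t\w)^{*}X_j=\q(\Phi_t\w)^{*}\q[X_i,X_j\w]$) together with \eqref{EqnAppendFrobInteg} gives the linear system $\frac{d}{dt}V_j\q(t\w)=\sum_k c_{i,j}^k\q(\Phi_t y\w)\,V_k\q(t\w)$. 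By uniqueness for linear ODEs, any covector annihilating $\{V_j\q(0\w)\}_j=\{X_j\q(y\w)\}_j$ annihilates $\{V_j\q(t\w)\}_j$ for all $t$, so $\Span{V_j\q(t\w)}\subseteq\Span{V_j\q(0\w)}=D_y$; running time backwards gives equality, which unwinds to $d\Phi_t\q(D_y\w)=D_{\Phi_t y}$. Composing such isomorphisms along an orbit shows $\dim D_y$ is constant on $L$; call this number $n_0$.

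Granting the lemma, the construction is then essentially the classical one. Near a point $y\in L$ pick $j_1<\cdots<j_{n_0}$ with $X_{j_1}\q(y\w),\ldots,X_{j_{n_0}}\q(y\w)$ a basis of $D_y$, and consider $\psi_y\q(u\w):=\Phi^{X_{j_1}}_{u_1}\circ\cdots\circ\Phi^{X_{j_{n_0}}}_{u_{n_0}}\q(y\w)$ for $u\in\R^{n_0}$ small. The differential of $\psi_y$ at $0$ has image $D_y$, so $\psi_y$ is an immersion near $0$ and, after shrinking its domain, has embedded image $N_y\subseteq L$; moreover, by the flow-invariance lemma each partial derivative $\partial_{u_a}\psi_y$ lies in $D$, hence $T_zN_y\subseteq D_z$ for all $z\in N_y$, and since $\dim D\equiv n_0$ on $L$ this is an equality, i.e.\ $N_y$ is an integral manifold of $D$ contained in $L$. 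I would then take the sets $N_y$ and their open subsets as a basis for a topology on $L$ and as charts; compatibility of overlapping charts reduces to the local uniqueness statement that two integral manifolds of $D$ through a common point coincide near that point, and with this $L$ becomes a connected $n_0$-dimensional manifold whose inclusion into $M$ is an injective immersion with $T_yL=D_y$.

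Finally, maximality and uniqueness: any connected integral manifold $N$ of $D$ through $x$ has all of its velocity vectors in $D$, so a path in $N$ from $x$ can be covered by finitely many of the charts $\psi_{\bullet}$ and is therefore, within each, a finite concatenation of flows of the chosen $X_{j_a}$; hence $N$ lies set-theoretically inside the orbit $L$ and, being an $n_0$-dimensional integral manifold meeting $L$, is an open submanifold of it. Thus $L$ (the whole orbit) is the unique maximal one. The step I expect to be the main obstacle is the flow-invariance lemma: it is precisely what breaks down if \eqref{EqnAppendFrobInteg} is dropped, and it is what lets the singular case go through --- the rank $\dim D_y$ may jump on open subsets of $M$, but never along a single orbit $L$; the remaining topological bookkeeping in assembling the charts is routine, though it has to be carried out with this possible jumping in mind.
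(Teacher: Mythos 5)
The paper does not actually prove Theorem~\ref{ThmAppendFrob}: it is stated as a classical fact with a pointer to \cite{ChevalleyTheoryOfLieGroupsI}, and the accompanying remark (citing \cite{HermanTheDifferentialGeometryOfFoliations}) observes that the usual proofs already cover the singular case. So there is no in-paper argument to compare against, and the task is to judge your sketch on its own merits.

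Your sketch is correct and follows the standard orbit-based route. The heart of the argument --- the flow-invariance $d\Phi^{X_i}_t\q(D_y\w)=D_{\Phi^{X_i}_t y}$ deduced from the fact that the pulled-back vector fields $V_j\q(t\w)=\q(d\Phi^{X_i}_{-t}\w)X_j\q(\Phi^{X_i}_t y\w)$ solve the linear system $\tfrac{d}{dt}V_j=\sum_k\q(c^k_{i,j}\circ\Phi^{X_i}_t\w)V_k$ under \eqref{EqnAppendFrobInteg}, hence constancy of $\dim D_y$ along a single orbit --- is exactly the device that makes the singular case work, and you identify it correctly. This linear ODE for the pullbacks is also the same basic mechanism the paper turns into the quantitative equation~\eqref{EqnHatDiffEq} when proving its uniform Theorem~\ref{ThmMainFrobThm}; the classical theorem, unlike the paper's, does not require the accompanying admissible estimates. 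The two steps you leave to the reader --- local uniqueness of integral manifolds through a given point, which makes the $\psi_y$-charts compatible, and second countability of the orbit in its finer manifold topology --- are genuine ingredients but routine in this setting, and both go through. In short: the sketch is correct and pitched at an appropriate level of detail for a result the paper itself only cites.
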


\begin{rmk}
Often, one sees an additional assumption in Theorem \ref{ThmAppendFrob}.
Namely, that $\dim \Span{X_1,\ldots, X_q}$ is constant.  This assumption
is not necessary, and the usual proofs (for instance, the one in \cite{ChevalleyTheoryOfLieGroupsI}) give the stronger result
in Theorem \ref{ThmAppendFrob}.  This was noted in \cite{HermanTheDifferentialGeometryOfFoliations}.
\end{rmk}

\begin{rmk}
Let $\sD$ be a $C^\infty$ module of vector fields on an open set $\Omega\subseteq \R^n$.  We call $\sD$ a (generalized) distribution.  Suppose that $\sD$ satisfies two conditions:
\begin{enumerate}
\item $\sD$ is involutive.  That is, if $X,Y\in \sD$, then $\q[X,Y\w]\in \sD$.
\item $\sD$ is locally finitely generated as a $C^\infty$ module.  That is, for each $x\in \Omega$,
there is a neighborhood $U$ containing $x$ such that there exist a finite
set of vector fields $X_1,\ldots, X_q\in \sD$ such that every $Y\in \sD$,
when restricted to $U$,
can be written as a linear combination (with coefficients in $C^\infty$)
of $X_1,\ldots, X_q$ on $U$.
\end{enumerate}
Note, under the above hypotheses, $X_1,\ldots, X_q$ satisfy
\eqref{EqnAppendFrobInteg} on $U$ (since $\q[X_i, X_j\w]\in \sD$).
Thus, one may apply Theorem \ref{ThmAppendFrob} to foliate
$\Omega$ into leaves, with each leaf $L$ satisfying $T_y L=\sD_y$, $\forall y\in L$.
Often, the Frobenius theorem is stated in terms of
such an ``involutive disribution which is locally finitely generated as a $C^\infty$ module,''
instead of stated in terms of an explicit
choice of generators as we have done in Theorem \ref{ThmAppendFrob}.
The reason we have chosen to state the theorem with an explicit
choice of generators is that we will need to discuss
how various constants depend on the generators.
\end{rmk}

\begin{rmk}
Above we have stated the result assuming the vector fields are
$C^\infty$.  In fact, an analogous result (using, again, the usual proofs) holds only assuming
that the vector fields are $C^1$.  In fact, there are even
results when the vector fields are assumed to be merely Lipschitz (see \cite{RampazzoFrobeniusTypeTheoremsForLipschitzDist}).
However $C^1$ will be sufficient for our purposes (and most of the
applications we have in mind require only $C^\infty$).
\end{rmk}

We close this section with a discussion of the relationship between
Theorem \ref{ThmAppendFrob} and Theorem \ref{ThmIntroFrob}.
As we mentioned before, Theorem \ref{ThmIntroFrob} implies
Theorem \ref{ThmAppendFrob}.  To understand the philosophy behind
Theorem \ref{ThmIntroFrob}, let $\sInp$ be an index set, and suppose
for each $\alpha\in \sInp$ we are given $C^\infty$ vector fields
$X_1^\alpha, \ldots, X_q^\alpha$ on a fixed open set $\Omega$.  Here,
both $q$ and $\Omega$ are independent of $\alpha$.  Suppose
further that for every $\alpha\in \sInp$ we have,
$$\q[X_i^\alpha, X_j^\alpha\w]=\sum_{k} c_{i,j}^{k,\alpha} X_k^\alpha.$$
Suppose, finally, that as $\alpha$ varies over $\sInp$, $X_j^{\alpha}$ and
$c_{i,j}^{k,\alpha}$ vary over bounded (and therefore pre-compact) subsets of $C^\infty$.
Since Theorem \ref{ThmAppendFrob} applies for each $\alpha\in \sInp$, one
might hope that it applies uniformly\footnote{We mean uniformly
in the sense that the coordinate charts which define the leaves
can be chosen to satisfy good estimates which are uniform in $\alpha$.}
for $\alpha\in \sInp$.
Indeed, this is the case, and is essentially the statement
of Theorem \ref{ThmIntroFrob}.  Hence, Theorem \ref{ThmIntroFrob}
may be informally restated as saying that the theorem of
Frobenius holds ``uniformly on compact sets'' in the above sense.

As it turns out, the classical proofs of Theorem \ref{ThmAppendFrob}
do not work uniformly in $\alpha$ in the above sense (this is
discussed in Remark \ref{RmkDoesntFollowFromFrobProofs}).
If we fix $x_0\in \Omega$ and define $n_0^\alpha=\dim\Span{X_1^{\alpha}\q( x_0\w),\ldots, X_q^{\alpha}\q( x_0\w)}$,
then the classical proofs also depend on a lower bound for
$$\q|\det_{n_0^\alpha\times n_0^\alpha} X^\alpha\q( x_0\w)\w|,$$
which may not be bounded below uniformly for $\alpha\in \sInp$.\footnote{It
is not a coincidence that the failure of the classical
proofs of the theorem of Frobenius to be uniform in an appropriate sense
lies in the use of a lower bound of a determinant, just as in the
work of Nagel, Stein, and Wainger (see Section \ref{SectionNSW}).  Indeed,
these two issues are closely related.}

There is another way to view Theorem \ref{ThmIntroFrob}
in relation to Theorem \ref{ThmAppendFrob}.
Let $X_1,\ldots, X_q$ be $C^\infty$ vector fields
satisfying
\eqref{EqnAppendFrobInteg}.
Notice we have not assumed
that $n_0\q( x\w)=\dim \Span{X_1\q( x\w),\ldots, X_q\q( x\w)}$
is constant in $x$.  
The foliation associated to the involutive
distribution generated by $X_1,\ldots, X_q$
is called ``singular'' if $n_0\q( x\w)$
is not constant in $x$; and if $n_0\q( x\w)$
is not constant near a point $x_0$ then
$x_0$ is called a singular point.

In the classic proofs of Theorem \ref{ThmAppendFrob},
the coordinate charts defining the leaves degenerate
as one approaches a singular point.
Theorem \ref{ThmIntroFrob} avoids this.
This is an essential point
in Section \ref{SectionHomogeneousFoliations}.


\section{Basic definitions}
Fix, for the rest of the paper, a connected open set $\Omega\subseteq \R^n$.
Suppose we are given a list of $C^1$ vector fields $X_1,\ldots, X_q$
defined on $\Omega$, and let $X$ denote this list.
As mentioned in Section \ref{SectionResults}, we will often identify
this list with the $n\times q$ matrix whose columns are given
by the vector fields $X_1,\ldots, X_q$.
In addition, we will define (when it makes sense) $X^\alpha$, where
$\alpha$ is an ordered multi-index, in the usual way.\footnote{For instance,
if $\alpha$ were the list $\q( 1,2,1,3\w)$, then $X^\alpha = X_1 X_2X_1X_3$
and $\q|\alpha\w|=4$, the length of the list.}
Thus, $X^\alpha$
is an $\q|\alpha\w|$th order partial differential operator.
In the introduction, we defined 
the Carnot-Carath\'eodory ball of unit radius
centered at $x_0\in \Omega$.  We denoted this ball by $B_X\q( x_0\w)$.

It will often be convenient to assume that $B_X\q( x_0\w)$ lies
``inside'' of $\Omega$.  More precisely, we make the following
definition:
\begin{defn}\label{DefnCondNoDeg}
Given $x_0\in \Omega$, we say $X$ satisfies $\sC\q(x_0\w)$
if for every $a=\q( a_1,\ldots, a_q\w)\in \q(L^\infty\q(\q[0,1\w] \w)\w)^q$,
with:
$$\q\|\q|a\w|\w\|_{L^\infty\q(\q[0,1\w]\w)}=\q\|\q(\sum_{j=1}^q\q|a_j\w|^2\w)^{\frac{1}{2}}\w\|_{L^\infty\q(\q[0,1\w]\w)}<1,$$
there exists a solution
 $\gamma:[0,1]\rightarrow \Omega$ to the ODE:
$$\gamma'\q( t\w) =\sum_{j=1}^q a_j\q( t\w) X_j\q( \gamma\q( t\w)\w),\quad \gamma\q(0\w) =x_0.$$
Note, by Gronwall's inequality, when this solution exists, it is unique.
\end{defn}

As in the introduction, to define Carnot-Carath\'eodory balls
of (possibly multi-parameter) radii, we assign to each vector
field $X_j$ a formal degree $0\ne d_j\in \q[0,\infty\w)^\nu$.
Here $\nu\in \N$ is a fixed number, independent of $j$, representing the number of parameters.
We denote the list $\q( X_1,d_1\w),\ldots, \q( X_q,d_q\w)$ by $\q( X,d\w)$.
In the introduction, we defined (for $\delta\in \q[0,\infty\w)^\nu$)
the list $\delta^d X$ to be the list of vector fields $\delta^{d_1} X_1,\ldots, \delta^{d_q} X_q$.  Then, we defined the multi-parameter Carnot-Carath\'eodory
ball $\B{X}{d}{x_0}{\delta}:=B_{\delta X}\q( x_0\w)$.
Just as in Definition \ref{DefnCondNoDeg} it will often be useful
to assume $\B{X}{d}{x_0}{\delta}$ lies ``inside'' of $\Omega$,
and so we make the following definition:
\begin{defn}
Given $x_0\in \Omega$ and $\delta\in \q[0,\infty\w)^\nu$, we say
$\q( X,d\w)$ satisfies $\sC\q( x_0,\delta\w)$ if $\delta^d X$
satisfies $\sC\q( x_0\w)$.
\end{defn}

In addition to the balls $\B{X}{d}{x_0}{\delta}$ it will be useful
to define some smaller balls.  Given $x_0\in \Omega$ and $\delta\in \q[0,\infty\w)^\nu$,
we define
$$\Bt{X}{d}{x_0}{\delta} = \q\{y\in \Omega: \exists a\in \R^q, \q|a\w|\leq 1, y=\exp\q(a\cdot \delta^d X\w)x_0\w\}.$$
Note that $\Bt{X}{d}{x_0}{\delta}\subseteq \B{X}{d}{x_0}{\delta}$.

Given a list of vector fields along with formal degrees $\q( X,d\w)$
and $J=\q( j_1,\ldots, j_{n_0}\w)\in \sI{n_0}{q}$, we defined
in the introduction the list of vector fields with formal degrees
$\q( X,d\w)_J$ and the list of vector fields $X_J$.  Namely,
$\q( X,d\w)_J$ is the list $\q( X_{j_1},d_{j_1}\w), \ldots, \q( X_{j_{n_0}}, d_{j_{n_0}}\w)$ and $X_J$ is the list $X_{j_1},\ldots, X_{j_{n_0}}$,
while $d_J$ is the list $d_{j_1},\ldots, d_{j_{n_0}}$.

Note that if $\q( X,d\w)$ satisfies $\sC\q(x_0,\delta\w)$, then so
does $\q( X,d\w)_J$.  In addition, we have,
$$\Bsub{X}{d}{J}{x_0}{\delta}\subseteq \B{X}{d}{x_0}{\delta}, \quad \Btsub{X}{d}{J}{x_0}{\delta}\subseteq \Bt{X}{d}{x_0}{\delta}.$$

Often, it will be convenient for our estimates to state
the definition of $\B{X}{d}{x_0}{\delta}$ in a slightly different way.
Thus, given the formal degrees $d_1,\ldots, d_q$ and given
a $a=\q(a_1,\ldots, a_q\w)\in \R^q$, $\delta\in \q[0,\infty\w)^\nu$, we define:
$$\delta^{d} a = \q(\delta^{d_1}a_1,\ldots, \delta^{d_q}a_q \w), $$
$$ \delta^{-d} a =\q(\delta^{-d_1}a_1,\ldots, \delta^{-d_q}a_q\w).$$
Then we have:
\begin{equation*}
\begin{split}
\B{X}{d}{x_0}{\delta} = \big\{y\in \Omega : &\exists \gamma:\q[0,1\w]\rightarrow \Omega, \gamma\q( 0\w)=x_0, \gamma\q( 1\w) =y\\
&\gamma'\q(t\w)=a\q(t\w)\cdot X\q(\gamma\q(t\w)\w), \q\|\q|\delta^{-d} a\w|\w\|_{L^\infty\q(\q[0,1\w]\w)}<1 \big\}.
\end{split}
\end{equation*}


\section{The (uniform) theorem of Frobenius}\label{SectionThmOfFrob}
In this section, we present a uniform version of the theorem of Frobenius:
the special case of Theorem \ref{ThmIntroFrob} when the
vector fields are assumed to be linearly independent.
The work in this section was heavily influenced by the methods in
Section 4 of \cite{TaoWrightLpImprovingBoundsForAverages} and
those in \cite{NagelSteinWaingerBallsAndMetricsDefinedByVectorFields}.
In fact, a result similar to a special case of Theorem
\ref{ThmMainFrobThm} is contained in \cite{TaoWrightLpImprovingBoundsForAverages},
though the result there is stated somewhat differently
(see Section \ref{SectionWeaklyCompBalls} for a discussion of their results).
Our goal, in this section, is to rephrase
and generalize the proof methods from these two papers to suit our needs.

In our context, we are faced
with a few difficulties not addressed in \cite{TaoWrightLpImprovingBoundsForAverages}.
A main difficulty we face is that we will not assume an \it a priori \rm
smoothness that was assumed in that paper.  This will
require us to provide a more detailed study of an ODE that arises
in that paper.  This difference in difficulty here, is that while
in that paper existence for a certain ODE was proved via the contraction
mapping principle, we must also prove smooth dependence
on parameters.  Furthermore, we will generalize their results to
vector fields that do not necessarily span the tangent space.  While
this may seem like an artificial generalization, it will
prove to be essential to our study of maximal functions and unit operators in
Sections \ref{SectionUnitOpsAtUnitScale} and \ref{SectionMaximalFuncs}.  Finally, we must also combine these methods with the methods
in \cite{NagelSteinWaingerBallsAndMetricsDefinedByVectorFields} to
prove the relationships between the various balls we will define.

Let $X=\q(X_1, \ldots, X_{n_0}\w)$ be $n_0$ $C^1$ vector fields
with single-parameter formal degrees $d=\q(d_1,\ldots, d_{n_0}\w)\in \q(0,\infty\w)^{n_0}$ defined on
the fixed connected open set $\Omega\subseteq \R^n$.
Fix $1\geq \xi>0$,
$x_0\in \Omega$,
and suppose that $\q( X,d\w)$ satisfies $\sC\q( x_0, \xi\w)$.
Suppose further that the $X_j$s satisfy an integrability condition
on $\B{X}{d}{x_0}{\xi}$
given by:
\begin{equation*}
\q[ X_j, X_k \w] = \sum_{l} c_{j,k}^l X_l.
\end{equation*}
In this section, we will assume that:
\begin{itemize}
\item $X_1\q( x_0\w) ,\ldots, X_{n_0}\q( x_0\w)$ are linearly independent.
\item $\Cjn{1}{\B{X}{d}{x_0}{\xi}}{X_j}<\infty$, for every $1\leq j\leq n_0.$
\item For $\q|\alpha\w|\leq 2$, $X^\alpha c_{j,k}^l\in \Cj{0}{\B{X}{d}{x_0}{\xi}}$, and 
$$\sum_{\q|\alpha\w|\leq 2} \Cjn{0}{\B{X}{d}{x_0}{\xi}}{X^\alpha c_{j,k}^l}<\infty,$$ for all $j,k,l$.
\end{itemize}

We will say that $C$ is an admissible constant if $C$ can be chosen
to depend only on a fixed upper bound, $d_{max}<\infty$, for $d_1,\ldots, d_{n_0}$, 
a fixed lower bound $d_{min}>0$ for $d_1,\ldots, d_{n_0}$, a fixed upper bound for $n$ (and
therefore for $n_0$), a fixed lower bound, $\xi_0>0$, for $\xi$, and a fixed upper
bound for the quantities:
\begin{equation*}
\Cjn{1}{\B{X}{d}{x_0}{\xi}}{X_j}, \quad \sum_{\q|\alpha\w|\leq 2} \Cjn{0}{\B{X}{d}{x_0}{\xi}}{X^\alpha c_{j,k}^l}.
\end{equation*}

Furthermore, if we say that $C$ is an $m$-admissible constant, we mean that
in addition to the above, we assume that:
$$\sum_{\q|\alpha\w|\leq m}\Cjn{0}{\B{X}{d}{x_0}{\xi}}{X^{\alpha}c_{j,k}^l}<\infty,$$ 
for every $j,k,l$
(in particular, these derivatives up to order $m$ exist
and are continuous).
$C$ is allowed to depend on $m$, all the quantities an admissible constant
is allowed to depend on, and a fixed upper bound for the above quantity.
Note that $\lesssim_0, \lesssim_1, \lesssim_2$, and $\lesssim$ all
denote the same thing.

For $\eta>0$, a sufficiently small admissible constant, define the map:
$$\Phi: B_{n_0}\q(\eta\w) \rightarrow \Bt{X}{d}{x_0}{\xi}$$
by
$$\Phi\q(u\w) = \exp\q(u\cdot X\w)x_0.$$
Note that, by Theorem \ref{ThmExpReg}, $\Phi$ is $C^1$.
The main theorem of this section is the following:
\begin{thm}\label{ThmMainFrobThm}
There exist admissible constants $\eta_1>0$, $\xi_1>0$, such that:
\begin{itemize}
\item $\Phi: B_{n_0}\q( \eta_1\w) \rightarrow \Bt{X}{d}{x_0}{\xi}$ is one-to-one.
\item For all $u\in B_{n_0}\q( \eta_1 \w)$, $\q| \det_{n_0\times n_0} d\Phi\q(u\w) \w| \approx \q|\det_{n_0\times n_0} X\q( x_0\w)\w|$. 
\item $\B{X}{d}{x_0}{\xi_1}\subseteq \Phi\q( B_{n_0}\q( \eta_1\w) \w) \subseteq \Bt{X}{d}{x_0}{\xi} \subseteq \B{X}{d}{x_0}{\xi}$.
\end{itemize} 
Furthermore, if we let $Y_j$ be the pullback of $X_j$ under the map $\Phi$,
then we have:
\begin{equation}\label{EqnRegularityForYj}
\Cjn{m}{B_{n_0}\q(\eta_1\w)}{Y_j}\lesssim_m 1
\end{equation}
in particular,
$$\Cjn{2}{B_{n_0}\q(\eta_1\w)}{Y_j}\lesssim 1.$$
Finally, if for $u\in B_{n_0}\q( \eta_1\w)$ we define the $n_0\times n_0$ matrix
$A\q( u\w)$ by:\footnote{Here we are thinking of $\grad_u$ as the vector $\q(\partial_{u_1},\ldots, \partial_{u_{n_0}}\w)$.}
$$\q( Y_1,\ldots, Y_{n_0}\w) = \q( I+A\w) \grad_u$$
then,
$$\sup_{u\in B_{n_0}\q( \eta_1\w) } \q\| A\q( u\w)\w\|\leq \frac{1}{2}.$$
\end{thm}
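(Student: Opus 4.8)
The plan is to build the map $\Phi$ and its properties by a continuity/bootstrapping argument in a single ``radial'' ODE parameter, following the strategy of Section 4 of Tao--Wright but supplying the smooth-dependence-on-parameters that is missing there. For a fixed direction $v\in \partial B_{n_0}(1)$ and $0\le r\le \eta$ set $\gamma_v(r) = \exp(rv\cdot X)x_0 = \Phi(rv)$; differentiating the flow equation in the $u_j$ variables gives a linear ODE (a ``Jacobi-type'' equation) for the pulled-back vector fields $Y_j(rv)$ along the ray, whose inhomogeneous term is governed by the structure functions $c_{j,k}^l$. Concretely, writing $\Phi_* Y_j = X_j$ and using $[X_j,X_k]=\sum_l c_{j,k}^l X_l$ together with the identity $\frac{d}{dr}\big(\text{(coefficient matrix of }Y_j\text{)}\big)$ expressed via the structure functions, one obtains an ODE of the form $\partial_r (\text{matrix of }Y\text{ data}) = (\text{bounded, built from } c)\cdot(\text{matrix of }Y\text{ data})$ with the correct initial condition at $r=0$ (where $Y_j = \partial_{u_j}$ since $X_1(x_0),\dots,X_{n_0}(x_0)$ are linearly independent and we may, after a linear change of the ambient coordinates, assume $X_j(x_0)=\partial_{x_j}$). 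First I would set up this ODE precisely and invoke the ODE existence/uniqueness and smooth-dependence results from the appendix (Theorem \ref{ThmExpReg} and its relatives) to get that $\Phi$ is $C^1$ and that the $Y_j$ extend to $C^m$ vector fields on $B_{n_0}(\eta_1)$ for $\eta_1$ a small admissible constant, with $\Cjn{m}{B_{n_0}(\eta_1)}{Y_j}\lesssim_m 1$; the $C^m$ bound follows by differentiating the ODE $m$ times and using the $m$-admissible bounds on $X^\alpha c_{j,k}^l$, since those are exactly the derivatives of the structure functions that enter.

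Granting \eqref{EqnRegularityForYj}, the statement about $A(u)$ is then essentially a matter of making $\eta_1$ small. Define $A(u)$ by $(Y_1,\dots,Y_{n_0}) = (I+A)\grad_u$ as in the theorem; this is legitimate once we know each $Y_j$, being the pullback of $X_j$ by the diffeomorphism $\Phi$ onto its image in the leaf, is a genuine $C^1$ (indeed $C^m$) vector field on $B_{n_0}(\eta_1)$ tangent to $B_{n_0}(\eta_1)$ (it is tangent because $\Phi$ maps into the leaf and $X_J$ spans the tangent space of the leaf, so the pullbacks span $\R^{n_0}$ at every point of $B_{n_0}(\eta_1)$ — this is where one uses that the $X_j(x_0)$ are independent, propagated along the flow). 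At $u=0$ we have $Y_j(0) = \partial_{u_j}$, so $A(0)=0$. Since $\Cjn{1}{B_{n_0}(\eta_1)}{Y_j}\lesssim 1$, each entry of $A$ is Lipschitz with an admissible constant $L$, hence $\|A(u)\| = \|A(u)-A(0)\| \le n_0 L\,|u|_\infty \le n_0 L\,\eta_1$ for $u\in B_{n_0}(\eta_1)$; choosing the admissible constant $\eta_1$ small enough that $n_0 L\,\eta_1 \le \tfrac12$ gives $\sup_{u\in B_{n_0}(\eta_1)}\|A(u)\|\le \tfrac12$. Note this same smallness of $\eta_1$ immediately yields $|\det_{n_0\times n_0} d\Phi(u)| = |\det(I+A(u))|\cdot|\det_{n_0\times n_0}X(x_0)| \approx |\det_{n_0\times n_0}X(x_0)|$, since $\tfrac12 \le |\det(I+A)| \le (\tfrac32)^{n_0}$, and it makes $d\Phi(u)$ everywhere injective, which with a standard covering/degree argument on the ray family gives that $\Phi$ is one-to-one on $B_{n_0}(\eta_1)$; the ball containments then follow by comparing lengths of curves, exactly as in \cite{NagelSteinWaingerBallsAndMetricsDefinedByVectorFields}.

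The main obstacle is the first step: establishing \eqref{EqnRegularityForYj} uniformly, i.e.\ showing that the pulled-back fields $Y_j$ are not merely $C^1$ but $C^m$ with $m$-admissible bounds, given only the $m$-admissible control on $X^\alpha c_{j,k}^l$ (derivatives of the structure functions in the directions of the $X$'s, \emph{not} full Euclidean derivatives) and only $C^1$ control on the $X_j$ themselves. This forces the careful ODE analysis alluded to above — one must show that the relevant Jacobi ODE can be differentiated the right number of times and that each differentiation only brings down quantities that are $m$-admissibly bounded, which requires tracking that the ``bad'' directions never appear. This is precisely the technical heart of the section and, as the authors indicate, where the bulk of the work (generalizing Tao--Wright to the non-\emph{a priori}-smooth setting and adding smooth dependence on parameters) takes place; everything else in the final statement is a soft consequence obtained by shrinking $\eta_1$.
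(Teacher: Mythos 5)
Your outline follows the right general strategy — set up a radial ODE for the coefficients of the pulled-back fields $Y_j$, get $C^m$ bounds from the structure functions, then shrink $\eta_1$ — but it misidentifies the ODE and, as a result, underestimates where the difficulty lies. Writing $(Y_1,\dots,Y_{n_0})=(I+A)\grad_u$ and pushing the identity $\sum_j u_j\partial_{u_j}=\sum_j u_j Y_j$ through the Lie bracket with $Y_i$ gives, not a linear equation of the form $\partial_r(\text{matrix}) = (\text{bounded})\cdot(\text{matrix})$, but the \emph{singular, quadratic} matrix ODE
\begin{equation*}
\partial_r\,\bigl(r\,A(r\omega)\bigr) \;=\; -A(r\omega)^2 - C_u(r\omega)\,A(r\omega) - C_u(r\omega),
\end{equation*}
i.e.\ $\partial_r A = \tfrac{1}{r}\bigl(-A - A^2 - C_u A - C_u\bigr)$, with the $r=0$ singularity compensated only by the initial condition $A(0)=0$ and the bound $\|C_u(r\omega)\|\lesssim r$. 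Because of the $1/r$ factor this is \emph{not} a standard ODE, so Theorem~\ref{ThmExpReg} (and the usual Picard/smooth-dependence machinery) cannot be invoked directly — the paper is explicit about this right after the statement of Theorem~\ref{ThmExistUniqDiffEq}. One must instead run a bespoke contraction-mapping argument on a weighted space (functions with $\|A(r\omega)\|=O(r)$) and then bootstrap the $C^m$ regularity by showing the Picard iterates converge in $C^m$, using Lemma~\ref{LemmaDefineContract} to make sense of $\partial_u^\alpha$ of the averaging operator $g\mapsto \tfrac1r\int_0^r g$, and Lemma~\ref{LemmaIzzoLemma} to pass a sequence of contractions to the limit. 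This is the content of Theorem~\ref{ThmExistUniqDiffEq}, and it is precisely the step your sketch handles by appealing to theorems that do not apply.

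Two further points, one significant and one minor. First, you define $Y_j$ as ``the pullback of $X_j$ by the diffeomorphism $\Phi$'' on $B_{n_0}(\eta_1)$ and then derive $\|A\|\le\tfrac12$, hence $|\det(I+A)|\gtrsim 1$, hence that $\Phi$ is an immersion — but knowing $\Phi$ is a diffeomorphism on an admissible ball requires exactly the nondegeneracy of $d\Phi$ you are trying to prove. The paper breaks this circle by defining $A$ (and hence $Y_j$) directly as the solution of the ODE above, independently of whether $\Phi$ is invertible, and only afterwards showing (Lemmas~\ref{LemmaAequalAh}, \ref{LemmaPushForwardY}, Theorem~\ref{ThmPushForwardY}) that this $A$ coincides with the coefficient matrix of the genuine pullback wherever the pullback is defined, and that $d\Phi$ is nondegenerate throughout $B_{n_0}(\eta_1)$ (Theorem~\ref{ThmEstJac}). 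Second, the parenthetical ``after a linear change of the ambient coordinates, assume $X_j(x_0)=\partial_{x_j}$'' is both unnecessary (the identity $Y_j(0)=\partial_{u_j}$ is automatic from $d\Phi(0)=X_{J_0}(x_0)$, no normalization needed) and dangerous: that linear map is essentially $X(x_0)^{-1}$, so its norm — and hence all subsequent $C^m$ bounds — would depend on a lower bound for $|\det_{n_0\times n_0}X(x_0)|$, which is exactly the dependence the theorem is designed to avoid (cf.\ Remark~\ref{RmkDoesntFollowFromFrobProofs}). The remaining pieces of your second paragraph (the determinant comparison, the injectivity of $\Phi$, and the ball containments via \cite{NagelSteinWaingerBallsAndMetricsDefinedByVectorFields}) are in the right spirit, though the injectivity argument in the paper is not a degree argument along rays but a fixed-point-free flow argument (Lemma~\ref{LemmaTWNoFixed} together with the uniform inverse function theorem).
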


This section will be devoted to the proof of Theorem \ref{ThmMainFrobThm}.

\begin{rmk}
In \cite{TaoWrightLpImprovingBoundsForAverages}, the map
$\Phi$ was defined with a large parameter $K$.  
Then,
a result like
(\ref{EqnRegularityForYj}) was proven by taking
$K$ large depending on $m$.  
It is important for the
applications we have in mind that this procedure is not necessary.
In our setup, this procedure is similar to taking the parameter
$\kappa$ in
Theorem \ref{ThmExistUniqDiffEq} small depending on $m$; however
we will see that we will be able to fix $\kappa=\frac{1}{2}$ throughout.
\end{rmk}

\begin{rmk}\label{RmkdsDontMatter}
The formal degrees, $d_1,\ldots, d_{n_0}$ do not play an essential
role in this section.  
Indeed note that they do not play a role in the assumptions
for Theorem \ref{ThmMainFrobThm}.  Moreover, since
$\xi_1,\xi\approx 1$, they do not play a role in the conclusion
either.
Indeed, Theorem \ref{ThmMainFrobThm}  
with any choice of $d_1,\ldots, d_{n_0}\in \q( 0,\infty\w)$ is
equivalent to the theorem with any other choice (though
the various constants in the conclusion of Theorem \ref{ThmMainFrobThm}
will depend on the choice of the $d$s).
The reason we have chosen to state Theorem \ref{ThmMainFrobThm}
with an arbitrary choice of $d$s (instead of taking,
say, $d_1=\cdots=d_{n_0}=1$) is that when we prove Theorem 
\ref{ThmMultiBallsAtPoint} we will be, in effect,
applying Theorem \ref{ThmMainFrobThm} infinitely many times.
Having stated Theorem \ref{ThmMainFrobThm} for general $d$
will allow us to seamlessly apply the results
here without any hand-waving about how various constants
depend on the formal degrees.
\end{rmk}

\begin{rmk}\label{RmkDoesntFollowFromFrobProofs}
As was discussed in Section \ref{SectionNSW}, the methods
in \cite{NagelSteinWaingerBallsAndMetricsDefinedByVectorFields}
fail to prove Theorem \ref{ThmMainFrobThm}.
It is also worth noting that the methods usually used to prove
the theorem of Frobenius are insufficient to prove Theorem \ref{ThmMainFrobThm}.
For simplicity, we discuss the proof in \cite{LundellAShortProofOfTheFrobeniusTheorem}, but similar remarks hold for all previous proofs we know of.
In \cite{LundellAShortProofOfTheFrobeniusTheorem}, an invertible
linear transformation was applied to $X_1,\ldots, X_{n_0}$ (call the
resulting vector fields $V_1,\ldots, V_{n_0}$).  This was done
in such a way that $\q[V_i, V_j\w] =0$ for every $i,j$.
Because of this, the map:
$$u\mapsto e^{u\cdot V} x_0$$
is easy to study.  Unfortunately, we know of no {\it a priori} way
to create such an invertible linear transformation without
destroying the admissible constants.  {\it A fortiori}, however,
we may just push forward the linear transformation $\q( I+A\w)^{-1}$
via the map $\Phi$ to obtain such a linear transformation.
This idea seems to yield no nontrivial new information.
\end{rmk}

\begin{rmk}
Morally, Theorem \ref{ThmMainFrobThm} (along with Theorems \ref{ThmMainUnitScale} and \ref{ThmMultiBallsAtPoint}) is a compactness result.  
This is discussed at the end of Section \ref{SectionClassicalFrob}.
The use of this compactness can be
seen every time we apply Theorem \ref{ThmInverseFunctionThm}.
Moreover, this compactness perspective was taken up in
Section 4 of \cite{StreetAdvances}.  In fact, one of the main consequences of 
this paper is that one may remove condition 4 of Definition 4.4 of \cite{StreetAdvances}, and still obtain the relevant results
(this is tantamount to saying that we do not require
a lower bound for a determinant as discussed in Section \ref{SectionNSW}).  Thus, from the 
remarks in that paper, one can easily see the results in this
paper from the perspective of compactness.
\end{rmk}

The next two lemmas we state in slightly greater generality than
we need, since we will refer to the proofs later in the paper.
\begin{lemma}\label{LemmaLieDerivOfDet}
Fix $1\leq n_1\leq n_0$.  Then, for $1\leq j\leq n_0$, $I\in \sI{n_1}{n}$,
$J\in \sI{n_1}{n_0}$, $x\in \B{X}{d}{x_0}{\xi}$,
\begin{equation*}
\q| X_j  \det X\q(x\w)_{I,J}\w| \lesssim \q| \det_{n_1\times n_1} X\q(x\w)\w|.
\end{equation*}
\end{lemma}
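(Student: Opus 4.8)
The plan is to expand $X_j \det X(x)_{I,J}$ using the product rule (Jacobi's formula for the derivative of a determinant), and then control each resulting minor by the full collection of $n_1 \times n_1$ minors, crucially using the integrability condition $[X_j, X_k] = \sum_l c_{j,k}^l X_l$ to close the estimate. First I would fix $I = (i_1,\ldots,i_{n_1}) \in \sI{n_1}{n}$ and $J = (j_1,\ldots,j_{n_1}) \in \sI{n_1}{n_0}$, and write $\det X(x)_{I,J}$ as the determinant of the $n_1\times n_1$ matrix whose $(a,b)$ entry is the $i_a$-th component $X_{j_b}^{i_a}(x)$ of the vector field $X_{j_b}$. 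Applying the vector field $X_j$ (a first-order differential operator) and Jacobi's formula, $X_j \det X(x)_{I,J} = \sum_{b=1}^{n_1} \det \bigl( \text{matrix with $b$-th column } X_{j_b} \text{ replaced by } X_j X_{j_b}\bigr)$, where $X_j X_{j_b}$ denotes the vector field whose $i_a$-th component is $X_j(X_{j_b}^{i_a})$, i.e.\ the componentwise directional derivative.

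The key observation is that $X_j X_{j_b}$ differs from $X_{j_b} X_j$ by the commutator $[X_j, X_{j_b}] = \sum_l c_{j,j_b}^l X_l$. I would split accordingly: the term involving $X_{j_b} X_j$ contributes a minor of the matrix obtained by replacing the $b$-th column by $X_{j_b}X_j$; since $X_j X_j$ appears as a component only through $X_{j_b}$ acting on components of $X_j$, this piece is controlled by $\Cjn{1}{\B{X}{d}{x_0}{\xi}}{X_j}$ times the determinant of a matrix whose other columns are the $X_{j_{b'}}$ — but to get the right-hand side $|\det_{n_1\times n_1}X(x)|$ one must be slightly more careful, expanding the replaced column in a basis. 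The cleaner route: the component-vector $X_j X_{j_b}$ (resp.\ $X_{j_b} X_j$) lies in $\R^n$; expand it against the $n_1 \times n_1$ minor structure. For the commutator term, $[X_j, X_{j_b}] = \sum_l c_{j,j_b}^l X_l$, so replacing the $b$-th column of $X(x)_{I,J}$ by $\sum_l c_{j,j_b}^l(x) X_l(x)$ gives $\sum_l c_{j,j_b}^l(x) \det(X(x)_{I, J \text{ with } j_b \to l})$, and each such determinant is (up to sign, and zero if $l$ already occurs in $J$) one of the coordinates of $\det_{n_1 \times n_1} X(x)$; since the $c_{j,j_b}^l$ are bounded by an admissible constant on $\B{X}{d}{x_0}{\xi}$, this whole contribution is $\lesssim |\det_{n_1\times n_1} X(x)|$. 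For the remaining term with $X_{j_b} X_j$: here I would use that the components of $X_j$ are $C^1$ with $C^1$-norm bounded by an admissible constant, so $X_{j_b} X_j$ (a vector in $\R^n$) has each entry bounded by $\Cjn{1}{\cdot}{X_j}$; expanding this fixed column vector in terms of minors and recognizing the resulting determinants as coordinates (or sums of coordinates) of $\det_{n_1\times n_1}X(x)$ — more precisely, writing $X_{j_b}X_j = \sum_{i} (X_{j_b} X_j)^i e_i$ and using multilinearity — again bounds this by an admissible constant times $|\det_{n_1\times n_1} X(x)|$, provided one notes that a single Euclidean column, when substituted, produces at most $\binom{n}{1}$ minors each of which is a coordinate of $\det_{n_1 \times n_1}X(x)$ after adjoining row $i$ (and these are among the $\det_{n_1 \times n_1}$ coordinates up to relabeling, or vanish).

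The step I expect to be the main obstacle is the second (non-commutator) term: making precise that the componentwise derivative $X_{j_b} X_j$, substituted as a column into the minor, is genuinely dominated by $|\det_{n_1\times n_1} X(x)|$ rather than by some a priori larger quantity like a Gram determinant or a bound depending on a lower bound for the determinant. The resolution is that substituting any \emph{fixed} vector $v \in \R^n$ for the $b$-th column of $X(x)_{I,J}$ yields $\sum_{a=1}^{n_1} \pm v^{i_a} \det(X(x)_{I', J\setminus\{j_b\}})$ where $I' = I \setminus \{i_a\}$, and each $\det(X(x)_{I', J\setminus j_b})$ is an $(n_1-1)\times(n_1-1)$ minor — so one actually wants to bound in terms of $\det_{(n_1-1)\times(n_1-1)}$, which is \emph{not} directly $\det_{n_1\times n_1}$. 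This suggests the correct argument instead keeps the column as $X_j X_{j_b}$ and only uses the commutator relation plus the $C^1$ bound on $X_j$ \emph{without} fully expanding: i.e.\ bound $|X_j \det X(x)_{I,J}|$ by a sum over $b$ of $|\det(\text{$b$-th column}=[X_j,X_{j_b}])| + |\det(\text{$b$-th column}=X_{j_b}X_j)|$, handle the first via the $c$'s as above, and for the second observe $X_{j_b}X_j = X_{j_b} \cdot \grad X_j$ lies in the span of the coordinate directions with coefficients of admissible size, but more usefully note this is exactly the pattern handled by the Laplace expansion identity $\det_{n_1\times n_1}$; I would consult Appendix \ref{AppendixLinear} for the precise multilinear-algebra inequality (of the form: for any $n\times n_1$ matrix $M$ and any $v\in\R^n$, the determinant of $M$ with a column replaced by $v$ is bounded by $|v|$ times $|\det_{(n_1)\times(n_1)}$-type quantities), and cite it rather than re-derive it. In short: the commutator structure is what makes the estimate close with the \emph{right} (determinant, not lower-bound-of-determinant) quantity on the right, and the bookkeeping of which minors appear is the routine-but-delicate part to get right.
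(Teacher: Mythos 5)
Your overall plan is the same as the paper's in spirit: expand $X_j\det X_{I,J}$ by the product rule, isolate the commutator contributions using the integrability condition, and control the remainder. But there is a genuine gap in the piece you yourself flag as the main obstacle, and the fix you propose does not exist.

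Concretely: after Jacobi's formula and the split $X_jX_{j_b}^{i_a}=[X_j,X_{j_b}]^{i_a}+X_{j_b}X_j^{i_a}$, you are left with the terms where the $b$-th column of $X_{I,J}$ is replaced by the vector $v_b:=(X_{j_b}X_j^{i_a})_{a=1}^{n_1}$. You propose to bound each such determinant \emph{individually} by an inequality ``\,$\lvert\det(\text{column replaced by }v)\rvert\lesssim\lvert v\rvert\cdot\lvert\det_{n_1\times n_1}X\rvert$\,'' and to cite Appendix~\ref{AppendixLinear} for it. No such inequality appears in the appendix (which contains only the definition of $\det_{m\times m}$, Cauchy--Binet, and the change of variables), and \emph{the inequality is false}: Laplace expansion along the replaced column produces $(n_1-1)\times(n_1-1)$ minors of $X$, and those are not controlled by $\lvert\det_{n_1\times n_1}X\rvert$ in the absence of a lower bound on the latter --- precisely the lower bound the whole lemma is engineered to avoid. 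At this stage we do not yet know that $\det_{n_1\times n_1}X(x)$ is bounded below at the generic $x$ in the ball (that fact is Lemma~\ref{LemmaDetsDontChange}, whose proof \emph{uses} this lemma), so an individual column replacement by an arbitrary admissibly-bounded vector cannot be closed.

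What rescues the argument is that the \emph{sum over $b$} of these column replacements regroups. Write $X_j=\sum_k b_j^k\partial_{x_k}$ and $M=\bigl(\partial b_j^{i}/\partial x_l\bigr)_{i,l}$ so that $v_b=MX_{j_b}$. By Cauchy--Binet applied to $\det\bigl((e^{tM})_{I,\cdot}\,X_{\cdot,J}\bigr)$ and differentiating at $t=0$ one obtains
\begin{equation*}
\sum_{b=1}^{n_1}\det\bigl(X_{I,J}\ \text{with column $b$ replaced by }(MX_{j_b})|_I\bigr)
=\sum_{I'\in\sI{n_1}{n}}c_{I,I'}(M)\,\det X_{I',J},
\end{equation*}
with $c_{I,I'}(M)$ linear in $M$, admissibly bounded, and vanishing unless $I'=I$ or $I'$ differs from $I$ in exactly one index (so that each surviving $\det X_{I',J}$ is a bona fide $n_1\times n_1$ minor and the sum is $\lesssim\lvert\det_{n_1\times n_1}X\rvert$). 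Only after this regrouping --- from column replacements to \emph{row} replacements --- does the bound close. The paper's Lie-derivative calculation achieves exactly this regrouping automatically: the commutators $[\sL_{X_j},i_{X_{j_k}}]=i_{[X_j,X_{j_k}]}$ yield the commutator column terms, and the residual $\sL_{X_j}(dx_{i_1}\wedge\cdots\wedge dx_{i_{n_1}})$ term yields, via $\sL_{X_j}dx_{i}=\sum_l(\partial b_j^{i}/\partial x_l)\,dx_l$, the row-replaced minors $\det X_{I',J}$ directly, with no intermediate appeal to lower-order minors. So your decomposition is correct, and even equivalent to the paper's after one more algebraic identity, but that identity is the crux of the matter, and your proposal does not supply it.
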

\begin{proof}
We use the notation $\sL_U$ to denote the Lie derivative with respect
to the vector field $U$, and $i_V$ to denote the interior
product with the vector field $V$.  $\sL_U$ and $i_V$ have the following, well-known, properties:
\begin{itemize}
\item $\sL_U f = Uf$ for functions $f$.
\item $\q[\sL_U, i_V\w] = i_{\q[U,V\w]}$.
\item $\sL_U \omega = i_U d\omega + d i_U \omega $, for forms $\omega$.
\item $\sL_U \q(\omega_1\wedge \omega_2\w) = \q( \sL_U \omega_1 \w)\wedge \omega_2 + \omega_1\wedge \q( \sL_U \omega_2\w)$ for forms $\omega_1,\omega_2$. 
\item If $U=\sum_k b_k \frac{\partial}{\partial x_k}$, then,
$$\sL_U d x_k = d i_U d x_k = d b_k = \sum \frac{\partial b_k}{\partial x_j}d x_j.$$
\end{itemize}

Fix $I=\q(i_1,\ldots,i_{n_1}\w),J=\q(j_1,\ldots,j_{n_1}\w)$ as in the statement of the lemma.  Then,
$$\det X\q(x\w)_{I,J} = i_{X_{j_{n_1}}} i_{X_{j_{n_1-1}}} \cdots i_{X_{j_1}} d x_{i_1} \wedge d x_{i_2} \wedge \cdots \wedge d x_{i_{n_1}}.$$
Thus, we see:
\begin{equation}\label{EqnLieDerivsFirst}
\begin{split}
X_j \det X\q( x\w)_{I,J} &= \sL_{X_j} i_{X_{j_{n_1}}} i_{X_{j_{n_1-1}}} \cdots i_{X_{j_1}} d x_{i_1} \wedge d x_{i_2} \wedge \cdots \wedge d x_{i_{n_1}}\\
&= i_{\q[ X_j, X_{j_{n_1}}\w]} i_{X_{j_{n_1-1}}} \cdots i_{X_{j_1}} d x_{i_1} \wedge d x_{i_2} \wedge \cdots \wedge d x_{i_{n_1}}\\
&\quad +i_{X_{j_{n_1}}} i_{\q[ X_j, X_{j_{n_1-1}}\w]} \cdots i_{X_{j_1}} d x_{i_1} \wedge d x_{i_2} \wedge \cdots \wedge d x_{i_{n_1}}\\
&\quad+\cdots+i_{X_{j_{n_1}}} i_{X_{j_{n_1-1}}} \cdots i_{\q[ X_j, X_{j_{1}}\w]} d x_{i_1} \wedge d x_{i_2} \wedge \cdots \wedge d x_{i_{n_1}}\\
&\quad + i_{X_{j_{n_1}}} i_{X_{j_{n_1-1}}} \cdots i_{X_{j_1}} \sL_{X_j} \q( d x_{i_1} \wedge d x_{i_2} \wedge \cdots \wedge d x_{i_{n_1}} \w).
\end{split}
\end{equation}
Every term, except the last term, on the RHS of (\ref{EqnLieDerivsFirst})
is easy to estimate.  We do the first term as an example, and all of the others
work in the same way:
\begin{equation*}
\begin{split}
&\q| i_{\q[ X_j, X_{j_{n_1}}\w]} i_{X_{j_{n_1-1}}} \cdots i_{X_{j_1}} d x_{i_1} \wedge d x_{i_2} \wedge \cdots \wedge d x_{i_{n_1}}\w|\\
&\quad = \q| \sum_{k=1}^{n_0} c_{j,j_{n_1}}^k i_{X_k} i_{X_{j_{n_1-1}}} \cdots i_{X_{j_1}} d x_{i_1} \wedge d x_{i_2} \wedge \cdots \wedge d x_{i_{n_1}} \w|\\
&\quad \lesssim \q| \det_{n_1\times n_1} X\q( x\w) \w|.
\end{split}
\end{equation*}
Since, for each $k$, $i_{X_k} i_{X_{j_{n_1-1}}} \cdots i_{X_{j_1}} d x_{i_1} \wedge d x_{i_2} \wedge \cdots \wedge d x_{i_{n_1}}$ is either $0$ or of
the form $\pm \det X\q( x\w)_{I,J'}$ for some $J'\in \sI{n_1}{n_0}$.

We now turn to the last term on the RHS of (\ref{EqnLieDerivsFirst}).
We have:
\begin{equation*}
\begin{split}
&\sL_{X_j} \q( d x_{i_1} \wedge d x_{i_2} \wedge \cdots \wedge d x_{i_{n_1}} \w)\\
&\quad =\q( \sL_{X_j} d x_{i_1} \w) \wedge d x_{i_2} \wedge \cdots \wedge d x_{i_{n_1}} + d x_{i_1} \wedge \q(\sL_{X_j} d x_{i_2} \w) \wedge \cdots \wedge d x_{i_{n_1}}\\
&\quad\quad+ \cdots+d x_{i_1} \wedge d x_{i_2} \wedge \cdots \wedge \q(\sL_{X_j} d x_{i_{n_1}} \w).
\end{split}
\end{equation*}
So we may separate the last term on the RHS of (\ref{EqnLieDerivsFirst}) into
a sum of $n_1$ terms.  We bound just the first, the bounds of the
others being similar.  To to this, let $X_j = \sum_{k} b_j^k \frac{\partial}{\partial x_k}$.  Note that $\Cjn{1}{\B{X}{d}{x_0}{\xi}}{b_j^k}\lesssim 1$.

\begin{equation*}
\begin{split}
&\q| i_{X_{j_{n_1}}} i_{X_{j_{n_1-1}}} \cdots i_{X_{j_1}} \q( \sL_{X_j} d x_{i_1} \w) \wedge d x_{i_2} \wedge \cdots \wedge d x_{i_{n_1}}\w|\\
&\quad = \q| \sum_l \frac{\partial b_j^{i_1}}{\partial x_l} i_{X_{j_{n_1}}} i_{X_{j_{n_1-1}}} \cdots i_{X_{j_1}} d x_l \wedge d x_{i_2} \wedge \cdots \wedge d x_{i_{n_1}}\w|\\
&\quad \lesssim \q| \det_{n_1\times n_1} X\q( x\w) \w|.
\end{split}
\end{equation*}
since each of the terms $i_{X_{j_{n_1}}} i_{X_{j_{n_1-1}}} \cdots i_{X_{j_1}} d x_l \wedge d x_{i_2} \wedge \cdots \wedge d x_{i_{n_1}}$ is either $0$ or of the form
$\pm \det X\q( x\w)_{I',J}$ for some $I'\in \sI{n_1}{n}$.
\end{proof}

\begin{rmk}
The reader wishing to avoid the use
of Lie derivatives in Lemma \ref{LemmaLieDerivOfDet}
should consult Lemma 2.6 of
\cite{NagelSteinWaingerBallsAndMetricsDefinedByVectorFields}
where a similar result in the special case $n_1=n$ is 
shown directly, without the use of Lie derivatives.
However, the proof we give in Lemma \ref{LemmaLieDerivOfDet}
is easily adapted to other situations that will arise in this paper
(e.g. Lemmas \ref{LemmaLieDerivOfDetCarnot}
and \ref{LemmaQuotientOfDets}),
while the proof in \cite{NagelSteinWaingerBallsAndMetricsDefinedByVectorFields}
becomes progressively more complicated to generalize.
\end{rmk}

\begin{lemma}\label{LemmaDetsDontChange}
For $y\in \B{X}{d}{x_0}{\xi}$, $1\leq n_1\leq n_0$, we have
$$\q| \det_{n_1\times n_1} X\q( y\w) \w| \approx \q| \det_{n_1\times n_1} X\q( x_0\w) \w|$$
In particular, since $\q| \det_{n_0\times n_0} X\q( x_0\w) \w|\ne 0$, $\q| \det_{n_0\times n_0} X\q( y\w) \w| \ne 0$.
\end{lemma}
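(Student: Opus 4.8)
The plan is a Gronwall estimate carried along a Carnot--Carath\'eodory path, with Lemma \ref{LemmaLieDerivOfDet} supplying the crucial differential inequality. Fix $y\in\B{X}{d}{x_0}{\xi}$ and, by the definition of the ball, choose a curve $\gamma\colon[0,1]\to\Omega$ with $\gamma(0)=x_0$, $\gamma(1)=y$, and $\gamma'(t)=\sum_{j=1}^{n_0}a_j(t)X_j(\gamma(t))$ for a.e.\ $t$, where $a=(a_1,\dots,a_{n_0})\in(L^\infty([0,1]))^{n_0}$ satisfies $\|\,|\xi^{-d}a|\,\|_{L^\infty([0,1])}<1$; in particular $|a_j(t)|<\xi^{d_j}\le 1$ a.e., using $\xi\le 1$ and $d_j>0$. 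Since $|a_j(t)|\le 1$ and $|X_j(\gamma(t))|\lesssim 1$, the curve $\gamma$ is Lipschitz. A standard reparametrization---replacing $\gamma$ by $u\mapsto\gamma(su)$, which rescales $a$ by the factor $s\le 1$ and hence keeps the $L^\infty$ condition below $1$---shows $\gamma(s)\in\B{X}{d}{x_0}{\xi}$ for every $s\in[0,1]$, so Lemma \ref{LemmaLieDerivOfDet} applies at each point of the path.

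Next I would set $v(t):=\det_{n_1\times n_1}X(\gamma(t))$, the vector whose coordinates are $\det X(\gamma(t))_{I,J}$ for $I\in\sI{n_1}{n}$ and $J\in\sI{n_1}{n_0}$. Since each $X_j$ is $C^1$ on $\B{X}{d}{x_0}{\xi}$, each coordinate function $x\mapsto\det X(x)_{I,J}$ is $C^1$ there; composing with the Lipschitz curve $\gamma$ gives an absolutely continuous function of $t$ for which the chain rule holds a.e., namely $\frac{d}{dt}\det X(\gamma(t))_{I,J}=\sum_{j=1}^{n_0}a_j(t)\,(X_j\det X_{I,J})(\gamma(t))$. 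By Lemma \ref{LemmaLieDerivOfDet}, $|(X_j\det X_{I,J})(\gamma(t))|\lesssim|\det_{n_1\times n_1}X(\gamma(t))|=|v(t)|$; since $|a_j(t)|\le 1$ and the number of pairs $(I,J)$ is bounded by a constant depending only on $n$, summing in quadrature yields $|v'(t)|\lesssim|v(t)|$ for a.e.\ $t$.

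Finally I would apply Gronwall's inequality to $h(t):=|v(t)|^2$, which is absolutely continuous with $|h'(t)|=|2\,v(t)\cdot v'(t)|\le 2|v(t)|\,|v'(t)|\lesssim h(t)$ a.e. Thus there is an admissible constant $C$ with $-Ch(t)\le h'(t)\le Ch(t)$ for a.e.\ $t$; multiplying by the integrating factors $e^{\mp Ct}$ and integrating over $[0,1]$ gives $e^{-C}h(0)\le h(1)\le e^{C}h(0)$, i.e.\ $|v(1)|\approx|v(0)|$, which is exactly $|\det_{n_1\times n_1}X(y)|\approx|\det_{n_1\times n_1}X(x_0)|$. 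The final assertion is then immediate with $n_1=n_0$: if $|\det_{n_0\times n_0}X(x_0)|\ne 0$, the comparison forces $|\det_{n_0\times n_0}X(y)|\ne 0$. The only steps requiring care are the low-regularity issues---$\gamma$ is merely Lipschitz and $a$ merely $L^\infty$---but differentiating $|v|^2$ rather than $|v|$ avoids ever dividing by a possibly-vanishing determinant, and composition of a $C^1$ function with a Lipschitz curve is absolutely continuous with the expected a.e.\ chain rule. I do not anticipate a genuine obstacle here: all the geometric content already resides in Lemma \ref{LemmaLieDerivOfDet}.
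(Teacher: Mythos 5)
Your proof is correct and follows essentially the same route as the paper: a Carnot--Carath\'eodory path $\gamma$, the differential inequality on $|\det_{n_1\times n_1}X(\gamma(t))|^2$ supplied by Lemma \ref{LemmaLieDerivOfDet}, and Gronwall. The only (minor) deviation is that you obtain the two-sided comparison in one pass via $|h'|\lesssim h$, whereas the paper proves $|\det_{n_1\times n_1}X(y)|\lesssim|\det_{n_1\times n_1}X(x_0)|$ and then reverses the path $\gamma$ to get the other inequality; your remarks on the reparametrization $u\mapsto\gamma(su)$ and on the a.e.\ chain rule for $C^1$ functions composed with a Lipschitz curve simply make explicit two points the paper leaves implicit.
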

\begin{proof}
Since $y\in \B{X}{d}{x_0}{\xi}$, there exists $\gamma:\q[0,1\w]\rightarrow \B{X}{d}{x_0}{\xi}$
with 
\begin{itemize}
\item $\gamma\q( 0\w) =x_0$, $\gamma\q( 1\w) = y$,
\item $\gamma'\q( t\w) =  a\q( t\w)\cdot X\q( \gamma\q(t\w)\w)$,
\item $a\in \q(\Lpp{\infty}{\q[0,1\w]}\w)^{n_0}$,
\item $\Lppn{\infty}{\q[0,1\w]}{\q|\xi^{-d} a\w|}<1$.
\end{itemize}
But, then consider:
\begin{equation*}
\begin{split}
\frac{d}{dt} \q| \det_{n_1\times n_1} X\q( \gamma\q(t\w)\w) \w|^2 &= 2\sum_{\substack{I\in \sI{n_1}{n}\\J\in \sI{n_1}{n_0}}} \det X_{I,J}\q( \gamma\q(t\w)\w) \frac{d}{dt} \det X_{I,J}\q( \gamma \q(t\w)\w)
\\&= 2\sum_{\substack{I\in \sI{n_1}{n}\\J\in \sI{n_1}{n_0}}} \det X_{I,J}\q( \gamma\q(t\w)\w) \q(\q( a\cdot X\w)  \det X_{I,J}\w)\q(\gamma\q( t\w)\w)\\
&\lesssim \q|\det_{n_1\times n_1} X\q( \gamma\q(t\w)\w)\w|^2
\end{split}
\end{equation*}
where, in the last step, we have applied Lemma \ref{LemmaLieDerivOfDet}.
Hence, Gronwall's inequality shows:
\begin{equation*}
\q| \det_{n_1\times n_1} X\q( y\w) \w| = \q| \det_{n_1\times n_1} X\q( \gamma\q(1\w)\w) \w| \lesssim \q| \det_{n_1\times n_1} X\q( \gamma\q(0\w)\w) \w| = \q| \det_{n_1\times n_1} X\q( x_0\w) \w|.
\end{equation*}
Reversing the path $\gamma$ and applying the same argument, we see that:
$$\q| \det_{n_1\times n_1} X\q( x_0\w) \w| \lesssim \q| \det_{n_1\times n_1} X\q( y\w) \w|,$$
completing the proof.
\end{proof}

Now consider the map $\Phi:B_{n_0}\q( \eta\w)\rightarrow \B{X}{d}{x_0}{\xi}$.
$d\Phi\q( 0\w) = X\q( x_0\w)$, and it follows that
$\det_{n_0\times n_0} d\Phi\q( 0\w)\ne 0$.  Hence, if we consider $\Phi$
as a map to the leaf generated by $X$ passing through the point $x_0$,
the inverse function theorem shows that there is a (non-admissible) $\delta>0$
such that:
$$\Phi: B_{n_0}\q( \delta\w)\rightarrow \Phi\q(B_{n_0}\q( \delta\w)\w)$$
is a $C^1$ diffeomorphism.  Pullback the vector field $X_j$ via the map
$\Phi$ to $B_{n_0}\q( \delta\w)$.  Call this $C^0$ vector field $\widehat{Y}_j$.

Clearly $\widehat{Y}_j\q(0\w) =\frac{\partial}{\partial u_j}$.  Write:
\begin{equation}\label{EqnDefnAj}
\widehat{Y}_j = \pd{u_j} + \sum_k \hat{a}_j^k \pd{u_k}
\end{equation}
with $\hat{a}_j^k\q( 0\w)=0$.  Moreover, in polar coordinates, for $\omega$ fixed,
Remark \ref{RmkExpMoreReg} shows that $\hat{a}_j^k\q( r\omega\w)$ is $C^1$
in the $r$ variable, and it follows that for $\omega$ fixed,
$\hat{a}_j^k \q( r\omega\w) = O\q( r\w)$.
We will now show that $\hat{a}_j^k$ satisfies an ODE in the $r$
variable.  
The derivation of this ODE is classical (see, for instance, page 155 of \cite{ChevalleyTheoryOfLieGroupsI}, though we follow the presentation of \cite{TaoWrightLpImprovingBoundsForAverages}), and is the main starting point for this 
entire section.
We include the derivation here, since it is not very long, and is
of fundamental importance to the rest of the paper.

Continuing in polar coordinates,
$$\Phi\q( r,\omega\w) = \exp\q( r\q(\omega\cdot X\w)\w)x_0.$$
Hence,
\begin{equation*}
d\Phi\q( r\partial_r \w)\q( \Phi\q( r,\omega\w)\w) = rd\Phi\q( \partial_r\w) \q( \Phi\q( r, \omega\w)\w) = r \omega\cdot X \q( \Phi\q( r,\omega\w)\w).
\end{equation*}
Writing this in Cartesian coordinates, we have the following vector field
identity on $B_{n_0}\q( \delta\w)$:
\begin{equation}\label{EqnVFIdentity}
\sum_{j=1}^{n_0} u_j \pd{u_j} = \sum_{j=1}^{n_0} u_j \widehat{Y}_j.
\end{equation}
Taking the lie bracket of (\ref{EqnVFIdentity}) with $\widehat{Y}_i$,
we obtain:
\begin{equation}\label{EqnFirstYjCommute}
\begin{split}
\sum_{j=1}^{n_0} \q( \widehat{Y_i}\q( u_j\w) \partial_{u_j} + u_j \q[\widehat{Y}_i, \partial_{u_j}\w] \w) &= \sum_{j=1}^{n_0} \q( \widehat{Y}_i\q(u_j\w) \widehat{Y}_j + u_j \q[\widehat{Y}_i, \widehat{Y}_j\w] \w)
\\& = \sum_{j=1}^{n_0} \q(\widehat{Y}_i\q( u_j \w) \widehat{Y}_j + u_j\sum_{l=1}^{n_0} \ct_{i,j}^l\q(u\w) \widehat{Y}_l  \w),
\end{split}
\end{equation}
where $\ct_{i,j}^k\q( u\w) = c_{i,j}^k \q( \Phi\q( u\w)\w)$, and
we have used the fact that $\q[\widehat{Y}_i, \widehat{Y}_j\w]=\sum \ct_{i,j}^k \widehat{Y}_k$.

\begin{rmk}
Since $\widehat{Y}_i$ is not $C^1$, one might worry about our
manipulations in (\ref{EqnFirstYjCommute}).  This turns out
to not be a problem.  Indeed, it makes sense to take the above
commutator, since $\widehat{Y}_i$ is $C^1$ in the $r$ variable (and
we are commuting it with $r\partial_r$).  Then, the computations
on the LHS of (\ref{EqnFirstYjCommute}) may be done in the
sense of distributions, while the computations on the RHS
may be done by pushing everything forward via the map $\Phi$.
We leave the details to the reader.
\end{rmk}

We re-write (\ref{EqnFirstYjCommute}) as:
\begin{equation}\label{EqnSecondYjCommute}
\begin{split}
&\q(\sum_{j=1}^{n_0} u_j \q[\partial_{u_j}, \widehat{Y}_i-\partial_{u_i}\w]\w) + \widehat{Y}_i-\partial_{u_i} \\&\quad= -\q(\sum_{j=1}^{n_0} \q(\widehat{Y}_i -\partial_{u_i}\w)\q(u_j\w)\q(\widehat{Y}_j-\partial_{u_j}\w)\w) - \sum_{j=1}^{n_0}\sum_{l=1}^{n_0} u_j \ct_{i,j}^l\q(u\w) \widehat{Y}_l.
\end{split}
\end{equation}

Plugging (\ref{EqnDefnAj}) into (\ref{EqnSecondYjCommute}) we have:
\begin{equation}\label{Eqn27}
\begin{split}
&\sum_{j=1}^{n_0} \sum_{k=1}^{n_0} u_j \q( \partial_{u_j} \hat{a}_j^k\w)\partial_{u_k} + \sum_{k=1}^{n_0} \hat{a}_i^k \partial_{u_k} \\
&\quad= -\q( \sum_{j=1}^{n_0} \sum_{k=1}^{n_0} \hat{a}_i^j \hat{a}_j^k \partial_{u_k} \w) - \sum_{k=1}^{n_0} \q(\sum_{j=1}^{n_0} u_j \ct_{i,j}^k\w) \partial_{u_k} - \sum_{l=1}^{n_0}\sum_{k=1}^{n_0} \q(\sum_{j=1}^{n_0} t_j \ct_{i,j}^l\w) \hat{a}_l^k \partial_{u_k}.
\end{split}
\end{equation}

Taking the $\partial_{u_k}$ component, and writing $\sum_{j=1}^{n_0} u_j \partial_{u_j} +1 = \partial_r r$, we have from (\ref{Eqn27}):
\begin{equation}\label{EqnTakingComps27}
\partial_r r\hat{a}_i^k = -\sum_{j=1}^{n_0} \hat{a}_i^j\hat{a}_j^k -\sum_{j=1}^{n_0} u_j \ct_{i,j}^k - \sum_{l=1}^{n_0}\q( \sum_{j=1}^{n_0} u_j \ct_{i,j}^l \w) \hat{a}_l^k.
\end{equation}
Define two $n_0\times n_0$ matrices, $\widehat{A}, C_u$ by:
\begin{equation*}
\widehat{A}_{i,k} := \q( \hat{a}_i^k\w), \quad \q(C_u\w)_{i,k} := \q(\sum_{j=1}^{n_0} u_j \ct_{i,j}^k\w), \quad 1\leq i,k\leq n_0.
\end{equation*}
Using this, (\ref{EqnTakingComps27}) may be re-written as the matrix
valued ODE:
\begin{equation}\label{EqnHatDiffEq}
\partial_r r \widehat{A} = -\widehat{A}^2 -C_u\widehat{A}-C_u.
\end{equation}

\begin{thm}\label{ThmExistUniqDiffEq}
Fix $\frac{1}{2}\geq \kappa>0$ (throughout the paper we will choose $\kappa=\frac{1}{2}$).  Consider the differential equation:
\begin{equation}\label{EqnThmDiffEq}
\partial_r r A\q( r\omega\w) = - A\q( r\omega\w)^2 -C_u\q( r\omega\w) A\q( r\omega\w) - C_u\q( r\omega\w),
\end{equation}
defined for $A:B_{n_0}\q( \eta\w) \rightarrow \mathbb{M}_{n_0\times n_0}\q( \R\w)$, where $\mathbb{M}_{n_0\times n_0}\q( \R\w)$ 
denotes the set of $n_0\times n_0$ real matrices.  Then, there exists an admissible constant $\eta_1=\eta_1\q( \kappa\w)>0$ such
that there exists a unique solution $A\in C\q( B_{n_0}\q(\eta_1 \w); \mathbb{M}_{n_0\times n_0}\q( \R\w) \w)$ to (\ref{EqnThmDiffEq}) satisfying $A\q( r\omega\w) = O\q( r\w)$ for each fixed $\omega$.  Moreover, this solution satisfies:
\begin{itemize}
\item $\q\|A\q( t\w)\w\|\lesssim \q| t\w|$.
\item $\sup_{t\in B_{n_0}\q( \eta_1\w)} \q\|A\q(t\w)\w\|\leq \kappa$.
\end{itemize}
Furthermore, if $\ct_{i,j}^k\in \Cj{m}{B_{n_0}\q( \eta_1\w)}$
with $\Cjn{m}{B_{n_0}\q( \eta_1\w)}{\ct_{i,j}^k}<\infty$, then
$A\in C^m\q(B_{n_0}\q( \eta\w); \mathbb{M}_{n_0\times n_0}\q( \R\w)\w)$,
and if $\widetilde{C}_{m,\eta_1}$ is a fixed upper bound for:
$$\Cjn{m}{B_{n_0}\q( \eta_1\w)}{\ct_{i,j}^k},\quad 1\leq i,j,k\leq n_0,$$
then, there exists an admissible constant $C_m=C_m\q( m,\widetilde{C}_{m,\eta_1} \w)$
such that:
\begin{equation}\label{EqnCmBoundForA}
\q\|A\w\|_{C^m\q(B_{n_0}\q(\eta_1\w); \mathbb{M}_{n_0\times n_0}\q( \R\w)\w)}\leq C_m.
\end{equation}
\end{thm}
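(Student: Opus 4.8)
The plan is to recast \eqref{EqnThmDiffEq} as an integral equation, solve it by the contraction mapping principle in a weighted space — this gives existence, uniqueness, and the two estimates on $A$ — and then obtain the $C^m$ statement by differentiating the equation in $u$ and inducting on $m$. Since $\partial_r r A = \partial_r(rA)$ and $\|C_u(s\omega)\|\lesssim|s|$ (the entry $(C_u)_{i,k}=\sum_j u_j\ct_{i,j}^k(u)$ vanishes to first order at $0$, and $\ct_{i,j}^k=c_{i,j}^k\circ\Phi$ is bounded by an admissible constant $C$), a continuous $A$ with $A(r\omega)=O(r)$ for each $\omega$ solves \eqref{EqnThmDiffEq} if and only if
\[
A(r\omega)=-\frac{1}{r}\int_0^r\q(A(s\omega)^2+C_u(s\omega)A(s\omega)+C_u(s\omega)\w)\,ds=:\mathcal{T}A(r\omega).
\]
I would work on the complete metric space $\mathcal{X}_M=\q\{A\in C\q(\overline{B_{n_0}(\eta_1)};\mathbb{M}_{n_0\times n_0}(\R)\w):A(0)=0,\ \|A(t)\|\le M|t|\ \text{for all }t\w\}$, with the sup norm, for a large admissible constant $M$ (e.g. $M=2C$).

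Bounding $\|A^2\|,\|C_uA\|,\|C_u\|$ pointwise by $M^2s^2,CMs^2,Cs$ and integrating gives $\|\mathcal{T}A(r\omega)\|\le\q(\tfrac{(M^2+CM)r}{3}+\tfrac{C}{2}\w)r$, which is $\le M|r|$ once $\eta_1$ is a sufficiently small admissible constant; thus $\mathcal{T}:\mathcal{X}_M\to\mathcal{X}_M$. The same estimate applied to $\mathcal{T}A_1-\mathcal{T}A_2$, using $A_1^2-A_2^2=A_1(A_1-A_2)+(A_1-A_2)A_2$, gives $\|\mathcal{T}A_1-\mathcal{T}A_2\|_{C^0}\le\q(M+\tfrac{C}{2}\w)\eta_1\|A_1-A_2\|_{C^0}\le\tfrac12\|A_1-A_2\|_{C^0}$ after shrinking $\eta_1$ (still admissibly). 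The unique fixed point is the desired solution; the bound $\|A(t)\|\le M|t|$ gives $\|A(t)\|\lesssim|t|$, and shrinking $\eta_1=\eta_1(\kappa)$ so that $M\eta_1\le\kappa$ yields $\sup_{t\in B_{n_0}(\eta_1)}\|A(t)\|\le\kappa$. For uniqueness among \emph{all} continuous solutions with $A(r\omega)=O(r)$: inserting such a solution into the integral equation along a fixed ray forces $\limsup_{r\to0}\|A(r\omega)\|/r\le C/2<M$, and a connectedness/continuity argument applied to $\q\{\rho\le\eta_1:\|A(r\omega)\|\le Mr\text{ for }0<r\le\rho\w\}$ (together with the same integral estimate) shows this set is all of $[0,\eta_1]$; hence $A\in\mathcal{X}_M$ and equals the fixed point.

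For the $C^m$ claim I would induct on $m$, the case $m=0$ being the continuity just established. Writing $D=\sum_j u_j\partial_{u_j}$ for the Euler operator, \eqref{EqnThmDiffEq} reads $(D+1)A=-A^2-C_uA-C_u$; using $[\partial_{u_l},D]=\partial_{u_l}$ and the Leibniz rule, differentiating $|\beta|$ times produces, for $A^{(\beta)}:=\partial_u^\beta A$, a singular \emph{linear} equation of the schematic form $(D+1+|\beta|)A^{(\beta)}=\mathcal{L}_uA^{(\beta)}+F_\beta$, where $\mathcal{L}_u$ is linear with coefficients assembled from $A$ and $C_u$ (hence of size $O(|u|)\lesssim\eta_1$ on $B_{n_0}(\eta_1)$), and $F_\beta$ is a universal polynomial in the strictly lower-order derivatives $\partial^{\beta'}A$ ($|\beta'|<|\beta|$) and in $\partial^{\beta''}C_u$ ($|\beta''|\le|\beta|\le m$). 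By the inductive hypothesis and the hypothesis $\Cjn{m}{B_{n_0}(\eta_1)}{\ct_{i,j}^k}<\infty$, the coefficients of $\mathcal{L}_u$ and $F_\beta$ are bounded by $m$-admissible constants. Since $(D+a)g=r^{1-a}\partial_r(r^ag)$, the equation for $A^{(\beta)}$ is equivalent to
\[
A^{(\beta)}(r\omega)=\frac{1}{r^{1+|\beta|}}\int_0^r s^{|\beta|}\q(\mathcal{L}_{s\omega}A^{(\beta)}(s\omega)+F_\beta(s\omega)\w)\,ds,
\]
and because $\mathcal{L}$ contributes a factor $\lesssim\eta_1/(2+|\beta|)$ (it gains a power of $s$ from its coefficients) while $F_\beta$ contributes a bounded amount, the very same contraction argument — on the \emph{same} ball $B_{n_0}(\eta_1)$, with contraction constant that only improves as $|\beta|$ grows — produces a continuous $A^{(\beta)}$ with $\|A^{(\beta)}\|_{C^0}\le C_{|\beta|}$, an admissible constant of the form $C_{|\beta|}(|\beta|,\widetilde{C}_{m,\eta_1})$. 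Finally, to see that these $A^{(\beta)}$ really are the partial derivatives of $A$, I would estimate the second difference $A^{(\beta')}(u+he_l)-A^{(\beta')}(u)-hA^{(\beta'+e_l)}(u)$ by means of the integral equations it satisfies and a Gronwall argument, showing it is $o(h)$ uniformly in $u$; this upgrades $A\in C^{m-1}$ to $A\in C^m$ and delivers \eqref{EqnCmBoundForA}.

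The main obstacle is this last step, the $C^m$ regularity — equivalently, smooth dependence of the fixed point on the parameter $\omega$. In the work of Tao and Wright an a priori smoothness was available, so there it sufficed to prove existence by contraction; here one must show that the \emph{formally} differentiated singular ODEs are solved by the \emph{actual} derivatives of $A$. What makes this tractable is the structural fact that each differentiation strictly increases the coercivity of the singular operator ($D+1\rightsquigarrow D+1+|\beta|$) and generates only a lower-order, already-controlled forcing term; in particular, as anticipated in the remark following the theorem, neither $\kappa$ nor $\eta_1$ ever has to be chosen small in terms of $m$.
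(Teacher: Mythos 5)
Your contraction-mapping step is essentially the paper's, up to a cosmetic change of metric: you use the sup norm on the ball $\{\|A(t)\|\le M|t|\}$, whereas the paper puts the weight $1/r$ into the metric itself; both give the two quantitative bounds on $A$ and the (Gronwall) uniqueness. Where you genuinely diverge is the $C^m$ statement, and this is exactly the point the paper flags as the new difficulty relative to Tao and Wright. You propose to solve the formally differentiated singular equation for a candidate $A^{(\beta)}$ by a fresh contraction (this much matches the paper's construction of $Q_\infty$), and then to prove $A^{(\beta)}=\partial_u^\beta A$ by estimating the difference quotient $A^{(\beta')}(u+he_l)-A^{(\beta')}(u)-hA^{(\beta)}(u)$ with a Gronwall argument. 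That last step is the crux and is only sketched: in the Cartesian form $A^{(\beta')}(u)=\int_0^1 t^{|\beta'|}G_{\beta'}(tu)\,dt$, the increment at $u$ couples to the increment at the rescaled point $tu$ with the rescaled step $th$, so one does not get a pointwise contraction but a scale-coupled inequality; and the $o(h)$ error terms require uniform moduli of continuity of the top retained derivatives of $A$ and of $\widetilde{c}_{i,j}^k$. None of this is unworkable, but it is precisely the work the theorem exists to do, and you should carry it out rather than gesture at it.

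The paper instead never differentiates the limit $A$ at all. By Lemma~\ref{LemmaDefineContract}, every Picard iterate $T^n0$ is explicitly in $C^m$ whenever $C_u\in C^m$; the issue is reduced to showing that $\partial_u^\alpha T^n0$ converges in $C^0$. Because $\partial_u^\alpha T^{n+1}0=Q_n(\partial_u^\alpha T^n0)$ with $Q_n$ depending on $n$ through the lower-order derivatives of $\gamma_n=T^n0$, a single contraction fixed point is not enough; the Izzo-type Lemma~\ref{LemmaIzzoLemma}, for a sequence of uniform contractions $Q_n\to Q_\infty$, is the specific extra ingredient that closes the induction. This is what lets the paper avoid any difference-quotient estimate on the limit. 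If you keep your route, supply the scaling-adapted Gronwall argument in detail (including the shift of weight from $t^{|\beta'|}$ to $t^{|\beta'|+1}$ under differencing, and the uniform continuity input); otherwise, adopting the Picard-iterate-plus-Izzo strategy yields the same conclusion with less to verify.
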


Note that (\ref{EqnThmDiffEq}) is not a standard ODE (due to the
factor of $r$ on the left hand side), and so
we cannot apply the standard theorems for existence and
dependence on parameters.
Fortunately, though, we will be able to prove Theorem \ref{ThmExistUniqDiffEq}, by adapting the methods
of \cite{IzzoCrConvergenceOfPicardsSuccessiveApprox}.  
In \cite{TaoWrightLpImprovingBoundsForAverages}, the solution
$A$ was assumed to be \it a priori \rm $C^\infty$, thereby
removing many of the difficulties in the proof of Theorem \ref{ThmExistUniqDiffEq}.
Before we begin
the proof, we need two preliminary lemmas:

\begin{lemma}\label{LemmaDefineContract}
Fix $\epsilon>0$.  Suppose $g\in C^m\q( B_{n_0}\q( \epsilon\w)\w)$.
Define $h$ on $B_{n_0}\q( \epsilon\w)$ by:
\begin{equation}\label{EqnDefnContract}
h\q(r\omega\w) = \begin{cases}
\frac{1}{r} \int_0^r g\q( s\omega\w) ds & \text{if $r\ne 0$},\\
g\q( 0\w) & \text{if $r=0$}.
\end{cases}
\end{equation}
Then, $h\in C^m\q( B_{n_0}\q( \epsilon\w)\w)$.  Moreover, if $\alpha$ is
a multi-index with $\q|\alpha\w|\leq m$, we have:
\begin{equation}\label{EqnDerivContract}
\q(\partial_u^\alpha h\w) \q(r\omega \w) = \begin{cases}
\frac{1}{r^{\q|\alpha\w|+1}} \int_0^r s^{\q|\alpha\w|} \q(\partial_u^\alpha g\w)\q( s\omega \w) ds & \text{if $r\ne 0$},\\
\frac{1}{\q|\alpha\w|+1}\q(\partial_u^\alpha g\w)\q(0\w) & \text{if $r=0$}.
\end{cases}
\end{equation}
\end{lemma}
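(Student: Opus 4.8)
The plan is to establish regularity of $h$ together with the explicit derivative formula \eqref{EqnDerivContract} by induction on $m$, working one coordinate derivative at a time and exploiting the polar structure of the integral. First I would dispose of the base case and the off-origin statement: away from $r=0$ the formula $h(u) = \tfrac{1}{|u|}\int_0^{|u|} g\bigl(s u/|u|\bigr)\,ds$ is a composition of smooth operations on $g$, so $h$ is manifestly $C^m$ there, and repeated differentiation reproduces \eqref{EqnDerivContract} by a routine computation (the key identity being that a derivative landing on the limit $r$ or on the prefactor $1/r$ recombines, after a change of variables $s\mapsto rs$ on $[0,1]$, into the claimed form). The genuine work is all concentrated at the origin, where one must show that the candidate value and candidate derivatives are actually attained continuously.

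The core step I would isolate is the following single-derivative claim: if $g\in C^1(B_{n_0}(\epsilon))$ and we set $h$ as in \eqref{EqnDefnContract}, then $\partial_{u_k} h$ exists everywhere, is continuous, and equals $\tfrac{1}{r^2}\int_0^r s\,(\partial_{u_k} g)(s\omega)\,ds$ for $r\neq 0$ and $\tfrac12(\partial_{u_k}g)(0)$ at the origin. To prove this I would rewrite $h(u) = \int_0^1 g(tu)\,dt$ (valid for all $u$, including $u=0$, by the substitution $s = t r$), which is the cleanest form: differentiation under the integral sign is now justified because $g\in C^1$ and $[0,1]$ is compact, giving $\partial_{u_k} h(u) = \int_0^1 t\,(\partial_{u_k}g)(tu)\,dt$; this expression is continuous in $u$ (again by dominated convergence / uniform continuity of $\partial_{u_k}g$ on a neighborhood of $0$), equals $\tfrac12(\partial_{u_k}g)(0)$ at $u=0$, and, undoing the substitution for $u\neq 0$, matches $\tfrac{1}{r^2}\int_0^r s(\partial_{u_k}g)(s\omega)\,ds$. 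Thus the operator $g\mapsto h$ sends $C^1$ to $C^1$ and commutes with a single derivative in the precise sense of \eqref{EqnDerivContract}.

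The induction is then immediate: since $\partial_{u_k} h$ is itself obtained from $\partial_{u_k} g$ by applying the \emph{same} averaging operator (with an extra weight $t^{|\alpha|}$, respectively $s^{|\alpha|}$, which is handled identically — the substitution $s=tr$ turns $\tfrac{1}{r^{|\alpha|+1}}\int_0^r s^{|\alpha|}(\cdots)\,ds$ into $\int_0^1 t^{|\alpha|}(\cdots)\,dt$), if $g\in C^m$ then applying the base-case claim $m$ times in succession yields $h\in C^m$ and the formula \eqref{EqnDerivContract} for every $\alpha$ with $|\alpha|\le m$. I expect the main obstacle to be purely bookkeeping rather than conceptual: one must verify that when a derivative $\partial_{u_k}$ is applied to the weighted integral $\int_0^1 t^{|\alpha|}(\partial_u^\alpha g)(tu)\,dt$ it produces exactly $\int_0^1 t^{|\alpha|+1}(\partial_u^{\alpha}\partial_{u_k} g)(tu)\,dt$ with the correct power of $t$ (so that the claimed value $\tfrac{1}{|\alpha|+1}(\partial_u^\alpha g)(0)$ upgrades to $\tfrac{1}{|\alpha|+2}(\partial_u^{\alpha+e_k}g)(0)$), and that one may legitimately differentiate under the integral sign at each stage — both of which follow from $\partial_u^\beta g$ being continuous on the compact set $\{tu : t\in[0,1]\}$ for $|\beta|\le m$. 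I would remark that the change of variables $s = t|u|$ (equivalently, writing everything as an integral over $[0,1]$) is the device that makes all statements, including those at the origin, uniform and sidesteps any case distinction.
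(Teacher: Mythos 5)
Your proof is correct, and it takes a genuinely cleaner route than the paper's. The paper proves \eqref{EqnDerivContract} away from $r=0$ by an algebraic manipulation (apply $\partial_u^\alpha$ to $\partial_r r h = g$, use the commutator identity $[\partial_u^\alpha,\partial_r r]=|\alpha|\partial_u^\alpha$, then multiply by $r^{|\alpha|}$), and then handles the origin separately via a two-step reduction: first assume all derivatives of $g$ through order $m$ vanish at $0$ and run an induction using the bound $(\partial_u^\alpha g)(s\omega)=O(s^2)$, then pass to the general case by splitting $g$ into its Taylor polynomial plus a flat remainder and checking the monomial case by hand. Your observation that the substitution $s=tr$ rewrites the operator as $h(u)=\int_0^1 g(tu)\,dt$ — a formula that is \emph{uniform} in $u$, including $u=0$ — collapses all of that into a single application of differentiation under the integral sign on the compact parameter interval $[0,1]$, with the induction on $|\alpha|$ then being purely mechanical (each derivative $\partial_{u_k}$ raises the power of $t$ by one, yielding the $\tfrac{1}{|\alpha|+1}$ factor at the origin automatically). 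What your approach buys is the elimination of the case split at $r=0$ and of the Taylor-expansion argument; what the paper's approach buys is, arguably, a closer alignment with the subsequent fixed-point analysis of Theorem \ref{ThmExistUniqDiffEq}, where the operator is written with the $\tfrac{1}{r}\int_0^r\cdots\,ds$ normalization throughout. Both are valid; yours is shorter and more transparent.
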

\begin{proof}
Note that, since $g\in C^m$, the right hand sides of (\ref{EqnDefnContract})
and (\ref{EqnDerivContract}) are both continuous in $r$.  Note, also,
that to prove the lemma, it suffices to prove the formula (\ref{EqnDerivContract})
for $g\in C^\infty$, as then the linear map $g\mapsto h$ will extend as a
map $C^\infty\rightarrow C^m$ to a map $C^m\rightarrow C^m$.  Hence,
we prove the lemma just under the assumption $g\in C^\infty$ (this
reduction is not necessary for our proof, but it simplifies notation a bit).

First, we prove the lemma for $r\ne 0$.  Away from
$r=0$, $h$ is clearly $C^\infty$, and so we need only verify the formula
(\ref{EqnDerivContract}).
$h$ satisfies the formula:
$$\partial_r r h\q( r\omega\w) = g\q( r\omega\w).$$
Apply $\partial_u^\alpha$ to both sides of this formula.  Using
the fact that $\q[\partial_u^\alpha,\partial_r r\w]= \q|\alpha\w|\partial_u^\alpha$, we have:
$$\partial_r r \q(\partial_u^\alpha h\w) \q( r\omega\w) +\q|\alpha\w|\q( \partial_u^\alpha h\w) \q( r\omega\w) = \q( \partial_u^{\alpha} g\w) \q( r\omega\w).$$
Multiplying both sides by $r^{\q|\alpha\w|}$, we obtain:
$$\partial_r r^{\q|\alpha\w|+1} \q( \partial_u^\alpha h\w)\q( r\omega\w) = r^{\q|\alpha\w|} \q( \partial_u^{\alpha} g\w) \q( r\omega\w)$$
and (\ref{EqnDerivContract}) follows for $r\ne 0$.

Hence, to complete the proof, we need only show that $\partial_u^{\alpha} h$ exists
at $0$ and is given by the $\frac{1}{\q|\alpha\w|+1}\q(\partial_u^\alpha g\w)\q( 0\w)$.  We first consider the case when:
$$\partial_u^\beta g\q( 0\w) =0, \quad 0\leq\q|\beta\w|\leq m,$$
and we prove the result by induction on the order of $\alpha$,
our base case being the trivial case $\q|\alpha\w|=0$.  Thus, suppose
we have the result for some $\alpha$, $\q|\alpha\w|<m$ and we wish
to show that the following derivative exists, and equals $0$:
\begin{equation*}
\partial_{u_j} \partial_u^{\alpha} h\q( r\omega\w)\bigg|_{r=0} = \partial_{u_j} \begin{cases}
\frac{1}{r^{\q|\alpha\w|+1}} \int_0^r s^{\q|\alpha\w|} \q(\partial_u^\alpha g\w)\q(s\omega\w) ds & \text{if $r\ne 0$}\\
0 &\text{if $r=0$}
\end{cases}\bigg|_{r=0}
\end{equation*}
And this will follow if we can show that:
\begin{equation}\label{EqnContractToShow}
\frac{1}{r^{\q|\alpha\w|+1}} \int_0^r s^{\q|\alpha\w|} \q(\partial_u^\alpha g\w)\q(s\omega\w) ds = o\q(r\w).
\end{equation}
But, by our assumption on $g$, $\q(\partial_u^\alpha g\w)\q(s\omega\w) = O\q( s^2\w)$ and (\ref{EqnContractToShow}) follows, completing the proof in this case.

Now turn to the general case $g\in C^\infty$.  We may write:
$$g\q( u\w) = \sum_{\q|\beta\w|\leq m} \frac{1}{\beta!}\q(\partial_u^\beta g\w) \q(0\w) u^\beta + g_e\q( u\w)$$
where $g_e$ vanishes to order $m$ at $0$.  Thus, by linearity of the map
$g\mapsto h$, it suffices to prove the lemma for monomials $u^{\beta}$.
Since we know (\ref{EqnDerivContract}) holds away from $r=0$ and we know
the RHS of (\ref{EqnDerivContract}) is continuous, it suffices to show that
if $g=u^\beta$, then $h\in C^\infty$.  But in this case, $h=\frac{1}{\q|\beta\w|+1} u^\beta\in C^\infty$, completing the proof.
\end{proof}

\begin{lemma}[\cite{IzzoCrConvergenceOfPicardsSuccessiveApprox}, p. 2060]\label{LemmaIzzoLemma}
Suppose $\q( M, \rho\w)$ is a metric space, and suppose $\q( Q_n\w)_{n=0}^\infty$ is a sequence of contractions on $M$ for which there exists a number $c<1$
such that:
$$\rho\q(Q_n\q(x\w),Q_n\q(y\w)\w)\leq c\rho\q(x,y\w)$$
for all $x,y\in M$ and all $n$.  Suppose also that there is a point $x_\infty\in M$
such that $Q_n\q( x_\infty\w) \rightarrow x_\infty$ as $n\rightarrow \infty$.
Let $x_0\in M$ be arbitrary, and define a sequence $\q( x_n\w)$ by setting:
$$x_{n+1}=Q_n\q( x_n\w).$$
Then, $x_n\rightarrow x_\infty$ as $n\rightarrow \infty$.
\end{lemma}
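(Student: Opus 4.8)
The plan is to reduce everything to the single scalar sequence $a_n := \rho\q(x_n, x_\infty\w)$ and to show that $a_n \to 0$. The starting point is the one-step estimate coming from the triangle inequality together with the uniform contraction hypothesis: since $x_{n+1} = Q_n\q(x_n\w)$,
\begin{equation*}
a_{n+1} = \rho\q(Q_n\q(x_n\w), x_\infty\w) \le \rho\q(Q_n\q(x_n\w), Q_n\q(x_\infty\w)\w) + \rho\q(Q_n\q(x_\infty\w), x_\infty\w) \le c\, a_n + \epsilon_n,
\end{equation*}
where $\epsilon_n := \rho\q(Q_n\q(x_\infty\w), x_\infty\w)$. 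By hypothesis $\epsilon_n \to 0$, and each $\epsilon_n$ is finite because $Q_n\q(x_\infty\w) \in M$.

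It then remains to prove the elementary claim: if $a_n \ge 0$ satisfies $a_{n+1} \le c\, a_n + \epsilon_n$ with $0 \le c < 1$ and $\epsilon_n \to 0$, then $a_n \to 0$. I would argue as follows. Fix $\delta > 0$ and choose $N$ so that $\epsilon_n \le (1-c)\delta$ for all $n \ge N$. Then for every $n \ge N$,
\begin{equation*}
a_{n+1} - \delta \le c\, a_n + (1-c)\delta - \delta = c\q(a_n - \delta\w),
\end{equation*}
so by induction on $n \ge N$ we get $a_n - \delta \le c^{\,n-N}\q(a_N - \delta\w)$; letting $n \to \infty$ yields $\limsup_n a_n \le \delta$. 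Since $\delta > 0$ was arbitrary and $a_n \ge 0$, we conclude $a_n \to 0$, i.e.\ $x_n \to x_\infty$ as $n \to \infty$.

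There is essentially no obstacle here; the argument is a routine contraction-type estimate, and the only points requiring (minimal) care are that the $\epsilon_n$ are finite and that the induction in the second step is started at the index $N$ rather than at $0$, so that $a_N - \delta$ may be taken as the base value even if it is negative. One could equally well iterate the recursion directly to obtain $a_n \le c^n a_0 + \sum_{k=0}^{n-1} c^{\,n-1-k}\epsilon_k$ and then invoke the standard fact that the discrete convolution of a summable geometric sequence with a null sequence is again a null sequence; the inductive phrasing above simply avoids having to split that sum into a head and a tail.
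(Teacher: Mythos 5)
The paper states this lemma as a citation to Izzo (p.~2060) and does not include a proof of it, so there is nothing internal to compare against. Your argument is correct and is the expected one: the triangle inequality together with the uniform contraction bound gives the scalar recursion $a_{n+1} \le c\,a_n + \epsilon_n$ with $a_n = \rho(x_n, x_\infty)$ and $\epsilon_n = \rho\q(Q_n\q(x_\infty\w), x_\infty\w) \to 0$, and your $\epsilon$--$N$ treatment of that recursion is sound --- in particular you correctly observe that the induction $a_n - \delta \le c^{n-N}\q(a_N - \delta\w)$ needs no sign hypothesis on $a_N - \delta$, since multiplying an inequality by $c \ge 0$ preserves it (and $c \ge 0$ is forced by the contraction hypothesis unless $M$ is a singleton, in which case the lemma is trivial). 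The alternative you sketch via $a_n \le c^n a_0 + \sum_{k=0}^{n-1} c^{\,n-1-k}\epsilon_k$ is equally valid.
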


\begin{proof}[Proof of Theorem \ref{ThmExistUniqDiffEq}]
It is easy to see from the definition $C_u$ that:
$$\q\|C_u\q(r\omega\w)\w\|\leq Dr$$
where $D$ is an admissible constant.  Take $\eta_1=\eta_1\q( \kappa\w)>0$ to be
an admissible constant so small that:
\begin{equation*}
\kappa^2+\frac{D\eta_1}{2}\q( \kappa+1\w) \leq \kappa, \quad \kappa+\frac{D\eta_1}{3}\leq \frac{3}{4}.
\end{equation*}

Our first step will be to show the existence of $A$ using the contraction
mapping principle.  Moreover, this contraction mapping principle
may be considered the base case in an induction we will use
at the end of the proof, to establish the regularity of $A$.
Consider the metric space:
\begin{equation*}
\begin{split}
M:=\bigg\{A\in C\q(B_{n_0}\q(\eta_1\w); \mathbb{M}_{n_0\times n_0}\q(\R\w)\w):& A\q(0\w)=0, \sup_{\substack{0<r\leq \eta_1\\\omega\in S^{n_0-1}}} \q\|\frac{1}{r}A\q(r\omega\w) \w\|<\infty,\\
& \sup_{t\in B_{n_0}\q(\eta_1\w)} \q\|A\q(t\w)\w\|\leq \kappa\bigg\}
\end{split}
\end{equation*}
with the metric:
$$\rp{A}{B} = \sup_{\substack{0<r\leq \eta_1\\\omega\in S^{n_0-1}}} \q\|\frac{1}{r}\q(A\q(r\omega\w)-B\q(r\omega\w)\w)\w\|.$$
Note that $M$ is complete with respect to the metric $\rho$.
Define the map $T: M\rightarrow C\q( B_{n_0}\q( \eta_1\w);\mathbb{M}_{n_0\times n_0}\q( \R\w)  \w)$, by:
\begin{equation*}
TA \q( r\omega\w) = \begin{cases}
\frac{1}{r} \int_0^r -A\q(s\omega\w)^2-C_u\q(s\omega\w)A\q( s\omega\w)-C_u\q(s\omega\w) ds & \text{if $r\ne 0$},\\
0 & \text{if $r=0$}.
\end{cases}
\end{equation*}
Note that, by Lemma \ref{LemmaDefineContract}, $TA\in C\q( B_{n_0}\q( \eta_1\w);\mathbb{M}_{n_0\times n_0}\q( \R\w)  \w)$.

Our first goal is to show that $T:M\rightarrow M$.  Consider, for
$0<r\leq \eta_1$, $\omega\in S^{n_0-1}$, $A\in M$,
\begin{equation*}
\begin{split}
\q\|TA\q(r\omega\w)\w\| &\leq \frac{1}{r}\int_0^r \q\|A\q(s\omega\w)\w\|^2 +\q\|C_u\q(s\omega\w)\w\|\q\|A\q(s\omega\w)\w\| + \q\|C_u\q(s\omega\w)\w\| ds\\
&\leq \frac{1}{r} \int_0^r \q(\kappa^2 + Ds\kappa + Ds \w)ds\\
&\leq \kappa^2 + \frac{D\eta_1}{2}\kappa + \frac{D\eta_1}{2}\leq \kappa.
\end{split}
\end{equation*}
Thus, by the definition of $TA$, $\sup_{t\in B_{n_0}\q(\eta_1\w)} \q\|TA\q(t\w)\w\|\leq \kappa$.

Next, we have:
\begin{equation*}
\begin{split}
\q\|\frac{1}{r} TA\q(r\omega\w)\w\| &\leq \frac{1}{r^2} \int_0^r \q(s\kappa \rp{0}{A} + D s \kappa + Ds \w)ds\\
&= \frac{\kappa}{2}\rp{0}{A}+ \frac{D\kappa}{2}+\frac{D}{2}<\infty.
\end{split}
\end{equation*}
Hence, $T:M\rightarrow M$.

Next, we wish to show that $T$ is a contraction.  Consider, suppressing
the dependence on $s\omega$ in the integrals,
\begin{equation*}
\begin{split}
\q\|\frac{1}{r}\q(TA\q(r\omega\w) - TB\q(r\omega\w)\w)\w\| & 
= \q\| \frac{1}{r^2} \int_0^r -\q(A-B\w)A-B\q(A-B\w)-C_u\q(A-B\w)   \w\|\\
&\leq \frac{1}{r^2}\int_0^r \q(2s\kappa\rp{A}{B} + Ds^2\rp{A}{B} \w)ds\\
&\leq \kappa \rp{A}{B}+ \frac{D\eta_1}{3} \rp{A}{B}\\
&\leq \frac{3}{4}\rp{A}{B}
\end{split}
\end{equation*}
where the last line follows by our choice of $\eta_1$.
Thus, we have $\rp{TA}{TB}\leq \frac{3}{4}\rp{A}{B}$.

Applying the contraction mapping principle, there exists a unique
fixed point $A\in M$ such that $TA=A$.  This is the desired solution to
(\ref{EqnThmDiffEq}).  Since $A\in M$, we have $\sup_{t\in B_{n_0}\q(\eta_1\w)} \q\|A\q(t\w)\w\|\leq \kappa$.  Moreover, since
$A=\lim_{n\rightarrow \infty} T^n\q( 0\w)$, we have:
\begin{equation*}
\rp{0}{A}=\lim_{n\rightarrow \infty} \rp{0}{T^n0}\leq \sum_{n=1}^\infty \rp{T^{n-1}0}{T^{n}0} \leq \sum_{n=0}^\infty \q(\frac{3}{4}\w)^n \rp{0}{T0}=4\rp{0}{T0}
\end{equation*}
and for $r\ne 0$, we have:
\begin{equation*}
\q\|\frac{1}{r}T0\q( r\omega\w)\w\| \leq \frac{1}{r^2} \int_0^r Ds ds\leq \frac{D}{2}
\end{equation*}
and so $\rp{0}{T0}\lesssim 1$ and therefore $\rp{0}{A}\lesssim 1$.  This can
be rephrased as $\q\|A\q( t\w)\w\| \lesssim \q|t\w|$.

We now turn to uniqueness of the solution $A$.  Suppose $B$ is another solution
(we are not, necessarily, assuming $B\in M$).  Suppose that, for $\omega$
fixed, $\q\| B\q( r\omega\w)\w\| = O\q(r\w)$.  Then, we have:
\begin{equation*}
\q\|r\q(A\q(r\omega\w) -B\q(r\omega\w)\w)\w\|\leq \int_0^r \q( \q\|s\q(A-B\w)\w\|\q[\q\|\frac{A}{s}\w\|+\q\|\frac{B}{s}\w\| + \q\|\frac{C_u}{s}\w\|  \w] \w)ds
\end{equation*}
And applying the integral form of Gronwall's inequality to $\q\|r\q( A-B\w)\w\|$ shows
that $A=B$.

To conclude the proof, we need to show that if $\ct_{i,j}^k\in C^m$, then
$A\in C^m$, and to estimate the $C^m$ norm of $A$.  First, we show that
$A\in C^m$.  To do this, we will show that $T^n0\rightarrow A$ in $C^m\q(B_{n_0}\q(\eta_1\w);\mathbb{M}_{n_0\times n_0}\q( \R\w)\w)$ (here we mean the Banach space of those $C^m$ functions all of whose derivatives up to order $m$ are bounded on $B_{n_0}\q(\eta_1\w)$).  We proceed by induction on $m$, our base case being
$m=0$, which we have already proven, by the contraction mapping
principle.  Thus, suppose $\Cjn{m}{B_{n_0}\q( \eta_1\w)}{\ct_{i,j}^k}<\infty$
for $1\leq i,j,k\leq n_0$ and suppose
\begin{equation*}
\lim_{n\rightarrow \infty} \q\|T^n0 -A\w\|_{C^{m-1}\q(B_{n_0}\q(\eta_1\w);\mathbb{M}_{n_0\times n_0}\q( \R\w)\w)}=0.
\end{equation*}
Fix $\q|\alpha\w|=m$.  We will show that
$$\partial_u^\alpha T^n0$$
converges in $C^{0}\q(B_{n_0}\q(\eta_1\w);\mathbb{M}_{n_0\times n_0}\q( \R\w)\w)$, and this will complete the induction.  Note that, by Lemma \ref{LemmaDefineContract}
we know that, for each $n$, $T^n 0\in C^m$.
Fix $r\ne 0$, $\omega\in S^{n_0-1}$.

Define $\gamma_n = T^n\q( 0\w)$, $\gamma_\infty=A$.  By Lemma \ref{LemmaDefineContract}, we have, for $n<\infty$,
\begin{equation}\label{EqnDerivT}
\begin{split}
\partial_u^\alpha T\q( \gamma_n\w) \q( r\omega\w) &= \frac{1}{r^{m+1}}\int_0^r s^m \partial_u^\alpha \q(-\gamma_n^2-C_u\gamma_n-C_u\w)ds\\
&= \sum_{\alpha_1+\alpha_2=\alpha} \frac{1}{r^{m+1}} \int_0^r s^m\q(-\q(\partial_u^{\alpha_1} \gamma_n\w)\q(\partial_u^{\alpha_2}\gamma_n\w)-\q(\partial_u^{\alpha_1}C_u\w)\q(\partial_u^{\alpha_2}\gamma_n\w)\w)ds\\
&\quad - \frac{1}{r^{m+1}} \int_0^r s^m \partial_u^\alpha C_u ds.
\end{split}
\end{equation}
Define, for $l\in C^0\q(B_{n_0}\q(\eta_1\w);\mathbb{M}_{n_0\times n_0}\q(\R\w)\w)$, and for $0\leq n\leq \infty$,
\begin{equation}\label{EqnDefnQn}
\begin{split}
Q_n\q( l\w)\q( r\omega\w) &=  -\sum_{\substack{\alpha_1+\alpha_2=\alpha\\ \alpha_1\ne 0\\\alpha_2\ne 0}} \frac{1}{r^{m+1}} \int_0^r s^m\q(\partial_u^{\alpha_1} \gamma_n\w)\q(\partial_u^{\alpha_2}\gamma_n\w) ds\\
&\quad -\sum_{\substack{\alpha_1+\alpha_2=\alpha\\ \alpha_1\ne 0}} \frac{1}{r^{m+1}} \int_0^r s^m\q(\partial_u^{\alpha_1}C_u\w)\q(\partial_u^{\alpha_2}\gamma_n\w)ds\\
&\quad -\frac{1}{r^{m+1}} \int_0^r s^m \partial_u^\alpha C_u ds\\
&\quad -\frac{1}{r^{m+1}} \int_0^r s^m\q(l\gamma_n+\gamma_nl+C_u l\w)ds.
\end{split}
\end{equation}
Note that $Q_n\q( l\w)\q(u\w)$ extends continuously to $u=0$ and we have:
$$Q_n:C^0\q(B_{n_0}\q(\eta_1\w);\mathbb{M}_{n_0\times n_0}\q(\R\w)\w)\rightarrow C^0\q(B_{n_0}\q(\eta_1\w);\mathbb{M}_{n_0\times n_0}\q(\R\w)\w).$$
Putting (\ref{EqnDerivT}) and (\ref{EqnDefnQn}) together, we see, for $0\leq n<\infty$,
\begin{equation}\label{EqnQnWithT}
Q_n\q( \partial_u^\alpha \gamma_n\w) = \partial_u^{\alpha} T\q( \gamma_n\w).
\end{equation}

Our next goal is to show that $Q_n$ is a contraction ($n\leq \infty$),
as a map
$$Q_n:C^0\q(B_{n_0}\q(\eta_1\w);\mathbb{M}_{n_0\times n_0}\q(\R\w)\w)\rightarrow C^0\q(B_{n_0}\q(\eta_1\w);\mathbb{M}_{n_0\times n_0}\q(\R\w)\w).$$
Consider, for $r\ne 0$, $\omega\in S^{n_0-1}$,
and using that $\gamma_n\in M$ for all $n$,
\begin{equation*}
\begin{split}
&\q\|Q_n\q(l_1\w)\q(r\omega\w) -Q_n\q( l_2\w)\q(r\omega\w) \w\| \\
&\quad= \q\|\frac{1}{r^{m+1}}\int_0^r s^m \q[ \q(l_1-l_2\w)\gamma_n + \gamma_n\q(l_1-l_2\w) +C_u\q(l_1-l_2\w)\w]\w\|\\
&\quad \leq \q\|l_1-l_2\w\|_{C^0\q(B_{n_0}\q(\eta_1\w);\mathbb{M}_{n_0\times n_0}\q(\R\w)\w)} \frac{1}{r^{m+1}} \int_0^r s^m \q( 2\kappa + Ds\w)\\
&\quad \leq \q\|l_1-l_2\w\|_{C^0\q(B_{n_0}\q(\eta_1\w);\mathbb{M}_{n_0\times n_0}\q(\R\w)\w)}  \q(\frac{2\kappa}{m+1}+\frac{D\eta_1}{m+2}\w)\\
&\quad \leq \frac{3}{4} \q\|l_1-l_2\w\|_{C^0\q(B_{n_0}\q(\eta_1\w);\mathbb{M}_{n_0\times n_0}\q(\R\w)\w)}
\end{split}
\end{equation*}
where the last line follows by our choice of $\eta_1$.

Next, fix $l\in C^0\q(B_{n_0}\q(\eta_1\w);\mathbb{M}_{n_0\times n_0}\q(\R\w)\w)$.
We wish to show that $Q_n\q( l\w) \rightarrow Q_\infty\q( l\w)$ in $C^0\q(B_{n_0}\q(\eta_1\w);\mathbb{M}_{n_0\times n_0}\q(\R\w)\w)$.
Consider,
\begin{equation*}
\begin{split}
Q_n\q(l\w)\q(r\omega\w) - Q_\infty \q( l\w) \q(r\omega\w) &= 
-\sum_{\substack{\alpha_1+\alpha_2=\alpha\\ \alpha_1\ne 0\\\alpha_2\ne 0}} \frac{1}{r^{m+1}} \int_0^r s^m\q(\partial_u^{\alpha_1} \q(\gamma_n-\gamma_\infty\w)\w)\q(\partial_u^{\alpha_2}\gamma_n\w) ds\\
&\quad-\sum_{\substack{\alpha_1+\alpha_2=\alpha\\ \alpha_1\ne 0\\\alpha_2\ne 0}} \frac{1}{r^{m+1}} \int_0^r s^m\q(\partial_u^{\alpha_1} \gamma_\infty\w)\q(\partial_u^{\alpha_2}\q(\gamma_n-\gamma_\infty\w)\w) ds\\
&\quad -\sum_{\substack{\alpha_1+\alpha_2=\alpha\\ \alpha_1\ne 0}} \frac{1}{r^{m+1}} \int_0^r s^m\q(\partial_u^{\alpha_1}C_u\w)\q(\partial_u^{\alpha_2}\q(\gamma_n-\gamma_\infty\w)\w)ds\\
&\quad -\frac{1}{r^{m+1}} \int_0^r s^m\q(l\q(\gamma_n-\gamma_\infty\w)+\q(\gamma_n-\gamma_\infty\w)l+C_u l\w)ds.
\end{split}
\end{equation*}
Using our inductive hypothesis that $\gamma_n\rightarrow \gamma_\infty$
in $C^{m-1}\q(B_{n_0}\q(\eta_1\w);\mathbb{M}_{n_0\times n_0}\q(\R\w)\w)$
it is easy to show that the above goes to $0$ uniformly in $\q(r,\omega\w)$
as $n\rightarrow\infty$.

In particular, if we let $l_\infty$ be the unique fixed point
of the strict contraction $Q_\infty$ we have that $Q_n\q( l_\infty\w)\rightarrow l_\infty$
in $C^{0}\q(B_{n_0}\q(\eta_1\w);\mathbb{M}_{n_0\times n_0}\q(\R\w)\w)$.
Using (\ref{EqnQnWithT}), we have:
$$\partial_u^\alpha \gamma_{n+1} = \partial_u^\alpha T\q(\gamma_n\w) = Q_n\q( \partial_u^\alpha \gamma_n\w).$$
Hence, Lemma \ref{LemmaIzzoLemma} shows that:
$$\partial_u^{\alpha}\gamma_n \rightarrow l_\infty$$
which shows that $\partial_u^\alpha T^n0$ converges in $C^{0}\q(B_{n_0}\q(\eta_1\w);\mathbb{M}_{n_0\times n_0}\q(\R\w)\w)$.  It follows that $A\in C^{m}\q(B_{n_0}\q(\eta_1\w);\mathbb{M}_{n_0\times n_0}\q(\R\w)\w)$.

Moreover, we have that $\partial_u^\alpha A = l_\infty$, where $l_\infty$
was the unique fixed point of $Q_\infty$.  Hence, by the contraction mapping
principle, $\partial_u^{\alpha}A=\lim_{n\rightarrow \infty} Q_\infty^n 0$.
It follows, by a proof similar to the one we did before for $T$, that we have:
\begin{equation*}
\q\|\partial_u^\alpha A\w\|_{C^{0}\q(B_{n_0}\q(\eta_1\w);\mathbb{M}_{n_0\times n_0}\q(\R\w)\w)} \leq 4 \q\|Q_\infty\q(0\w)\w\|_{C^{0}\q(B_{n_0}\q(\eta_1\w);\mathbb{M}_{n_0\times n_0}\q(\R\w)\w)}.
\end{equation*}
Let us suppose, for induction that we have (\ref{EqnCmBoundForA}) for $m-1$.
Then to prove (\ref{EqnCmBoundForA}) for $m$ it suffices to show that:
$$\q\|Q_\infty\q(0\w)\w\|_{C^{0}\q(B_{n_0}\q(\eta_1\w);\mathbb{M}_{n_0\times n_0}\q(\R\w)\w)} \leq C_m\q(m,\widetilde{C}_{m,\eta_1}\w)$$
but this follows immediately from the inductive hypothesis and the definition
of $Q_\infty$.
\end{proof}

Now fix $\eta_1$ and $A$ as in the conclusion of Theorem \ref{ThmExistUniqDiffEq}, taking $\kappa=\frac{1}{2}$. 
\begin{lemma}\label{LemmaAequalAh}
$A\big|_{B_{n_0}\q(\delta\w)} = \widehat{A}$.
\end{lemma}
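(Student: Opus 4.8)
The plan is to observe that $\widehat A$ is itself a solution of the differential equation \eqref{EqnThmDiffEq} on the small ball $B_{n_0}(\delta)$, and then to conclude $A = \widehat A$ there by the uniqueness argument already carried out in the proof of Theorem \ref{ThmExistUniqDiffEq}. First I would recall the facts established in the paragraphs leading up to the statement: $\widehat A$ is continuous on $B_{n_0}(\delta)$ with $\widehat A(0) = 0$; for each fixed $\omega$ the function $r \mapsto \widehat A(r\omega)$ is $C^1$; and $\widehat A(r\omega) = O(r)$ as $r \to 0$ for each fixed $\omega$. The computation culminating in \eqref{EqnHatDiffEq} shows precisely that $\partial_r r \widehat A = -\widehat A^2 - C_u \widehat A - C_u$ holds on $B_{n_0}(\delta)$, i.e.\ that $\widehat A$ solves \eqref{EqnThmDiffEq}.

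After shrinking $\delta$ if necessary — the inverse function theorem conclusion for $\Phi$ persists on any smaller ball, so we may assume $\delta \le \eta_1$ and the restriction $A|_{B_{n_0}(\delta)}$ makes sense — both $A|_{B_{n_0}(\delta)}$ and $\widehat A$ satisfy \eqref{EqnThmDiffEq} on $B_{n_0}(\delta)$, and for each fixed $\omega$ both are $O(r)$ as $r \to 0$: for $A$ this is the bound $\|A(t)\| \lesssim |t|$ furnished by Theorem \ref{ThmExistUniqDiffEq}, and for $\widehat A$ it is the fact just recalled. Now I would run the same Gronwall estimate used for uniqueness in the proof of Theorem \ref{ThmExistUniqDiffEq}, ray by ray: fixing $\omega$ and setting $g(r) = \|r(A(r\omega) - \widehat A(r\omega))\|$, subtracting the two integrated forms of the ODE gives $g(r) \le \int_0^r g(s)\,\Psi(s)\,ds$ with $\Psi(s) = \|A(s\omega)\|/s + \|\widehat A(s\omega)\|/s + \|C_u(s\omega)\|/s$; each of the three terms of $\Psi$ is bounded on $(0,\delta]$ — the first by $\|A(t)\| \lesssim |t|$, the second by continuity of $\widehat A$ combined with $\widehat A(r\omega) = O(r)$, the third by $\|C_u(r\omega)\| \le Dr$ — so $\Psi$ is bounded there, and Gronwall's inequality forces $g \equiv 0$. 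Hence $A(r\omega) = \widehat A(r\omega)$ along every ray, and since both matrices are continuous at the origin, $A = \widehat A$ on all of $B_{n_0}(\delta)$.

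The only point requiring a word of care is that the uniqueness clause of Theorem \ref{ThmExistUniqDiffEq} is stated for a competitor solution defined on the full admissible ball $B_{n_0}(\eta_1)$, whereas $\widehat A$ is only defined on the (non-admissible) ball $B_{n_0}(\delta)$; but the Gronwall estimate above is entirely one-dimensional and uses only the interval $(0,\delta)$, so it applies here without change. No new ingredients beyond those already in the proof of Theorem \ref{ThmExistUniqDiffEq} are needed, so I expect no real obstacle.
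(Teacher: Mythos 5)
Your proof is correct and matches the paper's approach exactly: the paper disposes of this lemma in one line by invoking the already-established fact that $\widehat{A}(r\omega)=O(r)$ for fixed $\omega$ and noting that the conclusion then follows "just as in the proof of uniqueness in Theorem~\ref{ThmExistUniqDiffEq}." Your write-up merely spells out the ray-by-ray Gronwall step and the (correct) observation that this uniqueness argument is one-dimensional along rays and so applies on the smaller, non-admissible ball $B_{n_0}(\delta)$ without any modification.
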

\begin{proof}
Using, as remarked before, that for fixed $\omega$, $\q\|\widehat{A}\q(r\omega\w)\w\|=O\q(r\w)$, this follows just as in the proof of uniqueness in
Theorem \ref{ThmExistUniqDiffEq}.
\end{proof}
Lemma \ref{LemmaAequalAh} shows that we may extend the vector fields
$\widehat{Y}_j$ by setting:
$$Y_j = \partial_{u_j} + \sum_{k=1}^{n_0} a_j^k \partial_{u_k}$$
where $A_{j,k}=\q(a_j^k\w)$.

\begin{thm}\label{ThmPushForwardY}
$d\Phi\q( Y_j\w) = X_j$.
\end{thm}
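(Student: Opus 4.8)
The plan is to prove the identity first on the small ball $B_{n_0}\q(\delta\w)$, where it holds essentially by construction, and then to propagate it to all of $B_{n_0}\q(\eta_1\w)$ by a uniqueness argument along rays through the origin. By Lemma \ref{LemmaAequalAh}, $A$ agrees with $\widehat{A}$ on $B_{n_0}\q(\delta\w)$, so $Y_j=\widehat{Y}_j$ there; and $\widehat{Y}_j$ was \emph{defined} to be the pullback of $X_j$ under the $C^1$ diffeomorphism $\Phi\big|_{B_{n_0}\q(\delta\w)}$, so $d\Phi\q(Y_j\w)=X_j$ on $B_{n_0}\q(\delta\w)$ for every $j$. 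Since $\Phi$ is $C^1$ (Theorem \ref{ThmExpReg}) and $A$ is continuous, for each fixed $\omega\in S^{n_0-1}$ the $\R^n$-valued curves $r\mapsto d\Phi\q(Y_j\w)\q(r\omega\w)$ and $r\mapsto X_j\q(\Phi\q(r\omega\w)\w)$ are continuous on $\q[0,\eta_1\w)$ and agree on $\q[0,\delta\w)$; hence it suffices to show they agree on all of $\q[0,\eta_1\w)$, which I will do by producing a first-order linear ODE in $r$ that both satisfy.

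The crucial input is that the identity obtained by rearranging \eqref{EqnVFIdentity}--\eqref{EqnSecondYjCommute}, namely
\[
\q[ \sum_{m=1}^{n_0} u_m\partial_{u_m},\, Y_i \w] = -2Y_i+\partial_{u_i}-\sum_{j=1}^{n_0} a_i^j\q(Y_j-\partial_{u_j}\w)-\sum_{j,l} u_j\ct_{i,j}^l\, Y_l ,
\]
holds for the \emph{global} $Y_i$ on all of $B_{n_0}\q(\eta_1\w)$: after substituting $Y_i=\partial_{u_i}+\sum_k a_i^k\partial_{u_k}$, this identity is, by a purely algebraic computation with the matrices $\widehat{A}$ and $C_u$, equivalent to the ODE \eqref{EqnHatDiffEq}, which the global $A$ satisfies by Theorem \ref{ThmExistUniqDiffEq}. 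I will combine this with two elementary facts: $\sum_m u_m\partial_{u_m}\Phi\q(u\w)=\q(\sum_m u_m X_m\w)\q(\Phi\q(u\w)\w)$, immediate from $\Phi\q(r\omega\w)=\exp\q(r\q(\omega\cdot X\w)\w)x_0$; and $d\Phi\q(\partial_{u_k}\w)=\sum_m\q(\q(I+A\w)^{-1}\w)_{k,m}\, d\Phi\q(Y_m\w)$, which is pure matrix algebra since $\q\|A\w\|\le\frac{1}{2}$ makes $I+A$ invertible.

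Set $\zeta_i\q(u\w):=d\Phi\q(Y_i\w)\q(u\w)-X_i\q(\Phi\q(u\w)\w)$; we know $\zeta_i\equiv0$ on $B_{n_0}\q(\delta\w)$ and must show $\zeta_i\equiv0$ throughout $B_{n_0}\q(\eta_1\w)$. Applying the displayed bracket identity to functions of the form $f\circ\Phi$ and invoking the two elementary facts, the operator $\sum_m u_m\partial_{u_m}$ on the left becomes, along each ray, a genuine $r$-derivative, while every remaining term is expressed linearly in the $\zeta_j$ with continuous, bounded coefficients built from $A$, the $\ct_{i,j}^l$, and $X$. One thereby obtains, for each fixed $\omega$, a linear system $r\frac{d}{dr}\zeta_i\q(r\omega\w)=\sum_j M_{i,j}\q(r\omega\w)\zeta_j\q(r\omega\w)$ together with $\zeta_i\q(r\omega\w)=O\q(r\w)$ as $r\to0$. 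On any interval $\q[\delta/2,\eta_1\w)$ this is a regular linear ODE with $\zeta_i\q(\frac{\delta}{2}\omega\w)=0$, so uniqueness forces $\zeta_i\equiv0$ there; together with $\zeta_i\equiv0$ on $B_{n_0}\q(\delta\w)$, and since $\omega$ was arbitrary, this gives $d\Phi\q(Y_j\w)=X_j$ on $B_{n_0}\q(\eta_1\w)$.

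The step I expect to be the main obstacle is making the previous paragraph rigorous under only $C^1$ regularity: $\Phi$ is merely $C^1$ and the $Y_j$ merely $C^0$, so neither the pushforward of a Lie-bracket identity nor the naive $r$-differentiation of $d\Phi\q(Y_i\w)\q(r\omega\w)$ — which would call for $\partial_r\partial_{u_k}\Phi$ — is literally justified. As in the derivation of \eqref{EqnHatDiffEq} and the remark following \eqref{EqnFirstYjCommute}, these manipulations must be reorganized so that the left-hand side is read distributionally and the right-hand side is computed by composing with $\Phi$, using the $C^1$-in-$r$ regularity along rays supplied by Remark \ref{RmkExpMoreReg}; one must also check that, after multiplying through by $r$, the coefficients $M_{i,j}$ remain bounded up to $r=0$, so that the classical uniqueness theorem for linear ODEs applies on $\q(0,\eta_1\w)$ once the $O\q(r\w)$ behaviour at the origin is used to anchor the solution.
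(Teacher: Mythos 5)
Your argument is correct and takes a genuinely different route from the paper. The paper proves Theorem \ref{ThmPushForwardY} indirectly via a bootstrap: Lemma \ref{LemmaPushForwardY} shows the identity propagates along a ray provided $d\Phi$ has full rank there (locally inverting $\Phi$, pulling back $X_j$, and appealing to uniqueness of the ODE \eqref{EqnThmDiffEq} away from $r=0$); the unnamed lemma after Theorem \ref{ThmEstJac} then shows the identity keeps $\q|\det_{n_0\times n_0}d\Phi\w|$ bounded below via \eqref{EqnDetsEqual} and Lemma \ref{LemmaDetsDontChange}; and a connectedness argument closes the loop, giving Theorem \ref{ThmEstJac} and hence Theorem \ref{ThmPushForwardY}. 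You instead set up a closed, \emph{homogeneous} linear ODE $r\partial_r\zeta_i=\sum_j M_{i,j}\zeta_j$ directly for the error $\zeta_i = d\Phi\q(Y_i\w)-X_i\circ\Phi$ and conclude by regular ODE uniqueness on $\q[\delta/2,\eta_1\w)$, anchored at $r=\delta/2$ by Lemma \ref{LemmaAequalAh}. This never needs non-degeneracy of $d\Phi$ as a hypothesis and never goes through the inverse function theorem; Theorem \ref{ThmEstJac} then drops out afterwards from $d\Phi\q(I+A\w)\grad_u=X$ and \eqref{EqnDetsEqual}, rather than being entangled with the proof. The trade-off is that your route requires an explicit (and not entirely trivial) computation that the paper's cleverer reuse of Theorem \ref{ThmExistUniqDiffEq} sidesteps.

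One point you should make explicit, since you only assert that ``every remaining term is expressed linearly in the $\zeta_j$'': when one carries out the computation, applying the bracket identity to the coordinate functions $\Phi^a$ and substituting $d\Phi\q(Y_j\w)=\zeta_j+X_j\circ\Phi$, the $\zeta$-free part of the resulting right-hand side is
$$\sum_m u_m\q(X_iX_m-X_mX_i\w)\circ\Phi - \sum_{m,l}u_m\,\ct_{i,m}^l\,X_l\circ\Phi,$$
and this vanishes \emph{only} because of the integrability condition $\q[X_i,X_m\w]=\sum_l c_{i,m}^l X_l$ on $\B{X}{d}{x_0}{\xi}$. So the integrability hypothesis is used a second time here, beyond its appearance in the construction of the ODE \eqref{EqnHatDiffEq}: once to guarantee $A$ satisfies the bracket identity globally, and once more to kill the inhomogeneous term so that $\zeta\equiv 0$ is a solution. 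You flag the $C^1$-regularity of $\partial_r\partial_{u_k}\Phi$ (via Remark \ref{RmkExpMoreReg}) as the expected obstacle, and that is a legitimate point to nail down, but the cancellation above is the step that actually makes the uniqueness argument conclusive, and it deserves to be displayed rather than absorbed into the phrase ``expressed linearly.''
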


To prove Theorem \ref{ThmPushForwardY}, we need a preliminary lemma:
\begin{lemma}\label{LemmaPushForwardY}
Fix $\omega\in S^{n_0-1}$, $r_0< \eta_1$, and suppose that
for all $r\leq r_0$, $\q| \det_{n_0\times n_0} d\Phi\q( r\omega\w)\w|\ne 0$.
Then, on the line $\q\{r\omega: 0\leq r\leq r_0 \w\}$,
we have $d\Phi\q( Y_j\w) = X_j$, $1\leq j\leq n_0$.
\end{lemma}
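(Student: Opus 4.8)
The plan is to establish the identity by continuation along the ray $\{r\omega:0\le r\le r_0\}$, starting from the neighbourhood of the origin on which $\Phi$ is already a diffeomorphism. Let $S$ be the set of $r^{*}\in[0,r_0]$ such that $d\Phi(r\omega)\,Y_j(r\omega)=X_j(\Phi(r\omega))$ for all $r\in[0,r^{*}]$ and all $1\le j\le n_0$. Then $S$ is an initial segment of $[0,r_0]$ and is closed by continuity, so $S=[0,\sup S]$; I want to show $\sup S=r_0$. For the base case, $Y_j(0)=\partial_{u_j}$ since $A(0)=0$, and $d\Phi(0)=X(x_0)$, so the identity holds at $r=0$; moreover on the (non-admissible) ball $B_{n_0}(\delta)$ where $\Phi$ is a $C^{1}$ diffeomorphism onto an open piece of the leaf through $x_0$ we have $Y_j=\widehat Y_j$ by Lemma \ref{LemmaAequalAh}, and $\widehat Y_j$ is by construction the pullback of $X_j$; hence $[0,\delta)\cap[0,r_0]\subseteq S$ and in particular $\sup S>0$.

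The substance is the openness of $S$. Suppose $r^{*}=\sup S$ and, for contradiction, $r^{*}<r_0$. By hypothesis $d\Phi(r^{*}\omega)$ has rank $n_0$, so $\Phi$ is an immersion at $p:=r^{*}\omega$; since $\mathrm{rank}\,d\Phi(p)=n_0=\dim(\text{leaf})$, the image of $d\Phi(p)$ is exactly the tangent space of the leaf at $\Phi(p)$, which contains $X_j(\Phi(p))$. The inverse function theorem (Theorem \ref{ThmInverseFunctionThm}), applied to $\Phi$ viewed as a map into the leaf, gives a neighbourhood $U\ni p$ on which $\Phi$ is a $C^{1}$ diffeomorphism onto an open subset of the leaf; pulling back $X_j$ via $\Phi|_{U}$ produces $C^{0}$ vector fields $\widehat Y_j^{U}$ on $U$ with $d\Phi(\widehat Y_j^{U})=X_j$. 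The identity $d\Phi(r\partial_r)=r\,(\omega\cdot X)\circ\Phi$ underlying \eqref{EqnVFIdentity} holds along the whole ray, so $d\Phi(\sum_l u_l\partial_{u_l})=\sum_l u_l X_l\circ\Phi$ on $U$; comparing with $d\Phi(\sum_l u_l\widehat Y_l^{U})=\sum_l u_l X_l\circ\Phi$ and using that $d\Phi$ is injective on $U$ gives the genuine vector field identity $\sum_l u_l\partial_{u_l}=\sum_l u_l\widehat Y_l^{U}$ on $U$. Now the derivation carrying \eqref{EqnVFIdentity} to the matrix ODE \eqref{EqnHatDiffEq} — take the bracket with $\widehat Y_i^{U}$, substitute the coefficients, and read off the $\partial_{u_k}$ component — applies verbatim on $U$, so the coefficient matrix $\widehat A^{U}$ of the fields $\widehat Y_j^{U}$ solves \eqref{EqnThmDiffEq} on $U$. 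But $A$ solves \eqref{EqnThmDiffEq} on all of $B_{n_0}(\eta_1)$, and $\widehat A^{U}(s\omega)=A(s\omega)$ for $s\le r^{*}$ with $s\omega\in U$ (both fields push forward under the injective $d\Phi$ to $X_j$, hence agree). Restricting to the ray, $s\mapsto A(s\omega)$ and $s\mapsto\widehat A^{U}(s\omega)$ solve the same radial ODE, which is a non-singular ODE for $s$ in a neighbourhood of $r^{*}>0$, and they agree for $s$ slightly below $r^{*}$; uniqueness forces them to coincide near $r^{*}$. Hence $Y_j=\widehat Y_j^{U}$, so $d\Phi(Y_j)=X_j$, for $s$ slightly exceeding $r^{*}$, contradicting $r^{*}=\sup S$. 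Therefore $S=[0,r_0]$.

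The hard part is not a single estimate but the care required at two points of the openness step. First, $Y_j$ and $\widehat Y_j^{U}$ are only $C^{0}$ transverse to the radial direction, so the bracket computations must be justified exactly as in the remark following \eqref{EqnFirstYjCommute}: the fields are $C^{1}$ in the radial variable, the left-hand sides are taken in the sense of distributions, and the right-hand sides are computed by pushing forward via $\Phi$. Second, the singular point $r=0$ must be avoided in the uniqueness comparison; this is precisely why the base case of Lemma \ref{LemmaAequalAh} is invoked to force $\sup S>0$, so that every matching argument takes place where $s$ is bounded away from $0$ and the ODE is regular. Once the lemma is in hand it propagates to Theorem \ref{ThmPushForwardY}: the set of $r$ with $d\Phi(r\omega)$ nondegenerate is open, and by the lemma together with Lemma \ref{LemmaDetsDontChange} applied on the leaf (so that $|\det_{n_0\times n_0}X(\Phi(u))|\approx|\det_{n_0\times n_0}X(x_0)|\ne0$) it is also closed.
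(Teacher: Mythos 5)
Your proof is correct and follows essentially the same route as the paper's: a continuation argument along the ray, initiated on the non-admissible neighbourhood where $\Phi$ is a diffeomorphism (so that $Y_j=\widehat Y_j$ by Lemma~\ref{LemmaAequalAh}), extended past any $r^{*}<r_0$ by applying the inverse function theorem at $r^{*}\omega$, deriving the same ODE~\eqref{EqnThmDiffEq} for the locally pulled-back fields, and invoking uniqueness of the (nonsingular, away from $r=0$) radial ODE. The only cosmetic difference is that you phrase the continuation as an ``open and closed initial segment'' argument while the paper picks explicit radii $r_2<r_3<r_1<r_4$ and contradicts the definition of the supremum; you also spell out the step (left implicit in the paper) of recovering the genuine vector-field identity $\sum_l u_l\partial_{u_l}=\sum_l u_l\widehat Y_l^{U}$ on $U$ from $d\Phi(r\partial_r)=r(\omega\cdot X)\circ\Phi$ and the injectivity of $d\Phi$, which is a nice clarification.
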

\begin{proof}
Suppose not.  Define
$$r_1 = \sup\q\{r\geq 0: d\Phi\q( Y_j\w) = X_j \text{ on the line } \q\{r'\omega: 0\leq r'\leq r\w\}, 1\leq j\leq n_0\w\}.$$
Then we must have $r_1<r_0$ (by continuity).
Since $\widehat{Y}_j=Y_j\big|_{B_{n_0}\q(\delta\w)}$, we know that $r_1>0$.
Since $\q|\det_{n_0\times n_0} d\Phi\q( r_1\omega\w)\w|\ne 0$, the
inverse function theorem implies that there exists a neighborhood
$V$ of $r_1\omega$ such that $\Phi:V\rightarrow \Phi\q( V\w)$ is a
$C^1$ diffeomorphism.

Pick $0<r_2<r_3<r_1<r_4$ such that:
$$\q\{r'\omega: r_2\leq r'\leq r_4\w\}\subset V.$$
Let $\widetilde{Y}_j$ be the pullback of $X_j$ to $V$ via the map $\Phi$.
By our choice of $r_1$, we have that on the line $\q\{r'\omega: r_2\leq r'\leq r_3\w\}$, $\widetilde{Y}_j=Y_j$.  On the other hand, if we write:
$$\widetilde{Y}_j=\partial_{u_j}+\tilde{a}_j^k \partial_{u_k}$$
then the coefficients $\tilde{a}_j^k$ satisfy the differential
equation (\ref{EqnThmDiffEq}) (this follows just as before).  Away from $r=0$ this is a standard
ODE, so standard uniqueness theorems (say using
Gronwall's inequality) show that $\widetilde{Y}_j=Y_j$ on the line
$\q\{r'\omega: r_2\leq r'\leq r_4\w\}$.  This contradicts our
choice of $r_1$.
\end{proof}

From here, Theorem \ref{ThmPushForwardY} will follow immediately from the
following theorem:
\begin{thm}\label{ThmEstJac}
For all $t\in B_{n_0}\q( \eta_1\w)$,
$$\q|\det_{n_0\times n_0} d\Phi\q( t\w)\w| \approx \q|\det_{n_0\times n_0} d\Phi\q( 0\w)\w| = \q|\det_{n_0\times n_0} X\q( x_0\w)\w|.$$
\end{thm}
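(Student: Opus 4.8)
The plan is to reduce the claim to Lemma \ref{LemmaDetsDontChange} by using the vector fields $Y_j$ to factor the Jacobian of $\Phi$, and then to propagate nondegeneracy of $d\Phi$ outward from the origin by a continuity argument along rays.

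First I would record the algebraic identity that is available wherever $d\Phi$ is already known to be nondegenerate. Fix $\omega\in S^{n_0-1}$ and suppose $\q|\det_{n_0\times n_0}d\Phi\q(r\omega\w)\w|\ne 0$ for all $r$ in some segment $\q\{r\omega:0\le r\le r_1\w\}$ with $r_1<\eta_1$. By Lemma \ref{LemmaPushForwardY} this forces $d\Phi\q(Y_j\w)=X_j$ on that segment. Writing $\q(Y_1,\ldots,Y_{n_0}\w)=\q(I+A\w)\grad_u$ as in Theorem \ref{ThmMainFrobThm}, so that $\q(I+A\w)_{j,k}=\delta_{j,k}+a_j^k$, this identity is exactly the statement that, as $n\times n_0$ matrices,
$$X\q(\Phi\q(u\w)\w)=d\Phi\q(u\w)\q(I+A\q(u\w)\w)^{T}$$
on that segment. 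Taking the $n_0\times n_0$ minor indexed by each $I\in\sI{n_0}{n}$ and using multiplicativity of the determinant on the resulting product of $n_0\times n_0$ matrices gives
$$\q|\det_{n_0\times n_0}X\q(\Phi\q(u\w)\w)\w|=\q|\det\q(I+A\q(u\w)\w)\w|\,\q|\det_{n_0\times n_0}d\Phi\q(u\w)\w|.$$
Since Theorem \ref{ThmExistUniqDiffEq} gives $\sup_u\q\|A\q(u\w)\w\|\le\tfrac12$, every eigenvalue of $I+A\q(u\w)$ has modulus in $\q[\tfrac12,\tfrac32\w]$, hence $\q|\det\q(I+A\q(u\w)\w)\w|\approx 1$; and since $\Phi\q(u\w)\in\Bt{X}{d}{x_0}{\xi}\subseteq\B{X}{d}{x_0}{\xi}$, Lemma \ref{LemmaDetsDontChange} gives $\q|\det_{n_0\times n_0}X\q(\Phi\q(u\w)\w)\w|\approx\q|\det_{n_0\times n_0}X\q(x_0\w)\w|$. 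Combining these, on any segment of the ray where $d\Phi$ has already been shown to be nondegenerate we obtain $\q|\det_{n_0\times n_0}d\Phi\q(u\w)\w|\approx\q|\det_{n_0\times n_0}X\q(x_0\w)\w|$, in particular a uniform (admissible) positive lower bound.

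Then I would run the continuity argument. Since $d\Phi\q(0\w)=X\q(x_0\w)$ and $X_1\q(x_0\w),\ldots,X_{n_0}\q(x_0\w)$ are linearly independent, $\det_{n_0\times n_0}d\Phi\q(0\w)\ne0$, so (viewing $\Phi$ as a map into the leaf through $x_0$, as in the discussion preceding Lemma \ref{LemmaAequalAh}) $d\Phi$ is nondegenerate near $0$. Fix $\omega$ and set $r_0=\sup\q\{r\in[0,\eta_1):\det_{n_0\times n_0}d\Phi\q(r'\omega\w)\ne0\text{ for all }r'\le r\w\}>0$. For every $r_1<r_0$ the hypothesis of Lemma \ref{LemmaPushForwardY} holds on $\q\{r\omega:0\le r\le r_1\w\}$, so by the previous step $\q|\det_{n_0\times n_0}d\Phi\q(r\omega\w)\w|\approx\q|\det_{n_0\times n_0}X\q(x_0\w)\w|$ for all $r<r_0$; by continuity this estimate, hence nonvanishing, persists at $r=r_0$, and then on a neighborhood of $r_0\omega$. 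If $r_0<\eta_1$ this contradicts the definition of $r_0$, so $r_0=\eta_1$. Thus the estimate holds for all $t\in B_{n_0}\q(\eta_1\w)$, and the final equality is just $d\Phi\q(0\w)=X\q(x_0\w)$.

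The main obstacle is the bootstrap itself: Lemma \ref{LemmaPushForwardY} is only a conditional local statement, so one must be careful to feed it only segments where nondegeneracy is already established, and crucially one must exploit the \emph{uniform} lower bound on $\q|\det_{n_0\times n_0}d\Phi\w|$ (not merely its nonvanishing) to prevent $r_0$ from being an interior zero. Once that is handled, everything else reduces to the linear algebra of the factorization $X\circ\Phi=d\Phi\,\q(I+A\w)^{T}$ together with Lemma \ref{LemmaDetsDontChange} and Theorem \ref{ThmExistUniqDiffEq}.
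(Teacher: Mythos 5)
Your proposal is correct and follows essentially the same route as the paper: both isolate a conditional lemma along rays (nondegeneracy of $d\Phi$ implies $d\Phi(Y_j)=X_j$ via Lemma \ref{LemmaPushForwardY}), factor $X\circ\Phi = d\Phi\,(I+A)$, bound $|\det(I+A)|\approx 1$ using $\|A\|\le\tfrac12$, invoke Lemma \ref{LemmaDetsDontChange}, and then close with a continuity bootstrap. The only cosmetic differences are that you compute $\q|\det_{n_0\times n_0}\q(d\Phi\q(I+A\w)\w)\w|$ by minor-wise multiplicativity where the paper uses the Cauchy-Binet identity $\q|\det_{n_0\times n_0}B\w|=\sqrt{\det\q(B^tB\w)}$, and you spell out the $r_0=\sup\{\cdots\}$ argument that the paper leaves as ``a simple continuity argument.''
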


Theorem \ref{ThmEstJac}, in turn, follows immediately from the following
lemma and a simple continuity argument:
\begin{lemma}
Fix $\omega\in S^{n_0-1}$, $0<r_0<\eta_1$.  Suppose for $0\leq r\leq r_0$,
$\q|\det_{n_0\times n_0} d\Phi \q( r\omega\w) \w|\ne 0$.  Then, for all
$0\leq r\leq r_0$,
$$\q|\det_{n_0\times n_0} d\Phi\q( r\omega\w)\w|\approx \q|\det_{n_0\times n_0} d\Phi \q( 0 \w)\w|$$
Here, the implicit constants depend on neither $r_0$ nor $\omega$.
\end{lemma}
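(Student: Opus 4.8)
The plan is to derive the claim from the local push-forward identity of Lemma~\ref{LemmaPushForwardY} together with the determinant-stability result of Lemma~\ref{LemmaDetsDontChange}. Fix $\omega$ and $r_0$ as in the statement. Because $|\det_{n_0\times n_0} d\Phi(r\omega)|\ne 0$ for every $r\le r_0$, Lemma~\ref{LemmaPushForwardY} applies on the whole segment $\{r\omega:0\le r\le r_0\}$ and yields $d\Phi(Y_j)=X_j$ there, where $Y_j=\partial_{u_j}+\sum_k a_j^k\partial_{u_k}$ and $A=(a_j^k)$ is the solution from Theorem~\ref{ThmExistUniqDiffEq} (with $\kappa=\tfrac12$). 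The key point to check for soundness is the absence of circularity: Lemma~\ref{LemmaPushForwardY} is established before Theorem~\ref{ThmEstJac}, using only the local inverse function theorem and ODE uniqueness, so invoking it here is legitimate.

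Next I would convert this into a matrix identity. View $d\Phi(r\omega)$ as the $n\times n_0$ Jacobian matrix with columns $d\Phi(\partial_{u_k})$, and $X(\Phi(r\omega))$ as the $n\times n_0$ matrix with columns $X_j$. By linearity, $d\Phi(Y_j)=\sum_k(I+A)_{jk}\,d\Phi(\partial_{u_k})$, so $d\Phi(Y_j)=X_j$ on the segment says exactly $X(\Phi(r\omega))=d\Phi(r\omega)\,(I+A(r\omega))^{T}$. Since $\|A(r\omega)\|\le\tfrac12$ on $B_{n_0}(\eta_1)$ by Theorem~\ref{ThmExistUniqDiffEq}, the matrix $I+A(r\omega)$ is invertible, hence $d\Phi(r\omega)=X(\Phi(r\omega))\,\big((I+A(r\omega))^{T}\big)^{-1}$. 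Restricting to the rows indexed by any $I\in\sI{n_0}{n}$ and taking determinants of the resulting $n_0\times n_0$ submatrices, each minor of $d\Phi(r\omega)$ equals the corresponding minor of $X(\Phi(r\omega))$ divided by $\det(I+A(r\omega))$, so
$$|\det\nolimits_{n_0\times n_0} d\Phi(r\omega)| = |\det(I+A(r\omega))|^{-1}\,|\det\nolimits_{n_0\times n_0} X(\Phi(r\omega))|.$$

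Finally I would close with two routine estimates. Because $\|A(r\omega)\|\le\tfrac12$ and $n_0\le n$ is bounded by an admissible constant, the singular values of $I+A(r\omega)$ lie in $[\tfrac12,\tfrac32]$, so $|\det(I+A(r\omega))|\approx 1$. And $\Phi(r\omega)\in\Bt{X}{d}{x_0}{\xi}\subseteq\B{X}{d}{x_0}{\xi}$, so Lemma~\ref{LemmaDetsDontChange} with $n_1=n_0$ gives $|\det_{n_0\times n_0}X(\Phi(r\omega))|\approx|\det_{n_0\times n_0}X(x_0)|$; combined with $d\Phi(0)=X(x_0)$ this gives $|\det_{n_0\times n_0}X(x_0)|=|\det_{n_0\times n_0}d\Phi(0)|$. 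Chaining the three comparisons proves $|\det_{n_0\times n_0}d\Phi(r\omega)|\approx|\det_{n_0\times n_0}d\Phi(0)|$, and since every implicit constant used is admissible (depending only on the quantities fixed at the start of the section) it depends on neither $r_0$ nor $\omega$. The only mildly delicate step is the matrix bookkeeping in the second paragraph — getting the transpose and the direction of the factor $(I+A)^{T}$ right — but once the identity $X=d\Phi\,(I+A)^{T}$ is in place everything else is immediate; Theorem~\ref{ThmEstJac} then follows from this lemma by a standard continuity/connectedness argument, as stated.
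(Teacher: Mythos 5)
Your proposal is correct and follows essentially the same path as the paper's own proof: invoke Lemma~\ref{LemmaPushForwardY} on the segment, rewrite $d\Phi(Y_j)=X_j$ as the matrix identity $X(\Phi(r\omega))=d\Phi(r\omega)(I+A(r\omega))^{T}$, deduce $|\det_{n_0\times n_0}d\Phi|=|\det(I+A)|^{-1}|\det_{n_0\times n_0}X\circ\Phi|$, and close with $\|A\|\le\tfrac12$ and Lemma~\ref{LemmaDetsDontChange}. The only cosmetic difference is that you extract the scalar factor by comparing $n_0\times n_0$ minors row-by-row, whereas the paper uses the Gram-determinant identity~(\ref{EqnDetsEqual}) directly; these are equivalent computations.
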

\begin{proof}
By Lemma \ref{LemmaPushForwardY}, for all $0\leq r\leq r_0$, we have 
$$d\Phi\q( Y_j\w)\q( \Phi\q(r\omega\w)\w) = X_j\q( \Phi\q( r\omega\w)\w).$$
Rewriting this in matrix notation, we have:
$$d\Phi\q(\q(I+A\w)\grad_u\w)\q(\Phi\q(r\omega\w)\w) = X\q( \Phi\q(r\omega\w)\w).$$
Thus, by applying (\ref{EqnDetsEqual}), we have at the point $\Phi\q( r\omega\w)$:
\begin{equation*}
\begin{split}
\q|\det_{n_0\times n_0} X\w| &= \q|\det_{n_0\times n_0} d\Phi\q(I+A\w)\w|\\
&= \sqrt{\det\q(\q(I+A\w)^{t} d\Phi^{t} d\Phi \q(I+A\w)\w)}\\
&= \q|\det \q(I+A\w)\w|\sqrt{\det\q(d\Phi^t d\Phi\w)}\\
&= \q|\det \q(I+A\w)\w|\q|\det_{n_0\times n_0} d\Phi\w|.
\end{split}
\end{equation*}
However, we have $\q\|A\w\|\leq \frac{1}{2}$, and so we have:
\begin{equation*}
\q|\det_{n_0\times n_0}d\Phi\q(r\omega\w)\w|\approx \q|\det_{n_0\times n_0} X\q(\Phi\q( r\omega\w)\w)\w|.
\end{equation*}
Applying Lemma \ref{LemmaDetsDontChange}, we see that
$$\q|\det_{n_0\times n_0} X\q(\Phi\q( r\omega\w)\w)\w| \approx \q|\det_{n_0\times n_0} X\q( x_0\w)\w|=\q|\det_{n_0\times n_0} d\Phi\q(0\w)\w|,$$
completing the proof.
\end{proof}

\begin{prop}\label{PropEstCmOfA}
We have, for $m\geq 0$:
$$\Cjn{m+1}{B_{n_0}\q( \eta_1\w)}{f} \approx_{m} \sum_{\q|\alpha\w|\leq m+1} \Cjn{0}{B_{n_0}\q(\eta\w)}{Y^\alpha f}$$
and,
$$\Cjn{m}{B_{n_0}\q(\eta_1\w);\mathbb{M}_{n_0\times n_0}\q(R\w)}{A}\lesssim_m 1.$$
It immediately follows that:
$$\Cjn{m}{B_{n_0}\q(\eta_1\w)}{Y_j}\lesssim_m 1$$
in particular,
$$\Cjn{2}{B_{n_0}\q(\eta_1\w)}{Y_j}\lesssim 1.$$
\end{prop}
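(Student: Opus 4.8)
The plan is to reduce all three assertions to the single estimate $\Cjn{m}{B_{n_0}\q(\eta_1\w)}{A}\lesssim_m 1$ and to prove that by induction on $m\geq 0$. Granting it for all $m$, the norm equivalence becomes a routine exercise in multivariable calculus: since $\q\|A\w\|\leq\tfrac12$, the matrix $I+A$ is invertible with $\q\|\q(I+A\w)^{-1}\w\|\leq 2$, and repeatedly differentiating $\q(I+A\w)^{-1}\q(I+A\w)=I$ exhibits the derivatives of $\q(I+A\w)^{-1}$ as universal polynomials in $\q(I+A\w)^{-1}$ and the derivatives of $A$, so $\q(I+A\w)^{-1}$ inherits the $C^k$ bounds of $A$; then from $\q(Y_1,\ldots,Y_{n_0}\w)=\q(I+A\w)\grad_u$ and its inverse $\grad_u=\q(I+A\w)^{-1}\q(Y_1,\ldots,Y_{n_0}\w)$, the Leibniz rule converts between $Y^\alpha f$ and $\partial_u^\beta f$ for $\q|\alpha\w|,\q|\beta\w|\leq m+1$ at the cost of the entries of $A$ and $\q(I+A\w)^{-1}$ differentiated at most $m$ times (a simple bookkeeping check: a $B$-factor created during the rewriting can be differentiated in at most $m$ of the remaining steps). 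The bound $\Cjn{m}{B_{n_0}\q(\eta_1\w)}{Y_j}\lesssim_m 1$ is then immediate from $Y_j=\partial_{u_j}+\sum_k a_j^k\partial_{u_k}$ with $\q(a_j^k\w)=A$.

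For the induction, the base case $m=0$ is just $\Cjn{0}{B_{n_0}\q(\eta_1\w)}{A}\leq\tfrac12$ from Theorem \ref{ThmExistUniqDiffEq} (with $\kappa=\tfrac12$). For the inductive step I would assume $\Cjn{m-1}{B_{n_0}\q(\eta_1\w)}{A}\lesssim_{m-1}1$ with $m\geq 1$. As in the previous paragraph, but now needing only the $C^{m-1}$ bound on $A$, this yields the auxiliary equivalence $\Cjn{m}{B_{n_0}\q(\eta_1\w)}{g}\approx_{m-1}\sum_{\q|\alpha\w|\leq m}\Cjn{0}{B_{n_0}\q(\eta_1\w)}{Y^\alpha g}$ for all $g$. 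Apply it to $g=\ct_{i,j}^k=c_{i,j}^k\circ\Phi$: since $d\Phi\q(Y_l\w)=X_l$ by Theorem \ref{ThmPushForwardY}, differentiating along curves tangent to the $Y_l$ gives $Y^\alpha\ct_{i,j}^k=\q(X^\alpha c_{i,j}^k\w)\circ\Phi$ for $\q|\alpha\w|\leq m$; since $\Phi$ maps $B_{n_0}\q(\eta_1\w)$ into $\B{X}{d}{x_0}{\xi}$ and $m$-admissibility bounds $\sum_{\q|\alpha\w|\leq m}\Cjn{0}{\B{X}{d}{x_0}{\xi}}{X^\alpha c_{i,j}^k}$, we get $\Cjn{m}{B_{n_0}\q(\eta_1\w)}{\ct_{i,j}^k}\lesssim_m 1$. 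Feeding this quantity into the regularity assertion of Theorem \ref{ThmExistUniqDiffEq} as the upper bound $\widetilde C_{m,\eta_1}$ gives $\Cjn{m}{B_{n_0}\q(\eta_1\w)}{A}\lesssim_m 1$, closing the induction; the full norm equivalence of the Proposition is then the auxiliary equivalence with $m$ replaced by $m+1$.

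The main obstacle is the apparent circularity in the inductive step: Theorem \ref{ThmExistUniqDiffEq} controls $A$ in $C^m$ only in terms of the \emph{Euclidean} $C^m$ norms of the pulled-back structure functions $\ct_{i,j}^k$, whereas the standing hypotheses control the $c_{i,j}^k$ only through their iterated derivatives \emph{along the vector fields} $X$, i.e.\ through the $Y$-derivatives of $\ct_{i,j}^k$ — and passing from $Y$-derivatives back to Euclidean derivatives seemingly reintroduces derivatives of $A$. The point that makes the induction close is precisely the bookkeeping fact above: converting $Y$-derivatives of order $\leq m$ into Euclidean derivatives of order $\leq m$ costs the entries of $A$ differentiated at most $m-1$ times, so the inductive hypothesis is exactly strong enough to run the step before Theorem \ref{ThmExistUniqDiffEq} is invoked to upgrade to order $m$. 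A minor secondary point I would address with care is the identity $Y^\alpha\ct_{i,j}^k=\q(X^\alpha c_{i,j}^k\w)\circ\Phi$ in view of the limited smoothness of $\Phi$ and the $X_j$: it is justified by differentiating along curves tangent to the $Y_l$, which are $\Phi$-related to the $X_l$ by Theorem \ref{ThmPushForwardY}, together with the assumed existence and continuity of the $X^\alpha c_{i,j}^k$.
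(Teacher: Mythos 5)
Your proposal is correct and takes essentially the same route as the paper: induct on $m$, use the (inductively established) norm equivalence to convert $Y$-derivative bounds on $\ct_{i,j}^k$ into Euclidean $C^m$ bounds, feed those into Theorem \ref{ThmExistUniqDiffEq} to get the $C^m$ bound on $A$, and then upgrade the norm equivalence via Neumann series and the Leibniz rule. The only cosmetic difference is that you phrase the inductive hypothesis as the $C^{m-1}$ bound on $A$ rather than the order-$m$ norm equivalence; these are interchangeable, and your explicit discussion of the bookkeeping that resolves the apparent circularity is exactly what makes the argument close.
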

\begin{proof}
We prove the result by induction.  Our base case will be $m=0$.
We already know,
$$\Cjn{0}{B_{n_0}\q(\eta_1\w)}{A}\leq \frac{1}{2}\lesssim_0 1.$$
Recall, $\lesssim_0,\lesssim_1,\lesssim_2$, and $\lesssim$ all mean
the same thing.
For notational convenience, write the operator:
$$\grad_Y f = \q( Y_1 f, \ldots, Y_{n_0} f\w).$$
Since $\grad_Y= \q( I+A\w)\grad_u$, and $\Cjn{0}{B_{n_0}\q(\eta_1\w);\mathbb{M}_{n_0\times n_0}\q(\R\w)}{I+A}\lesssim 1$, it follows that:
$$\sum_{\q|\alpha\w|\leq 1} \Cjn{0}{B_{n_0}\q(\eta_1\w)}{Y^\alpha f}\lesssim_0 \Cjn{1}{B_{n_0}\q( \eta_1\w)}{f}.$$
Conversely, since $\grad_u = \q( I+A\w)^{-1}\grad_Y$ and $\Cjn{0}{B_{n_0}\q(\eta_1\w);\mathbb{M}_{n_0\times n_0}\q(\R\w)}{\q(I+A\w)^{-1}}\lesssim 1$ (which can be seen by writing $\q( I+A\w)^{-1}$ as a Neumann series), we have:
$$\Cjn{1}{B_{n_0}\q( \eta_1\w)}{f} \lesssim_{0} \sum_{\q|\alpha\w|\leq 1} \Cjn{0}{B_{n_0}\q(\eta_1\w)}{Y^\alpha f}.$$

Suppose, for induction, that we have:
$$\Cjn{m}{B_{n_0}\q( \eta_1\w)}{f} \approx_{m-1} \sum_{\q|\alpha\w|\leq m} \Cjn{0}{B_{n_0}\q(\eta\w)}{Y^\alpha f}.$$
Then, note,
\begin{equation*}
\Cjn{m}{B_{n_0}\q( \eta_1\w)}{\ct_{i,j}^k}\approx_{m-1} \sum_{\q|\alpha\w|\leq m} \Cjn{0}{B_{n_0}\q(\eta\w)}{Y^\alpha \ct_{i,j}^k}.
\end{equation*}
But, 
$$Y^\alpha\ct_{i,j}^k = \q( X^\alpha c_{i,j}^k\w)\circ \Phi$$
and hence,
\begin{equation*}
\sum_{\q|\alpha\w|\leq m} \Cjn{0}{B_{n_0}\q(\eta\w)}{Y^\alpha \ct_{i,j}^k} \lesssim_m 1
\end{equation*}
and we have that:
$$\Cjn{m}{B_{n_0}\q( \eta_1\w)}{\ct_{i,j}^k}\lesssim_m 1$$
for all $i,j,k$.
It follows from Theorem \ref{ThmExistUniqDiffEq} that:
$$\Cjn{m}{B_{n_0}\q(\eta_1\w);\mathbb{M}_{n_0\times n_0}\q(R\w)}{A}\lesssim_m 1.$$
And thus, we have, using the Neumann series, that:
\begin{equation*}
\Cjn{m}{B_{n_0}\q(\eta_1\w);\mathbb{M}_{n_0\times n_0}\q(\R\w)}{\q(I+A\w)^{-1}}, \quad  \Cjn{m}{B_{n_0}\q(\eta_1\w);\mathbb{M}_{n_0\times n_0}\q(\R\w)}{I+A}\lesssim_m 1.
\end{equation*}
Hence, since $\grad_u =\q( I+A\w)^{-1} \grad_Y$ and $\grad_Y=\q( I+A\w)\grad_u$,
it follows easily that:
$$\Cjn{m+1}{B_{n_0}\q( \eta_1\w)}{f} \approx_{m} \sum_{\q|\alpha\w|\leq m+1} \Cjn{0}{B_{n_0}\q(\eta\w)}{Y^\alpha f}.$$
\end{proof}

Now we turn our attention to showing that if we shrink $\eta_1$ enough, while
still keeping it admissible, we have that $\Phi$ is injective on $B_{n_0}\q( \eta_1\w)$.  This result is essentially contained in \cite{TaoWrightLpImprovingBoundsForAverages} (see p. 622 of that reference), however we recreate
the proof below for completeness, and to make it
clear why each constant is admissible.  Thus, the next
lemma and proposition follow \cite{TaoWrightLpImprovingBoundsForAverages}.

\begin{lemma}\label{LemmaTWNoFixed}
Suppose $Z$ is a $C^1$ vector field on an open subset $V\subseteq \R^n$,
and $U\subseteq V$.
Then, there exists a $\delta>0$, depending only on $n$, such that
if $\Cjn{1}{U}{Z}\leq \delta$, then there does not exist $x_1\in U$ with:
\begin{itemize}
\item $e^{tZ}x_1\in U$, $0\leq t\leq 1$,
\item $e^Z x_1=x_1$,
\item $Z\q( x_1\w) \ne 0$.
\end{itemize}
\end{lemma}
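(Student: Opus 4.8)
The plan is to argue by contradiction. Suppose such an $x_1\in U$ exists, and set $E\q(t\w)=e^{tZ}x_1$, so that $E$ solves $E'\q(t\w)=Z\q(E\q(t\w)\w)$ on $\q[0,1\w]$ with $E\q(0\w)=E\q(1\w)=x_1$ and $E\q(t\w)\in U$ for every $t\in\q[0,1\w]$ (all of this is part of the hypothesis, and $E$ is $C^1$ in $t$ because $Z\in C^1$ and the orbit stays in $U\subseteq V$). The goal is to show that these three conditions already force $Z\q(x_1\w)=0$, contradicting the third bullet. The starting point is that, since $E\q(1\w)=E\q(0\w)$,
$$0=E\q(1\w)-E\q(0\w)=\int_0^1 Z\q(E\q(t\w)\w)\,dt.$$

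First I would establish an a priori estimate: the orbit stays close to $x_1$ on the scale of $\q|Z\q(x_1\w)\w|$. Writing $Z=\q(b^1,\ldots,b^n\w)$, the bound $\Cjn{1}{U}{Z}\le\delta$ controls $\q|\partial b^i/\partial x_j\w|$ on $U$ by $\delta$, so differentiating $t\mapsto Z\q(E\q(t\w)\w)$ along the orbit gives $\q|Z\q(E\q(t\w)\w)-Z\q(x_1\w)\w|\le c_n\delta\int_0^t\q|Z\q(E\q(s\w)\w)\w|\,ds$ for a constant $c_n$ depending only on $n$. Putting $M=\sup_{t\in\q[0,1\w]}\q|Z\q(E\q(t\w)\w)\w|$ this yields $M\le\q|Z\q(x_1\w)\w|+c_n\delta M$, hence $M\le 2\q|Z\q(x_1\w)\w|$ as soon as $c_n\delta\le\tfrac12$, and consequently $\sup_{t\in\q[0,1\w]}\q|E\q(t\w)-x_1\w|\le M\le 2\q|Z\q(x_1\w)\w|$ as well.

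Next I would take the Euclidean inner product of the displayed identity with the fixed vector $Z\q(x_1\w)$, obtaining $\int_0^1 Z\q(x_1\w)\cdot Z\q(E\q(t\w)\w)\,dt=0$; then, writing $Z\q(E\q(t\w)\w)=Z\q(x_1\w)+\q(Z\q(E\q(t\w)\w)-Z\q(x_1\w)\w)$ and using $\q|Z\q(E\q(t\w)\w)-Z\q(x_1\w)\w|\le c_n\delta M\le 2c_n\delta\q|Z\q(x_1\w)\w|$, the same integral is at least $\q(1-2c_n\delta\w)\q|Z\q(x_1\w)\w|^2$. Combining, $\q(1-2c_n\delta\w)\q|Z\q(x_1\w)\w|^2\le 0$, so choosing $\delta$ to be a dimensional constant small enough that $c_n\delta\le\tfrac12$ and $2c_n\delta<1$ forces $Z\q(x_1\w)=0$, the desired contradiction.

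The argument has no serious obstacle; it is a short Gronwall-plus-pairing estimate. The only points needing care are that the auxiliary constant $c_n$ (relating $\Cjn{1}{U}{Z}$ to the size of the Jacobian of $Z$ along the orbit), and hence the final threshold for $\delta$, depend on $n$ alone and not on $U$, $V$, or $x_1$; and that all the fundamental-theorem-of-calculus manipulations are legitimate, which they are since $Z\in C^1$ and the orbit $\q\{E\q(t\w):0\le t\le 1\w\}$ lies in $U$.
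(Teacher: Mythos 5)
Your proof is correct and takes essentially the same approach as the paper: a Gronwall-type a priori bound on the speed $\q|E'\q(t\w)\w|=\q|Z\q(E\q(t\w)\w)\w|$ in terms of $\q|Z\q(x_1\w)\w|$, followed by the observation that $E\q(1\w)-E\q(0\w)=0$ (equivalently $\int_0^1 Z\q(E\q(t\w)\w)\,dt=0$) forces $\q|Z\q(x_1\w)\w|\lesssim\delta\q|Z\q(x_1\w)\w|$. The paper integrates the second derivative of $e^{tZ}x_1$ twice, whereas you pair the integral identity with $Z\q(x_1\w)$; these are cosmetically different but mathematically the same estimate, and your pairing step is actually superfluous since the bound $\q|Z\q(E\q(t\w)\w)-Z\q(x_1\w)\w|\le 2c_n\delta\q|Z\q(x_1\w)\w|$ plugged directly into $0=Z\q(x_1\w)+\int_0^1\q(Z\q(E\q(t\w)\w)-Z\q(x_1\w)\w)dt$ already gives the contradiction.
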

\begin{proof}
Suppose the lemma does not hold, and we have an $x_1$ and $Z$ as above.
In the proof of this lemma, we will use big-$O$ notation--the implicit
constants will only depend on $n$.
Differentiating the identity:
$$\frac{d}{dt} e^{tZ} x_1 = Z\q( e^{tZ} x_1\w)$$
we obtain:
$$\frac{d^2}{dt^2} e^{tZ}x_1 = O\q( \delta \q|\frac{d}{dt} e^{tZ} x_1\w|\w).$$
Thus, by Gronwall's inequality:
$$\frac{d}{dt} e^{tZ}x_1  = O\q(\q|\frac{d}{dt} e^{tZ}x_1\bigg|_{t=0}\w|\w)= O\q(\q|Z\q( x_1\w)\w|\w)$$
for $t\leq 1$.  Hence,
$$\frac{d^2}{dt^2} e^{tZ}x_1 = O\q( \delta \q|Z\q(x_1\w)\w|\w).$$
Integrating, we obtain:
$$\frac{d}{dt} e^{tZ}x_1 = Z\q( x_1\w) + O\q( \delta \q| t\w| \q|Z\q( x_1\w)\w| \w).$$
Integrating again, we obtain:
$$x_1 = e^{Z} x_1 = x_1+ Z\q( x_1\w) + O\q(\delta \q|Z\q( x_1\w)\w|\w).$$
which is impossible if $\delta$ is sufficiently small, completing
the proof.
\end{proof}

\begin{prop}\label{PropPhiInjec}
We may shrink $\eta_1$, while still keeping it admissible, to ensure that
$\Phi$ is injective on $B_{n_0}\q( \eta_1\w)$.
\end{prop}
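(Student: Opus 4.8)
The plan is to argue by contradiction: from a hypothetical failure of injectivity I will manufacture a closed integral curve, of period exactly one, of a $C^1$-small vector field, which Lemma \ref{LemmaTWNoFixed} forbids. It is convenient first to prove injectivity on the smaller ball $B_{n_0}(\eta_1/C)$ for a large admissible constant $C$ (chosen at the end), and then to rename $\eta_1 := \eta_1/C$; this keeps $\eta_1$ admissible. So set $\eta_1' := \eta_1/C$ and suppose, for contradiction, that there are $u_1 \neq u_2$ in $B_{n_0}(\eta_1')$ with $\Phi(u_1) = \Phi(u_2) =: p$, so that $|u_1 - u_2| \leq 2\eta_1'$.

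The first step is to join $u_1$ to $u_2$ by the time-one flow of a single \emph{constant-coefficient} combination of the $Y_j$. Consider the map $E : c \mapsto \exp\!\big(\sum_j c_j Y_j\big) u_1$ for $c \in \R^{n_0}$ near $0$. By Proposition \ref{PropEstCmOfA} the $Y_j$ are bounded in $C^2(B_{n_0}(\eta_1))$, so standard dependence-on-parameters for ODEs gives that $E$ is $C^2$ in $c$ with bounds uniform in $u_1 \in B_{n_0}(\eta_1')$, that $E(0) = u_1$, and that $dE(0) = I + A(u_1)$, whose inverse has norm $\leq 2$ by Theorem \ref{ThmMainFrobThm}. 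Hence, by the quantitative inverse function theorem (Theorem \ref{ThmInverseFunctionThm}), $E$ is — uniformly in $u_1$ — a $C^1$ diffeomorphism from an admissible ball $B_{n_0}(\rho)$ onto an open set containing every point at distance $< \rho'$ from $u_1$, for an admissible $\rho' > 0$, with Lipschitz inverse of admissible norm. Choosing $\eta_1'$ small enough that $2\eta_1' < \rho'$, we obtain $c \neq 0$ (since $u_1 \neq u_2$) with $|c| \lesssim |u_1 - u_2| \lesssim \eta_1'$ and $E(c) = u_2$. Moreover $E(tc) = \exp\!\big(t\sum_j c_j Y_j\big) u_1$ stays within distance $\lesssim |c| \lesssim \eta_1'$ of $u_1$ for all $t \in [0,1]$, hence inside $B_{n_0}(\eta_1)$ once $C$ is large enough (this is why we work on the shrunken ball: the $Y_j$ are controlled only on $B_{n_0}(\eta_1)$).

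Next, push this flow forward by $\Phi$. Put $Z := \sum_j c_j X_j$ and $\sigma(t) := \Phi\big(E(tc)\big)$. Since $d\Phi(Y_j) = X_j$ (Theorem \ref{ThmPushForwardY}) and $E(tc) \in B_{n_0}(\eta_1)$ for all $t \in [0,1]$, the curve $\sigma$ is an integral curve of $Z$ with $\sigma(0) = \Phi(u_1) = p$ and $\sigma(1) = \Phi(u_2) = p$; thus $e^Z p = p$, and $e^{tZ} p = \sigma(t) \in \B{X}{d}{x_0}{\xi}$ for $0 \leq t \leq 1$. Also $Z(p) \neq 0$: indeed $Z(p) = d\Phi(u_1)\big(\sum_j c_j Y_j(u_1)\big)$, the vector $\sum_j c_j Y_j(u_1) = (I + A(u_1))^{t} c$ is nonzero because $c \neq 0$ and $\|A(u_1)\| \leq 1/2$, and $d\Phi(u_1)$ is injective as a linear map since $|\det_{n_0 \times n_0} d\Phi(u_1)| \neq 0$ by Theorem \ref{ThmEstJac}.

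Finally, $\Cjn{1}{\B{X}{d}{x_0}{\xi}}{Z} \leq \big(\sum_j |c_j|\big)\, \max_j \Cjn{1}{\B{X}{d}{x_0}{\xi}}{X_j} \lesssim |c| \lesssim \eta_1'$, so once $C$ (equivalently $1/\eta_1'$) is taken large enough — which only requires $\eta_1'$ to lie below an admissible multiple of the dimensional constant $\delta$ from Lemma \ref{LemmaTWNoFixed} — we get $\Cjn{1}{\B{X}{d}{x_0}{\xi}}{Z} \leq \delta$. Applying Lemma \ref{LemmaTWNoFixed} with $x_1 = p$ and $U$ an open subset of $\Omega$ containing $\sigma([0,1])$ on which the $X_j$ are $C^1$ (e.g.\ $U = \B{X}{d}{x_0}{\xi}$) then contradicts the existence of such a $p$. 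Hence $\Phi$ is injective on $B_{n_0}(\eta_1')$, and renaming $\eta_1 := \eta_1'$ finishes the proof. I expect the main obstacle to be the second paragraph: producing the constant vector $c$ with $|c| \lesssim |u_1 - u_2|$ and a flow that remains in the region $B_{n_0}(\eta_1)$ where the $Y_j$ are controlled, \emph{uniformly} in $u_1, u_2$ and with all constants admissible — this is exactly where the uniform $C^2$ bounds on the $Y_j$ (Proposition \ref{PropEstCmOfA}) and the bound $\|(I + A)^{-1}\| \leq 2$ (Theorem \ref{ThmMainFrobThm}) are used.
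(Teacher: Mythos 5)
Your argument is correct and follows the same route as the paper: apply the uniform inverse function theorem to the time-one exponential map in the $Y$-variables to write $u_2 = e^{c\cdot Y}u_1$ with $|c|$ small, push the resulting closed loop forward under $\Phi$ via Theorem \ref{ThmPushForwardY} to get a nontrivial fixed point of $e^{c\cdot X}$, and invoke Lemma \ref{LemmaTWNoFixed} after shrinking $\eta_1$. The only cosmetic differences are that you verify $Z(p)\ne 0$ directly through injectivity of $d\Phi(u_1)$ rather than by citing Lemma \ref{LemmaDetsDontChange}, and you spell out explicitly that the flow $E(tc)$ stays inside $B_{n_0}(\eta_1)$ where the pullback identity holds — a point the paper leaves implicit.
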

\begin{proof}
We will construct an admissible constant $\eta_2$ with the properties desired
in the statement of the proposition, and then the proof will be completed
by renaming $\eta_2$, $\eta_1$.  

Consider the maps $\Psi_{u_0}\q(u\w) = e^{u\cdot Y}u_0$, defined
for $\q|u_0\w|,\q|u\w|\leq \eta'$, where $\eta'>0$ is some sufficiently small admissible constant.  Notice, since
$$\Cjn{2}{B_{n_0}\q(\eta_1\w)}{Y_j}\lesssim 1$$ 
we have by Theorem \ref{ThmExpReg} that $\Psi_{u_0}\in C^2$
with $C^2$ norm admissibly bounded uniformly in $u_0$.
Furthermore, since $d\Psi_{u_0}\q( 0\w) = \q( I+ A\q( u_0\w)\w)$,
and $\q\|A\w\|\leq \frac{1}{2}$,
we have that $\q|\det d\Psi_{u_0}\q( 0\w)\w|\gtrsim 1$, uniformly
in $u_0$.

Hence we apply the uniform inverse function theorem (Theorem \ref{ThmInverseFunctionThm})
to see that there exist admissible constants $\eta_2>0$, $\delta>0$ such
that for all $u_1,u_2\in B_{n_0}\q(\eta_2\w)$ there exists $u_0\in B_{n_0}\q(\delta\w)$
with $u_2=\Psi_{u_1}\q( u_0\w)$.  Moreover, by shrinking $\eta_2$,
we may shrink $\delta$.

Now suppose $\Phi:B\q( \eta_2\w) \rightarrow \Bt{X}{d}{x_0}{\xi}$
is not injective.  Thus, there exist $u_1,u_2\in B_{n_0}\q(\eta_2\w), u_1\ne u_2$
such that
$$\Phi\q( u_1\w) =\Phi\q( u_2\w).$$
But, since there exists $0\ne u_0\in B_{n_0}\q( \delta\w)$ with $u_2= e^{u_0\cdot Y} u_1$, we have that:
$$\Phi\q( u_1\w) = \Phi\q( u_2\w) = e^{u_0\cdot X} \Phi\q( u_1\w).$$
Setting $Z=u_0\cdot X$, we have by Lemma \ref{LemmaDetsDontChange}
that $Z$ is non-zero on $\B{X}{d}{x_0}{\xi}$.  Applying Lemma \ref{LemmaTWNoFixed}, we see
that by taking $\delta$ admissibly small enough (and therefore $\eta_2$
admissibly small enough), we achieve a contradiction.
\end{proof}

Our proof of Theorem \ref{ThmMainFrobThm} will now be completed
by the following proposition:
\begin{prop}[\cite{NagelSteinWaingerBallsAndMetricsDefinedByVectorFields}, Lemma 2.16]\label{PropCanPutBallInPhi}
There exists an admissible constant $\xi_1>0$, such that
$$\B{X}{d}{x_0}{\xi_1}\subseteq \Phi\q(B_{n_0}\q(\eta_1\w)\w).$$
\end{prop}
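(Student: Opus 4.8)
The plan is to lift $\sC\q( x_0\w)$-curves through the exponential map $\Phi$, using the pulled-back vector fields $Y_j$ furnished by Theorem \ref{ThmMainFrobThm}. Fix $y\in\B{X}{d}{x_0}{\xi_1}$ and a curve $\gamma:\q[0,1\w]\rightarrow\Omega$ with $\gamma\q( 0\w)=x_0$, $\gamma\q( 1\w)=y$, $\gamma'\q( t\w)=\sum_{j=1}^{n_0}a_j\q( t\w)X_j\q(\gamma\q( t\w)\w)$ and $\q\|\q|\xi_1^{-d}a\w|\w\|_{L^\infty\q(\q[0,1\w]\w)}<1$. Since $d_j\geq d_{min}>0$ and we may take $\xi_1\leq 1$, this last condition forces $\q| a\q( t\w)\w|<\sqrt{n_0}\,\xi_1^{d_{min}}$ for a.e.\ $t$ --- an admissible quantity tending to $0$ as $\xi_1\rightarrow 0$. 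I would then solve, in the Carath\'eodory sense (legitimate since each $Y_j$ is $C^1$ in $u$ and $a$ is bounded and measurable in $t$), the lifted ODE
$$u'\q( t\w)=\sum_{j=1}^{n_0}a_j\q( t\w)\,Y_j\q( u\q( t\w)\w),\qquad u\q( 0\w)=0.$$
By Theorem \ref{ThmMainFrobThm}, $\q( Y_1,\ldots,Y_{n_0}\w)=\q( I+A\w)\grad_u$ with $\q\|A\w\|\leq\frac12$, so in matrix form this reads $u'\q( t\w)=\q( I+A\q( u\q( t\w)\w)\w)^{T}a\q( t\w)$; hence as long as the solution stays in $B_{n_0}\q(\eta_1\w)$ we have $\q| u'\q( t\w)\w|\leq\frac32\q| a\q( t\w)\w|\leq\frac32\sqrt{n_0}\,\xi_1^{d_{min}}$ and therefore $\q| u\q( t\w)\w|\leq\frac32\sqrt{n_0}\,\xi_1^{d_{min}}$. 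Choosing $\xi_1\leq 1$ to be an admissible constant so small that $\frac32\sqrt{n_0}\,\xi_1^{d_{min}}<\eta_1$ (possible since $\eta_1$, $d_{min}$ and the upper bound for $n_0$ are all admissible), a routine continuation argument shows the solution exists on all of $\q[0,1\w]$ and satisfies $u:\q[0,1\w]\rightarrow B_{n_0}\q(\eta_1\w)$.

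Next I would show $\Phi\q( u\q( t\w)\w)=\gamma\q( t\w)$. Set $\widetilde\gamma\q( t\w)=\Phi\q( u\q( t\w)\w)$; then $\widetilde\gamma\q( 0\w)=\Phi\q( 0\w)=x_0=\gamma\q( 0\w)$, and since $\Phi$ is $C^1$, the chain rule together with Theorem \ref{ThmPushForwardY} ($d\Phi\q( Y_j\w)=X_j$) gives
$$\widetilde\gamma'\q( t\w)=d\Phi\q( u'\q( t\w)\w)=\sum_{j=1}^{n_0}a_j\q( t\w)\,d\Phi\q( Y_j\q( u\q( t\w)\w)\w)=\sum_{j=1}^{n_0}a_j\q( t\w)\,X_j\q(\widetilde\gamma\q( t\w)\w).$$
So $\widetilde\gamma$ and $\gamma$ solve the same ODE with the same initial value. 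Since $\B{X}{d}{x_0}{\xi_1}\subseteq\B{X}{d}{x_0}{\xi}$ and $\Phi\q( B_{n_0}\q(\eta_1\w)\w)\subseteq\Bt{X}{d}{x_0}{\xi}\subseteq\B{X}{d}{x_0}{\xi}$, both curves stay in $\B{X}{d}{x_0}{\xi}$, where each $X_j$ is $C^1$ and hence Lipschitz; Gronwall's inequality then forces $\widetilde\gamma\equiv\gamma$ on $\q[0,1\w]$. In particular $y=\gamma\q( 1\w)=\Phi\q( u\q( 1\w)\w)\in\Phi\q( B_{n_0}\q(\eta_1\w)\w)$, which is exactly the claimed inclusion.

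The main obstacle I anticipate is confirming that $\xi_1$ can be taken to depend only on the admissible quantities; this rests entirely on the a priori bound $\q| u\q( t\w)\w|\leq\frac32\sqrt{n_0}\,\xi_1^{d_{min}}$, which in turn comes from the uniform estimate $\q\|A\w\|\leq\frac12$ of Theorem \ref{ThmMainFrobThm}, and it is precisely this --- rather than any lower bound for $\q|\det_{n_0\times n_0}X\q( x_0\w)\w|$ --- that makes $\xi_1$ admissible. The remaining pieces (Carath\'eodory existence, continuation of $u$ to $\q[0,1\w]$, and the Gronwall uniqueness identifying $\widetilde\gamma$ with $\gamma$) are standard.
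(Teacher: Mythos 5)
Your proof is correct and is essentially the same argument as the paper's (which is itself a streamlined version of Lemma~2.16 in Nagel--Stein--Wainger): both rest on the uniform bound $\q\|A\w\|\leq\frac12$ from Theorem~\ref{ThmMainFrobThm} to control the pulled-back vector fields $Y_j$, and hence to show that a short $X$-path cannot escape $\Phi\q(B_{n_0}\q(\eta_1\w)\w)$. The only difference is bookkeeping: the paper directly sets $u\q(t\w)=\Phi^{-1}\q(\phi\q(t\w)\w)$ on a maximal interval and derives a contradiction from $\q|u\q(t_0\w)\w|=\eta_1/2$, whereas you solve the lifted Carath\'eodory ODE for $u$ independently, obtain the a~priori bound $\q|u\q(t\w)\w|\leq\tfrac32\sqrt{n_0}\,\xi_1^{d_{\min}}$, and then identify $\Phi\circ u$ with $\gamma$ by Gronwall uniqueness --- two equivalent ways of phrasing the same lifting estimate.
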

\begin{proof}
Actually, the proof in \cite{NagelSteinWaingerBallsAndMetricsDefinedByVectorFields} proves something more general.  In our case, though, we have already
shown that $\Phi$ is injective (Proposition \ref{PropPhiInjec}), and this simplifies matters, somewhat.  We include this simplified proof, and refer the reader
to \cite{NagelSteinWaingerBallsAndMetricsDefinedByVectorFields} for the
stronger results.

Fix $\xi_1>0$.  Suppose $y\in \B{X}{d}{x_0}{\xi_1}$.  Thus, there
exists $\phi:\q[0,1\w]\rightarrow \B{X}{d}{x_0}{\xi_1}$, $\phi\q(0\w)=x_0$,
$\phi\q( 1\w) =y$,
$$\phi'\q(t\w) = \q(b\cdot X\w) \q( \phi\q(t\w)\w)$$
with $b\in \Lpp{\infty}{\q[0,1\w]}^{n_0}$, $\Lppn{\infty}{\q[0,1\w]}{\q|\xi_1^{-d} b\w|}<1$.

Define
$$\mathcal{T} = \q\{t\leq 1: \phi\q(t'\w)\in \Phi\q(B_{n_0}\q( \frac{\eta_1}{2}\w)\w), \forall 0\leq t'\leq t\w\}.$$
Let $t_0= \sup{\mathcal{T}}$.  We want to show that, by taking $\xi_1$ admissibly
small enough we have that $t_0=1$ and $\phi\q(1\w)\in \Phi\q(B_{n_0}\q( \frac{\eta_1}{2}\w)\w)$.

Suppose not.  Then, we must have that $\q| \Phi^{-1} \q( \phi\q(t_0\w) \w)\w| =\frac{\eta_1}{2}$.  Then, we have:
\begin{equation*}
\begin{split}
\frac{\eta_1}{2} &= \q| \Phi^{-1} \q( \phi\q(t_0\w) \w)\w|\\
&=\q|\int_0^{t_0} \frac{d}{dt} \Phi^{-1}\q(\phi\q(t\w)\w)\w|\\
&= \q| \int_0^{t_0} \q(\q(b\cdot X\w)\Phi^{-1}\w)\q(\phi\q(t\w)\w) \w|\\
&= \q|\int_0^{t_0} \q( b\cdot Y\w) \q( \Phi^{-1}\q(\phi\q(t\w)\w)\w)\w|\\
&<\frac{\eta_1}{2},
\end{split}
\end{equation*}
provided $\xi_1$ is admissibly small enough.  In the second to last line,
$$\q( b\cdot Y\w) \q( \Phi^{-1}\q(\phi\q(t\w)\w)\w)$$
denotes the vector $b\cdot Y$ evaluated at the point $\Phi^{-1}\q(\phi\q(t\w)\w)$.
This achieves the contradiction and completes the proof.
\end{proof}



\section{Carnot-Carath\'eodory balls at the unit scale}\label{SectionUnitScale}
In this section, we generalize Theorem \ref{ThmMainFrobThm} to the
case when the vector fields may not be linearly independent--thereby
completing the proof of Theorem \ref{ThmIntroFrob}.
Suppose $X=\q( X_1,\ldots, X_q\w)$ are $q$ $C^1$ vector fields
with associated single-parameter formal degrees $d=\q( d_1,\ldots, d_q\w)\in \q(0,\infty\w)^{q}$, defined
on the fixed connected open set $\Omega\subseteq \R^n$.  
Fix $1\geq \xi>0$,
$x_0\in \Omega$.
Let $n_0=\dim \Span{X_1\q(x_0\w),\ldots, X_q\q( x_0\w)}$.
Fix $1\geq \zeta>0$, $J_0\in \sI{n_0}{q}$; we assume that:
\begin{equation}\label{EqnJ0IsMaximal}
\q|\det_{n_0\times n_0} X_{J_0}\q( x_0\w)\w|_{\infty}\geq \zeta \sup_{J\in \sI{n_0}{q}} \q|\det_{n_0\times n_0} X_J\q( x_0\w)\w|_{\infty}.
\end{equation}
Recall if $J_0=\q( j_1,\ldots, j_{n_0}\w)$, $\q(X,d\w)_{J_0}$ denotes the list
with formal degrees $\q( \q( X_{j_1}, d_{j_1}\w),\ldots, \q( X_{j_{n_0}}, d_{j_{n_0}}\w)\w)$.  We also write $X_{J_0}$ to denote the list of vector
field $X_{j_1},\ldots, X_{j_{n_0}}$, and $d_{J_0}$ to denote the list
of formal degrees $d_{j_1},\ldots, d_{j_{n_0}}$.
Suppose, further, that $\q(X,d\w)_{J_0}$ satisfies $\sC\q( x_0,\xi\w)$.
In addition, suppose that the $X_j$s satisfy an integrability condition
on $\Bsub{X}{d}{J_0}{x_0}{\xi}$ given by:
\begin{equation}\label{EqnUnitScaleIntegCont}
\q[X_j,X_k\w] = \sum_{l} c_{j,k}^l X_l.
\end{equation}
Without loss of generality, we assume for the remainder of the section
that $J_0=\q( 1,\ldots, n_0\w)$.
We will also assume that:
\begin{itemize}
\item For $1\leq j\leq n_0$, $X_j$ is $C^2$ on $\Bsub{X}{d}{J_0}{x_0}{\xi}$ and satisfies $\Cjn{2}{\Bsub{X}{d}{J_0}{x_0}{\xi}}{X_j}<\infty$.
\item For $\q| \alpha \w|\leq 2$, $1\leq i,j\leq n_0$, $1\leq k\leq q$, $X_{J_0}^{\alpha} c_{i,j}^k\in C^{0}\q(\B{X}{d}{x_0}{\xi}\w)$, and
$$\sum_{\q|\alpha\w|\leq 2} \Cjn{0}{\Bsub{X}{d}{J_0}{x_0}{\xi}}{X_{J_0}^\alpha c_{i,j}^k}<\infty.$$
\item For $\q| \alpha \w|\leq 1$, $1\leq i,j,k\leq q$, $X_{J_0}^{\alpha} c_{i,j}^k\in C^{0}\q(\B{X}{d}{x_0}{\xi}\w)$, and
$$\sum_{\q|\alpha\w|\leq 1} \Cjn{0}{\Bsub{X}{d}{J_0}{x_0}{\xi}}{X_{J_0}^\alpha c_{i,j}^k}<\infty.$$
\end{itemize}

We will say that $C$ is an admissible constant if $C$ can be chosen to
depend only on a fixed upper bound, $d_{max}<\infty$, for $d_1,\ldots, d_q$,
a fixed lower bound $d_{min}>0$ for $d_1,\ldots, d_q$, a fixed upper bound
for $n$ and $q$ (and therefore for $n_0$), a fixed lower bound, $\xi_0>0$,
for $\xi$, a fixed lower bound, $\zeta_0>0$, for $\zeta$, and a fixed
upper bound for the quantities:
\begin{equation*}
\begin{split}
\Cjn{2}{\Bsub{X}{d}{J_0}{x_0}{\xi}}{X_j}, &\quad 1\leq j\leq n_0, \\
\sum_{\q|\alpha\w|\leq 2} \Cjn{0}{\Bsub{X}{d}{J_0}{x_0}{\xi}}{X_{J_0}^\alpha c_{i,j}^k}, & \quad 1\leq i, j\leq n_0, \quad 1\leq k\leq q,\\
\sum_{\q|\alpha\w|\leq 1} \Cjn{0}{\Bsub{X}{d}{J_0}{x_0}{\xi}}{X_{J_0}^\alpha c_{i,j}^k}, & \quad 1\leq i, j, k\leq q.\\
\end{split}
\end{equation*}

Furthermore, if we say that $C$ is an $m$-admissible constant, we mean
that in addition to the above, we assume that:
\begin{itemize}
\item $\Cjn{m}{\Bsub{X}{d}{J_0}{x_0}{\xi}}{X_j}<\infty$, for every $1\leq j\leq n_0,$
\item $\sum_{\q|\alpha\w|\leq m}\Cjn{0}{\Bsub{X}{d}{J_0}{x_0}{\xi}}{X_{J_0}^\alpha c_{i,j}^k}<\infty$, for every $1\leq i, j\leq n_0$, $1\leq k\leq q$, 
\item $\sum_{\q|\alpha\w|\leq m-1}\Cjn{0}{\Bsub{X}{d}{J_0}{x_0}{\xi}}{X_{J_0}^\alpha c_{i,j}^k}<\infty$, for every $1\leq i, j,k\leq q$. 
\end{itemize}
(in particular, the above partial derivatives exist and are continuous).  $C$ is allowed to depend on $m$, all
the quantities an admissible constant is allowed to depend on, and
a fixed upper bound for the above quantities.  
Note that, as before, $\lesssim_0, \lesssim_1,\lesssim_2$, and $\lesssim$
all denote the same thing.

For $\eta>0$, a sufficiently small admissible constant, define the map:
$$\Phi:B_{n_0}\q(\eta\w) \rightarrow \Btsub{X}{d}{J_0}{x_0}{\xi}$$
by
$$\Phi\q(u\w) = \exp\q(u\cdot X_{J_0}\w)x_0.$$
The main results of this section are the following:

\begin{thm}\label{ThmMainUnitScale}
There exist admissible constants $\eta_1>0$, $\xi_1\geq \xi_2>0$, such that:
\begin{itemize}
\item $\Phi:B_{n_0}\q(\eta_1\w)\rightarrow \Btsub{X}{d}{J_0}{x_0}{\xi}$ is one-to-one.
\item For all $u\in B_{n_0}\q(\eta_1\w)$, $\q|\det_{n_0\times n_0} d\Phi\q( u\w)\w|\approx \q|\det_{n_0\times n_0} X\q( x_0\w)\w|$.
\item $\B{X}{d}{x_0}{\xi_2}\subseteq\Bsub{X}{d}{J_0}{x_0}{\xi_1}\subseteq\Phi\q(B_{n_0}\q(\eta_1\w)\w)\subseteq \Btsub{X}{d}{J_0}{x_0}{\xi}\subseteq \Bsub{X}{d}{J_0}{x_0}{\xi}\subseteq \B{X}{d}{x_0}{\xi}$.
\end{itemize}
Furthermore, if we let $Y_j$ ($1\leq j\leq q$) be the pullback of $X_j$ under the map
$\Phi$, then we have:
$$\Cjn{m}{B_{n_0}\q(\eta_1\w)}{Y_j}\lesssim_m 1$$
in particular,
$$\Cjn{2}{B_{n_0}\q(\eta_1\w)}{Y_j}\lesssim 1.$$
Finally, if for $u\in B_{n_0}\q( \eta_1\w)$ we define the $n_0\times n_0$ matrix
$A\q( u\w)$ by:\footnote{Recall,
we have, without loss of generality, assumed $J_0=\q( 1,\ldots,n_0\w)$.}
$$\q( Y_1,\ldots, Y_{n_0}\w) = \q( I+A\w) \grad_u$$
then,
$$\sup_{u\in B_{n_0}\q( \eta_1\w) } \q\| A\q( u\w)\w\|\leq \frac{1}{2}.$$
\end{thm}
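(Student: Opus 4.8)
The plan is to deduce Theorem \ref{ThmMainUnitScale} from Theorem \ref{ThmMainFrobThm} applied to the \emph{maximal sublist} $\q(X,d\w)_{J_0}$, whose vector fields \emph{are} linearly independent at $x_0$. First I would observe that, since $n_0$ is the rank of $X\q(x_0\w)$, the maximality \eqref{EqnJ0IsMaximal} forces $\q|\det_{n_0\times n_0}X_{J_0}\q(x_0\w)\w|\approx\q|\det_{n_0\times n_0}X\q(x_0\w)\w|\ne 0$ (the lower bound uses $\zeta\ge\zeta_0$, the upper bound is trivial), so $X_1\q(x_0\w),\ldots,X_{n_0}\q(x_0\w)$ are linearly independent; by the Gronwall argument of Lemma \ref{LemmaDetsDontChange} this persists, up to admissible constants, throughout $\Bsub{X}{d}{J_0}{x_0}{\xi}$. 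The same argument applied to the $\q(n_0+1\w)\times\q(n_0+1\w)$ minors of $X$, which vanish at $x_0$, shows $X\q(y\w)$ has rank exactly $n_0$ for every $y\in\Bsub{X}{d}{J_0}{x_0}{\xi}$; consequently every $X_l$, $1\le l\le q$, can be written on this ball as $X_l=\sum_{i=1}^{n_0}b_l^i X_i$, with $b_l^i$ given by Cramer's rule, i.e. as a quotient of two $n_0\times n_0$ minors of $X$ (for $l\le n_0$, $b_l^i=\delta_l^i$).

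The technical heart of the reduction is to bound the $b_l^i$ together with their $X_{J_0}$-derivatives $X_{J_0}^\alpha b_l^i$ for $\q|\alpha\w|\le 2$ (and for $\q|\alpha\w|\le m$ in the $m$-admissible setting) by admissible constants. The point -- and this is where no lower bound for $\q|\det_{n_0\times n_0}X\q(x_0\w)\w|$ enters -- is that the numerator and denominator of the Cramer quotient are both $n_0\times n_0$ minors and so scale together; the needed control of such quotients and of their Lie derivatives is exactly what the determinant estimates of Section \ref{SectionThmOfFrob} provide (Lemma \ref{LemmaLieDerivOfDet} and its quotient analogue), the integrability condition \eqref{EqnUnitScaleIntegCont} being used to rewrite the brackets that appear. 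Granting this, I would set, for $1\le i,j,k\le n_0$,
\[
\widetilde{c}_{i,j}^{k}\ :=\ c_{i,j}^{k}+\sum_{l>n_0}c_{i,j}^{l}\,b_l^{k},
\]
so that $\q[X_i,X_j\w]=\sum_{k\le n_0}\widetilde{c}_{i,j}^{k}X_k$ on $\Bsub{X}{d}{J_0}{x_0}{\xi}$. Careful bookkeeping of which derivative lands on which factor, using the two-derivative bound on $c_{i,j}^k$ for $i,j\le n_0$, the one-derivative bound for the full index range, and the bounds on $b_l^k$ just described, shows the $X_{J_0}^\alpha\widetilde{c}_{i,j}^{k}$ are admissibly bounded for $\q|\alpha\w|\le 2$. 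Hence $\q(X,d\w)_{J_0}$ with structure constants $\widetilde{c}$ satisfies all hypotheses of Theorem \ref{ThmMainFrobThm}.

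Applying Theorem \ref{ThmMainFrobThm} to $\q(X,d\w)_{J_0}$ supplies admissible $\eta_1>0$, $\xi_1>0$ (with $\xi_1\le\xi$) so that $\Phi:B_{n_0}\q(\eta_1\w)\to\Btsub{X}{d}{J_0}{x_0}{\xi}$ is one-to-one, $\q|\det_{n_0\times n_0}d\Phi\q(u\w)\w|\approx\q|\det_{n_0\times n_0}X_{J_0}\q(x_0\w)\w|\approx\q|\det_{n_0\times n_0}X\q(x_0\w)\w|$, the inclusions $\Bsub{X}{d}{J_0}{x_0}{\xi_1}\subseteq\Phi\q(B_{n_0}\q(\eta_1\w)\w)\subseteq\Btsub{X}{d}{J_0}{x_0}{\xi}\subseteq\Bsub{X}{d}{J_0}{x_0}{\xi}$ hold, $\q\|A\w\|\le\frac12$ for the matrix $A$ with $\q(Y_1,\ldots,Y_{n_0}\w)=\q(I+A\w)\grad_u$, and $\Cjn{m}{B_{n_0}\q(\eta_1\w)}{Y_j}\lesssim_m 1$ for $1\le j\le n_0$; the remaining inclusion $\Bsub{X}{d}{J_0}{x_0}{\xi}\subseteq\B{X}{d}{x_0}{\xi}$ is immediate. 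For $n_0<j\le q$ I would \emph{define} $Y_j:=\sum_{i=1}^{n_0}\q(b_j^i\circ\Phi\w)Y_i$; this is the pullback of $X_j$ under $\Phi$, and $\Cjn{m}{B_{n_0}\q(\eta_1\w)}{Y_j}\lesssim_m 1$ follows by combining the $C^m$-versus-$Y_{J_0}$-derivative equivalence (the analogue of Proposition \ref{PropEstCmOfA}, whose proof applies verbatim) with the identity $Y_{J_0}^\alpha\q(b_j^i\circ\Phi\w)=\q(X_{J_0}^\alpha b_j^i\w)\circ\Phi$ and the bounds on $X_{J_0}^\alpha b_j^i$ from the previous step.

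It remains to produce $\xi_2\in\q(0,\xi_1\w]$ with $\B{X}{d}{x_0}{\xi_2}\subseteq\Bsub{X}{d}{J_0}{x_0}{\xi_1}$ (note $\q(X,d\w)_{J_0}$ satisfies $\sC\q(x_0,\xi_1\w)$ since $\xi_1\le\xi$). Let $\gamma:\q[0,1\w]\to\Omega$ be any sub-unit curve for $\delta^d X$ of radius $\xi_2$, i.e. $\gamma'=\sum_{l=1}^q a_l X_l\q(\gamma\w)$ with $\Lppn{\infty}{\q[0,1\w]}{\q|\xi_2^{-d}a\w|}<1$. As long as $\gamma$ lies in $\Bsub{X}{d}{J_0}{x_0}{\xi}$ we may rewrite $\gamma'=\sum_{i=1}^{n_0}\widehat{a}_i X_i\q(\gamma\w)$ with $\widehat{a}_i=\sum_l a_l b_l^i\q(\gamma\w)$, whence $\q|\widehat{a}_i\w|\lesssim\sum_l\q|a_l\w|\lesssim\sum_l\xi_2^{d_l}\le q\,\xi_2^{d_{min}}$ and therefore $\q|\xi_1^{-d_i}\widehat{a}_i\w|\lesssim\xi_1^{-d_{max}}\xi_2^{d_{min}}$; since $\xi_1$ is admissibly bounded below, choosing $\xi_2$ to be an admissible constant sufficiently small relative to $\xi_1$ makes this $<n_0^{-1/2}$, so $\gamma$ is a sub-unit curve for $\q(X,d\w)_{J_0}$ of radius $\xi_1$. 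A continuity/maximality argument, exactly as in the proof of Proposition \ref{PropCanPutBallInPhi}, then confines $\gamma$ to $\Bsub{X}{d}{J_0}{x_0}{\xi_1}$ (with its endpoint there), giving the desired inclusion and hence the full chain $\B{X}{d}{x_0}{\xi_2}\subseteq\Bsub{X}{d}{J_0}{x_0}{\xi_1}\subseteq\Phi\q(B_{n_0}\q(\eta_1\w)\w)\subseteq\Btsub{X}{d}{J_0}{x_0}{\xi}\subseteq\Bsub{X}{d}{J_0}{x_0}{\xi}\subseteq\B{X}{d}{x_0}{\xi}$, together with all remaining assertions. The main obstacle throughout is the uniform, determinant-quotient control of the coefficients $b_l^i$ and their derivatives: this is precisely what guarantees that passing to the maximal sublist $J_0$ costs only admissible constants, rather than reintroducing a forbidden dependence on a lower bound for $\q|\det_{n_0\times n_0}X\q(x_0\w)\w|$.
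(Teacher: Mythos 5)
Your proposal is correct and follows essentially the same route as the paper's own proof. You identify the same core mechanism: Lemma \ref{LemmaMaxDetDoesntChange} and the determinant-quotient estimate (Lemma \ref{LemmaQuotientOfDets}) give admissible control on the Cramer coefficients $b_l^i$ and their $X_{J_0}$-derivatives, which lets one build the structure constants for the maximal sublist (your $\widetilde{c}_{i,j}^k$ is exactly the paper's $\widehat{c}_{i,j}^k$ from Proposition \ref{PropSubSatisHyp}, just written without substituting the trivial Cramer coefficients for $l\le n_0$), apply Theorem \ref{ThmMainFrobThm} to $\q(X,d\w)_{J_0}$, recover the $Y_j$ for $j>n_0$ via $Y_j=\sum_i\q(b_j^i\circ\Phi\w)Y_i$ as in Proposition \ref{PropRegularityOfYUnitScale}, and obtain the $\xi_2$-containment by the coefficient-rewriting of Proposition \ref{PropBigBallInSmallerBall}. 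One small point worth keeping: you make explicit the continuity/maximality step needed to ensure the curve $\gamma$ remains in $\Bsub{X}{d}{J_0}{x_0}{\xi^1}$ while the Cramer rewriting is applied; the paper's Proposition \ref{PropBigBallInSmallerBall} leaves this implicit, and spelling it out is a genuine improvement in rigor rather than a change of method.
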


\begin{cor}\label{CorMainUnitScaleTwice}
Let $\eta_1,\xi_1,\xi_2$ be as in Theorem \ref{ThmMainUnitScale}.  Then,
there exist admissible constants $0<\eta_2<\eta_1$, $0<\xi_4\leq \xi_3<\xi_2$ such that:
\begin{equation*}
\begin{split}
&\B{X}{d}{x_0}{\xi_4}\subseteq 
\Bsub{X}{d}{J_0}{x_0}{\xi_3}\subseteq 
\Phi\q(B_{n_0}\q(\eta_2\w)\w)\\&\subseteq 
\Btsub{X}{d}{J_0}{x_0}{\xi_2}\subseteq 
\Bsub{X}{d}{J_0}{x_0}{\xi_2}\subseteq
\B{X}{d}{x_0}{\xi_2}\\&\subseteq
\Bsub{X}{d}{J_0}{x_0}{\xi_1}\subseteq
\Phi\q(B_{n_0}\q(\eta_1\w)\w)\subseteq 
\Btsub{X}{d}{J_0}{x_0}{\xi}\\&\subseteq 
\Bsub{X}{d}{J_0}{x_0}{\xi}\subseteq 
\B{X}{d}{x_0}{\xi},
\end{split}
\end{equation*}
and $\Vol{\B{X}{d}{x_0}{\xi_2}}\approx \q|\det_{n_0\times n_0} X\q(x_0\w)\w|$,
where $\Vol{A}$ denotes the induced Lebesgue volume on the leaf generated by the
$X_j$s, passing through the point $x_0$.
\end{cor}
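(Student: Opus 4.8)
The plan is to derive the corollary by applying Theorem \ref{ThmMainUnitScale} a \emph{second} time, with the ambient radius $\xi$ replaced by the radius $\xi_2$ that the first application produces, and then splicing the two resulting chains of inclusions together at the common ball $\B{X}{d}{x_0}{\xi_2}$. So I would first check that Theorem \ref{ThmMainUnitScale} may legitimately be re-invoked at radius $\xi_2$, with the same $x_0$, $n_0$, $J_0$, and $\zeta$. The integer $n_0=\dim\Span{X_1\q( x_0\w),\ldots,X_q\q( x_0\w)}$ and the maximality condition \eqref{EqnJ0IsMaximal} for $J_0$ do not refer to the radius, so they are unchanged. By the conclusion of Theorem \ref{ThmMainUnitScale}, $\Bsub{X}{d}{J_0}{x_0}{\xi_2}\subseteq\Bsub{X}{d}{J_0}{x_0}{\xi_1}\subseteq\Bsub{X}{d}{J_0}{x_0}{\xi}$, so the $C^2$ bounds on the $X_j$ and the $C^0$ bounds on the $X_{J_0}^\alpha c_{i,j}^k$ over the smaller ball are dominated by the corresponding ones over $\Bsub{X}{d}{J_0}{x_0}{\xi}$, hence are finite and admissibly bounded; and since $d_j>0$ and $\xi_2\le\xi\le 1$, every curve admissible for the $\xi_2$-ball, after rescaling its coefficients by powers of $\xi_2/\xi\le 1$, is admissible for the $\xi$-ball, so $\sC\q( x_0,\xi\w)$ for $\q( X,d\w)_{J_0}$ implies $\sC\q( x_0,\xi_2\w)$. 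Finally, $\xi_2$ is itself an admissible constant, so a positive lower bound for $\xi_2$ is expressible through the data an admissible constant may depend on; the class of admissible constants for the second application therefore coincides with the original one, and the constants it outputs are admissible.

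Applying Theorem \ref{ThmMainUnitScale} at radius $\xi_2$ thus produces admissible constants that I rename $\eta_2,\xi_3,\xi_4$ (playing the roles of $\eta_1,\xi_1,\xi_2$). Since the $\eta$ that theorem furnishes may always be taken smaller, at the cost of correspondingly shrinking the radii it outputs (cf.\ the proofs of Propositions \ref{PropPhiInjec} and \ref{PropCanPutBallInPhi}, where $\eta_1$ is shrunk and the output radii chosen accordingly), I may arrange $\eta_2<\eta_1$ and then $\xi_3<\xi_2$, still with $\xi_4\le\xi_3$. The application gives that $\Phi:B_{n_0}\q(\eta_2\w)\to\Btsub{X}{d}{J_0}{x_0}{\xi_2}$ is one-to-one, that $\q|\det_{n_0\times n_0}d\Phi\q( u\w)\w|\approx\q|\det_{n_0\times n_0}X\q( x_0\w)\w|$ on $B_{n_0}\q(\eta_2\w)$, and that
$$\B{X}{d}{x_0}{\xi_4}\subseteq\Bsub{X}{d}{J_0}{x_0}{\xi_3}\subseteq\Phi\q( B_{n_0}\q(\eta_2\w)\w)\subseteq\Btsub{X}{d}{J_0}{x_0}{\xi_2}\subseteq\Bsub{X}{d}{J_0}{x_0}{\xi_2}\subseteq\B{X}{d}{x_0}{\xi_2}.$$
Concatenating this with the chain from the first application of Theorem \ref{ThmMainUnitScale}, which continues $\B{X}{d}{x_0}{\xi_2}\subseteq\Bsub{X}{d}{J_0}{x_0}{\xi_1}\subseteq\Phi\q( B_{n_0}\q(\eta_1\w)\w)\subseteq\Btsub{X}{d}{J_0}{x_0}{\xi}\subseteq\Bsub{X}{d}{J_0}{x_0}{\xi}\subseteq\B{X}{d}{x_0}{\xi}$, gives precisely the long chain of inclusions asserted in the corollary.

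For the volume estimate, note that $X_{J_0}$ is a sublist of $X$, so $\Phi\q( u\w)=\exp\q( u\cdot X_{J_0}\w)x_0$ takes values in the leaf $L$ through $x_0$, and $\q|\det_{n_0\times n_0}d\Phi\q( u\w)\w|\approx\q|\det_{n_0\times n_0}X\q( x_0\w)\w|\ne 0$ on $B_{n_0}\q(\eta_1\w)$ (the right-hand determinant is nonzero since $X\q( x_0\w)$ has rank $n_0$); hence $\Phi$ is an injective $C^1$ immersion of $B_{n_0}\q(\eta_1\w)$ into $L$. Using $\q|\det_{n_0\times n_0}d\Phi\w|=\sqrt{\det\q( d\Phi^{t}d\Phi\w)}$, the area formula on $L$ together with $\eta_1,\eta_2\approx 1$ gives, for $j=1,2$,
$$\Vol{\Phi\q( B_{n_0}\q(\eta_j\w)\w)}=\int_{B_{n_0}\q(\eta_j\w)}\q|\det_{n_0\times n_0}d\Phi\q( u\w)\w|\,du\approx\q|\det_{n_0\times n_0}X\q( x_0\w)\w|.$$
Since $\Phi\q( B_{n_0}\q(\eta_2\w)\w)\subseteq\B{X}{d}{x_0}{\xi_2}\subseteq\Phi\q( B_{n_0}\q(\eta_1\w)\w)$, squeezing yields $\Vol{\B{X}{d}{x_0}{\xi_2}}\approx\q|\det_{n_0\times n_0}X\q( x_0\w)\w|$.

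The one step that requires genuine care is the first paragraph: verifying that Theorem \ref{ThmMainUnitScale} may be applied at radius $\xi_2$ and that it returns admissible constants. Once that is granted, the remainder is a formal concatenation of inclusions together with a routine use of the area formula, and I anticipate no serious obstacle — as befits a corollary.
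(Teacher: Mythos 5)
Your proposal is correct and matches the paper's own proof: re-apply Theorem \ref{ThmMainUnitScale} with $\xi_2$ in place of $\xi$ to obtain $\eta_2,\xi_3,\xi_4$, concatenate the two chains of inclusions, and then deduce the volume estimate from the squeeze $\Phi(B_{n_0}(\eta_2))\subseteq\B{X}{d}{x_0}{\xi_2}\subseteq\Phi(B_{n_0}(\eta_1))$ together with the change-of-variables formula \eqref{EqnChangeOfVars} and the Jacobian bound. The only difference is that you spell out the (routine but worth noting) verification that the hypotheses remain admissibly satisfied at the smaller radius $\xi_2$, which the paper leaves implicit.
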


\begin{cor}\label{CorBumpFuncAtUnitScale}
Take $\xi_4$ as in Corollary \ref{CorMainUnitScaleTwice}.
Then, there exists $\phi\in C_0^2\q(\B{X}{d}{x_0}{\xi}\w)$ (here, we mean
$C^2$ as thought of as a function on the leaf), which
equals $1$ on $\B{X}{d}{x_0}{\xi_4}$ and satisfies:
$$\q|X^\alpha \phi\w|\lesssim_{\q(\q|\alpha\w|-1\w)\vee 0} 1$$
for every ordered multi-index $\alpha$.
\end{cor}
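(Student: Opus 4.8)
The plan is to obtain $\phi$ as the pushforward, through the coordinate chart $\Phi$ furnished by Theorem \ref{ThmMainUnitScale} and Corollary \ref{CorMainUnitScaleTwice}, of a fixed smooth bump function on $B_{n_0}\q(\eta_1\w)$. First I would record that $\Phi$ is a $C^2$ diffeomorphism onto an (relatively) open piece of the leaf through $x_0$: since $X_1,\ldots,X_{n_0}$ are $C^2$, Theorem \ref{ThmExpReg} gives $\Phi\in C^2$; by Theorem \ref{ThmMainUnitScale} the map $\Phi$ is one-to-one on $B_{n_0}\q(\eta_1\w)$ and $\q|\det_{n_0\times n_0}d\Phi\q(u\w)\w|\approx \q|\det_{n_0\times n_0}X\q(x_0\w)\w|\ne 0$ there (the last quantity being nonzero because $J_0$ satisfies \eqref{EqnJ0IsMaximal} and $X\q(x_0\w)$ has rank $n_0$), so the $C^2$ inverse function theorem applied locally and glued using injectivity does it. I would also recall from Corollary \ref{CorMainUnitScaleTwice} the inclusions $\B{X}{d}{x_0}{\xi_4}\subseteq \Phi\q(B_{n_0}\q(\eta_2\w)\w)$ and $\Phi\q(B_{n_0}\q(\eta_1\w)\w)\subseteq \Btsub{X}{d}{J_0}{x_0}{\xi}\subseteq \B{X}{d}{x_0}{\xi}$, with $0<\eta_2<\eta_1$ admissible.

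Next, fix $\chi\in C_0^\infty\q(B_{n_0}\q(\eta_1\w)\w)$ with $0\le \chi\le 1$ and $\chi\equiv 1$ on $B_{n_0}\q(\eta_2\w)$; since $\eta_1,\eta_2$ are admissible, $\chi$ can be chosen so that $\Cjn{m}{B_{n_0}\q(\eta_1\w)}{\chi}$ is an admissible constant for each $m$. Define $\phi:=\chi\circ\Phi^{-1}$ on $\Phi\q(B_{n_0}\q(\eta_1\w)\w)$ and $\phi:=0$ on the rest of the leaf. Because $\mathrm{supp}\,\chi$ is a compact subset of $B_{n_0}\q(\eta_1\w)$, its image $\Phi\q(\mathrm{supp}\,\chi\w)$ is a compact subset of the open set $\Phi\q(B_{n_0}\q(\eta_1\w)\w)$, so $\phi$ is a well-defined $C^2$ function on the leaf with $\mathrm{supp}\,\phi\subseteq \Phi\q(\mathrm{supp}\,\chi\w)\subseteq \B{X}{d}{x_0}{\xi}$; hence $\phi\in C_0^2\q(\B{X}{d}{x_0}{\xi}\w)$. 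Moreover $\phi\equiv 1$ on $\Phi\q(B_{n_0}\q(\eta_2\w)\w)\supseteq \B{X}{d}{x_0}{\xi_4}$, as required. (If instead one works under the $m$-admissible hypotheses, so that $X_1,\ldots,X_{n_0}\in C^m$, the identical construction yields $\phi\in C_0^m$.)

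For the derivative bounds, let $Y_j$ be the pullback of $X_j$ under $\Phi$, as in Theorem \ref{ThmMainUnitScale}, so $Y_j\q(g\circ\Phi\w)=\q(X_jg\w)\circ\Phi$ for functions $g$ on $\Phi\q(B_{n_0}\q(\eta_1\w)\w)$, and $\Cjn{m}{B_{n_0}\q(\eta_1\w)}{Y_j}\lesssim_m 1$. Each $X_j$ ($1\le j\le q$) is tangent to the leaf and $\phi$ vanishes off the compact set $\Phi\q(\mathrm{supp}\,\chi\w)$, so iterating this identity gives that $X^\alpha\phi$ is supported in $\Phi\q(\mathrm{supp}\,\chi\w)$ and $\q(X^\alpha\phi\w)\circ\Phi=Y^\alpha\chi$ on $B_{n_0}\q(\eta_1\w)$, for every ordered multi-index $\alpha$. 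Therefore $\sup\q|X^\alpha\phi\w|=\Cjn{0}{B_{n_0}\q(\eta_1\w)}{Y^\alpha\chi}$. Writing $Y_j=\partial_{u_j}+\sum_k a_j^k\partial_{u_k}$ and expanding $Y^\alpha\chi$, one obtains a finite sum of terms $b_\beta\,\partial_u^\beta\chi$ with $\q|\beta\w|\le\q|\alpha\w|$, where each $b_\beta$ is a universal polynomial in the $a_j^k$ and their $u$-derivatives of order at most $\q(\q|\alpha\w|-1\w)\vee 0$. Since $\Cjn{\q(\q|\alpha\w|-1\w)\vee 0}{B_{n_0}\q(\eta_1\w)}{a_j^k}\lesssim_{\q(\q|\alpha\w|-1\w)\vee 0}1$ (from $\Cjn{m}{B_{n_0}\q(\eta_1\w)}{Y_j}\lesssim_m 1$ together with $\q\|A\w\|\le\tfrac12$) and $\Cjn{\q|\alpha\w|}{B_{n_0}\q(\eta_1\w)}{\chi}$ is admissible, we conclude $\q|X^\alpha\phi\w|\lesssim_{\q(\q|\alpha\w|-1\w)\vee 0}1$.

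The only thing requiring care is the regularity and domain bookkeeping: checking that $\Phi$ is a bona fide $C^2$ (resp.\ $C^m$) diffeomorphism onto a relatively open piece of the leaf, so that $\phi$ extended by zero genuinely lies in $C_0^2$ (resp.\ $C_0^m$), and noting that for $\q|\alpha\w|\ge 3$ the asserted bound $\q|X^\alpha\phi\w|\lesssim_{\q|\alpha\w|-1}1$ is being made precisely under the $\q(\q|\alpha\w|-1\w)$-admissible hypotheses needed for $X^\alpha\phi$ to exist. Neither point is a genuine obstacle once Theorem \ref{ThmMainUnitScale} and Corollary \ref{CorMainUnitScaleTwice} are available — which is exactly why this is stated as a corollary.
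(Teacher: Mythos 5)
Your proof is correct and follows the paper's own argument in all essentials: push a fixed bump $\chi$ (the paper calls it $\psi$) with $\chi\equiv 1$ on $B_{n_0}\q(\eta_2\w)$ and compact support in $B_{n_0}\q(\eta_1\w)$ forward through $\Phi$, extend by zero, and bound $X^\alpha\phi$ by transferring to $Y^\alpha\chi$ via the pullback identity and the estimates $\Cjn{m}{B_{n_0}\q(\eta_1\w)}{Y_j}\lesssim_m 1$. You simply unpack the steps the paper leaves as "obvious" (that $\Phi$ is a $C^2$ chart, compactness of the support, and the expansion of $Y^\alpha\chi$ in coordinate derivatives), so there is nothing substantively different.
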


\begin{rmk}
Later in the paper we will apply Corollaries \ref{CorMainUnitScaleTwice}
and \ref{CorBumpFuncAtUnitScale} without explicitly saying what
$J_0$ and $\zeta$ are.  In these cases, we are choosing $\zeta=1$
and $J_0$ such that:
$$\q|\det_{n_0\times n_0} X_{J_0}\q( x_0\w)\w|_\infty = \q|\det_{n_0\times n_0} X\q( x_0\w)\w|_{\infty}.$$
\end{rmk}

\begin{rmk}\label{RmkSymmetricUnitScaleAssump}
In our definition of admissible constants, we have assumed greater
regularity on $X_1,\ldots, X_{n_0}$ than on $X_{j}$, $n_0<j\leq q$.
In many applications, it is easier to just assume more symmetric regularity
assumptions, that imply the assumptions of this section.  Later in the paper
we sometimes assume the following, stronger hypotheses:
\begin{itemize}
\item $\q( X,d\w)$ satisfies $\sC\q( x_0,\xi\w)$, and
(\ref{EqnUnitScaleIntegCont}) holds on $\B{X}{d}{x_0}{\xi}$.
\item In addition to everything that they are allowed to depend on in
this section, $m$-admissible constants (for $m\geq 2$) can depend on a fixed upper bound
for the quantities:
$$\Cjn{m}{\B{X}{d}{x_0}{\xi}}{X_l},\quad \sum_{\q|\alpha\w|\leq m}\Cjn{0}{\B{X}{d}{x_0}{\xi}}{X^\alpha c_{i,j}^k}$$
where $1\leq i,j,k,l\leq q$, and these derivatives are assumed to
exist, and the norms are assumed to be finite.  Admissible constants are
defined to be $2$-admissible constants.
\end{itemize}
\end{rmk}

\begin{rmk}
Just as in Remark \ref{RmkdsDontMatter}, the $d_j$s do not play
an essential role in this section.
\end{rmk}

\begin{rmk}
As mentioned in the Section \ref{SectionResults}, ``at the unit scale'' in the title
of this section refers to the unit scale with respect to the
vector fields $X_j$.  Thus, if the vector fields $X_j$ are
very small, one can think of the results in this section as
taking place at a very small scale.
\end{rmk}

\begin{rmk}
The observant reader may have noticed that we made no {\it a priori}
bound on $X_j$, $n_0<j\leq q$.  However, we will see using Cramer's
rule that (\ref{EqnJ0IsMaximal}) implies a bound for $X_j$ at $x_0$.
In addition, we will be able to use Gronwall's inequality to obtain
bounds at points other than $x_0$ (see (\ref{EqnXkAsSumOfSubs})).
The reader may wonder, though, that since we have assumed no
{\it a priori} bound for the $C^1$ norm of $X_j$ ($n_0<j\leq q$),
do we need to insist that they are $C^1$?  The answer is partially
no, though our definitions only makes sense when the $X_j$ are
all assumed to be $C^1$.  We will see that the above assumptions
will show that $X_1,\ldots, X_{n_0}$ are integrable (see Proposition \ref{PropSubSatisHyp}), and from there
all we need is that $X_j$ ($n_0<j\leq q$) is $C^1$ on the leaf generated by
$X_1,\ldots, X_{n_0}$, passing through $x_0$.
This perspective is
taken up in Section \ref{SectionControlUnitScale}; in fact, 
the main reason we have been careful to {\it not} assume
a bound on the $C^1$ norm of $X_j$ ($n_0<j\leq q$) in this
section, is to make clear how these arguments also
work in the setup of Section \ref{SectionControlUnitScale}.
\end{rmk}

Before we prove Theorem \ref{ThmMainUnitScale}, let us first see
how it implies the two corollaries.

\begin{proof}[Proof of Corollary \ref{CorMainUnitScaleTwice}]
We obtain $\eta_1,\xi_1, \xi_2$ from Theorem \ref{ThmMainUnitScale}.  Then,
apply Theorem \ref{ThmMainUnitScale} again with $\xi_2$ in place of
$\xi$ to complete the proof of the first part of the corollary.

By the above containments, we have:
$$\Vol{\Phi\q(B_{n_0}\q(\eta_2\w)\w)}\lesssim \Vol{\B{X}{d}{x_0}{\xi_2}}\lesssim \Vol{\Phi\q(B_{n_0}\q(\eta_1\w)\w)}.$$
Using (\ref{EqnChangeOfVars}) and the fact that
$$\q|\det_{n_0\times n_0} d\Phi\q(t\w)\w|\approx \q|\det_{n_0\times n_0} X\q(x_0\w)\w|$$
for all $t\in B_{n_0}\q(\eta_1\w)$, the estimate on the volume follows immediately.
\end{proof}

\begin{rmk}\label{RmkVolOfAllSmallBalls}
By a proof similar to the one of Corollary \ref{CorMainUnitScaleTwice},
we have that if $\xi'>0$, is a fixed admissible constant with $\xi'\leq \xi_2$, then,
$$\Vol{\B{X}{d}{x_0}{\xi'}}\approx  \q|\det_{n_0\times n_0} X\q(x_0\w)\w|.$$
\end{rmk}

\begin{proof}[Proof of Corollary \ref{CorBumpFuncAtUnitScale}]
Let $\psi\in C_0^\infty\q( B_{n_0}\q( \eta_1\w)\w)$, with $\psi=1$ on
$B_{n_0}\q(\eta_2\w)$.  Define
\begin{equation*}
\phi\q( x\w) = 
\begin{cases}
\psi\q(\Phi^{-1}\q( x\w)\w) & \text{if $x\in \Phi\q(B_{n_0}\q(\eta_1\w)\w)$,}\\
0 & \text{otherwise.}
\end{cases}
\end{equation*}
Then, we see:
\begin{equation*}
X^\alpha \phi\q(x\w) =
\begin{cases}
\q( Y^\alpha \psi\w) \q(\Phi^{-1}\q(x\w)\w) & \text{if $x\in \Phi\q(B_{n_0}\q(\eta_1\w)\w)$,}\\
0 & \text{otherwise.}
\end{cases}
\end{equation*}
Thus, to prove the corollary, it suffices to show that:
$$\q|Y^\alpha \psi\w|\lesssim_{\q(\q|\alpha\w|-1\w) \vee 0} 1$$
and this is obvious.
\end{proof}

We now turn to the proof of Theorem \ref{ThmMainUnitScale}.  The
main idea is to apply Theorem \ref{ThmMainFrobThm} to the vector fields
$\q( X,d\w)_{J_0}$.  

\begin{lemma}\label{LemmaLieDerivOfDetCarnot}
Fix $1\leq n_1\leq n\wedge q$.  Then, for $1\leq j\leq n_0$, $I\in \sI{n_1}{n}$, $J\in \sI{n_1}{q}$, $x\in \Bsub{X}{d}{J_0}{x_0}{\xi}$,
$$\q| X_j \det X\q(x\w)_{I,J}\w|\lesssim \q|\det_{n_1\times n_1} X\q(x\w)\w|.$$
\end{lemma}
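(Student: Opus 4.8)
The plan is to rerun the Lie-derivative argument of Lemma~\ref{LemmaLieDerivOfDet} essentially verbatim; nothing in that proof used either the square shape of $X$ or the linear independence of the $X_j$, and the only new feature here is that the column index set $J$ now ranges over $\sI{n_1}{q}$ rather than $\sI{n_1}{n_0}$, so the interior products produce $n_1\times n_1$ minors of the full $n\times q$ matrix $X\q(x\w)$. Concretely, I would fix $I=\q(i_1,\ldots,i_{n_1}\w)\in\sI{n_1}{n}$ and $J=\q(j_1,\ldots,j_{n_1}\w)\in\sI{n_1}{q}$ and write
$$\det X\q(x\w)_{I,J} = i_{X_{j_{n_1}}}i_{X_{j_{n_1-1}}}\cdots i_{X_{j_1}}\,dx_{i_1}\wedge\cdots\wedge dx_{i_{n_1}}.$$
Since $1\le j\le n_0$, the field $X_j$ is $C^1$ (indeed $C^2$) on $\Bsub{X}{d}{J_0}{x_0}{\xi}$, so applying $\sL_{X_j}$ and using $\q[\sL_{X_j},i_V\w]=i_{\q[X_j,V\w]}$ repeatedly expands $X_j\det X\q(x\w)_{I,J}$, exactly as in \eqref{EqnLieDerivsFirst}, into $n_1$ terms in which one factor $i_{X_{j_l}}$ is replaced by $i_{\q[X_j,X_{j_l}\w]}$, plus a final term $i_{X_{j_{n_1}}}\cdots i_{X_{j_1}}\sL_{X_j}\q(dx_{i_1}\wedge\cdots\wedge dx_{i_{n_1}}\w)$.

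For the commutator terms I would use \eqref{EqnUnitScaleIntegCont}: $\q[X_j,X_{j_l}\w]=\sum_{k=1}^{q}c_{j,j_l}^k X_k$, the sum now running over all $q$ indices, so $i_{\q[X_j,X_{j_l}\w]}\q(\cdots\w)=\sum_k c_{j,j_l}^k\, i_{X_k}\q(\cdots\w)$, and each $i_{X_k}i_{X_{j_{n_1-1}}}\cdots i_{X_{j_1}}dx_{i_1}\wedge\cdots$ is either $0$ or $\pm\det X\q(x\w)_{I,J'}$ for some $J'\in\sI{n_1}{q}$, hence bounded by $\q|\det_{n_1\times n_1}X\q(x\w)\w|$. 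The coefficients appearing ($c_{j,j_l}^k$ with $1\le j\le n_0$ and $1\le j_l,k\le q$) have $\Cj{0}$-norms controlled by an admissible constant --- this is precisely the hypothesis $\sum_{\q|\alpha\w|\le 1}\Cjn{0}{\Bsub{X}{d}{J_0}{x_0}{\xi}}{X_{J_0}^\alpha c_{i,j}^k}<\infty$ for $1\le i,j,k\le q$ --- so these terms are $\lesssim\q|\det_{n_1\times n_1}X\q(x\w)\w|$. For the last term, write $X_j=\sum_k b_j^k\pd{x_k}$ with $\Cjn{1}{\Bsub{X}{d}{J_0}{x_0}{\xi}}{b_j^k}\lesssim 1$, expand $\sL_{X_j}\q(dx_{i_1}\wedge\cdots\wedge dx_{i_{n_1}}\w)$ by the Leibniz rule together with $\sL_{X_j}dx_{i_m}=\sum_l\frac{\partial b_j^{i_m}}{\partial x_l}\,dx_l$, and observe that each resulting term $i_{X_{j_{n_1}}}\cdots i_{X_{j_1}}\,dx_l\wedge dx_{i_2}\wedge\cdots$ (one $dx_{i_m}$ replaced by $dx_l$) is $0$ or $\pm\det X\q(x\w)_{I',J}$ for some $I'\in\sI{n_1}{n}$, hence again $\lesssim\q|\det_{n_1\times n_1}X\q(x\w)\w|$. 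Summing the three pieces yields the claim.

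There is no genuine obstacle here --- it is a bookkeeping adaptation of an argument already in hand --- but the one point I would be careful about is matching the regularity invoked to the section's hypotheses: we differentiate only along $X_j$ with $j\le n_0$ (exactly the range in the statement), for which the admissible constants of this section supply a $C^1$ bound, and we use only $\Cj{0}$-bounds on the structure functions $c_{j,\cdot}^{\cdot}$ whose first index is $\le n_0$, which are among those listed. In particular no bound on $X_k$ for $k>n_0$ is needed, since such fields enter only through interior products, i.e.\ through their pointwise values, and those values are themselves entries of $\det_{n_1\times n_1}X\q(x\w)$.
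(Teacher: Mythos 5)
Your proposal is correct and is exactly the adaptation the paper has in mind: the paper's own proof of this lemma consists of the single sentence that it is a simple modification of Lemma~\ref{LemmaLieDerivOfDet}, and your write-up supplies precisely that modification (columns indexed by $\sI{n_1}{q}$, commutator terms summed over all $q$ fields via \eqref{EqnUnitScaleIntegCont}, differentiation only along $X_j$ with $j\le n_0$). Your closing regularity audit --- that one needs only the $\Cj{0}$ bound on $c_{j,\cdot}^{\cdot}$ and the $C^1$ bound on $X_j$ for $j\le n_0$, while $X_k$ with $k>n_0$ enters only through pointwise values already appearing as entries of $\det_{n_1\times n_1}X$ --- is the correct and relevant observation.
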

\begin{proof}
This can be proved by a simple modification of the proof for Lemma \ref{LemmaLieDerivOfDet}.  We leave the details to the reader.
\end{proof}

\begin{lemma}\label{LemmaDetsDontChangeCarnot}
For $y\in \Bsub{X}{d}{J_0}{x_0}{\xi}$, $1\leq n_1\leq q\wedge n$,
$$\q|\det_{n_1\times n_1} X\q( y\w)\w|\approx \q|\det_{n_1\times n_1} X\q( x_0\w)\w|.$$
In particular, for all $y\in \Bsub{X}{d}{J_0}{x_0}{\xi}$, $\dim \Span{X_1\q(y\w),\ldots X_q\q( y\w)} = n_0$.
\end{lemma}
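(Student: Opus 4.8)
The plan is to follow the proof of Lemma \ref{LemmaDetsDontChange} essentially verbatim, using Lemma \ref{LemmaLieDerivOfDetCarnot} in place of Lemma \ref{LemmaLieDerivOfDet} and exploiting that the connecting curve is constrained to lie inside $\Bsub{X}{d}{J_0}{x_0}{\xi}$. Since $y\in \Bsub{X}{d}{J_0}{x_0}{\xi}=B_{\q(X,d\w)_{J_0}}\q(x_0,\xi\w)$, there is a curve $\gamma:\q[0,1\w]\rightarrow \Bsub{X}{d}{J_0}{x_0}{\xi}$ with $\gamma\q(0\w)=x_0$, $\gamma\q(1\w)=y$, $\gamma'\q(t\w)=\sum_{j=1}^{n_0}a_j\q(t\w)X_j\q(\gamma\q(t\w)\w)$, and $\Lppn{\infty}{\q[0,1\w]}{\q|\xi^{-d_{J_0}}a\w|}<1$; since $\xi\leq 1$ and each $d_j>0$, this forces $\q|a_j\q(t\w)\w|\leq 1$ for all $t$ (recall we have assumed $J_0=\q(1,\ldots,n_0\w)$, so $a\cdot X_{J_0}=\sum_{j=1}^{n_0}a_jX_j$).

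Fix $1\leq n_1\leq q\wedge n$. First I would differentiate $\q|\det_{n_1\times n_1}X\q(\gamma\q(t\w)\w)\w|^2=\sum_{I\in \sI{n_1}{n},\,J\in \sI{n_1}{q}}\q(\det X\q(\gamma\q(t\w)\w)_{I,J}\w)^2$ in $t$; each summand contributes $2\det X_{I,J}\q(\gamma\w)\,\q(\q(a\cdot X_{J_0}\w)\det X_{I,J}\w)\q(\gamma\w)$, and since $\gamma\q(t\w)\in \Bsub{X}{d}{J_0}{x_0}{\xi}$ for every $t$, expanding $a\cdot X_{J_0}=\sum_{j=1}^{n_0}a_jX_j$ and applying Lemma \ref{LemmaLieDerivOfDetCarnot} together with $\q|a_j\w|\leq 1$ gives
$$\frac{d}{dt}\q|\det_{n_1\times n_1}X\q(\gamma\q(t\w)\w)\w|^2\lesssim \q|\det_{n_1\times n_1}X\q(\gamma\q(t\w)\w)\w|^2.$$
Gronwall's inequality then yields $\q|\det_{n_1\times n_1}X\q(y\w)\w|\lesssim \q|\det_{n_1\times n_1}X\q(x_0\w)\w|$; running the argument along the reversed curve (which also lies in $\Bsub{X}{d}{J_0}{x_0}{\xi}$) gives the opposite inequality, so $\q|\det_{n_1\times n_1}X\q(y\w)\w|\approx \q|\det_{n_1\times n_1}X\q(x_0\w)\w|$, which is the main claim.

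For the ``in particular'' part I would take $n_1=n_0$: since $\dim\Span{X_1\q(x_0\w),\ldots,X_q\q(x_0\w)}=n_0$ we have $\q|\det_{n_0\times n_0}X\q(x_0\w)\w|\neq 0$, hence $\q|\det_{n_0\times n_0}X\q(y\w)\w|\neq 0$ and $\dim\Span{X_1\q(y\w),\ldots,X_q\q(y\w)}\geq n_0$. If $n_0<q\wedge n$, apply the same Gronwall step with $n_1=n_0+1$: here $\q|\det_{\q(n_0+1\w)\times\q(n_0+1\w)}X\q(x_0\w)\w|=0$, so the differential inequality with vanishing initial data forces $\q|\det_{\q(n_0+1\w)\times\q(n_0+1\w)}X\q(y\w)\w|=0$, whence the rank at $y$ is at most $n_0$; if $n_0=q\wedge n$ this upper bound is automatic. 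Thus $\dim\Span{X_1\q(y\w),\ldots,X_q\q(y\w)}=n_0$. I do not anticipate any real difficulty here; the only points deserving a word of justification are that both $\gamma$ and its reversal remain inside $\Bsub{X}{d}{J_0}{x_0}{\xi}$ (immediate from the definition of the ball, which is what makes Lemma \ref{LemmaLieDerivOfDetCarnot} applicable along the whole curve), and the degenerate instance of the Gronwall step used in the rank computation.
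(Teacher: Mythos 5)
Your proof is correct and follows exactly the route the paper intends: the paper simply states that Lemma \ref{LemmaDetsDontChangeCarnot} "can be proved by a simple modification of the proof of Lemma \ref{LemmaDetsDontChange}, using Lemma \ref{LemmaLieDerivOfDetCarnot}," and that is precisely what you do — including the correct observation that the curve controlling $\gamma'$ only involves the $X_j$ with $j \leq n_0$ (which is what Lemma \ref{LemmaLieDerivOfDetCarnot} requires) and that $\xi\leq 1$, $d_j>0$ forces $\q|a_j\w|\leq 1$ so the implicit Gronwall constant is admissible. Your handling of the ``in particular'' rank statement (via $n_1=n_0$ for the lower bound and $n_1=n_0+1$, when it makes sense, for the upper bound) is a small but correct elaboration of what the paper leaves implicit.
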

\begin{proof}
This can be proved by a simple modification of the proof of Lemma \ref{LemmaDetsDontChange}, using Lemma \ref{LemmaLieDerivOfDetCarnot}.  We leave the details to the reader.
\end{proof}

Take $I_0\in \sI{n_0}{n}$ such that:
$$\q|\det X\q(x_0\w)_{I_0,J_0}\w|= \sup_{I\in \sI{n_0}{n}}\q|\det X\q(x_0\w)_{I,J_0}\w|.$$
\begin{lemma}\label{LemmaMaxDetDoesntChange}
There exists an admissible constant $\xi^1>0$, $\xi^1\leq \xi$ such that
for every $y\in \Bsub{X}{d}{J_0}{x_0}{\xi^1}$, we have:
\begin{equation*}
\q|\det X\q( y\w)_{I_0,J_0}\w|\gtrsim \q|\det_{n_0\times n_0} X\q(y\w)\w|
\end{equation*}
\end{lemma}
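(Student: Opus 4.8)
The plan is to follow the pattern of the proof of Lemma \ref{LemmaDetsDontChange}, but tracking only the single minor $\det X_{I_0,J_0}$ along curves issuing from $x_0$. The point is that, by the choice of $I_0$ together with the maximality assumption \eqref{EqnJ0IsMaximal}, at the base point $x_0$ this one minor is already comparable to the whole determinant vector, so it is enough to show that it does not decrease too much on a sufficiently small ball.

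First I would record the base-point estimate. Since $\det_{n_0\times n_0} X_{J_0}$ is precisely the vector $\q(\det X_{I,J_0}\w)_{I\in\sI{n_0}{n}}$, the choice of $I_0$ gives $\q|\det X\q(x_0\w)_{I_0,J_0}\w| = \q|\det_{n_0\times n_0} X_{J_0}\q(x_0\w)\w|_\infty$, and combined with \eqref{EqnJ0IsMaximal} and the elementary identity $\sup_{J\in\sI{n_0}{q}}\q|\det_{n_0\times n_0} X_J\q(x_0\w)\w|_\infty = \q|\det_{n_0\times n_0} X\q(x_0\w)\w|_\infty$ this yields
$$\q|\det X\q(x_0\w)_{I_0,J_0}\w| \geq \zeta\,\q|\det_{n_0\times n_0} X\q(x_0\w)\w|_\infty \geq c_0\,\q|\det_{n_0\times n_0} X\q(x_0\w)\w|,$$
where $c_0>0$ is admissible (comparing $\ell^\infty$ and $\ell^2$ norms of a vector in $\R^N$ with $N=\binom{n}{n_0}\binom{q}{n_0}$ admissibly bounded, and using $\zeta\geq\zeta_0$).

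Next, fix $\xi^1\leq\xi$ (to be chosen below) and $y\in\Bsub{X}{d}{J_0}{x_0}{\xi^1}$, and take a curve $\gamma:\q[0,1\w]\to\Bsub{X}{d}{J_0}{x_0}{\xi^1}$ with $\gamma\q(0\w)=x_0$, $\gamma\q(1\w)=y$, $\gamma'\q(t\w)=a\q(t\w)\cdot X_{J_0}\q(\gamma\q(t\w)\w)$ and $\q\|\q|\q(\xi^1\w)^{-d_{J_0}}a\w|\w\|_{L^\infty\q(\q[0,1\w]\w)}<1$ (one can always take the connecting curve inside the ball itself); the last condition forces $\q|a\q(t\w)\w|\lesssim\q(\xi^1\w)^{d_{min}}$ for a.e.\ $t$. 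Because $X_1,\ldots,X_{n_0}$ are $C^2$, the function $\det X_{I_0,J_0}$ is $C^1$, so $t\mapsto\det X\q(\gamma\q(t\w)\w)_{I_0,J_0}$ is absolutely continuous and
$$\det X\q(y\w)_{I_0,J_0}-\det X\q(x_0\w)_{I_0,J_0}=\int_0^1\q(a\q(t\w)\cdot X_{J_0}\w)\q(\det X_{I_0,J_0}\w)\q(\gamma\q(t\w)\w)\,dt.$$
Bounding the integrand by Lemma \ref{LemmaLieDerivOfDetCarnot} (with $n_1=n_0$), which applies since $\gamma$ lies in $\Bsub{X}{d}{J_0}{x_0}{\xi^1}\subseteq\Bsub{X}{d}{J_0}{x_0}{\xi}$, and then using Lemma \ref{LemmaDetsDontChangeCarnot} to replace $\q|\det_{n_0\times n_0}X\q(\gamma\q(t\w)\w)\w|$ by $\q|\det_{n_0\times n_0}X\q(x_0\w)\w|$, one obtains
$$\q|\det X\q(y\w)_{I_0,J_0}-\det X\q(x_0\w)_{I_0,J_0}\w|\leq C\,\q(\xi^1\w)^{d_{min}}\,\q|\det_{n_0\times n_0}X\q(x_0\w)\w|$$
with $C$ admissible.

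Finally I would choose $\xi^1\leq\xi$ admissibly small so that $C\q(\xi^1\w)^{d_{min}}\leq c_0/2$. Combined with the base-point estimate this gives $\q|\det X\q(y\w)_{I_0,J_0}\w|\geq\frac{c_0}{2}\q|\det_{n_0\times n_0}X\q(x_0\w)\w|\approx\q|\det_{n_0\times n_0}X\q(y\w)\w|$, the last step again by Lemma \ref{LemmaDetsDontChangeCarnot}, which is the assertion. The only mildly delicate point is bookkeeping — checking that $c_0$, the constant $C$ coming out of Lemmas \ref{LemmaLieDerivOfDetCarnot} and \ref{LemmaDetsDontChangeCarnot}, and the resulting $\xi^1$ are all admissible in the sense fixed at the start of this section; there is no genuine analytic obstacle beyond those two already-proved lemmas.
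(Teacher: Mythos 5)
Your argument is correct and is essentially the paper's proof: both hinge on the derivative bound from Lemma \ref{LemmaLieDerivOfDetCarnot}, stability of the determinant vector from Lemma \ref{LemmaDetsDontChangeCarnot}, and the base-point comparability of $\det X(x_0)_{I_0,J_0}$ coming from the choice of $I_0$ and \eqref{EqnJ0IsMaximal}, with $\xi^1$ chosen admissibly small so the drift along the curve is negligible. The only cosmetic difference is that you track just the single minor $\det X_{I_0,J_0}$ and invoke Lemma \ref{LemmaDetsDontChangeCarnot} once more at the end, whereas the paper tracks each $\det X_{I,J}$ along the flow; these are interchangeable bookkeeping arrangements of the same idea.
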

\begin{proof}
Fix $I\in \sI{n_0}{n}$, $J\in \sI{n_0}{q}$.  Let $\gamma:\q[0,1\w]\rightarrow \Bsub{X}{d}{J_0}{x_0}{\xi}$ satisfy:
$$\gamma'\q(t\w) = \q( b\cdot X_{J_0}\w)\q(\gamma\q(t\w)\w)$$
with $b\in \Lpp{\infty}{\q[0,1\w]}^{n_0}$, $\Lppn{\infty}{\q[0,1\w]}{\q|\xi^{-d_{J_0}} b\w|}<1$.

By applying Lemmas \ref{LemmaLieDerivOfDetCarnot}, \ref{LemmaDetsDontChangeCarnot}, we see:
\begin{equation*}
\begin{split}
\frac{d}{dt} \q|\det X_{I,J}\q(\gamma\q(t\w)\w)\w|^2 &= \det X_{I,J}\q(\gamma\q(t\w)\w) \q(\q( b\cdot X_{J_0}\w)\det X_{I,J}\w)\q(\gamma\q(t\w)\w)\\
&\lesssim \q|\det_{n_0\times n_0} X\q(\gamma\q(t\w)\w)\w|^2\\
&\approx \q|\det_{n_0\times n_0} X\q( x_0\w)\w|^2\\
&\approx \q|\det_{n_0\times n_0} X\q( x_0\w)_{J_0}\w|^2\\
&\approx \q|\det X\q(x_0\w)_{I_0,J_0}\w|^2\\
\end{split}
\end{equation*}
and therefore,
$$\frac{d}{dt} \q|\det X_{I,J}\q(\gamma\q(t\w)\w)\w|^2 \leq C \q|\det X\q(x_0\w)_{I_0,J_0}\w|^2$$
where $C$ is some admissible constant.  Thus, if $t\leq \frac{1}{2C}$,
we have:
$$\q|\det X\q(\gamma\q(t\w)\w)_{I_0,J_0}\w| \approx \q|\det X\q(x_0\w)_{I_0,J_0}\w|$$
and,
\begin{equation}
\begin{split}
\q| \det X\q(\gamma\q(t\w)\w)_{I,J}\w|&\lesssim \q|\det X\q(x_0\w)_{I,J}\w| + \q|\det X\q( x_0\w)_{I_0,J_0}\w|
\\&\lesssim \q|\det X\q(x_0\w)_{I_0,J_0}\w|
\\&\approx \q|\det X\q(\gamma\q(t\w)\w)_{I_0,J_0}\w|.
\end{split}
\end{equation}
We complete the proof by noting that there exists an admissible constant
$\xi^1>0$ such that for every point $y\in \Bsub{X}{d}{J_0}{x_0}{\xi^1}$ there is
a $\gamma$ of the above form and a $t\leq \frac{1}{2C}$ with $y=\gamma\q( t\w)$.
\end{proof}

\begin{lemma}\label{LemmaQuotientOfDets}
Fix $I\in \sI{n_0}{n}$, $J\in \sI{n_0}{q}$.  Then,
\begin{equation*}
\sum_{\q|\alpha\w|\leq m}\Cjn{0}{\Bsub{X}{d}{J_0}{x_0}{\xi^1}}{ X_{J_0}^{\alpha}\frac{\det X_{I,J}}{\det X_{I_0,J_0}  }}\lesssim_m 1.
\end{equation*}
\end{lemma}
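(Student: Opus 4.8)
The plan is to prove the estimate by a double induction: an outer induction on $m$ for the quotient itself, resting on an inner (auxiliary) claim about iterated derivatives of minors. Since $n_0$ and $q$ are bounded by admissible constants, there are only admissibly many ordered multi-indices $\alpha$ with $\q|\alpha\w|\leq m$, so it suffices to prove $\Cjn{0}{\Bsub{X}{d}{J_0}{x_0}{\xi^1}}{X_{J_0}^\alpha \q(\det X_{I,J}/\det X_{I_0,J_0}\w)}\lesssim_m 1$ for each such $\alpha$ separately. First I would note that $f:=\det X_{I,J}/\det X_{I_0,J_0}$ is well defined and bounded on $\Bsub{X}{d}{J_0}{x_0}{\xi^1}$: by Lemma \ref{LemmaMaxDetDoesntChange} together with Lemma \ref{LemmaDetsDontChangeCarnot}, $\q|\det X\q( y\w)_{I_0,J_0}\w|\gtrsim \q|\det_{n_0\times n_0}X\q( x_0\w)\w|$ for $y\in \Bsub{X}{d}{J_0}{x_0}{\xi^1}$, which is nonzero since $X_1\q(x_0\w),\ldots,X_q\q(x_0\w)$ span an $n_0$-dimensional space, while $\q|\det X\q( y\w)_{I,J}\w|\leq \q|\det_{n_0\times n_0}X\q( y\w)\w|\approx \q|\det_{n_0\times n_0}X\q( x_0\w)\w|$ by Lemma \ref{LemmaDetsDontChangeCarnot}; this is the base case $m=0$.

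The auxiliary claim is that for every $I'\in\sI{n_0}{n}$, $J'\in\sI{n_0}{q}$ and every ordered multi-index $\alpha$ with $\q|\alpha\w|\leq m$,
$$\Cjn{0}{\Bsub{X}{d}{J_0}{x_0}{\xi^1}}{X_{J_0}^{\alpha}\det X_{I',J'}}\lesssim_{\q|\alpha\w|}\q|\det_{n_0\times n_0}X\q( x_0\w)\w|.$$
This I would prove by induction on $\q|\alpha\w|$: the case $\q|\alpha\w|=0$ is Lemma \ref{LemmaDetsDontChangeCarnot}, and for the step one applies the Lie-derivative identity from the proof of Lemma \ref{LemmaLieDerivOfDetCarnot} to write $X_j\det X_{I',J'}$ (for $j\leq n_0$, as $J_0=\q( 1,\ldots,n_0\w)$) as a finite linear combination of same-size minors $\det X_{I'',J''}$, with $I''\in\sI{n_0}{n}$, $J''\in\sI{n_0}{q}$, whose coefficients are the structure functions $c_{j,k}^l$ (with $k,l\leq q$) or the partials $\partial b_j^i/\partial x_l$; applying the remaining $\q|\alpha\w|-1$ vector fields of $X_{J_0}$ and distributing by the product rule, each such coefficient is differentiated at most $\q|\alpha\w|-1$ times along $X_{J_0}$ — which is bounded by an admissible constant, since only the $c_{j,k}^l$ and the components $b_j$ of the \emph{smooth} vector fields $X_j$ with $j\leq n_0$ enter, and the definition of $m$-admissible constant supplies exactly $\Cjn{0}{}{X_{J_0}^\beta c_{j,k}^l}$ for $\q|\beta\w|\leq m-1$ with all indices $\leq q$, together with $\Cjn{m}{}{X_j}$ for $j\leq n_0$ — while the minors are bounded by $\q|\det_{n_0\times n_0}X\q( x_0\w)\w|$ via the inner induction hypothesis. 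With the auxiliary claim in hand, the outer inductive step is immediate: for $\q|\alpha\w|=m$, applying $X_{J_0}^\alpha$ to $\q(\det X_{I_0,J_0}\w)f=\det X_{I,J}$ and distributing, the one term in which all $m$ vector fields act on $f$ is $\q(\det X_{I_0,J_0}\w)\q( X_{J_0}^\alpha f\w)$, and every other term has the form $\q( X_{J_0}^\beta \det X_{I_0,J_0}\w)\q( X_{J_0}^\gamma f\w)$ with $\q|\beta\w|+\q|\gamma\w|=m$, $\q|\beta\w|\geq 1$, so
$$X_{J_0}^\alpha f = \frac{1}{\det X_{I_0,J_0}}\q( X_{J_0}^\alpha \det X_{I,J} - \sum \q( X_{J_0}^\beta \det X_{I_0,J_0}\w)\q( X_{J_0}^\gamma f\w)\w);$$
bounding $\q|1/\det X_{I_0,J_0}\w|\lesssim \q|\det_{n_0\times n_0}X\q( x_0\w)\w|^{-1}$ (Lemma \ref{LemmaMaxDetDoesntChange}), the minor terms by $\lesssim_m \q|\det_{n_0\times n_0}X\q( x_0\w)\w|$ (auxiliary claim), and each $X_{J_0}^\gamma f$ with $\q|\gamma\w|<m$ by $\lesssim_{m-1}1$ (outer induction hypothesis), everything multiplies to $\lesssim_m 1$.

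The main obstacle is the auxiliary claim, specifically the bookkeeping through arbitrarily many iterations of the Lie-derivative identity: one must verify that the coefficient functions generated are always of the two admissible types described, that a function of type $c_{j,k}^l$ never needs more than $m-1$ derivatives (it is created by a commutator, consuming one of the $m$ slots), that the columns $X_k$ with $k>n_0$ are never differentiated (they enter only through $[X_j,X_k]=\sum_l c_{j,k}^l X_l$ and through un-differentiated minors, which is precisely where the Cramer's-rule/Gronwall control of the $X_k$ is absorbed into Lemma \ref{LemmaDetsDontChangeCarnot}), and that the order budget $m$ is never exceeded. Everything else — the product rule, the restriction argument from $\xi$ down to $\xi^1$, and the final multiplication of estimates — is routine.
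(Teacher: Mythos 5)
Your argument is correct and fills in exactly the ``simple induction'' that the paper leaves to the reader: both proofs start from Lemma \ref{LemmaMaxDetDoesntChange} for the $m=0$ case, both rely on the decomposition $X_j\det X_{I,J}=\sum f\det X_{I',J'}$ from the proof of Lemma \ref{LemmaLieDerivOfDet} (in its Carnot form, Lemma \ref{LemmaLieDerivOfDetCarnot}), and your two-stage induction — first the auxiliary estimate $\q|X_{J_0}^{\alpha}\det X_{I',J'}\w|\lesssim_{\q|\alpha\w|}\q|\det_{n_0\times n_0}X\q(x_0\w)\w|$, then the quotient-rule recursion on $\q(\det X_{I_0,J_0}\w)f=\det X_{I,J}$ — is a natural and correct way to organize it. The bookkeeping you flag as the main obstacle (each $c_{i,j}^{k}$ with an index $>n_0$ is created by consuming one of the $m$ slots and so is hit by at most $m-1$ further $X_{J_0}$-derivatives, matching the asymmetric $m$-admissibility hypotheses; columns $X_k$ with $k>n_0$ are never differentiated directly but only through the interior product $i_{[X_j,X_k]}$) is exactly the point the paper elides, and you have tracked it correctly.
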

\begin{proof}
For $m=0$ this follows from Lemma \ref{LemmaMaxDetDoesntChange}.
For $m>0$, we look back to the proof of Lemma \ref{LemmaLieDerivOfDet}.
There, it was shown that $X_j \det X_{I,J}$ could be written as a sum
of terms of the form
$$f\det X_{I',J'}$$
where $I'\in \sI{n_0}{n}$, $J'\in \sI{n_0}{q}$, and $f$ was either of the
form $c_{i,j}^k$ or $f$ was a derivative of a coefficient of $X_j$ ($1\leq j\leq n_0$).
From this, Lemma \ref{LemmaMaxDetDoesntChange}, and a simple induction,
the lemma follows easily.  We leave the proof to the interested reader.
\end{proof}

We now show that on $\Bsub{X}{d}{J_0}{x_0}{\xi^1}$, the vector fields $X_1,\ldots, X_{n_0}$ satisfy the hypotheses of Theorem \ref{ThmMainFrobThm}.  Recall,
we have assumed, without loss of generality, $J_0=\q( 1,\ldots, n_0\w)$.
\begin{prop}\label{PropSubSatisHyp}
For $1\leq i,j,k\leq n_0$, there exist functions $\ch_{i,j}^k\in C\q(\Bsub{X}{d}{J_0}{x_0}{\xi^1}\w)$ such that, for $1\leq i,j\leq n_0$:
\begin{equation*}
\q[X_i, X_j\w]= \sum_{k=1}^{n_0} \ch_{i,j}^k X_k.
\end{equation*}
These functions satisfy:
\begin{equation*}
\sum_{\q|\alpha\w|\leq m} \Cjn{0}{\Bsub{X}{d}{J_0}{x_0}{\xi^1}}{X_{J_0}^\alpha\ch_{i,j}^k} \lesssim_m 1.
\end{equation*}
\end{prop}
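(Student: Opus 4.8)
The plan is to express $\q[X_i,X_j\w]$ ($1\le i,j\le n_0$) in terms of $X_1,\dots,X_{n_0}$ alone by eliminating $X_{n_0+1},\dots,X_q$ through Cramer's rule, and then to read off the required bounds from Lemma \ref{LemmaQuotientOfDets} together with the admissibility hypotheses on the $c_{i,j}^l$. The first step is to record the linear-algebraic consequence of Lemmas \ref{LemmaMaxDetDoesntChange} and \ref{LemmaDetsDontChangeCarnot}: on $\Btsub{X}{d}{J_0}{x_0}{\xi^1}$ the vectors $X_1(y),\dots,X_{n_0}(y)$ are linearly independent (since $\det X(y)_{I_0,J_0}\ne 0$ by Lemma \ref{LemmaMaxDetDoesntChange}), while $\dim\Span{X_1(y),\dots,X_q(y)}=n_0$ by Lemma \ref{LemmaDetsDontChangeCarnot}; hence $\Span{X_1(y),\dots,X_{n_0}(y)}=\Span{X_1(y),\dots,X_q(y)}$ for every such $y$.

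Next I would produce, for each $l$ with $n_0<l\le q$, continuous functions $e_l^k$ ($1\le k\le n_0$) on $\Btsub{X}{d}{J_0}{x_0}{\xi^1}$ by solving the (invertible) $I_0$-row system $X_{I_0,J_0}(y)\,e_l(y)=(X_l)_{I_0}(y)$ via Cramer's rule; each $e_l^k$ then equals $\pm\det X_{I_0,J}/\det X_{I_0,J_0}$ for an appropriate $J\in\sI{n_0}{q}$. Because both sides of $X_l=\sum_{k=1}^{n_0}e_l^k X_k$ lie in $\Span{X_1(y),\dots,X_{n_0}(y)}$ and agree on the $I_0$-coordinates, on which the projection restricted to this $n_0$-dimensional space is injective, the identity $X_l=\sum_{k=1}^{n_0}e_l^k X_k$ holds as vector fields on $\Btsub{X}{d}{J_0}{x_0}{\xi^1}$. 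Setting in addition $e_l^k=\delta_l^k$ for $1\le l\le n_0$, Lemma \ref{LemmaQuotientOfDets} yields $\sum_{\q|\alpha\w|\le m}\Cjn{0}{\Bsub{X}{d}{J_0}{x_0}{\xi^1}}{X_{J_0}^\alpha e_l^k}\lesssim_m 1$ for all $1\le k\le n_0$, $1\le l\le q$ (for $l\le n_0$ the function is constant).

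Now define $\ch_{i,j}^k:=\sum_{l=1}^q c_{i,j}^l\, e_l^k$ for $1\le i,j,k\le n_0$; this is a finite sum of products of continuous functions, hence continuous, and
\[
\q[X_i,X_j\w]=\sum_{l=1}^q c_{i,j}^l X_l=\sum_{l=1}^q c_{i,j}^l\sum_{k=1}^{n_0}e_l^k X_k=\sum_{k=1}^{n_0}\ch_{i,j}^k X_k .
\]
For the estimate, apply $X_{J_0}^\alpha$ with $\q|\alpha\w|\le m$: since $X_{J_0}^\alpha(fg)$ expands as a finite sum of terms $(X_{J_0}^{\alpha'}f)(X_{J_0}^{\alpha''}g)$ with $\q|\alpha'\w|+\q|\alpha''\w|\le\q|\alpha\w|$, $X_{J_0}^\alpha\ch_{i,j}^k$ is a finite sum of terms $(X_{J_0}^{\alpha'}c_{i,j}^l)(X_{J_0}^{\alpha''}e_l^k)$. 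The first factor is $\lesssim_m 1$ by the ($m$-)admissibility hypothesis $\sum_{\q|\alpha\w|\le m}\Cjn{0}{\Bsub{X}{d}{J_0}{x_0}{\xi^1}}{X_{J_0}^\alpha c_{i,j}^l}\lesssim_m 1$, which is exactly the bound available for $1\le i,j\le n_0$ and $1\le l\le q$; the second factor is $\lesssim_m 1$ by the previous paragraph. Summing over the finitely many $l$ and the finitely many Leibniz terms gives $\sum_{\q|\alpha\w|\le m}\Cjn{0}{\Bsub{X}{d}{J_0}{x_0}{\xi^1}}{X_{J_0}^\alpha\ch_{i,j}^k}\lesssim_m 1$.

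The one point that genuinely requires care is the claim that $X_l=\sum_k e_l^k X_k$ holds as an identity of vector fields on all of $\Btsub{X}{d}{J_0}{x_0}{\xi^1}$, not merely after projecting onto the rows indexed by $I_0$; this is precisely where the rank constancy of Lemma \ref{LemmaDetsDontChangeCarnot} enters, forcing each $X_l(y)$ into $\Span{X_1(y),\dots,X_{n_0}(y)}$. Everything else is bookkeeping: matching the index ranges in the admissibility hypotheses and invoking Lemma \ref{LemmaQuotientOfDets}.
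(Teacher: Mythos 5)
Your proposal is correct and follows essentially the same route as the paper: Cramer's rule expresses each $X_l$ ($1\le l\le q$) as a combination of $X_1,\dots,X_{n_0}$ with coefficients of the form $\pm\det X_{I_0,J}/\det X_{I_0,J_0}$, and the required estimates then fall out of Lemma \ref{LemmaQuotientOfDets} together with the admissibility hypotheses on the $c_{i,j}^l$. You also usefully make explicit a point the paper leaves implicit in its one-line appeal to Cramer's rule --- namely that the rank constancy of Lemma \ref{LemmaDetsDontChangeCarnot} is what upgrades agreement on the $I_0$-rows to a genuine identity of vector fields --- though note the minor slip that you write the smaller ball $\Btsub{X}{d}{J_0}{x_0}{\xi^1}$ in several places where the statement and the cited lemmas refer to the full ball $\Bsub{X}{d}{J_0}{x_0}{\xi^1}$.
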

\begin{proof}
For $1\leq j,k\leq q$, let $X^{\q(j,k\w)}$ be the matrix obtained by
replacing the $j$th column of the matrix $X$ with $X_k$.  Note that:
$$\det X^{\q(j,k\w)}_{I_0,J_0} = \epsilon_{j,k} \det X_{I_0, J\q( j,k\w)}$$
where $\epsilon_{j,k}\in \q\{0,1,-1\w\}$, and $J\q( j,k\w)\in \sI{n_0}{q}$.
Thus, for any $1\leq k\leq q$, we may write, by Cramer's rule:
\begin{equation}\label{EqnXkAsSumOfSubs}
X_k = \sum_{l=1}^{n_0} \frac{\det X^{\q(l,k\w)}_{I_0,J_0}}{\det X_{I_0,J_0}} X_l
= \sum_{l=1}^{n_0} \epsilon_{l,k} \frac{\det X_{I_0,J\q(l,k\w)}}{\det X_{I_0,J_0}} X_l.
\end{equation}
Hence, we have, for $1\leq i,j\leq n_0$:
\begin{equation*}
\q[X_i, X_j\w] = \sum_{k=1}^q c_{i,j}^k X_k = \sum_{l=1}^{n_0} \q( \sum_{k=1}^q c_{i,j}^k \epsilon_{l,k} \frac{\det X_{I_0,J\q(l,k\w)}}{\det X_{I_0,J_0}} \w) X_l =: \sum_{l=1}^{n_0} \ch_{i,j}^l X_l.
\end{equation*}

Given the form of $\ch_{i,j}^k$, the desired estimates on the derivatives
follow immediately from Lemma \ref{LemmaQuotientOfDets}.
\end{proof}

We now apply Theorem \ref{ThmMainFrobThm} to the list of vector fields
$\q( X,d\w)_{J_0}$ on the ball $\Bsub{X}{d}{J_0}{x_0}{\xi^1}$.  We obtain
$\xi_1$ and $\eta_1$ as in the statement of Theorem \ref{ThmMainUnitScale}.  We obtain 
$$\q|\det_{n_0\times n_0} d\Phi \q(t\w)\w|\approx \q|\det_{n_0\times n_0} X_{J_0}\q( x_0\w)\w|.$$
But, we know from our initial assumptions that
$$\q|\det_{n_0\times n_0} X_{J_0}\q( x_0\w)\w| \approx \q|\det_{n_0\times n_0} X\q( x_0\w)\w|.$$
For $1\leq j\leq n_0$, we have:
\begin{equation*}
\Cjn{m}{B_{n_0}\q(\eta_1\w)}{Y_j}\lesssim_m 1.
\end{equation*}
Hence, to complete the proof of Theorem \ref{ThmMainUnitScale}, we need
to show the existence of $\xi_2$ and prove the estimates on $Y_j$ for
$n_0<j\leq q$.  We begin with the latter:
\begin{prop}\label{PropRegularityOfYUnitScale}
$\Cjn{m}{B_{n_0}\q(\eta_1\w)}{Y_k}\lesssim_m 1$, for $n_0<k\leq q$.
\end{prop}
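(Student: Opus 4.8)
The plan is to reduce the bound for $Y_k$ ($n_0 < k \le q$) entirely to the bounds already in hand for $Y_1,\ldots,Y_{n_0}$ by pulling back the Cramer's-rule identity \eqref{EqnXkAsSumOfSubs} via $\Phi$. First I would recall from Proposition \ref{PropSubSatisHyp} and the application of Theorem \ref{ThmMainFrobThm} that $\Phi: B_{n_0}(\eta_1) \to \Btsub{X}{d}{J_0}{x_0}{\xi}$ is a $C^1$ (indeed well-controlled) map with $d\Phi(Y_j) = X_j$ for $1\le j\le n_0$, and moreover $\Cjn{m}{B_{n_0}(\eta_1)}{Y_j}\lesssim_m 1$ for these $j$. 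Pulling back \eqref{EqnXkAsSumOfSubs} through $\Phi$, and writing $g_{l,k} := \bigl(\epsilon_{l,k}\,\det X_{I_0,J(l,k)}/\det X_{I_0,J_0}\bigr)\circ \Phi$, we get the identity on $B_{n_0}(\eta_1)$
\begin{equation*}
Y_k = \sum_{l=1}^{n_0} g_{l,k}\, Y_l.
\end{equation*}
(Here one should note that $d\Phi$ is injective on its image leaf so pulling back the vector field relation is legitimate; this is exactly the content used in Lemma \ref{LemmaPushForwardY} and Theorem \ref{ThmEstJac}.)

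Next I would estimate $\Cjn{m}{B_{n_0}(\eta_1)}{g_{l,k}}$. By Proposition \ref{PropEstCmOfA}, the ordinary $C^{m}$ norm on $B_{n_0}(\eta_1)$ is equivalent (up to $m$-admissible constants) to $\sum_{|\alpha|\le m}\Cjn{0}{B_{n_0}(\eta_1)}{Y^\alpha(\cdot)}$, where here $Y^\alpha$ means compositions of $Y_1,\ldots,Y_{n_0}$. But $Y^\alpha g_{l,k} = \bigl(X_{J_0}^\alpha(\epsilon_{l,k}\det X_{I_0,J(l,k)}/\det X_{I_0,J_0})\bigr)\circ\Phi$, and Lemma \ref{LemmaQuotientOfDets} gives exactly that $\sum_{|\alpha|\le m}\Cjn{0}{\Bsub{X}{d}{J_0}{x_0}{\xi^1}}{X_{J_0}^\alpha(\det X_{I,J}/\det X_{I_0,J_0})}\lesssim_m 1$. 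Since $\Phi$ maps into $\Btsub{X}{d}{J_0}{x_0}{\xi}\subseteq \Bsub{X}{d}{J_0}{x_0}{\xi^1}$ (after possibly shrinking admissible constants — I would note $\xi^1$ was chosen admissible and $\Bt{}\subseteq\B{}$ at comparable radii, so this containment holds up to admissible constants), we conclude $\Cjn{m}{B_{n_0}(\eta_1)}{g_{l,k}}\lesssim_m 1$ for all $l,k$.

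Finally I would combine the two: since $Y_k = \sum_{l=1}^{n_0} g_{l,k} Y_l$ with each $g_{l,k}$ and each $Y_l$ ($l\le n_0$) bounded in $C^m$ by $m$-admissible constants, the product rule gives $\Cjn{m}{B_{n_0}(\eta_1)}{Y_k}\lesssim_m 1$, completing the proof; in particular the case $m=2$ gives the $C^2$ bound needed for the injectivity argument and for Corollary \ref{CorBumpFuncAtUnitScale}. The only mildly delicate point — and the main thing to get right — is verifying that $\Phi(B_{n_0}(\eta_1))$ genuinely lands inside the ball $\Bsub{X}{d}{J_0}{x_0}{\xi^1}$ on which Lemma \ref{LemmaQuotientOfDets} was proved, rather than just inside $\Bsub{X}{d}{J_0}{x_0}{\xi}$; this follows because Theorem \ref{ThmMainFrobThm} was applied to the vector fields $(X,d)_{J_0}$ on the ball $\Bsub{X}{d}{J_0}{x_0}{\xi^1}$ in the first place, so $\eta_1$ was already chosen so that the image lies in $\Btsub{X}{d}{J_0}{x_0}{\xi^1}\subseteq\Bsub{X}{d}{J_0}{x_0}{\xi^1}$, and everything is consistent.
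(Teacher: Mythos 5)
Your proof is correct and follows essentially the same route as the paper: pull back \eqref{EqnXkAsSumOfSubs} via $\Phi$ to write $Y_k=\sum_{l\le n_0}g_{l,k}Y_l$, reduce the $C^m$ bound on $g_{l,k}$ to a bound on $Y_{J_0}^\alpha g_{l,k}$ via Proposition \ref{PropEstCmOfA}, and then use $Y_{J_0}^\alpha g_{l,k}=(X_{J_0}^\alpha(\cdot))\circ\Phi$ together with Lemma \ref{LemmaQuotientOfDets}. The closing consistency check you flag (that $\Phi(B_{n_0}(\eta_1))\subseteq\Bsub{X}{d}{J_0}{x_0}{\xi^1}$) is indeed the right thing to worry about, and is handled exactly as you say: Theorem \ref{ThmMainFrobThm} was applied with $\xi^1$ in the role of $\xi$, so the image lands in $\Btsub{X}{d}{J_0}{x_0}{\xi^1}$ by construction.
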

\begin{proof}
By (\ref{EqnXkAsSumOfSubs}), we see that we may write:
$$Y_k = \sum_{l=1}^{n_0} \epsilon_{l,k} \q(\frac{\det X_{I_0, J\q(l,k\w)}}{\det X_{I_0,J_0}}\circ \Phi\w) Y_l.$$
Since we already know the result for $Y_l$, $1\leq l \leq n_0$, it suffices to
show that:
$$\Cjn{m}{B_{n_0}\q(\eta_1\w)}{\frac{\det X_{I_0, J\q(l,k\w)}}{\det X_{I_0,J_0}}\circ \Phi}\lesssim_m 1.$$
By Proposition \ref{PropEstCmOfA}, it suffices to show that for
$\q|\alpha\w|\leq m$,
$$\Cjn{0}{B_{n_0}\q(\eta_1\w)}{Y_{J_0}^{\alpha} \frac{\det X_{I_0, J\q(l,k\w)}}{\det X_{I_0,J_0}}\circ \Phi}\lesssim_m 1.$$
But,
$$Y_{J_0}^{\alpha} \frac{\det X_{I_0, J\q(l,k\w)}}{\det X_{I_0,J_0}}\circ \Phi = \q(X_{J_0}^{\alpha} \frac{\det X_{I_0, J\q(l,k\w)}}{\det X_{I_0,J_0}}\w)\circ \Phi.$$
From here, the result follows immediately from an application of Lemma \ref{LemmaQuotientOfDets}.
\end{proof}

We now conclude our proof of Theorem \ref{ThmMainUnitScale}, with
the following proposition:
\begin{prop}\label{PropBigBallInSmallerBall}
There exists an admissible constant $\xi_2>0$ such that:
$$\B{X}{d}{x_0}{\xi_2}\subseteq \Bsub{X}{d}{J_0}{x_0}{\xi_1}.$$
\end{prop}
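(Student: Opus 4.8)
The plan is to show that any Carnot-Carathéodory path starting at $x_0$ with sufficiently small radius cannot escape $\Bsub{X}{d}{J_0}{x_0}{\xi_1}$, by using equation (\ref{EqnXkAsSumOfSubs}) to rewrite the velocity of an arbitrary admissible path entirely in terms of the sub-list $X_{J_0}$. Concretely, suppose $y\in \B{X}{d}{x_0}{\xi_2}$, so there is an absolutely continuous $\gamma:[0,1]\to\Omega$ with $\gamma(0)=x_0$, $\gamma(1)=y$, $\gamma'(t)=\sum_{k=1}^q b_k(t)X_k(\gamma(t))$, and $\||\xi_2^{-d}b|\|_{L^\infty}<1$. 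As long as $\gamma(t)\in\Bsub{X}{d}{J_0}{x_0}{\xi^1}$, the identity (\ref{EqnXkAsSumOfSubs}) lets us substitute $X_k=\sum_{l=1}^{n_0}\epsilon_{l,k}\frac{\det X_{I_0,J(l,k)}}{\det X_{I_0,J_0}}X_l$ for each $k$ (including $n_0<k\le q$), obtaining $\gamma'(t)=\sum_{l=1}^{n_0}\tilde b_l(t)X_l(\gamma(t))$ where $\tilde b_l(t)=\sum_{k=1}^q b_k(t)\epsilon_{l,k}\frac{\det X_{I_0,J(l,k)}}{\det X_{I_0,J_0}}(\gamma(t))$. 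By Lemma \ref{LemmaMaxDetDoesntChange} (and Lemma \ref{LemmaDetsDontChangeCarnot}), the ratios $\left|\frac{\det X_{I_0,J(l,k)}}{\det X_{I_0,J_0}}\right|$ are bounded by an admissible constant on $\Bsub{X}{d}{J_0}{x_0}{\xi^1}$; combined with $|b_k(t)|\le \xi_2^{d_k}\le \xi_2^{d_{\min}}$ (taking $\xi_2\le 1$), this gives $|\tilde b_l(t)|\lesssim \xi_2^{d_{\min}}$, hence $\||\xi^{-d_{J_0}}\tilde b|\|_{L^\infty}<1$ provided $\xi_2$ is chosen admissibly small. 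This shows that the ``tail'' of $\gamma$ (up to the first time it would leave $\Bsub{X}{d}{J_0}{x_0}{\xi^1}$, or up to the first time it would leave $\Bsub{X}{d}{J_0}{x_0}{\xi_1}$) is actually an admissible $X_{J_0}$-path of radius less than $\xi^1$ (resp.\ $\xi_1$).

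Next I would run a standard continuity/connectedness argument to upgrade this to the full containment. Define $\mathcal T=\{t\le 1:\gamma(t')\in \Bsub{X}{d}{J_0}{x_0}{\xi_1}\ \forall\,0\le t'\le t\}$ and let $t_0=\sup\mathcal T$; note $0\in\mathcal T$. On $[0,t_0]$ the argument of the previous paragraph (applied with the radius $\xi_1$, which is admissible and may be taken $\le\xi^1$, shrinking $\xi^1$ if necessary) shows that $\gamma\restriction_{[0,t]}$ is, after the reparametrization built into the definition of Carnot-Carathéodory balls, an $X_{J_0}$-path witnessing $\gamma(t)\in \B{X}{d}{x_0}{\xi_1}_{J_0}$ with the $L^\infty$ norm of $|\xi_1^{-d_{J_0}}\tilde b|$ strictly less than $1$ — in fact bounded by a constant $<1$ that does not degenerate as $t\to t_0$. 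Since the bound is uniform in $t<t_0$, the path $\gamma\restriction_{[0,t_0]}$ also lies in the open-in-the-leaf set whose closure is contained in $\Bsub{X}{d}{J_0}{x_0}{\xi_1}$, so $\gamma(t_0)$ is in the interior and $t_0<1$ would contradict maximality unless $t_0=1$. Hence $t_0=1$ and $y=\gamma(1)\in \Bsub{X}{d}{J_0}{x_0}{\xi_1}$, as desired.

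The main obstacle I anticipate is the bookkeeping around the scaling in the definition of the balls: the condition defining $\B{X}{d}{x_0}{\delta}$ is $\||\delta^{-d}b|\|_{L^\infty}<1$, and after substituting via (\ref{EqnXkAsSumOfSubs}) one must check that the rescaled coefficients $\tilde b$ for the sub-list genuinely satisfy $\||\xi_1^{-d_{J_0}}\tilde b|\|_{L^\infty}<1$ with a \emph{strict} inequality that survives the limit $t\to t_0$ — this is what makes the open/closed dichotomy in the continuity argument work. Controlling this requires that the admissible bound on $\left|\det X_{I_0,J(l,k)}/\det X_{I_0,J_0}\right|$ only costs a fixed constant factor, which is exactly the content of Lemma \ref{LemmaMaxDetDoesntChange}, so that choosing $\xi_2$ small enough relative to that constant (and to $d_{\min}$, $d_{\max}$, and the number of terms $q$) closes the estimate. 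One should also be slightly careful that (\ref{EqnXkAsSumOfSubs}) is only valid at points where $\det X_{I_0,J_0}\ne 0$, but Lemma \ref{LemmaDetsDontChangeCarnot} guarantees this throughout $\Bsub{X}{d}{J_0}{x_0}{\xi^1}$, which is where we are working.
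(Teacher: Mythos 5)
Your proof uses the same central mechanism as the paper: rewrite the velocity $\gamma'(t) = b\cdot X$ in terms of the sub-list $X_{J_0}$ via Cramer's rule \eqref{EqnXkAsSumOfSubs}, bound the new coefficients using Lemma~\ref{LemmaMaxDetDoesntChange}, and take $\xi_2$ admissibly small so that the rescaled coefficients satisfy the radius-$\xi_1$ constraint. The difference is that you make explicit a bootstrapping step the paper leaves implicit: \eqref{EqnXkAsSumOfSubs} and the determinant-ratio bound of Lemma~\ref{LemmaMaxDetDoesntChange} are only established on $\Bsub{X}{d}{J_0}{x_0}{\xi^1}$, whereas the witnessing path is a priori known only to lie in $\B{X}{d}{x_0}{\xi_2}$ — which is what one is trying to show is contained in the smaller ball. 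Your connectedness argument (with $\mathcal T$ and $t_0 = \sup\mathcal T$, plus the observation that the strictness of the $L^\infty$ bound persists as $t\to t_0$ after reparametrizing to $[0,1]$) correctly resolves this apparent circularity, and it is exactly the pattern the paper does spell out in the proof of Proposition~\ref{PropCanPutBallInPhi}, so the author presumably regarded it as routine and suppressed it here. In short: same approach, with the open/closed dichotomy written out rather than assumed.
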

\begin{proof}
Suppose $y\in \B{X}{d}{x_0}{\xi_2}$, where $\xi_2\leq \xi_1\leq \xi^1$ will be chosen
at the end of the proof.  Thus there exists a path $\gamma:\q[0,1\w]\rightarrow \B{X}{d}{x_0}{\xi_2}$, $\gamma\q( 0\w) = x_0$, $\gamma\q( 1\w) =y$,
$$\gamma'\q(t\w) = \q( b\cdot X\w) \q( \gamma\q(t\w)\w)$$
where $b\in \Lpp{\infty}{\q[0,1\w]}^q$ with $\Lppn{\infty}{\q[0,1\w]}{\q|\xi_2^{-d} b\w|}<1$.
Then, applying (\ref{EqnXkAsSumOfSubs}), we have:
\begin{equation*}
\begin{split}
\gamma'\q(t\w) &= \sum_{k=1}^q b_k\q(t\w) X_k\q(\gamma\q(t\w)\w) \\
&= \sum_{l=1}^{n_0} \q(\sum_{k=1}^q \epsilon_{l,k} b_k\q(t\w)\frac{\det X\q(\gamma\q(t\w)\w)_{I_0,J\q(l,k\w)}}{\det X\q(\gamma\q(t\w)\w)_{I_0,J_0}}\w) X_l\q(\gamma\q(t\w)\w)\\
&=: \sum_{l=1}^{n_0} a_l\q(t\w) X_l\q(\gamma\q(t\w)\w),
\end{split}
\end{equation*}
and if $\xi_2>0$ is admissibly small enough, by Lemma \ref{LemmaMaxDetDoesntChange}, we have that:
$$\Lppn{\infty}{\q[0,1\w]}{\sqrt{\sum_{l=1}^{n_0} \xi_1^{-2d_l} \q|a_l\w|^2}}<1,$$
proving that $y=\gamma\q(1\w)\in \Bsub{X}{d}{J_0}{x_0}{\xi_1}$.
\end{proof}



	\subsection{Control of vector fields}\label{SectionControlUnitScale}
		We take all the same notation as in Section \ref{SectionUnitScale},
and define ($m$-)admissible constants in the same way.\footnote{We are
still assuming $J_0=\q( 1,\ldots, n_0\w)$.}  
The goal
of this section is to understand when we can add an additional
vector field with a formal degree $\q( X_{q+1}, d_{q+1}\w)$ ($d_{q+1}\in \q( 0,\infty\w)$) to the list of vector fields $\q( X,d\w)$ without
``adding anything new.''
In particular, we wish to not significantly increase the size of $\B{X}{d}{x_0}{\tau}$,
where $\tau$ is thought of as a fixed constant $\leq \xi$.

Let $X_{q+1}$ be a $C^1$ vector field on $\Bsub{X}{d}{J_0}{x_0}{\xi}$ 
(here we mean that $X_{q+1}$ is $C^1$ thought of as a function on
the leaf in which $\Bsub{X}{d}{J_0}{x_0}{\xi}$ lies; but it need not be tangent to
the leaf), 
and assign to it a formal degree $d_{q+1}\in \q( 0,\infty\w)$.  
Let $\q( \hX, \hd\w)$ denote the list of vector fields with formal
degrees:
$$\q( \q( X_1,d_1\w), \ldots, \q( X_{q+1},d_{q+1}\w)\w).$$ 
For an integer $m\geq 1$ we define three conditions which will
turn out to be equivalent (all parameters below are considered to
be elements of $\q( 0,\infty\w)$):
\begin{enumerate}
\item $\sPo{m}{\kappa_1}{\tau_1}{\sigma_1}{\sigma_1^m}$:
\begin{itemize}
\item $\q|\det_{n_0\times n_0} X\q( x_0\w)\w|_{\infty} \geq \kappa_1 \q|\det_{n_0\times n_0} \hX\q( x_0\w)\w|_{\infty}$
\item $\q|\det_{j\times j} \hX\q( x_0\w) \w|=0$, $n_0<j\leq n$.
\item There exist $c_{i,q+1}^j\in C^0\q(\Bsub{X}{d}{J_0}{x_0}{\tau_1}\w)$ such that
$$\q[X_i,X_{q+1}\w]=\sum_{j=1}^{q+1} c_{i,q+1}^j X_j, \quad \text{on }\Bsub{X}{d}{J_0}{x_0}{\tau_1}$$
with:
$$\sum_{\q|\alpha\w|\leq m-1} \Cjn{0}{\Bsub{X}{d}{J_0}{x_0}{\tau_1}}{X^\alpha c_{i,q+1}^j}\leq \sigma_1^m, \quad \Cjn{0}{\Bsub{X}{d}{J_0}{x_0}{\tau_1}}{c_{i,q+1}^j}\leq \sigma_1.$$
\end{itemize}
\item $\sPt{m}{\tau_2}{\sigma_2}{\sigma_2^m}$:  There exist $c_j\in C^0\q( \Bsub{X}{d}{J_0}{x_0}{\tau_2} \w)$
such that:
\begin{itemize}
\item $X_{q+1}=\sum_{j=1}^{n_0} c_j X_j$, on $\Bsub{X}{d}{J_0}{x_0}{\tau_2}$.
\item $\sum_{\q|\alpha\w|\leq m} \Cjn{0}{\Bsub{X}{d}{J_0}{x_0}{\tau_2}}{X^\alpha c_j}\leq \sigma_2^m$.
\item $\sum_{\q|\alpha\w|\leq 1} \Cjn{0}{\Bsub{X}{d}{J_0}{x_0}{\tau_2}}{X^\alpha c_j}\leq \sigma_2$.
\end{itemize}
\item $\sPr{m}{\tau_3}{\sigma_3}{\sigma_3^m}$:  There exist $c_j\in C^0\q( \Bsub{X}{d}{J_0}{x_0}{\tau_3} \w)$
such that:
\begin{itemize}
\item $X_{q+1}=\sum_{j=1}^{q} c_j X_j$, on $\Bsub{X}{d}{J_0}{x_0}{\tau_3}$.
\item $\sum_{\q|\alpha\w|\leq m} \Cjn{0}{\Bsub{X}{d}{J_0}{x_0}{\tau_3}}{X^\alpha c_j}\leq \sigma_3^m$.
\item $\sum_{\q|\alpha\w|\leq 1} \Cjn{0}{\Bsub{X}{d}{J_0}{x_0}{\tau_3}}{X^\alpha c_j}\leq \sigma_3$.
\end{itemize}
\end{enumerate}

\begin{thm}\label{ThmUnitScaleEquivCond}
$\sPn{1}{m}\Rightarrow \sPn{2}{m} \Rightarrow \sPn{3}{m}\Rightarrow \sPn{1}{m}$
in the following sense:
\begin{enumerate}
\item $\sPo{m}{\kappa_1}{\tau_1}{\sigma_1}{\sigma_1^m} \Rightarrow$ there
exist admissible constants $\tau_2=\tau_2\q(\kappa_1,\tau_1,\sigma_1\w)$,
$\sigma_2=\sigma_2\q( \kappa_1, \sigma_1\w)$, and an $m$-admissible
constant $\sigma_2^m=\sigma_2^m\q( \kappa_1, \sigma_1^m \w)$ such that
$\sPt{m}{\tau_2}{\sigma_2}{\sigma_2^m}$.
\item $\sPt{m}{\tau_2}{\sigma_2}{\sigma_2^m}\Rightarrow \sPr{m}{\tau_2}{\sigma_2}{\sigma_2^m}$.
\item $\sPr{m}{\tau_3}{\sigma_3}{\sigma_3^m} \Rightarrow$ there exist admissible
constants $\kappa_1=\kappa_1\q( \sigma_3\w)$, $\sigma_1=\sigma_1\q( \sigma_3\w)$
and an $m$-admissible constant $\sigma_1^m=\sigma_1^m\q( \sigma_3^m\w)$, such that $\sPo{m}{\kappa_1}{\tau_3}{\sigma_1}{\sigma_1^m}$.
\end{enumerate}
\end{thm}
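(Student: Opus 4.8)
The plan is to establish the cyclic chain of implications in the order stated, treating each arrow separately and exploiting the machinery already built in Section \ref{SectionUnitScale}. The middle implication $\sPn{2}{m}\Rightarrow\sPn{3}{m}$ is essentially trivial: a representation $X_{q+1}=\sum_{j=1}^{n_0}c_jX_j$ is in particular a representation $X_{q+1}=\sum_{j=1}^q c_j X_j$ (take $c_j=0$ for $n_0<j\leq q$), on the same ball $\Bsub{X}{d}{J_0}{x_0}{\tau_2}$ and with the same bounds, so one may take $\tau_3=\tau_2$, $\sigma_3=\sigma_2$, $\sigma_3^m=\sigma_2^m$. For $\sPn{3}{m}\Rightarrow\sPn{1}{m}$: given the representation $X_{q+1}=\sum_{j=1}^q c_j X_j$ with $\sum_{|\alpha|\leq 1}\|X^\alpha c_j\|_{C^0}\leq\sigma_3$, one first gets the determinant hypotheses for free. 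Indeed, since $\hX$ differs from $X$ only by adjoining a column $X_{q+1}$ which is a $C^0$-combination of the columns of $X$ with coefficients bounded by $\sigma_3$ at $x_0$, multilinearity of the determinant gives $\det_{j\times j}\hX(x_0)=0$ for $j>n_0$ (because $X_{q+1}(x_0)\in\Span{X_1(x_0),\dots,X_q(x_0)}$, which is $n_0$-dimensional by the maximality assumption built into our standing hypotheses and Lemma \ref{LemmaDetsDontChangeCarnot}), and $|\det_{n_0\times n_0}\hX(x_0)|_\infty\lesssim(1+\sigma_3)|\det_{n_0\times n_0}X(x_0)|_\infty$, which yields $\kappa_1=\kappa_1(\sigma_3)\approx(1+\sigma_3)^{-1}$. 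For the bracket condition, compute
\[
\q[X_i,X_{q+1}\w]=\q[X_i,\sum_{j=1}^q c_j X_j\w]=\sum_{j=1}^q (X_i c_j)X_j+\sum_{j=1}^q c_j\q[X_i,X_j\w]=\sum_{j=1}^q\q(X_i c_j+\sum_{k=1}^q c_k c_{k,i}^j\w)X_j,
\]
using \eqref{EqnUnitScaleIntegCont}; this expresses $\q[X_i,X_{q+1}\w]$ as a combination of $X_1,\dots,X_q$ (hence of $X_1,\dots,X_{q+1}$ with the $X_{q+1}$-coefficient zero). The $C^0$ bound on these coefficients is controlled by $\sigma_3$ and the admissible bound on the $c_{k,i}^j$, and the higher-order bound $\sum_{|\alpha|\leq m-1}\|X^\alpha(\cdot)\|_{C^0}$ is controlled by $\sigma_3^m$ together with the $m$-admissible bounds on the $c_{i,j}^k$ and the Leibniz rule; one applies this on the ball $\Bsub{X}{d}{J_0}{x_0}{\tau_3}$, so $\tau_1=\tau_3$. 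This gives $\sigma_1=\sigma_1(\sigma_3)$ and $\sigma_1^m=\sigma_1^m(\sigma_3^m)$.

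The implication $\sPn{1}{m}\Rightarrow\sPn{2}{m}$ is the substantive one, and I expect it to be the main obstacle. The idea is: from $\sPo{m}{\kappa_1}{\tau_1}{\sigma_1}{\sigma_1^m}$ one knows $X_{q+1}(x_0)$ lies in $\Span{X_1(x_0),\dots,X_{n_0}(x_0)}$ (the rank-$n_0$ conditions on $\hX$ at $x_0$ plus the fact that $X_1(x_0),\dots,X_{n_0}(x_0)$ are already a maximal independent subset, via \eqref{EqnJ0IsMaximal}), so by Cramer's rule one can write, \emph{at $x_0$}, $X_{q+1}(x_0)=\sum_{l=1}^{n_0}c_l(x_0)X_l(x_0)$ with $c_l(x_0)$ controlled by $\kappa_1$ and the usual determinant ratios as in \eqref{EqnXkAsSumOfSubs}. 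The point is to propagate this identity off $x_0$ along Carnot--Carathéodory paths: define $c_l$ on $\Bsub{X}{d}{J_0}{x_0}{\tau_2}$ (for a suitable admissible $\tau_2$) by solving the linear ODE that $c_l$ must satisfy along paths, obtained by differentiating the putative identity $X_{q+1}=\sum_l c_l X_l$ in the direction of $b\cdot X_{J_0}$ and using the bracket relations for $\q[X_i,X_{q+1}\w]$ and $\q[X_i,X_l\w]$. This is exactly the same ``Gronwall along paths'' mechanism used in the proof of Lemma \ref{LemmaDetsDontChange} and Lemma \ref{LemmaMaxDetDoesntChange}: one sets up $\frac{d}{dt}$ of the coefficient vector along $\gamma$, shows it is controlled by itself plus admissible data (this uses the hypothesis $|\det_{j\times j}\hX\equiv 0|$ for $j>n_0$ \emph{on the whole ball}, not just at $x_0$ — which follows from Lemma \ref{LemmaDetsDontChangeCarnot} applied to $\hX$, giving that $X_{q+1}(y)$ stays in the span of $X_{J_0}(y)$ for $y$ in a possibly smaller admissible ball), and concludes a uniform $C^0$ bound $\sigma_2=\sigma_2(\kappa_1,\sigma_1)$. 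The higher derivative bounds $\sum_{|\alpha|\leq m}\|X^\alpha c_j\|_{C^0}\leq\sigma_2^m$ then follow by differentiating the ODE and applying Lemma \ref{LemmaQuotientOfDets}-style estimates together with an induction on $|\alpha|$, exactly as in Proposition \ref{PropSubSatisHyp} and Proposition \ref{PropRegularityOfYUnitScale}; here one gains one order of smoothness for $c_j$ relative to the bracket coefficients $c_{i,q+1}^j$ (which is why $\sPn{1}{m}$ only asks for $m-1$ derivatives of the bracket coefficients while $\sPn{2}{m}$ delivers $m$ derivatives of $c_j$ — the integration in the ODE buys a derivative).

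The remaining care is bookkeeping of which balls the estimates live on and shrinking $\tau_1$ to an admissible $\tau_2$ so that the Gronwall argument closes (one needs $\Bsub{X}{d}{J_0}{x_0}{\tau_2}$ small enough that paths of length $1$ in the rescaled sense stay inside $\Bsub{X}{d}{J_0}{x_0}{\tau_1}$ and that the exponential growth factor from Gronwall is bounded); this is routine given the estimates $|\det_{n_0\times n_0}X(y)|\approx|\det_{n_0\times n_0}X(x_0)|$ on these balls. One should also verify that the identity $X_{q+1}=\sum_l c_l X_l$ obtained by solving the ODE genuinely agrees with $X_{q+1}$ and not just with some solution of the ODE — this uses a uniqueness argument (Gronwall again, comparing the two sides which agree at $x_0$), parallel to Lemma \ref{LemmaAequalAh}. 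I would present the three implications as three short lemmas and then assemble them; the bulk of the writing goes into $\sPn{1}{m}\Rightarrow\sPn{2}{m}$, and within it the setup of the coefficient ODE and its Gronwall estimate is the crux.
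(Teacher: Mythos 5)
Your arguments for $\sPn{2}{m}\Rightarrow\sPn{3}{m}$ and $\sPn{3}{m}\Rightarrow\sPn{1}{m}$ are the paper's (the paper calls both trivial); your Leibniz computation for the bracket in the latter correctly locates the one-derivative discrepancy between the two conditions. The interesting difference is in $\sPn{1}{m}\Rightarrow\sPn{2}{m}$, where you take a genuinely different and more involved route. You propose to get $c_l(x_0)$ from Cramer's rule and then define $c_l$ away from $x_0$ by integrating a coefficient ODE along Carnot--Carath\'eodory paths. The paper instead applies Cramer's rule \emph{at every point}: the hypotheses of $\sPn{1}{m}$ are precisely what is needed to rerun Lemmas \ref{LemmaLieDerivOfDetCarnot}, \ref{LemmaDetsDontChangeCarnot}, \ref{LemmaMaxDetDoesntChange}, and \ref{LemmaQuotientOfDets} with $\hX$ in place of $X$, so the $(n_0+1)\times(n_0+1)$ minors of $\hX$ stay zero and $\det X_{I_0,J_0}$ stays comparable on an admissibly shrunk ball, and
$$c_l(y)=\epsilon_{l,q+1}\,\frac{\det\hX(y)_{I_0,J(l,q+1)}}{\det X(y)_{I_0,J_0}}$$
is an explicit, globally well-defined expression exactly as in \eqref{EqnXkAsSumOfSubs}; the required bounds on $X^\alpha c_l$ then follow from the $\hX$ version of Lemma \ref{LemmaQuotientOfDets}. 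Your ODE approach would need additional care on two points that the Cramer route avoids: the naive ODE for $\dot c_l$ does not close purely in bracket data (the $t$-derivative of $X_{q+1}(\gamma(t))$ is the Jacobian of $X_{q+1}$ applied to $b\cdot X_{J_0}$, not a bracket, and rewriting it in bracket form leaves Jacobian terms whose cancellation against the analogous terms from $\sum_l c_l X_l$ uses the very identity being established), and defining $c_l$ by integration from $x_0$ requires a path-independence argument, which you acknowledge but do not supply. Both are fixable, but the Gronwall machinery is already baked into the determinant lemmas, and invoking them via Cramer's rule is the shorter path.
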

\begin{proof}
$\sPn{1}{m}\Rightarrow \sPn{2}{m}$ follows just as in the proof
of (\ref{EqnXkAsSumOfSubs}).  This can be seen by noting that
$\q( X,d\w)$ can be replaced by $\q( \hX, \hd\w)$ in the 
proofs of Lemmas \ref{LemmaLieDerivOfDetCarnot}, \ref{LemmaDetsDontChangeCarnot},
\ref{LemmaMaxDetDoesntChange}, and \ref{LemmaQuotientOfDets}.  
The reader might worry that in the definition of $\sPn{2}{m}$ we
are using $X^\alpha$ instead of $X_{J_0}^\alpha$; however, there
is no real difference between the two, due to (\ref{EqnXkAsSumOfSubs}).
From there,
the proof follows easily, and we leave the details to the interested
reader.
$\sPn{2}{m}\Rightarrow \sPn{3}{m}$ and $\sPn{3}{m}\Rightarrow \sPn{1}{m}$
are both trivial.
\end{proof}

Let $\dm$ be a fixed lower bound for $d_{q+1}$.  We have:
\begin{prop}\label{PropUnderCondsBallContain}
Suppose $\sPt{1}{\tau_2}{\sigma_2}{\sigma_2^1}$ holds.
Then, there exists an admissible constant $\tau'=\tau'\q( \dm, \tau_2, \sigma_2 \w)$ such that:
$$\B{X}{d}{x_0}{\tau'}\subseteq \B{\hX}{\hd}{x_0}{\tau'}\subseteq \Bsub{X}{d}{J_0}{x_0}{\tau_2}.$$
\end{prop}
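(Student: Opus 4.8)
The plan is to unwind the two inclusions separately. The inclusion $\B{X}{d}{x_0}{\tau'}\subseteq \B{\hX}{\hd}{x_0}{\tau'}$ is immediate from the definition of Carnot-Carath\'eodory balls: any admissible curve for the list $\q(X,d\w)$ with radius $\tau'$ is also an admissible curve for the longer list $\q(\hX,\hd\w)$ with the same radius, since one may simply take the coefficient of $X_{q+1}$ to be $0$. (No smallness of $\tau'$ is needed for this half, only that $\q(\hX,\hd\w)$ satisfies $\sC\q(x_0,\tau'\w)$, which will follow once we know the ball is contained in $\Bsub{X}{d}{J_0}{x_0}{\tau_2}$, itself inside $\Omega$.) So the real content is the inclusion $\B{\hX}{\hd}{x_0}{\tau'}\subseteq \Bsub{X}{d}{J_0}{x_0}{\tau_2}$.

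For that inclusion, I would take a curve $\gamma:\q[0,1\w]\rightarrow \B{\hX}{\hd}{x_0}{\tau'}$ with $\gamma\q(0\w)=x_0$, $\gamma\q(1\w)=y$, and
$$\gamma'\q(t\w) = \sum_{k=1}^{q+1} b_k\q(t\w) X_k\q(\gamma\q(t\w)\w),\qquad \Lppn{\infty}{\q[0,1\w]}{\q|(\tau')^{-\hd}b\w|}<1.$$
Using $\sPt{1}{\tau_2}{\sigma_2}{\sigma_2^1}$, on $\Bsub{X}{d}{J_0}{x_0}{\tau_2}$ we may write $X_{q+1}=\sum_{j=1}^{n_0} c_j X_j$ with $\Cjn{0}{\Bsub{X}{d}{J_0}{x_0}{\tau_2}}{c_j}\leq \sigma_2$. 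Then, \emph{as long as $\gamma$ stays inside $\Bsub{X}{d}{J_0}{x_0}{\tau_2}$}, we can rewrite
$$\gamma'\q(t\w) = \sum_{j=1}^{n_0}\q(b_j\q(t\w)+b_{q+1}\q(t\w)c_j\q(\gamma\q(t\w)\w)\w)X_j\q(\gamma\q(t\w)\w) =: \sum_{j=1}^{n_0} a_j\q(t\w) X_j\q(\gamma\q(t\w)\w),$$
so $\gamma$ is a curve adapted to the list $\q(X,d\w)_{J_0}$. The coefficient bound we need is $\Lppn{\infty}{\q[0,1\w]}{\q|\tau_2^{-d_{J_0}}a\w|}<1$. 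Since $\q|\tau_2^{-d_j}b_j\w|\lesssim (\tau'/\tau_2)^{d_j}$ and $\q|\tau_2^{-d_j}b_{q+1}c_j\w|\lesssim \sigma_2\,(\tau')^{d_{q+1}}\tau_2^{-d_j}$, both terms are controlled by a power $\q(\tau'\w)^{\dm\wedge d_{\min}}$ times admissible constants; choosing $\tau'=\tau'\q(\dm,\tau_2,\sigma_2\w)$ small enough (admissibly) forces the $L^\infty$ norm below $1$. This gives $y\in \Bsub{X}{d}{J_0}{x_0}{\tau_2}$.

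The one genuine subtlety — and the step I expect to be the main obstacle — is the bootstrapping that justifies "as long as $\gamma$ stays inside $\Bsub{X}{d}{J_0}{x_0}{\tau_2}$": a priori $\gamma$ only lies in $\B{\hX}{\hd}{x_0}{\tau'}$, and the identity $X_{q+1}=\sum c_j X_j$ is only available on the smaller ball. I would handle this with a standard continuity/open-closed argument: let $\mathcal{T}=\q\{t\in\q[0,1\w]: \gamma\q(\q[0,t\w]\w)\subseteq \Bsub{X}{d}{J_0}{x_0}{\tau_2}\w\}$, observe $0\in\mathcal{T}$ and $\mathcal{T}$ is closed, and show it is open by noting that on $\mathcal{T}$ the computation above shows $\gamma\q(\q[0,t\w]\w)$ actually lands in the strictly smaller ball $\Bsub{X}{d}{J_0}{x_0}{\tau_2'}$ for some admissible $\tau_2'<\tau_2$ (indeed the argument produces membership with radius comparable to $\tau_2\q(\tau'/\tau_2\w)^{\dm\wedge d_{\min}}$, hence $\ll\tau_2$ once $\tau'$ is admissibly small), leaving room to extend. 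Thus $\mathcal{T}=\q[0,1\w]$, and we are done; combined with the trivial first inclusion this proves the proposition.
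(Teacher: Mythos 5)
Your approach is the right one and tracks the paper's: reduce the second inclusion to the Cramer-rule rewriting of Proposition \ref{PropBigBallInSmallerBall}, augmented by the hypothesis $\sPt{1}{\tau_2}{\sigma_2}{\sigma_2^1}$ to handle $X_{q+1}$. You are also right that an open/closed bootstrap is what legitimizes applying the pointwise identities along the path; the paper leaves this implicit (both here and in Proposition \ref{PropBigBallInSmallerBall}, in the spirit of Proposition \ref{PropCanPutBallInPhi}), so making it explicit is a genuine improvement in exposition.

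There is, however, a gap in the rewriting step. A path witnessing $y\in\B{\hX}{\hd}{x_0}{\tau'}$ satisfies $\gamma'(t)=\sum_{k=1}^{q+1}b_k(t)X_k(\gamma(t))$, with all of $b_1,\ldots,b_{q+1}$ potentially nonzero. Your rewriting passes directly to $\sum_{j=1}^{n_0}(b_j+b_{q+1}c_j)X_j$, which silently drops the contribution $\sum_{k=n_0+1}^{q}b_kX_k$. Those terms must also be absorbed into the span of $X_1,\ldots,X_{n_0}$, which is exactly what the Cramer-rule identity (\ref{EqnXkAsSumOfSubs}) does in the proof of Proposition \ref{PropBigBallInSmallerBall}, with the coefficient bounds coming from Lemma \ref{LemmaMaxDetDoesntChange} on $\Bsub{X}{d}{J_0}{x_0}{\xi^1}$. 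The correct coefficients are
$$a_j = b_j + b_{q+1}c_j + \sum_{k=n_0+1}^{q}\epsilon_{j,k}\,b_k\,\frac{\det X_{I_0,J(j,k)}}{\det X_{I_0,J_0}},$$
and the extra summands are estimated exactly as you estimate the $b_{q+1}c_j$ term, since each determinant ratio is $O(1)$ on $\Bsub{X}{d}{J_0}{x_0}{\tau_2\wedge\xi^1}$ while $|b_k|<(\tau')^{d_k}$. With that correction (and choosing $\tau'$ admissibly small so that both the bootstrap and the final $L^\infty$ bound go through), your argument matches the paper's intended proof.
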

\begin{proof}
The first containment is trivial.  The second follows just as in
the proof of Proposition \ref{PropBigBallInSmallerBall}.
\end{proof}

\begin{prop}\label{PropPullBackUnderConds}
Suppose $\sPt{m}{\tau_2}{\sigma_2}{\sigma_2^m}$ holds.  Let
$\eta'\leq \eta_1$ be small enough
that $\Phi\q( B_{n_0}\q( \eta'\w)\w) \subseteq \Bsub{X}{d}{J_0}{x_0}{\tau_2}$.
Let $Y_{q+1}$ be the pullback of $X_{q+1}$ under $\Phi$ to $B_{n_0}\q( \eta'\w)$.
Then,
$$\Cjn{m}{B_{n_0}\q( \eta'\w)}{Y_{q+1}}\leq \sigma_4^m$$
where $\sigma_4^m=\sigma_4^m \q( \sigma_2^m\w)$ is an $m$-admissible constant.
\end{prop}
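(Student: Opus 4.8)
The plan is to pull back the defining identity of $\sPn{2}{m}$ under $\Phi$ and then convert the resulting $C^0$ bounds on iterated $Y$-derivatives into a $C^m$ bound via Proposition \ref{PropEstCmOfA}, in exact analogy with the proof of Proposition \ref{PropRegularityOfYUnitScale}.

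First I would note that, although $X_{q+1}$ need not a priori be tangent to the leaf through $x_0$, the hypothesis $\sPt{m}{\tau_2}{\sigma_2}{\sigma_2^m}$ supplies $c_1,\ldots,c_{n_0}\in C^0\q(\Bsub{X}{d}{J_0}{x_0}{\tau_2}\w)$ with $X_{q+1}=\sum_{j=1}^{n_0} c_j X_j$ on $\Bsub{X}{d}{J_0}{x_0}{\tau_2}$; in particular $X_{q+1}$ is tangent to the leaf there, so its pullback $Y_{q+1}$ under $\Phi$ is well defined on $B_{n_0}\q(\eta'\w)$ (recall that $\eta'$ was chosen so that $\Phi\q(B_{n_0}\q(\eta'\w)\w)\subseteq \Bsub{X}{d}{J_0}{x_0}{\tau_2}$, and that by Theorem \ref{ThmMainUnitScale} the map $\Phi$ is a $C^1$ diffeomorphism onto its image in the leaf). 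Applying $\Phi^*$ to the identity gives $Y_{q+1}=\sum_{j=1}^{n_0}\q(c_j\circ\Phi\w) Y_j$. Since we already know $\Cjn{m}{B_{n_0}\q(\eta_1\w)}{Y_j}\lesssim_m 1$ for $1\leq j\leq n_0$ (Theorem \ref{ThmMainUnitScale}), the Leibniz rule reduces the problem to bounding $\Cjn{m}{B_{n_0}\q(\eta'\w)}{c_j\circ\Phi}$ by an $m$-admissible constant depending only on $\sigma_2^m$ (and the other admissible parameters).

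For this I would invoke Proposition \ref{PropEstCmOfA}, whose proof only uses $\grad_Y=\q(I+A\w)\grad_u$ together with the $C^{m-1}$ bounds on $A$ and $\q\|A\w\|\leq\frac12$, all of which hold on $B_{n_0}\q(\eta'\w)\subseteq B_{n_0}\q(\eta_1\w)$; thus $\Cjn{m}{B_{n_0}\q(\eta'\w)}{c_j\circ\Phi}\lesssim_{m-1}\sum_{\q|\alpha\w|\leq m}\Cjn{0}{B_{n_0}\q(\eta'\w)}{Y_{J_0}^\alpha\q(c_j\circ\Phi\w)}$. By naturality of pullback of vector fields under the diffeomorphism $\Phi$, $Y_{J_0}^\alpha\q(c_j\circ\Phi\w)=\q(X_{J_0}^\alpha c_j\w)\circ\Phi$; and since (with $J_0=\q(1,\ldots,n_0\w)$) each operator $X_{J_0}^\alpha$ is one of the operators $X^\alpha$ with $\q|\alpha\w|\leq m$, the bound in $\sPt{m}{\tau_2}{\sigma_2}{\sigma_2^m}$ yields $\Cjn{0}{B_{n_0}\q(\eta'\w)}{\q(X_{J_0}^\alpha c_j\w)\circ\Phi}\leq \Cjn{0}{\Bsub{X}{d}{J_0}{x_0}{\tau_2}}{X^\alpha c_j}\leq \sigma_2^m$. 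Assembling these estimates gives $\Cjn{m}{B_{n_0}\q(\eta'\w)}{c_j\circ\Phi}\lesssim_{m-1}\sigma_2^m$ and hence $\Cjn{m}{B_{n_0}\q(\eta'\w)}{Y_{q+1}}\leq \sigma_4^m$ with $\sigma_4^m=\sigma_4^m\q(\sigma_2^m\w)$ $m$-admissible. There is no substantial obstacle: the argument is a direct transcription of the proof of Proposition \ref{PropRegularityOfYUnitScale}, and the only points needing care are purely bookkeeping — verifying that Proposition \ref{PropEstCmOfA} applies on the possibly smaller ball $B_{n_0}\q(\eta'\w)$, and observing that the $\sigma_2^m$ controlling $X^\alpha c_j$ already controls the fewer operators $X_{J_0}^\alpha c_j$ that actually appear.
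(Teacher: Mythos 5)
Your proof is correct and follows the same route as the paper, which simply says the result ``follows just as in Proposition~\ref{PropRegularityOfYUnitScale}'': pull back the identity $X_{q+1}=\sum_j c_j X_j$ under $\Phi$, bound the $Y_j$ via Theorem~\ref{ThmMainUnitScale}, and control $\Cjn{m}{B_{n_0}\q(\eta'\w)}{c_j\circ\Phi}$ using Proposition~\ref{PropEstCmOfA} together with $Y_{J_0}^\alpha\q(c_j\circ\Phi\w)=\q(X_{J_0}^\alpha c_j\w)\circ\Phi$ and the $\sigma_2^m$ bound from $\sPn{2}{m}$. Your added remark that the relation $X_{q+1}=\sum c_j X_j$ is what makes $X_{q+1}$ tangent to the leaf (and hence $Y_{q+1}$ well defined) is a worthwhile clarification that the paper leaves implicit.
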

\begin{proof}
This follows just as in Proposition \ref{PropRegularityOfYUnitScale}.
\end{proof}

\begin{rmk}\label{RmkCommutatorInTermsOfControlUnitScale}
Our assumption on the commutator $\q[ X_i, X_j\w]$ in Section \ref{SectionUnitScale}
was essentially just that $\q( \q[X_i,X_j\w], d_i+d_j\w)$ satisfied
condition $\sPn{3}{m}$ for appropriate $m$.
\end{rmk}

	\subsection{Unit operators at the unit scale}\label{SectionUnitOpsAtUnitScale}
		In this section, we study the compositions of certain ``unit operators,''
which will be the core of our study of maximal functions
in Section \ref{SectionMaximalFuncs}--see Section \ref{SectionStreet} for some
motivation for the study of these operators.  

Let $X_1,\ldots, X_q$, $d_1,\ldots, d_q$, $x_0$, $\xi$, $\zeta$,
$c_{i,j}^k$, $n_0$, and $J_0$ be as in
Section \ref{SectionUnitScale}, in addition (for simplicity), we assume the stronger
assumptions of Remark \ref{RmkSymmetricUnitScaleAssump}.  
We again suppose, without loss
of generality, that $J_0=\q( 1,\ldots, n_0 \w)$.
In addition, suppose we are given $\numsub$ subsets of
$\q\{\q(X_1,d_1\w),\ldots, \q( X_q, d_q\w)\w\}$:
$$\q\{\q(Z_1^\mu, d_1^\mu\w),\ldots, \q(Z^\mu_{q_\mu}, d^\mu_{q_\mu}\w)\w\}\subseteq \q\{ \q(X_1,d_1\w),\ldots, \q( X_q, d_q\w) \w\}$$
with $1\leq \mu\leq \numsub$.  Suppose these subsets satisfy:
\begin{equation}\label{EqnAllTheXjAppear}
\q\{ \q(X_1,d_1\w),\ldots, \q(X_{n_0},d_{n_0}\w)\w\}\subseteq \bigcup_{1\leq \mu\leq \numsub} \q\{ \q(Z_1^{\mu},d_1^{\mu}\w),\ldots,\q(Z_{q_\mu}^\mu, d_{q_\mu}^\mu\w)\w\}.
\end{equation}

We say $C$ is a pre-admissible constant if $C$ can be chosen to depend
only on those parameters an admissible constant could depend on
in Remark \ref{RmkSymmetricUnitScaleAssump}, plus a fixed upper bound for $\numsub$.
We will write $A\prel B$ for $A\leq C B$ where $C$ is a pre-admissible
constant.  Also, we write $A\prea B$ for $A\prel B$ and $B\prel A$.

If we say that $C$ is an admissible constant, it means that we furthermore
assume that:
\begin{equation*}
\q[ Z_i^\mu, Z_j^\mu \w] = \sum_{k=1}^{q_\mu} c_{i,j}^{k,\mu} Z_k^\mu
\end{equation*}
and $C$ is allowed to depend on everything a pre-admissible constant
is allowed to depend on, plus a fixed upper bound for the quantities:
$$\sum_{\q|\alpha\w|\leq 2} \Cjn{0}{\B{X}{d}{X_0}{\xi}}{\q( Z^\mu\w)^\alpha c_{i,j}^{k,\mu}}, \quad 1\leq \mu\leq \numsub$$
which we assume to exist and are finite.

Given a function $f$ defined on a set $U$, and given for each $x\in U$
a set $V_x$, we define for those $y\in U$ such that $V_y\subseteq U$:
$$A_{U,V_{\cdot}} f\q( y\w) = \frac{1}{\Vol{V_y}} \int_{V_y} f\q( z\w) dz.$$
Here we are being ambiguous about what we mean by $\Vol{V_y}$ and $dz$.
Below, $V$ will be replaced by sets lying in the leaf generated
by one of the $Z^\mu$s (or by $X$).  We then mean for $\Vol{\cdot}$ and
$dz$ to refer to the Lebesgue measure on that leaf.  Below, we will
drop the $U$ from the subscript $A_{U,V_{\cdot}}$, and it is understood
to be the domain of $f$.

If we let $\xi_2$ be 
as in the statement of Corollary \ref{CorMainUnitScaleTwice}\footnote{Note that all of the constants in Corollary \ref{CorMainUnitScaleTwice} are pre-admissible in the sense of this section.} the main result of this section is:
\begin{thm}\label{ThmUnitOpsMainThm}
There exist admissible constants $0<\lambda_3, \lambda_2, \lambda_1\leq \xi_2$
such that for every $f\in\Cj{0}{\B{X}{d}{x_0}{\xi}}$ with $f\geq 0$, we have:
\begin{equation*}
\begin{split}
A_{\B{X}{d}{\cdot}{\lambda_3}} f\q(x_0\w) &\lesssim A_{\B{Z^\numsub}{d^\numsub}{\cdot}{\lambda_2}} A_{\B{Z^{\numsub-1}}{d^{\numsub-1}}{\cdot}{\lambda_2}}\cdots A_{\B{Z^1}{d^1}{\cdot}{\lambda_2}} f\q( x_0\w)\\
&\lesssim A_{\B{X}{d}{\cdot}{\lambda_1}} f\q(x_0\w).
\end{split}
\end{equation*}
\end{thm}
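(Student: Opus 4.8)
The plan is to prove the two inequalities essentially independently, using the scaling maps $\Phi$ from Corollary \ref{CorMainUnitScaleTwice} to reduce both to elementary statements about averaging operators on Euclidean balls, and then to exploit that all the relevant Carnot--Carath\'eodory balls are comparable (up to changes of the radius by admissible constants) to images under $\Phi$ of ordinary balls $B_{n_0}(\eta)$ with Jacobian $\approx |\det_{n_0\times n_0}X(x_0)|$, which is constant up to admissible factors on the leaf. Throughout, I would work on the leaf $L$ through $x_0$ generated by $X_1,\ldots,X_{n_0}$ (which is also the leaf generated by each $Z^\mu$, since by \eqref{EqnAllTheXjAppear} each $Z^\mu$ has span containing $X_1(x_0),\ldots,X_{n_0}(x_0)$, hence by Lemma \ref{LemmaDetsDontChangeCarnot} on the whole ball), and I would use that $\Vol{\B{X}{d}{y}{\lambda}}\approx |\det_{n_0\times n_0}X(x_0)|$ for all $y$ near $x_0$ and all admissible $\lambda\le\xi_2$ (Corollary \ref{CorMainUnitScaleTwice}, Remark \ref{RmkVolOfAllSmallBalls}, and Lemma \ref{LemmaDetsDontChangeCarnot}), so that every averaging operator appearing is, up to admissible constants, $|\det_{n_0\times n_0}X(x_0)|^{-1}$ times an honest integral over a ball.

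\textbf{Upper bound (the second inequality).} First I would iterate the containments. By Theorem \ref{ThmMainUnitScale} applied to each list $(Z^\mu,d^\mu)$ (whose hypotheses hold because $(Z^\mu,d^\mu)$ inherits the integrability condition and the regularity from $(X,d)$, and satisfies $\sC(x_0,\xi)$, with $n_0$ unchanged by \eqref{EqnAllTheXjAppear} and Lemma \ref{LemmaDetsDontChangeCarnot}), there are admissible $\eta^\mu,\xi^\mu$ with $\B{Z^\mu}{d^\mu}{y}{\lambda_2}\subseteq\Phi^\mu_y(B_{n_0}(\eta^\mu))$ for $\lambda_2$ admissibly small; moreover for $z\in\B{Z^\mu}{d^\mu}{y}{\lambda_2}$ one has $\B{X}{d}{y}{\lambda}\subseteq\B{X}{d}{z}{\lambda'}$ for suitable admissible $\lambda,\lambda'$ by concatenating paths (this is the standard ``engulfing'' property, provable directly from the definition of the ball plus Lemma \ref{LemmaMaxDetDoesntChange} to control the renormalization of the $L^\infty$ norm of the coefficients). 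Hence $\B{Z^1}{d^1}{x_0}{\lambda_2}\subseteq\B{X}{d}{x_0}{\lambda_1}$, and inductively the support of the innermost integrand, after all $\numsub$ averages, stays inside $\B{X}{d}{x_0}{\lambda_1}$ for $\lambda_1$ admissible. Then, since $f\ge 0$ and each average is $\ge$ a constant times $|\det|^{-1}\int_{\text{(a ball)}}f$ with volume factors all comparable to $|\det_{n_0\times n_0}X(x_0)|$, the whole iterated average is bounded by $C|\det_{n_0\times n_0}X(x_0)|^{-1}\int_{\B{X}{d}{x_0}{\lambda_1}}f = C\,A_{\B{X}{d}{\cdot}{\lambda_1}}f(x_0)$.

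\textbf{Lower bound (the first inequality).} Here the point is a positivity/Fubini argument: for $f\ge0$ an iterated average over balls dominates, up to admissible constants, an average over a single small ball, provided the family of balls is suitably ``thick.'' Concretely, using \eqref{EqnAllTheXjAppear} I would show that the composition $A_{Z^\numsub}\cdots A_{Z^1}$, applied to $\mathbf 1_{\B{X}{d}{x_0}{\lambda_3}}$, is $\gtrsim \mathbf 1$ on $\B{X}{d}{x_0}{\lambda_3}$ after pulling everything back by $\Phi$ to $B_{n_0}(\eta_1)$: the pulled-back operators become averages over (pre)images of the $Z^\mu$-balls, each of which contains a Euclidean ball of admissible radius in the $Z^\mu$-directions (by Theorem \ref{ThmMainUnitScale} applied to $Z^\mu$ and the estimate $\|A^\mu\|\le\frac12$), and since together the $Z^\mu$ cover the directions $X_1,\ldots,X_{n_0}$, an $\numsub$-fold composition of such ``flat'' averages reaches, with a definite lower bound, a full neighborhood of $x_0$ of admissible size. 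Unwinding, $A_{Z^\numsub}\cdots A_{Z^1}f(x_0)\gtrsim |\det_{n_0\times n_0}X(x_0)|^{-1}\int_{\B{X}{d}{x_0}{\lambda_3}}f$. I expect this lower bound — specifically, showing that the composition of the $\numsub$ one-parameter ``slab'' averages fills out an honest $n_0$-dimensional ball of admissible radius, uniformly — to be the main obstacle: it requires the uniform estimates on the pulled-back vector fields $Y_j$ (Theorem \ref{ThmMainUnitScale}, $\|A\|\le\frac12$) together with a careful bookkeeping of how the reachable set grows with each successive average, and it is exactly the place where \eqref{EqnAllTheXjAppear} and the uniform Frobenius theorem are used in an essential way.
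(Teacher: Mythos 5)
Your proposal correctly identifies the hard step but does not carry it out, and the upper bound as written contains a false assertion. Here is the issue concretely.

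\textbf{The upper bound does not go through as stated.} You claim that the volume factors appearing in the iterated average are ``all comparable to $|\det_{n_0\times n_0}X(x_0)|$.'' This is false: each individual list $(Z^\mu,d^\mu)$ need only be a \emph{subset} of $(X,d)$, so $\B{Z^\mu}{d^\mu}{y}{\lambda_2}$ lives on a leaf of dimension $n_0^\mu=\dim\Span{Z^\mu_1(y),\ldots}$ which is typically strictly less than $n_0$, and its volume is an $n_0^\mu$-dimensional quantity comparable to $|\det_{n_0^\mu\times n_0^\mu}Z^\mu(y)|$, not to $|\det_{n_0\times n_0}X(x_0)|$. (Assumption \eqref{EqnAllTheXjAppear} only says the \emph{union} of the $Z^\mu$ covers $X_1,\ldots,X_{n_0}$; each $Z^\mu$ alone can be as small as a single vector field.) Moreover, containment of the support in $\B{X}{d}{x_0}{\lambda_1}$ is not enough: the composed operator has Schwartz kernel $T(x_0,\cdot)$ that is \emph{a priori} a measure on $\B{X}{d}{x_0}{\lambda_1}$, and to bound $\int T(x_0,y)f(y)\,dy$ by $A_{\B{X}{d}{\cdot}{\lambda_1}}f(x_0)$ one needs $T(x_0,\cdot)$ to be absolutely continuous on the $n_0$-dimensional leaf with density $\lesssim\Vol{\B{X}{d}{x_0}{\lambda_1}}^{-1}$. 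A composition of lower-dimensional averages could in principle remain concentrated on a lower-dimensional subset, and this is exactly the same ``filling out an $n_0$-dimensional neighborhood'' problem you flag as the main obstacle for the \emph{lower} bound. So both inequalities hinge on the same step, and the proposal addresses it for neither.

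\textbf{What actually closes the gap.} The paper proves a single change-of-variables lemma that handles both directions at once (Proposition \ref{PropUnitOpsPrelimProp} via Lemma \ref{LemmaUnitOpsDefinel2}): pull everything back by $\Phi$ so the $Z^\mu$ become the $Y^\mu$ with $C^2$ norms $\lesssim 1$; rewrite each one-parameter average $A_{\B{Z^\mu}{d^\mu}{\cdot}{l}}$ as a parameter integral $\int_{Q_{q_\mu}}f(e^{u_\mu\cdot Z^\mu}y)\,du_\mu$, so the composition becomes $\int_{Q_{n_1}(l_2)}f(\Psi(u))\,du$ with $\Psi(u)=e^{u_1\cdot Y^1}\cdots e^{u_\nu\cdot Y^\nu}0$; then, using \eqref{EqnAllTheXjAppear}, choose one occurrence of each $Y_j$ ($1\le j\le n_0$) among the coordinates of $u$, split $u=(u^1,u^2)$ accordingly, and observe $d\Psi_0(0)=I$ in the $u^1$ directions. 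The uniform inverse function theorem (Theorem \ref{ThmInverseFunctionThm}), together with the admissible $C^2$ bound on $\Psi$, makes $u^1\mapsto\Psi_{u^2}(u^1)$ a diffeomorphism with Jacobian $\approx 1$ whose image contains a ball $B_{n_0}(\eta_3)$, \emph{uniformly} for $u^2$ small; Fubini then gives the two-sided comparison $\int_{B_{n_0}(\eta_3)}f\precsim\int_{Q_{n_1}(l_2)}f\circ\Psi\precsim\int_{B_{n_0}(\eta_2)}f$. That two-sided comparison is what your proposal needs and does not supply.
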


Define $n_1=\sum_{\mu=1}^{\numsub} q_{\mu}$.
To prove Theorem \ref{ThmUnitOpsMainThm}, we need a preliminary result:
\begin{prop}\label{PropUnitOpsPrelimProp}
There exist pre-admissible constants $0<l_3, l_2, l_1\leq \xi_2$ such that
for all $f\geq 0$, we have:
\begin{equation*}
\begin{split}
A_{\B{X}{d}{\cdot}{l_3}} f\q( x_0\w) &\prel \int_{Q_{n_1}\q( l_2\w)} f\q( e^{u_{1}\cdot Z^1}e^{u_{2}\cdot Z^{2}}\cdots e^{u_\numsub\cdot Z^\numsub} x_0\w) d u_1 \ldots d u_{\numsub} du_\numsub\\
&\prel A_{\B{X}{d}{\cdot}{l_1}}f\q(x_0\w).
\end{split}
\end{equation*}
Recall, $Q_{n_1}\q( l_2\w)$ denotes the $\q|\cdot\w|_\infty$ ball in
$\R^{n_1}$ of radius $l_2$.
\end{prop}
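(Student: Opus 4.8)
The plan is to realize the iterated exponential map $G(u_1,\ldots,u_\numsub) = e^{u_1\cdot Z^1}e^{u_2\cdot Z^2}\cdots e^{u_\numsub\cdot Z^\numsub}x_0$ as a change of variables between a box $Q_{n_1}(l_2)$ in $\R^{n_1}$ and (a comparable copy of) the leaf $\B{X}{d}{x_0}{\cdot}$, and then to compare the Jacobian of $G$ with $\q|\det_{n_0\times n_0} X\q(x_0\w)\w|$ using the machinery of Section \ref{SectionUnitScale}. First I would pull everything back by the coordinate chart $\Phi: B_{n_0}(\eta_1)\to \Btsub{X}{d}{J_0}{x_0}{\xi}$ from Theorem \ref{ThmMainUnitScale}, so that the vector fields $Z^\mu_k$ become the pulled-back vector fields $Y^\mu_k$ on $B_{n_0}(\eta_1)$, each with $\Cjn{2}{B_{n_0}(\eta_1)}{Y_j^\mu}\lesssim 1$ by Proposition \ref{PropRegularityOfYUnitScale} (the $Z^\mu_k$ are among the $X_j$). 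Working on $B_{n_0}(\eta_1)$ we may then use Theorem \ref{ThmExpReg} to see that the map $\widetilde{G}(u) = e^{u_1\cdot Y^1}\cdots e^{u_\numsub\cdot Y^\numsub}(0)$ is $C^1$ (indeed $C^2$) with norms admissibly bounded, uniformly in the base point, and that $\Phi\circ\widetilde{G} = G$ where defined.

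The key point is then a lower bound on $\q|\det_{n_0\times n_0} d\widetilde{G}(u)\w|$ for $u$ in a small admissible box. At $u=0$ we have $d\widetilde{G}(0) = Y^1(0)\,|\,\cdots\,|\,Y^\numsub(0)$ as an $n_0\times n_1$ matrix, and by \eqref{EqnAllTheXjAppear} the columns span $\R^{n_0}$ — more precisely, the columns corresponding to $Y_1,\ldots,Y_{n_0}$ appear among them, and these equal $(I+A(0))\grad_u$ at $0$, i.e. a matrix with $\q\|A(0)\w\|\leq\frac12$, so $\q|\det_{n_0\times n_0} d\widetilde{G}(0)\w|\approx 1$. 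By the uniform continuity (the $C^2$ bound on $\widetilde G$ gives a Lipschitz bound on $d\widetilde G$ with admissible constant) there is an admissible $l_2>0$ so that $\q|\det_{n_0\times n_0} d\widetilde{G}(u)\w|\approx 1$ for all $u\in Q_{n_1}(l_2)$. Now I would use the coarea/change-of-variables formula: splitting $Q_{n_1}(l_2)$ along any choice of $n_0$ coordinates with the maximal minor and integrating out the remaining $n_1-n_0$ coordinates as fibers, one gets that $\int_{Q_{n_1}(l_2)} f(\widetilde G(u))\,du$ is, up to pre-admissible constants, $\int_{\widetilde G(Q_{n_1}(l_2))} f$ — using $f\geq 0$, the lower bound on the Jacobian, and that the fibers have comparable length. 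Pushing forward by $\Phi$ and invoking Item \ref{ItemConstDet}-type estimates ($\q|\det_{n_0\times n_0} d\Phi\w|\approx\q|\det_{n_0\times n_0} X(x_0)\w|$) together with the volume estimate of Corollary \ref{CorMainUnitScaleTwice} converts this into $A_{\B{X}{d}{\cdot}{l_1}}f(x_0)$ up to constants, giving the upper inequality.

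For the lower inequality, I would show $\widetilde G(Q_{n_1}(l_2))$ contains $\Phi^{-1}$ of a ball $\B{X}{d}{x_0}{l_3}$ for an admissible $l_3$. The containment $\B{X}{d}{x_0}{l_3}\subseteq\Phi(B_{n_0}(\eta_1))$ is Proposition \ref{PropCanPutBallInPhi} / Corollary \ref{CorMainUnitScaleTwice}; to land inside the image of $\widetilde G$, note that since the $Y_1,\ldots,Y_{n_0}$ occur among the $Y^\mu_k$ (say in the last block $\mu=\numsub$, reordering if necessary — or in \emph{some} block, which by a finite composition argument suffices), the restriction of $\widetilde G$ fixing all but those coordinates is exactly $u\mapsto e^{u\cdot Y_{J_0}}(0)$ on a box, whose image contains $B_{n_0}(\eta')$ for admissible $\eta'$ by the uniform inverse function theorem (Theorem \ref{ThmInverseFunctionThm}) applied as in Proposition \ref{PropPhiInjec}. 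Then $A_{\B{X}{d}{\cdot}{l_3}}f(x_0) = \frac{1}{\Vol{\B{X}{d}{x_0}{l_3}}}\int_{\B{X}{d}{x_0}{l_3}} f \prel \int_{Q_{n_1}(l_2)} f(G(u))\,du$ since $f\geq 0$, the target ball is covered with bounded multiplicity by $G(Q_{n_1}(l_2))$, and the Jacobian of $G$ is admissibly bounded above. I expect the main obstacle to be the bookkeeping in the change-of-variables step when $n_1>n_0$: one must carefully organize the fiber integration so that the positivity of $f$ is used in the right direction for each of the two inequalities, and verify that the implicit maps stay within the domains where all the Section \ref{SectionUnitScale} estimates apply — but no new idea beyond the uniform inverse/implicit function theorem and the Jacobian comparison is needed.
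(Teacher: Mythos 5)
Your overall strategy matches the paper: pull back by $\Phi$ to $B_{n_0}(\eta_1)$, introduce the iterated exponential map in the pulled-back coordinates, get $C^2$ bounds from Theorem \ref{ThmExpReg} and Proposition \ref{PropRegularityOfYUnitScale}, use $A(0)=0$ so $Y_j(0)=\partial_{u_j}$ for $j\leq n_0$, and convert back via the Jacobian and volume estimates. The upper bound is essentially the paper's, modulo your ``coarea with the maximal minor'' phrasing, which is fussier than needed; the paper simply fixes, once and for all, $n_0$ of the $n_1$ coordinates (one occurrence of each $Y_j$, $j\leq n_0$), calls the rest $u^2$, and studies the family $\Psi_{u^2}(u^1)$, using $d\Psi_0(0)=I$ and a uniform $C^2$ bound, and then Fubini over $u^2$. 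That is cleaner and avoids having to select a $u$-dependent maximal minor.

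The lower bound, however, has a genuine gap. You argue that, since $Y_1,\ldots,Y_{n_0}$ occur among the $Y^\mu_k$, they occur ``in the last block $\mu=\numsub$, reordering if necessary --- or in \emph{some} block,'' and that the restriction of $\widetilde G$ to those coordinates is ``exactly $u\mapsto e^{u\cdot Y_{J_0}}(0)$.'' Hypothesis \eqref{EqnAllTheXjAppear} only guarantees that each $Y_j$ ($j\leq n_0$) appears somewhere in the \emph{union} of the $\numsub$ blocks; it does \emph{not} put them all in a single block. If $Y_1$ appears only in $Y^1$ and $Y_2$ appears only in $Y^3$, say, then fixing all other coordinates gives a map $u\mapsto e^{u_1 Y_1}e^{u_2 Y_2}(0)$, a composition of two separate flows, which is not $e^{u\cdot Y_{J_0}}(0)$; they agree only to first order at $0$. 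Your fallback ``finite composition argument'' is not carried out and does not automatically yield a $u^2$-uniform containment $B_{n_0}(\eta_3)\subseteq\Psi_{u^2}(Q_{n_0}(l_2))$. The paper resolves this precisely by not identifying the restriction with $e^{u\cdot Y_{J_0}}(0)$: it fixes one chosen coordinate per $Y_j$ (possibly spread across several blocks), parameterizes the remaining coordinates by $u^2$, observes $d\Psi_{u^2}(0)\to I$ as $u^2\to 0$ with a uniform Lipschitz modulus from the $C^2$ bound, and then invokes the uniform inverse function theorem (Theorem \ref{ThmInverseFunctionThm}) to get, for all $u^2\in Q_{n_1-n_0}(\eta')$, both injectivity of $\Psi_{u^2}$ on $Q_{n_0}(l_2)$ and the containment $B_{n_0}(\eta_3)\subseteq\Psi_{u^2}(Q_{n_0}(l_2))$. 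With that fix your argument would align with the paper's.
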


To prove Proposition \ref{PropUnitOpsPrelimProp},
we need some preliminary results.  Set $l_1=\xi_2$, take $\eta_2$ and $\Phi$ as in
Corollary \ref{CorMainUnitScaleTwice}.
\begin{lemma}\label{LemmaUnitOpsDefinel1}
For $f\geq 0$,
$$\int_{B_{n_0}\q( \eta_2\w)} f\circ \Phi\q( u\w) du \prel A_{\B{X}{d}{\cdot}{l_1}} f\q( x_0\w).$$
\end{lemma}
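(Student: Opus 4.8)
The plan is to change variables via $\Phi$ and then compare the resulting integral over $B_{n_0}(\eta_2)$ to the averaging operator over the ball $\B{X}{d}{x_0}{l_1}$ with $l_1 = \xi_2$. First I would recall from Corollary \ref{CorMainUnitScaleTwice} that $\Phi(B_{n_0}(\eta_2)) \subseteq \B{X}{d}{x_0}{\xi_2}$, so if $f \geq 0$ and $f$ is defined on $\B{X}{d}{x_0}{\xi}$, then
\begin{equation*}
\int_{B_{n_0}(\eta_2)} f\circ \Phi(u)\, du \leq \int_{B_{n_0}(\eta_1)} f\circ\Phi(u)\, du.
\end{equation*}
Next I would apply the change of variables formula (\ref{EqnChangeOfVars}) through $\Phi$, thinking of $\Phi$ as a coordinate chart on the leaf through $x_0$ (which is legitimate since $\Phi$ is injective on $B_{n_0}(\eta_1)$ by Theorem \ref{ThmMainUnitScale}). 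By Item 2 of Theorem \ref{ThmMainUnitScale} (or Theorem \ref{ThmEstJac}), $|\det_{n_0\times n_0} d\Phi(u)| \approx |\det_{n_0\times n_0} X(x_0)|$ uniformly for $u \in B_{n_0}(\eta_1)$, so the Jacobian factor is essentially constant and can be pulled out:
\begin{equation*}
\int_{B_{n_0}(\eta_2)} f\circ \Phi(u)\, du \prel \frac{1}{|\det_{n_0\times n_0} X(x_0)|} \int_{\Phi(B_{n_0}(\eta_1))} f(z)\, dz.
\end{equation*}

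Then I would use the containment $\Phi(B_{n_0}(\eta_1)) \subseteq \B{X}{d}{x_0}{\xi_2} = \B{X}{d}{x_0}{l_1}$ from Corollary \ref{CorMainUnitScaleTwice} to bound the integral on the right by $\int_{\B{X}{d}{x_0}{l_1}} f(z)\, dz$, using again that $f \geq 0$. Finally I would invoke Remark \ref{RmkVolOfAllSmallBalls} (or Corollary \ref{CorMainUnitScaleTwice} directly, since $l_1 = \xi_2$), which gives $\Vol{\B{X}{d}{x_0}{l_1}} \approx |\det_{n_0\times n_0} X(x_0)|$. Combining, the factor $|\det_{n_0\times n_0} X(x_0)|^{-1}$ is comparable to $\Vol{\B{X}{d}{x_0}{l_1}}^{-1}$, and we obtain
\begin{equation*}
\int_{B_{n_0}(\eta_2)} f\circ \Phi(u)\, du \prel \frac{1}{\Vol{\B{X}{d}{x_0}{l_1}}} \int_{\B{X}{d}{x_0}{l_1}} f(z)\, dz = A_{\B{X}{d}{\cdot}{l_1}} f(x_0),
\end{equation*}
which is exactly the claim with $l_1 = \xi_2$.

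The argument is essentially bookkeeping: every ingredient (injectivity of $\Phi$, the nearly-constant Jacobian, the ball containments, the volume estimate) is already supplied by Theorem \ref{ThmMainUnitScale}, Corollary \ref{CorMainUnitScaleTwice}, and Remark \ref{RmkVolOfAllSmallBalls}. The only point requiring a little care is that all the implicit constants introduced are pre-admissible in the sense of this section — this is guaranteed because, as noted in the footnote to the statement of Theorem \ref{ThmUnitOpsMainThm}, all constants in Corollary \ref{CorMainUnitScaleTwice} are pre-admissible, and the volume comparison in Remark \ref{RmkVolOfAllSmallBalls} only uses admissible constants from Section \ref{SectionUnitScale}. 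There is no real obstacle here; the lemma is a warm-up that packages the scaling map $\Phi$ into the language of averaging operators, to be used repeatedly in the proof of Proposition \ref{PropUnitOpsPrelimProp}.
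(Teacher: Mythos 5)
There is a genuine, if easily-fixed, error in your proof: the containment you invoke at the key step goes in the wrong direction. You first enlarge the integration domain from $B_{n_0}(\eta_2)$ to $B_{n_0}(\eta_1)$, change variables, and then assert the containment $\Phi\q(B_{n_0}\q(\eta_1\w)\w) \subseteq \B{X}{d}{x_0}{\xi_2}$. But Corollary~\ref{CorMainUnitScaleTwice} gives the chain
$$\Phi\q(B_{n_0}\q(\eta_2\w)\w) \subseteq \B{X}{d}{x_0}{\xi_2} \subseteq \Bsub{X}{d}{J_0}{x_0}{\xi_1} \subseteq \Phi\q(B_{n_0}\q(\eta_1\w)\w) \subseteq \B{X}{d}{x_0}{\xi},$$
so $\Phi\q(B_{n_0}\q(\eta_1\w)\w)$ \emph{contains} $\B{X}{d}{x_0}{\xi_2}$ rather than being contained in it. If you instead use the true containment $\Phi\q(B_{n_0}\q(\eta_1\w)\w) \subseteq \B{X}{d}{x_0}{\xi}$, you end up bounding by $\frac{1}{\q|\det_{n_0\times n_0} X\q(x_0\w)\w|}\int_{\B{X}{d}{x_0}{\xi}} f$, and since $\B{X}{d}{x_0}{l_1}=\B{X}{d}{x_0}{\xi_2}$ is the \emph{smaller} ball, this is in general $\gtrsim A_{\B{X}{d}{\cdot}{l_1}} f\q(x_0\w)$ — the wrong direction.

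The enlargement step is the culprit: it is unnecessary and destroys the argument. The fix is to skip it and apply the change of variables directly on $B_{n_0}(\eta_2)$, as the paper does. Using the Jacobian bound from Theorem~\ref{ThmMainUnitScale} on $B_{n_0}(\eta_2)$,
$$\int_{B_{n_0}\q(\eta_2\w)} f\circ\Phi\q(u\w)\, du \prea \frac{1}{\q|\det_{n_0\times n_0} X\q(x_0\w)\w|}\int_{\Phi\q(B_{n_0}\q(\eta_2\w)\w)} f\q(z\w)\, dz,$$
then use the correct containment $\Phi\q(B_{n_0}\q(\eta_2\w)\w) \subseteq \B{X}{d}{x_0}{\xi_2}=\B{X}{d}{x_0}{l_1}$ and $\Vol{\B{X}{d}{x_0}{l_1}}\prea\q|\det_{n_0\times n_0} X\q(x_0\w)\w|$, exactly as you did afterward. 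With that repair your argument coincides with the paper's (which does the volume comparison first and the change of variables second, but the ingredients are identical).
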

\begin{proof}
Note that:
$$\Vol{\Phi\q(B_{n_0}\q( \eta_2\w)\w)} \prea \q|\det_{n_0\times n_0} X\q( x_0\w)\w| \prea \Vol{\B{X}{d}{x_0}{\lambda_1}}$$
and so we have:
$$\frac{1}{\Vol{\Phi\q(B_{n_0}\q( \eta_2\w)\w)}} \int_{\Phi\q(B_{n_0}\q( \eta_2\w)\w)}f\q( y\w) dy \prel A_{\B{X}{d}{\cdot}{l_1}} f\q(x_0\w).$$
Now applying a change of variables as in (\ref{EqnChangeOfVars}) and
using that by Theorem \ref{ThmMainUnitScale} for every $u\in B_{n_0}\q( \eta_2\w)$,
$$\q|\det_{n_0\times n_0} d\Phi\q( u\w) \w| \prea \q|\det_{n_0\times n_0} X\q( x_0\w)\w|\prea \Vol{\B{X}{d}{x_0}{l_1}}$$
we see that:
\begin{equation*}
\int_{B_{n_0}\q( \eta_2\w)} f\circ \Phi\q( u\w) du \prel A_{\B{X}{d}{\cdot}{l_1}} f\q( x_0\w)
\end{equation*}
completing the proof.
\end{proof}

Let $Y^\mu_j$ be the pullback of $Z^\mu_j$ under the map $\Phi$.  As before,
we let $Y_1,\ldots, Y_{n_0}$ denote the pullbacks of $X_1,\ldots, X_{n_0}$.
Note that, by (\ref{EqnAllTheXjAppear}) each of $Y_1,\ldots, Y_{n_0}$ appears
as at least one of the $Y^\mu_j$.
\begin{lemma}\label{LemmaUnitOpsDefinel2}
There exists pre-admissible constants $l_2>0$, $\eta_3>0$ such that for all
$f\in \Cj{0}{B_{n_0}\q( \eta_2\w)}$, $f\geq 0$,
\begin{equation}\label{EqnToProveCompIntsUnitScale}
\begin{split}
\int_{B_{n_0}\q( \eta_3 \w)} f\q( u\w) du &\prel \int_{Q_{n_1}\q( l_2\w)} f\q( e^{u_1\cdot Y^1}e^{u_2\cdot Y^2}\cdots e^{u_{\numsub}\cdot Y^\numsub} 0\w) du_1\cdots du_\numsub \\&\prel \int_{B_{n_0}\q( \eta_2 \w)} f\q( u\w) du.
\end{split}
\end{equation} 
\end{lemma}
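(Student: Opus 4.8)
The plan is to view the middle integral in \eqref{EqnToProveCompIntsUnitScale} as one over the $n_1$-dimensional cube $Q_{n_1}(l_2)$, single out $n_0$ of the integration variables, and compare the remaining integral fiberwise with Lebesgue measure on $B_{n_0}$. First I would note that each $Y^\mu_j$, being the pullback under $\Phi$ of one of $X_1,\ldots,X_q$, has pre-admissibly bounded $C^2$ norm on $B_{n_0}(\eta_1)$, by Proposition \ref{PropRegularityOfYUnitScale} (together with Proposition \ref{PropEstCmOfA} and the hypotheses of Remark \ref{RmkSymmetricUnitScaleAssump}). Hence, for a small enough pre-admissible constant $l_2>0$, the map
$$\Theta(u)=\Theta(u_1,\ldots,u_\numsub)=e^{u_1\cdot Y^1}e^{u_2\cdot Y^2}\cdots e^{u_\numsub\cdot Y^\numsub}0,\qquad u\in Q_{n_1}(l_2),$$
is well defined (every intermediate point stays in $B_{n_0}(\eta_1)$), takes values in $B_{n_0}(\eta_2)$, and is $C^2$ with pre-admissibly bounded $C^2$ norm, by Theorem \ref{ThmExpReg}.

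Using \eqref{EqnAllTheXjAppear} I would then pick, for each $i\in\{1,\ldots,n_0\}$, a distinct index pair $(\mu(i),j(i))$ with $Z^{\mu(i)}_{j(i)}=X_i$, and hence $Y^{\mu(i)}_{j(i)}=Y_i$. Writing $\pi=(u^{\mu(1)}_{j(1)},\ldots,u^{\mu(n_0)}_{j(n_0)})$ for these ``distinguished'' coordinates and $\rho$ for the remaining $n_1-n_0$ coordinates, one has $Q_{n_1}(l_2)=Q_{n_0}(l_2)\times Q_{n_1-n_0}(l_2)$ under the corresponding splitting. Since $\partial_{u^\mu_j}\Theta(0)=Y^\mu_j(0)$ and $Y_i(0)=\partial_{u_i}$ for $1\le i\le n_0$ (because $A(0)=0$), the $n_0\times n_0$ matrix $d_\pi\Theta(0,0)$ equals the identity. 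As $\Theta$ is $C^2$ with pre-admissibly bounded norm, shrinking $l_2$ pre-admissibly forces $|\det d_\pi\Theta(\pi,\rho)|\approx 1$ throughout $Q_{n_1}(l_2)$ and makes $d_\pi\Theta(0,\rho)$ pre-admissibly close to the identity for every $\rho\in Q_{n_1-n_0}(l_2)$; by the uniform inverse function theorem (Theorem \ref{ThmInverseFunctionThm}) it follows that, for each such $\rho$, the map $\pi\mapsto\Theta(\pi,\rho)$ is injective on $Q_{n_0}(l_2)$, a $C^1$ diffeomorphism onto an open subset of $B_{n_0}(\eta_2)$ with Jacobian $\approx 1$, whose image contains the ball $B(\Theta(0,\rho),\eta_3')$ for a fixed pre-admissible $\eta_3'>0$.

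Both inequalities then follow from Fubini and the change of variables $v=\Theta(\pi,\rho)$. For the second $\prel$: for each $\rho$, injectivity and $|\det d_\pi\Theta|\approx 1$ give $\int_{Q_{n_0}(l_2)}f(\Theta(\pi,\rho))\,d\pi\prel\int_{B_{n_0}(\eta_2)}f(v)\,dv$, and integrating over $\rho\in Q_{n_1-n_0}(l_2)$, which has pre-admissible volume, yields it. For the first $\prel$: choose a pre-admissible $\eta_3>0$, shrinking $l_2$ further if necessary, so that $\sup_{\rho\in Q_{n_1-n_0}(l_2)}|\Theta(0,\rho)|+\eta_3<\eta_3'$; then $B_{n_0}(\eta_3)\subseteq B(\Theta(0,\rho),\eta_3')$ for every such $\rho$, so $\int_{B_{n_0}(\eta_3)}f(v)\,dv\prel\int_{Q_{n_0}(l_2)}f(\Theta(\pi,\rho))\,d\pi$, and integrating over $\rho$ completes the proof.

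The step needing the most care is the uniform control of the distinguished $n_0\times n_0$ block of $d\Theta$ over the whole cube $Q_{n_1}(l_2)$ — both its nondegeneracy and the resulting injectivity of the slice maps — which has to be extracted from the single identity $A(0)=0$ and the bounded-$C^2$-norm estimates of Proposition \ref{PropRegularityOfYUnitScale}; one must also keep track that $l_2$, $\eta_3$, $\eta_3'$ and all implicit constants remain pre-admissible. Everything else is a routine application of Fubini and the change of variables formula.
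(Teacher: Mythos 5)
Your argument mirrors the paper's proof of Lemma \ref{LemmaUnitOpsDefinel2} almost exactly: same composite exponential map, same choice of $n_0$ distinguished coordinates via \eqref{EqnAllTheXjAppear}, same appeal to the uniform inverse function theorem (Theorem \ref{ThmInverseFunctionThm}), and the second $\prel$ is handled identically. However, the final step of your treatment of the first $\prel$ has a genuine gap. After establishing that $\Theta(Q_{n_0}\q(l_2\w),\rho)\supseteq B\q(\Theta\q(0,\rho\w),\eta_3'\w)$ for each $\rho\in Q_{n_1-n_0}\q( l_2\w)$, you propose to ``shrink $l_2$ further'' so that $\sup_{\rho}\q|\Theta\q(0,\rho\w)\w|+\eta_3<\eta_3'$, while treating $\eta_3'$ as a fixed pre-admissible constant. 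But $\eta_3'$ cannot be taken independent of $l_2$: the ball one can guarantee inside $\Theta\q(Q_{n_0}\q(l_2\w),\rho\w)$ has radius that scales essentially linearly with $l_2$ (with $\q\|d_\pi\Theta - I\w\|\leq \frac{1}{2}$, the boundary $\partial Q_{n_0}\q(l_2\w)$ maps to points at distance $\gtrsim l_2$ from $\Theta\q(0,\rho\w)$, and that is all the uniform inverse function theorem applied on the shrinking domain $Q_{n_0}\q(l_2\w)$ can give). Meanwhile $\sup_{\rho\in Q_{n_1-n_0}\q(l_2\w)}\q|\Theta\q(0,\rho\w)\w|$ also scales linearly with $l_2$, with a constant controlled only by the pre-admissible $C^1$ bounds on the $Y^\mu_j$. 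Nothing forces the latter constant to be smaller than the former, so the inequality you want cannot in general be achieved by shrinking $l_2$ alone---both sides shrink at the same rate.

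The fix, which is what the paper actually does, is to decouple the two scales: keep the $\pi$-domain $Q_{n_0}\q(l_2\w)$ fixed (so $\eta_3'$ is pinned down) and shrink \emph{only} the $\rho$-range to a smaller cube $Q_{n_1-n_0}\q( \eta'\w)$ with $\eta'\leq l_2$ chosen pre-admissibly small so that $\sup_{\rho\in Q_{n_1-n_0}\q(\eta'\w)}\q|\Theta\q(0,\rho\w)\w|<\eta_3'/2$, say, and set $\eta_3=\eta_3'/2$. Then $\int_{B_{n_0}\q(\eta_3\w)} f\prel \int_{Q_{n_1-n_0}\q(\eta'\w)}\int_{Q_{n_0}\q(l_2\w)} f\q(\Theta\q(\pi,\rho\w)\w)\,d\pi\,d\rho$, and one enlarges the outer integration domain back to $Q_{n_1-n_0}\q(l_2\w)$ using $f\geq 0$. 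The rest of your argument is fine, and with this correction you recover precisely the paper's proof.
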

\begin{proof}
Let $\Psi\q( u_1,\ldots u_\numsub\w)$ denote the map:
$$\Psi\q( u_1,\ldots, u_\numsub\w) = e^{u_1 \cdot Y^1}e^{u_2\cdot Y^2}\cdots e^{u_\numsub \cdot Y^\numsub}0.$$
Note that $\Psi\in C^2\q( Q_{n_1}\q(\eta'\w) \w)$, provided $\eta'$ is
a sufficiently small pre-admissible constant.  Moreover, the $C^2$
norm is bounded by a pre-admissible constant (by Theorem \ref{ThmExpReg},
using that $\Cjn{2}{B_{n_0}\q(\eta_2\w)}{Y_j^\mu}\prel 1$, by
Theorem \ref{ThmMainUnitScale}).

Recalling that each $Y_j$ ($1\leq j \leq n_0$) appears at least once
in some $Y_k^\mu$, for each $1\leq j\leq n_0$ we pick one
such occurrence.  Write $\Psi$ as a function of two variables:
$$\Psi\q( u^1, u^2\w), \quad u^1\in Q_{n_0}\q(\eta'\w), u^2\in Q_{n_1-n_0}\q(\eta'\w)$$
where $u^1$ denotes the coefficients of the above chosen $Y_j$, and $u^2$
denotes the remaining coefficients.  For each fixed $u^2$, think of $\Psi$
as a function of one variable:
$$\Psi_{u^2}\q( u^1\w)$$
Note $d\Psi_{0}\q(0\w)=I$, and so by the $C^2$ estimates of $\Psi$, we
see that if $l_2$ if a pre-admissible constant that is small enough,
for every $u^2\in Q_{n_1-n_0}\q( l_2\w)$, we have:
\begin{equation}\label{EqnEstdForInv}
\q\| d\Psi_{u^2}\q(0\w) -I\w\|\leq \frac{1}{2}.
\end{equation} 
Hence, by the inverse function theorem (Theorem \ref{ThmInverseFunctionThm}),
we may pre-admissibly shrink $l_2$ such that:
\begin{itemize}
\item For every $u^2\in Q_{n_1-n_0}\q( l_2\w)$, $\Psi_{u^2}$ is injective
on $Q_{n_0}\q( l_2\w)$.
\item $\Psi_{u^2}\q(Q_{n_0}\q(l_2\w)\w)\subseteq Q_{n_0}\q( \eta_2\w)$, for every $u^2\in Q_{n_1-n_0}\q( l_2\w)$.
\end{itemize}
Hence, by a simple change of variables, we have for $u^2\in Q_{n_1-n_0}\q(l_2\w)$:
$$\int_{Q_{n_0}\q( l_2\w)} f\q( \Psi_{u^2}\q( u^1\w) \w) du^1 \prel \int_{B_{n_0}\q(\eta_2\w)} f\q(u\w) du$$
for $f\geq 0$.  Applying:
$$\int_{Q_{n_1-n_0}\q(l_2\w)} du^2$$
to both sides of this expression proves the latter inequality in (\ref{EqnToProveCompIntsUnitScale}).

We now turn to the former inequality in (\ref{EqnToProveCompIntsUnitScale}).
Applying the inverse function theorem (Theorem \ref{ThmInverseFunctionThm})
and again using (\ref{EqnEstdForInv}) we have that there exist
pre-admissible constants $\eta'\leq l_2$ and $\eta_3\leq l_2$ such that
for every $u^2\in Q_{n_1-n_0}\q( \eta'\w)$ we have that:
$$B_{n_0}\q( \eta_3\w) \subseteq \Psi_{u^2}\q( Q_{n_0}\q(l_2\w)\w).$$
Thus a simple change of variables (using (\ref{EqnEstdForInv})) shows that,
for $u^2\in  Q_{n_1-n_0}\q( \eta'\w)$ and $f\geq 0$:
$$\int_{B_{n_0}\q(\eta_3\w)} f\q( u\w)du \prel \int_{Q_{n_0}\q(l_2\w)} f\q( \Psi_{u^2}\q(u^1\w) \w) du^1.$$
Integrating both sides in $u^2$, we obtain:
\begin{equation*}
\begin{split}
\int_{B_{n_0}\q(\eta_3\w)} f\q( u\w) du &\prel \int_{Q_{n_1-n_0}\q( \eta'\w)} \int_{Q_{n_0}\q(l_2\w)} f\q( \Psi_{u^2}\q( u^1\w) \w) du^1 du^2\\
&\prel \int_{Q_{n_1-n_0}\q( l_2\w)} \int_{Q_{n_0}\q(l_2\w)} f\q( \Psi_{u^2}\q( u^1\w) \w) du^1 du^2.
\end{split}
\end{equation*}
Where in the last line, we used that $f\geq 0$ and that $\eta'\leq l_2$.
This proves the first inequality in (\ref{EqnToProveCompIntsUnitScale})
and completes the proof.
\end{proof}

\begin{lemma}\label{LemmaUnitOpsDefinel3}
There exists a pre-admissible constant $l_3>0$ such that for all $f\geq 0$:
\begin{equation*}
A_{\B{X}{d}{\cdot}{l_3}}f\q(x_0\w)\prel \int_{B_{n_0}\q( \eta_3\w)} f\circ \Phi\q( u\w) du.
\end{equation*}
\end{lemma}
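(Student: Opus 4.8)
The plan is to compare the average of $f$ over $\B{X}{d}{x_0}{l_3}$ with the integral of $f\circ\Phi$ over $B_{n_0}\q(\eta_3\w)$ by pulling everything back through $\Phi$. Observe first that $\eta_3\leq l_2\leq \eta_2<\eta_1$ by the construction in Lemma \ref{LemmaUnitOpsDefinel2}, so $\Phi$ is a $C^1$ diffeomorphism on $B_{n_0}\q(\eta_3\w)$ and, by Theorem \ref{ThmMainUnitScale}, $\q|\det_{n_0\times n_0} d\Phi\q(u\w)\w|\prea \q|\det_{n_0\times n_0} X\q(x_0\w)\w|$ there. We will choose $l_3$ to be a fixed pre-admissible constant, small enough (and $\leq \xi^1$, $\leq \xi_2$) for the steps below.

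The first step, which is the only real obstacle, is to produce a pre-admissible $l_3>0$ with
\[
\B{X}{d}{x_0}{l_3}\subseteq \Phi\q(B_{n_0}\q(\eta_3/2\w)\w).
\]
This is a version of Proposition \ref{PropCanPutBallInPhi}, forcing the ball into the prescribed (possibly very small) ball $B_{n_0}\q(\eta_3/2\w)$ rather than $B_{n_0}\q(\eta_1/2\w)$, and I would argue exactly as there. Given $y\in\B{X}{d}{x_0}{l_3}$ with a path $\phi$, $\phi\q(0\w)=x_0$, $\phi\q(1\w)=y$, $\phi'\q(t\w)=\q(b\cdot X\w)\q(\phi\q(t\w)\w)$, $\Lppn{\infty}{\q[0,1\w]}{\q|l_3^{-d}b\w|}<1$, use Cramer's rule \eqref{EqnXkAsSumOfSubs} and Lemma \ref{LemmaMaxDetDoesntChange} to rewrite $\phi'\q(t\w)=\q(a\cdot X_{J_0}\w)\q(\phi\q(t\w)\w)$ with $\Lppn{\infty}{\q[0,1\w]}{\q|a\w|}\prel l_3^{d_{min}}$. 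Let $\mathcal T=\q\{t\leq 1:\phi\q(t'\w)\in\Phi\q(B_{n_0}\q(\eta_3/2\w)\w)\ \forall\,t'\leq t\w\}$; on $[0,\sup\mathcal T]$ one has $\frac{d}{dt}\Phi^{-1}\q(\phi\q(t\w)\w)=\q(a\cdot Y_{J_0}\w)\q(\Phi^{-1}\q(\phi\q(t\w)\w)\w)$, whose norm is $\prel l_3^{d_{min}}$ since $\sup_j\q\|Y_j\w\|\prel 1$ by Theorem \ref{ThmMainUnitScale}. If the path ever reached the boundary, then $\q|\Phi^{-1}\q(\phi\q(\sup\mathcal T\w)\w)\w|=\eta_3/2$ would give $\eta_3/2\prel l_3^{d_{min}}$, impossible once $l_3$ is pre-admissibly small (depending on $\eta_3$ and $d_{min}$); hence $y\in\Phi\q(B_{n_0}\q(\eta_3/2\w)\w)$.

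Granting this containment, the remainder is bookkeeping. Since $f\geq 0$ and $\Phi^{-1}\q(\B{X}{d}{x_0}{l_3}\w)\subseteq B_{n_0}\q(\eta_3\w)$, the change of variables \eqref{EqnChangeOfVars} gives
\[
\int_{\B{X}{d}{x_0}{l_3}}f\q(z\w)\,dz=\int_{\Phi^{-1}\q(\B{X}{d}{x_0}{l_3}\w)}\q(f\circ\Phi\w)\q(u\w)\,\q|\det\nolimits_{n_0\times n_0} d\Phi\q(u\w)\w|\,du\leq \int_{B_{n_0}\q(\eta_3\w)}\q(f\circ\Phi\w)\q(u\w)\,\q|\det\nolimits_{n_0\times n_0} d\Phi\q(u\w)\w|\,du,
\]
and by Theorem \ref{ThmMainUnitScale} the Jacobian is $\prea \q|\det_{n_0\times n_0} X\q(x_0\w)\w|$, so the right side is $\prel \q|\det_{n_0\times n_0} X\q(x_0\w)\w|\int_{B_{n_0}\q(\eta_3\w)}f\circ\Phi\q(u\w)\,du$. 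Finally, with $l_3$ a fixed pre-admissible constant $\leq\xi_2$, Remark \ref{RmkVolOfAllSmallBalls} yields $\Vol{\B{X}{d}{x_0}{l_3}}\prea \q|\det_{n_0\times n_0} X\q(x_0\w)\w|$; dividing by this volume gives $A_{\B{X}{d}{\cdot}{l_3}}f\q(x_0\w)\prel\int_{B_{n_0}\q(\eta_3\w)}f\circ\Phi\q(u\w)\,du$, as claimed.
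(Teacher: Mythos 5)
Your proof is correct and follows the same route as the paper: establish a pre-admissible $l_3$ with $\B{X}{d}{x_0}{l_3}\subseteq\Phi\q(B_{n_0}\q(\eta_3\w)\w)$ by combining the continuation argument of Proposition \ref{PropCanPutBallInPhi} with the Cramer's-rule reduction to $X_{J_0}$ from Proposition \ref{PropBigBallInSmallerBall}, then change variables and use the uniform Jacobian estimate together with Remark \ref{RmkVolOfAllSmallBalls}. The paper's proof is terser (it simply invokes the two propositions and leaves the containment argument implicit), whereas you fill in that step explicitly; the substance and the bookkeeping at the end are the same.
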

\begin{proof}
Proceeding as in the proofs of Propositions \ref{PropCanPutBallInPhi} and \ref{PropBigBallInSmallerBall},
we may find a pre-admissible constant $l_3>0$ such that:
$$\B{X}{d}{x_0}{l_3}\subseteq \Phi\q( B_{n_0}\q(\eta_3\w)\w).$$
Note that, by Remark \ref{RmkVolOfAllSmallBalls}, we have:
$$\Vol{\B{X}{d}{x_0}{l_3}}\prea \q|\det_{n_0\times n_0} X\q( x_0\w)\w| \prea \Vol{\Phi\q( B_{n_0}\q(\eta_3\w)\w)}$$
and it follows that
\begin{equation*}
A_{\B{X}{d}{\cdot}{l_3}} f\q( x_0\w) \prel \frac{1}{\Vol{\Phi\q( B_{n_0}\q(\eta_3\w)\w)}} \int_{\Phi\q( B_{n_0}\q(\eta_3\w)\w)} f\q( y\w) dy.
\end{equation*}
Applying a change of variables as in (\ref{EqnChangeOfVars}) and using
that for all $u\in  B_{n_0}\q(\eta_3\w)$, we have:
\begin{equation*}
\q|\det_{n_0\times n_0} d\Phi\q(t\w)\w| \prea\q| \det_{n_0\times n_0} X\q(x_0\w)\w|\prea \Vol{\Phi\q( B_{n_0}\q(\eta_3\w)\w)}
\end{equation*}
it follows that:
\begin{equation*}
A_{\B{X}{d}{\cdot}{l_3}} f\q( x_0\w) \prel \int_{B_{n_0}\q(\eta_3\w)} f\q( u\w) du
\end{equation*}
completing the proof.
\end{proof}

\begin{proof}[Proof of Proposition \ref{PropUnitOpsPrelimProp}]
Fix $f\geq 0$ as in the statement of Proposition \ref{PropUnitOpsPrelimProp}.
Apply Lemmas \ref{LemmaUnitOpsDefinel1}, \ref{LemmaUnitOpsDefinel3}
to $f$ and Lemma \ref{LemmaUnitOpsDefinel2} to $f\circ \Phi$ to obtain:
\begin{equation*}
\begin{split}
A_{\B{X}{d}{\cdot}{l_3}} f\q( x_0\w) &\prel \int_{Q_{n_1}\q( l_2\w)} f\circ \Phi\q( e^{u_{1}\cdot Y^1}e^{u_{2}\cdot Y^{2}}\cdots e^{u_\numsub\cdot Y^\numsub} 0\w) d u_1 \ldots d u_{\numsub} du_\numsub\\
&\prel A_{\B{X}{d}{\cdot}{l_1}}f\q(x_0\w).
\end{split}
\end{equation*}
Using that:
\begin{equation*}
f\circ \Phi\q( e^{u_{1}\cdot Y^1}e^{u_{2}\cdot Y^{2}}\cdots e^{u_\numsub\cdot Y^\numsub} 0\w) = f\q( e^{u_{1}\cdot Z^1}e^{u_{2}\cdot Z^{2}}\cdots e^{u_\numsub\cdot Z^\numsub} x_0\w)
\end{equation*}
completes the proof.
\end{proof}

\begin{proof}[Proof of Theorem \ref{ThmUnitOpsMainThm}]
Let $0<\xi'\leq \xi_2$ be an admissible constant so small that:
\begin{equation*}
\begin{split}
\Omega_0&:=
\bigcup_{\substack{x_\numsub\in \\\B{Z^\numsub}{d^\numsub}{x_0}{\xi'}}} \bigcup_{\substack{x_{\numsub-1}\in \\\B{Z^{\numsub-1}}{d^{\numsub-1}}{x_\numsub}{\xi'}}}\cdots \bigcup_{\substack{x_{2}\in \\\B{Z^{2}}{d^2}{x_3}{\xi'}}} \B{Z^1}{d^1}{x_2}{\xi'}\\
&\Subset \B{X}{d}{x_0}{\frac{\xi}{2}}
\end{split}
\end{equation*}
where $A\Subset B$ denotes that $A$ is a relatively compact subset of $B$.
It is easy to see that this is possible, and we leave the details to the
reader.  Further, we take $0<\xi''\leq \xi'$ to be an admissible constant
so small that for every $y\in \Omega_0$,
$$\B{Z^\mu}{d^\mu}{y}{\xi''}\Subset \B{X}{d}{x_0}{\xi}, \quad 1\leq \mu\leq \nu.$$
We apply Proposition \ref{PropUnitOpsPrelimProp} to each $y\in \Omega_0$
with $\xi''$ in place of $\xi$ and $\q( Z^\mu, d^\mu\w)$ in place of
$\q( X,d\w)$ (and taking $\numsub=1$) to find admissible constants
$l_1, l_2, l_3$ such that for every $y\in \Omega_0$, and every $f\geq 0$
\begin{equation}\label{EqnFirstPropApp}
\begin{split}
A_{\B{Z^\mu}{d^\mu}{\cdot}{l_3}} f\q( y\w) &\lesssim \int_{Q_{q_\mu}\q(l_2\w)} f\q( e^{u_\mu \cdot Z^\mu} y\w) du_\mu
\\
&\lesssim A_{\B{Z^\mu}{d^\mu}{\cdot}{l_1}} f\q( y\w)
\end{split}
\end{equation}
and also applying Proposition \ref{PropUnitOpsPrelimProp} as it is
stated we may ensure that:
\begin{equation}\label{EqnSecondPropApp}
\begin{split}
A_{\B{X}{d}{\cdot}{l_3}} f\q( x_0\w) &\lesssim \int_{Q_{n_1}\q( l_2\w)} f\q( e^{u_{1}\cdot Z^1}e^{u_{2}\cdot Z^{2}}\cdots e^{u_\numsub\cdot Z^\numsub} x_0\w) d u_1 \ldots d u_{\numsub} du_\numsub\\
&\lesssim A_{\B{X}{d}{\cdot}{l_1}}f\q(x_0\w).
\end{split}
\end{equation}
Let $\lambda_1=l_1$ and $\lambda_2=l_3$.  Then, applying (\ref{EqnFirstPropApp})
$\numsub$ times, we see that:
\begin{equation*}
\begin{split}
&A_{\B{Z^\numsub}{d^\numsub}{\cdot}{\lambda_2}}A_{\B{Z^{\numsub-1}}{d^{\numsub-1}}{\cdot}{\lambda_2}} \cdots A_{\B{Z^1}{d^1}{\cdot}{\lambda_2}}f\q( x_0\w) 
\\&\quad
\lesssim \int_{Q_{n_1}\q( l_2\w)} f\q( e^{u_{1}\cdot Z^1}e^{u_{2}\cdot Z^{2}}\cdots e^{u_\numsub\cdot Z^\numsub} x_0\w) d u_1 \ldots  du_\numsub.
\end{split}
\end{equation*}
Applying (\ref{EqnSecondPropApp}) yields the second inequality in the
statement of Theorem \ref{ThmUnitOpsMainThm}.

We apply Proposition \ref{PropUnitOpsPrelimProp} to each $y\in \Omega_0$
with $\lambda_2$ in place of $\xi$ and $\q( Z^\mu, d^\mu\w)$ in place of
$\q( X,d\w)$ (and taking $\numsub=1$) to find admissible constants
$l_3',l_2',l_1'\leq \lambda_2$ such that for every $y\in \Omega_0$, and every $f\geq 0$:
\begin{equation}\label{EqnThirdPropApp}
\begin{split}
A_{\B{Z^\mu}{d^\mu}{\cdot}{l_3'}} f\q( y\w) &\lesssim \int_{Q_{q_\mu}\q(l_2'\w)} f\q( e^{u_\mu \cdot Z^\mu} y\w) du_\mu
\\
&\lesssim A_{\B{Z^\mu}{d^\mu}{\cdot}{l_1'}} f\q( y\w)
\end{split}
\end{equation}
and also applying Proposition \ref{PropUnitOpsPrelimProp} as it is
stated (with $\lambda_2$ in place of $\xi$) we may ensure that:
\begin{equation}\label{EqnFourthPropApp}
\begin{split}
A_{\B{X}{d}{\cdot}{l_3'}} f\q( x_0\w) &\lesssim \int_{Q_{n_1}\q( l_2'\w)} f\q( e^{u_{1}\cdot Z^1}e^{u_{2}\cdot Z^{2}}\cdots e^{u_\numsub\cdot Z^\numsub} x_0\w) d u_1 \ldots d u_{\numsub} du_\numsub\\
&\lesssim A_{\B{X}{d}{\cdot}{l_1'}}f\q(x_0\w).
\end{split}
\end{equation}

Set $\lambda_3=l_3'$.  We first claim that, for all $f\geq 0$:
\begin{equation}\label{EqnCompl3andnewl1}
A_{\B{Z^\mu}{d^\mu}{\cdot}{l_1'}} f\q( y\w) \lesssim A_{\B{Z^\mu}{d^\mu}{\cdot}{\lambda_2}} f\q( y\w), \quad y\in \Omega_0,\quad 1\leq\mu\leq\numsub.
\end{equation}
Indeed, we already have that $l_1'\leq \lambda_2$.  Moreover, we have
by Remark \ref{RmkVolOfAllSmallBalls}:
$$\Vol{\B{Z^\mu}{d^\mu}{y}{l_1'}}\approx \Vol{\B{Z^\mu}{d^\mu}{y}{\lambda_2}}, \quad y\in \Omega_0$$
and (\ref{EqnCompl3andnewl1}) immediately follows.

Thus we have:
\begin{equation*}
\begin{split}
&A_{\B{Z^\numsub}{d^\numsub}{\cdot}{\lambda_2}}A_{\B{Z^{\numsub-1}}{d^{\numsub-1}}{\cdot}{\lambda_2}} \cdots A_{\B{Z^1}{d^1}{\cdot}{\lambda_2}}f\q( x_0\w)\\
&\quad \gtrsim  
A_{\B{Z^\numsub}{d^\numsub}{\cdot}{l_1'}}A_{\B{Z^{\numsub-1}}{d^{\numsub-1}}{\cdot}{l_1'}} \cdots A_{\B{Z^1}{d^1}{\cdot}{l_1'}}f\q( x_0\w) 
\\&\quad \gtrsim
\int_{Q_{n_1}\q( l_2'\w)} f\q( e^{u_{1}\cdot Z^1}e^{u_{2}\cdot Z^{2}}\cdots e^{u_\numsub\cdot Z^\numsub} x_0\w) d u_1 \ldots d u_{\numsub} du_\numsub
\\&\quad \gtrsim
A_{\B{X}{d}{\cdot}{\lambda_3}} f\q( x_0\w)
\end{split}
\end{equation*}
where in the second to last line, we have applied (\ref{EqnThirdPropApp})
$\numsub$ times, and in the last line we have applied (\ref{EqnFourthPropApp}).
This completes the proof.
\end{proof}

\section{Multi-parameter Carnot-Carath\'eodory balls}\label{SectionMultiParamBalls}
	In this section, we discuss multi-parameter Carnot-Carath\'eodory balls.
In Section \ref{SectionBallsAtAPoint} we state the main theorem
regarding multi-parameter balls (Theorem \ref{ThmMultiBallsAtPoint}).
In Section \ref{SectionExamplesOfMainThm} we discuss four examples/applications
where Theorem \ref{ThmMultiBallsAtPoint} applies, one of which
is the ``weakly-comparable'' balls of \cite{TaoWrightLpImprovingBoundsForAverages}.
Finally, in Section \ref{SectionControlEveryScale} we discuss
a notion of ``controlling'' vector fields, which we hope will
elucidate the complicated assumptions in Section \ref{SectionBallsAtAPoint}.

Before we begin, we need one new piece of notation.
Suppose we are given formal degrees $d_1,\ldots, d_q\in \q[0,\infty\w)^\nu$.
If $\alpha$ is an ordered multi-index, we define the formal degree
$$d\q( \alpha\w)=\sum_{j=1}^q k_j d_j$$
where $k_j$ denotes the number of times that $j$ appears in the list $\alpha$.
Thus if $\delta\in \q[0,\infty\w)^\nu$, we may define
$\delta^{d\q( \alpha\w)}\in \q[0,\infty\w)$ and $\delta^{-d\q( \alpha\w)}\in \q[0,\infty\w]$ in the usual way.

	\subsection{The main theorem}\label{SectionBallsAtAPoint}
		Suppose $X_1,\ldots, X_q$ are $q$ $C^1$ vector fields with associated formal
degrees $0\ne d_1,\ldots, d_q\in \q[0,\infty\w)^\nu$.
Let $K\subset \Omega$ (think of $K=\q\{x_0\w\}$ or, more generally, $K$ compact).
Suppose that $\xi\in \q(0,1\w]^\nu$ is such that
$\q(  X, d\w)$ satisfies $\sC\q( x,\xi\w)$, for every $x\in K$.
The goal in this section
is to apply Theorem \ref{ThmMainUnitScale} and Corollaries
\ref{CorMainUnitScaleTwice} and \ref{CorBumpFuncAtUnitScale}
to the vector fields $\q( \delta X,\sd\w)$ at each point $x\in K$,
where $\delta\in \q[0,1\w)^\nu$ is small.


Fix a subset $\sA$:
$$\sA\subseteq \q\{\delta\in \q[0,1\w]^\nu: \delta\ne 0, \delta\leq \xi \w\}$$
to be the set of ``allowable'' $\delta$s.  Recall, $\delta\leq \xi$ means
that the inequality holds coordinatewise.
\begin{rmk}\label{RmkFlagdelta}
We will be restricting our attention to balls $\B{X}{d}{x}{\delta}$,
where $\delta\in \sA$, $x\in K$.  For many applications, one would take:
\begin{equation}\label{EqnConnonicalA}
\sA= \q\{\delta\in \q[0,1\w]^\nu: \delta\ne 0, \delta\leq \xi\w\}
\end{equation}
and we encourage the reader to keep this particular choice of $\sA$
in mind throughout this section.  However, other choices of $\sA$
do arise in applications.  For instance, the choice:
\begin{equation}\label{EqnFlagA}
\sA= \q\{\delta\in \q[0,1\w]^\nu: \delta\ne 0, \delta\leq \xi, \delta_1\geq\delta_2\geq \cdots \geq \delta_{\nu}\w\}
\end{equation}
arises in the study of flag kernels, as in \cite{NagelRicciSteinSingularIntegralsWithFlagKernels}.
Also, the results in Section \ref{SectionWeaklyCompBalls} use
yet another
choice of $\sA$.
\end{rmk}

In this section, we assume that for every $\delta\in \sA$, $x\in K$, we have:
$$\q[\delta^{d_i} X_i, \delta^{d_j} X_j\w] = \sum_k c_{i,j}^{k,\delta,x} \delta^{d_k} X_k$$
on $\B{X}{d}{x}{\delta}$.
In addition, we assume:
\begin{itemize}
\item The $X_j$s are $C^2$ on $\B{X}{d}{x}{\xi}$, for every $x\in K$, and satisfy $\sup_{x\in K}\Cjn{2}{\B{X}{d}{x}{\xi}}{X_j}<\infty$.
\item For all $\q|\alpha\w|\leq 2$, $x\in K$, we have $\q(\delta^d X\w)^\alpha c_{i,j}^{k,\delta,x}\in \Cj{0}{\B{X}{d}{x}{\delta}}$, for every $i,j,k$, and every $\delta\in \sA$, and moreover:
$$\sup_{\substack{\delta\in A\\x\in K}} \sum_{\q|\alpha\w|\leq 2} \Cjn{0}{\B{X}{d}{x}{\delta}}{\q(\delta^d X\w)^\alpha c_{i,j}^{k,\delta,x}}<\infty.$$
\end{itemize}
Finally, let 
$$n_0\q( x,\delta\w)=\dim \Span{\delta^{d_1}X_1\q( x\w), \ldots, \delta^{d_q}X_q\q( x\w)}.$$

We say $C$ is an admissible constant if $C$ can be chosen to depend only
on fixed upper and lower bounds $d_{max}<\infty$, $d_{min}>0$, for
the coordinates of $\sd$, 
a fixed upper bound for $n,q,\nu$
and a fixed upper bound for the quantities:
\begin{equation*}
\sup_{x\in K}\Cjn{2}{\B{X}{d}{x}{\xi}}{X_j}, \quad \sup_{\substack{\delta\in A\\x\in K}} \sum_{\q|\alpha\w|\leq 2} \Cjn{0}{\B{X}{d}{x}{\delta}}{\q(\delta^d X\w)^\alpha c_{i,j}^{k,\delta,x}}.
\end{equation*}
Furthermore, if we say $C$ is an $m$-admissible constant, we mean that in addition
to the above, we assume that:
\begin{itemize}
\item $\sup_{x\in K}\Cjn{m}{\B{X}{d}{x}{\xi}}{X_j}<\infty$, for every $1\leq j\leq q$.
\item $\sup_{\substack{\delta\in A\\x\in K}} \sum_{\q|\alpha\w|\leq m} \Cjn{0}{\B{X}{d}{x}{\delta}}{\q(\delta^d X\w)^\alpha c_{i,j}^{k,\delta,x}}<\infty$, for every $i,j,k$.
\end{itemize}
(in particular, the above partial derivatives exist and are continuous).
$C$ is allowed to depend on $m$, all the quantities an admissible
constant is allowed to depend on, and a fixed upper bound for the
above two quantities.  

\begin{rmk}
The assumptions in this section are somewhat complicated.  The
reader might hope that special cases of these assumptions
might be enough for applications.  
Unfortunately, this seems to not be the case, and is discussed in Section \ref{SectionControlEveryScale}.
\end{rmk}

For each $\delta\in \sA$, $x\in K$, let $J\q( x,\delta\w) =\q(J\q( x,\delta\w)_1,\ldots,J\q(x,\delta\w)_{n_0\q(x,\delta\w)}\w)\in \sI{n_0\q(x,\delta\w)}{q}$ be such that:
$$\q|\det_{n_0\q( x,\delta\w)\times n_0\q(x,\delta\w)} \q( \delta^d X \q( x\w)\w)_{J\q( x,\delta \w)}\w|_\infty = \q|\det_{n_0\q( x,\delta\w)\times n_0\q( x,\delta\w)} \delta^d X\q( x\w)\w|_\infty,$$
and define, for $u\in \R^{n_0\q(x,\delta\w)}$ with $\q|u\w|$ sufficiently small:
$$\Phi_{x,\delta}\q( u\w) = e^{u\cdot \q(\delta^d X\w)_{J\q( x,\delta\w)}}x.$$
The main result of this section is:
\begin{thm}\label{ThmMultiBallsAtPoint}
There exist admissible constants $\eta_1,\eta_2>0$, $0<\xi_4\leq \xi_3<\xi_2\leq \xi_1$ such that, for all $\delta\in \sA$, $x\in K$:
\begin{equation*}
\begin{split}
&\B{X}{d}{x}{\xi_4\delta}\subseteq 
\Bsub{X}{d}{J\q( x,\delta\w)}{x}{\xi_3 \delta}\subseteq 
\Phi_{x,\delta}\q( B_{n_0\q( x,\delta\w)}\q( \eta_2\w)\w) \\
&\subseteq \Btsub{X}{d}{J\q( x,\delta\w)}{x}{\xi_2\delta}
\subseteq \Bsub{X}{d}{J\q( x,\delta\w)}{x}{\xi_2\delta}
\subseteq \B{X}{d}{x}{\xi_2\delta} \\
&\subseteq \Bsub{X}{d}{J\q( x,\delta\w)}{x}{\xi_1\delta}
\subseteq \Phi_{x,\delta}\q( B_{n_0\q( x,\delta\w)}\q( \eta_1\w)\w) 
\subseteq \Btsub{X}{d}{J\q( x,\delta\w)}{x}{\delta}\\
&\subseteq \Bsub{X}{d}{J\q( x,\delta\w)}{x}{\delta}
\subseteq \B{X}{d}{x}{\delta},
\end{split}
\end{equation*}
and
\begin{itemize}
\item $\Phi_{x,\delta}:B_{n_0\q( x,\delta\w)}\q( \eta_1\w) \rightarrow \Btsub{X}{d}{J\q( x,\delta\w)}{x}{\delta}$ is one-to-one.
\item For all $u\in B_{n_0\q( x,\delta\w)}\q( \eta_1\w)$, $\q|\det_{n_0\q( x,\delta\w)\times n_0\q( x,\delta\w)} d\Phi_{x,\delta}\q( u\w)\w|\approx \q|\det_{n_0\q( x,\delta\w)\times n_0\q( x,\delta\w)} \delta^d X\q( x\w)\w|$.
\item $\Vol{\B{X}{d}{x}{\xi_2\delta}}\approx \q|\det_{n_0\q(x,\delta\w) \times n_0\q( x,\delta\w)} \delta^d X\q( x\w)\w|$.
\item There exists $\phi_{x,\delta} \in C_0^2\q( \B{X}{d}{x}{\delta}\w)$, which
equals $1$ on $\B{X}{d}{x}{\xi_4\delta}$ and satisfies:
$$\q|X^{\alpha} \phi_{x,\delta}\w|\lesssim_{\q(\q|\alpha\w|-1\w)\vee 0}\delta^{-d\q( \alpha\w)}.$$
\end{itemize}
Furthermore, if we let $Y_j^{x,\delta}$ be the pullback of $\delta^{d_j} X_j$ under the 
map $\Phi_{x,\delta}$ to $B_{n_0\q( x,\delta\w)}\q(\eta_1\w)$, we have that:
$$\Cjn{m}{B_{n_0\q( x\w)}\q(\eta_1\w)}{Y_j^{x,\delta}}\lesssim_m 1.$$
Finally, if for each $x\in K$, $u\in B_{n_0\q( x,\delta\w)}\q( \eta_1\w)$, and
$\delta\in \sA$, we define the $n_0\q( x,\delta\w) \times n_0\q( x,\delta\w)$ matrix
$A\q( x,u\w)$ by:
$$\q( Y_{J\q( x,\delta\w)_1}^{x,\delta},\ldots, Y_{J\q( x,\delta\w)_{n_0\q( x,\delta\w)}}^{x,\delta} \w) = \q( I+A\q( x,\cdot\w) \w) \grad_u$$
then,
\begin{equation}\label{EqmBallsAtAPointMatrixBound}
\sup_{u\in B_{n_0\q( x,\delta\w)}\q(\eta_1\w)} \q\|A\q( x,u\w)\w\|\leq \frac{1}{2}.
\end{equation}
\end{thm}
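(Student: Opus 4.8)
The plan is to reduce Theorem \ref{ThmMultiBallsAtPoint} to the single-parameter results already established, namely Theorem \ref{ThmMainUnitScale} and its Corollaries \ref{CorMainUnitScaleTwice} and \ref{CorBumpFuncAtUnitScale}, applied to the list of vector fields $\q( \delta^d X, \sd\w)$ at each point $x\in K$. First I would record the key elementary identity from the introduction: since $\B{X}{d}{x}{\delta} = B_{\delta^d X}\q( x\w)$, every multi-parameter ball is literally a single-parameter ball of unit radius with respect to the rescaled vector fields $\delta^d X$, and the associated formal degrees $\sd$ have coordinates $\q(\sd\w)_j = \q|d_j\w|_1 \in \q(0,\infty\w)$, which are bounded above and below by admissible constants. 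Thus for fixed $\q( x,\delta\w)$ the hypotheses of Section \ref{SectionUnitScale} are exactly what we have assumed here (with $J_0 = J\q( x,\delta\w)$, $\zeta = 1$, and $\xi$ replaced by $1$), \emph{provided} the relevant constants — the $C^2$ norms of $\delta^{d_j} X_j$ on $\B{X}{d}{x}{\xi}$ and the $C^0$ norms of $\q(\delta^d X\w)^\alpha c_{i,j}^{k,\delta,x}$ for $\q|\alpha\w|\le 2$ — are bounded uniformly in $\q( x,\delta\w)$ by the admissible constants of \emph{this} section. Verifying that uniform boundedness is the first substantive step.

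The one point requiring care is that Theorem \ref{ThmMainUnitScale} needs control of $\Cjn{2}{\cdot}{\delta^{d_j} X_j}$ for the $n_0$ vector fields indexed by $J_0$, and these are among the $\delta^{d_j} X_j$; since $\delta\in \q(0,1\w]^\nu$ and $d_j^\mu \ge 0$, we have $\q|\delta^{d_j}\w|\le 1$, so $\Cjn{2}{\B{X}{d}{x}{\xi}}{\delta^{d_j}X_j} \le \Cjn{2}{\B{X}{d}{x}{\xi}}{X_j}$, which is bounded by an admissible constant by hypothesis. (One must observe that $\B{X}{d}{x}{\delta}\subseteq \B{X}{d}{x}{\xi}$ since $\delta\le\xi$, so the norms on the smaller ball are controlled by those on the larger; and that $\q( X,d\w)$ satisfies $\sC\q( x,\delta\w)$ because it satisfies $\sC\q( x,\xi\w)$ and $\delta\le\xi$.) Similarly, the structure functions for $\q( \delta^d X, \sd\w)$ are precisely the $c_{i,j}^{k,\delta,x}$ of the hypothesis, and the quantities $\sum_{\q|\alpha\w|\le 2}\Cjn{0}{\B{X}{d}{x}{\delta}}{\q(\delta^d X\w)^\alpha c_{i,j}^{k,\delta,x}}$ are exactly what we assumed to be uniformly bounded. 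So the admissible constants produced by Theorem \ref{ThmMainUnitScale} for the rescaled system depend only on the admissible constants of this section, uniformly in $\q( x,\delta\w)$. This is the heart of the argument: \emph{uniformity of the admissible constants across the family $\q\{\q( x,\delta\w): x\in K,\ \delta\in\sA\w\}$}, and it is the step I expect to demand the most bookkeeping.

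Once uniformity is in hand, I would simply translate each conclusion. The chain of inclusions is the inclusion chain of Corollary \ref{CorMainUnitScaleTwice} applied to $\q( \delta^d X,\sd\w)$, noting that $\Bsub{\delta^d X}{\sd}{J_0}{x}{t} = \Bsub{X}{d}{J_0}{x}{t\delta}$ and $\Bts{\delta^d X}{x}{t} = \Bt{\delta^d X}{\sd}{x}{t}$ — here one uses the scaling relations $\B{\delta^d X}{\sd}{x}{t} = \B{X}{d}{x}{t\delta}$ for scalar $t$, so $\xi_i$ from the corollary becomes $\xi_i\delta$ in the present statement. The one-to-one property, the Jacobian determinant estimate $\q|\det_{n_0\q( x,\delta\w)\times n_0\q( x,\delta\w)} d\Phi_{x,\delta}\q( u\w)\w|\approx \q|\det \q(\delta^d X\w)\q( x\w)\w|$, and the volume estimate $\Vol{\B{X}{d}{x}{\xi_2\delta}}\approx \q|\det_{n_0\q(x,\delta\w)} \delta^d X\q( x\w)\w|$ are the corresponding assertions of Theorem \ref{ThmMainUnitScale} and Corollary \ref{CorMainUnitScaleTwice} for the rescaled system. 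The bump function $\phi_{x,\delta}$ comes from Corollary \ref{CorBumpFuncAtUnitScale}: that corollary gives $\q|\q(\delta^d X\w)^\alpha \phi_{x,\delta}\w|\lesssim_{\q(\q|\alpha\w|-1\w)\vee 0} 1$, and since $\q(\delta^d X\w)^\alpha = \delta^{d\q(\alpha\w)} X^\alpha$ we get $\q|X^\alpha \phi_{x,\delta}\w| \lesssim_{\q(\q|\alpha\w|-1\w)\vee 0} \delta^{-d\q(\alpha\w)}$, as claimed.

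Finally, for \eqref{EqmBallsAtAPointMatrixBound}: the vector fields $Y_j^{x,\delta}$ are by definition the pullbacks of $\delta^{d_j}X_j$ under $\Phi_{x,\delta}$, so they are exactly the $Y_j$ produced by Theorem \ref{ThmMainUnitScale} applied to $\q( \delta^d X,\sd\w)$ with $J_0 = J\q( x,\delta\w)$. The estimates $\Cjn{m}{B_{n_0\q( x,\delta\w)}\q(\eta_1\w)}{Y_j^{x,\delta}}\lesssim_m 1$ and the matrix $A\q( x,u\w)$ defined by $\q( Y_{J\q( x,\delta\w)_1}^{x,\delta},\ldots\w) = \q( I + A\q( x,\cdot\w)\w)\grad_u$ with $\sup_u \q\|A\q( x,u\w)\w\| \le \tfrac12$ are verbatim the last two conclusions of Theorem \ref{ThmMainUnitScale}, now holding uniformly in $\q( x,\delta\w)$ because all the admissible constants were shown uniform. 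Thus the proof is a matter of carefully transporting each statement through the rescaling $X \rightsquigarrow \delta^d X$, $d \rightsquigarrow \sd$, $\xi \rightsquigarrow 1$, $t \rightsquigarrow t\delta$, and checking that no constant secretly depends on $\q( x,\delta\w)$ beyond the admissible data — which is exactly the content of the preceding three paragraphs.
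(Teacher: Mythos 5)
Your proposal is correct and follows exactly the route the paper takes: the paper's proof is precisely to apply Theorem~\ref{ThmMainUnitScale} and Corollaries~\ref{CorMainUnitScaleTwice}, \ref{CorBumpFuncAtUnitScale} to the single-parameter system $\q(\delta^d X,\sd\w)$ with $\zeta=1$ and $J_0=J\q(x,\delta\w)$, and to observe that the admissible constants appearing there are uniform in $\q(x,\delta\w)$ given the hypotheses of the section. You simply spell out the scaling identities $\B{\delta^d X}{\sd}{x}{t}=\B{X}{d}{x}{t\delta}$ and $\q(\delta^d X\w)^\alpha=\delta^{d(\alpha)}X^\alpha$ and the norm-monotonicity $\delta\le\xi\Rightarrow\B{X}{d}{x}{\delta}\subseteq\B{X}{d}{x}{\xi}$ that the paper treats as evident.
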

\begin{proof}
For each $x\in K$ and $\delta\in \sA$, merely apply Theorem \ref{ThmMainUnitScale}
and Corollaries \ref{CorMainUnitScaleTwice} and \ref{CorBumpFuncAtUnitScale}
to $\q( \delta^d X, \sd\w)$, taking $\zeta=1$ and $J_0=J\q( x,\delta\w)$.
It is easy to see, by the assumptions in this section, that all of the constants admissible (respectively, $m$-admissible) in those results
are admissible (respectively, $m$-admissible) in the sense of this section.
\end{proof}

\begin{cor}
We assume, in addition to the other assumptions in this section, that for every $\delta\in \sA$ with $\q|\delta\w|$ sufficiently
small, $\xi_2^{-1}\delta\in \sA$
(in particular, this is true if
$\sA$ is given by (\ref{EqnConnonicalA}) or (\ref{EqnFlagA})).
We have, for $x\in K$, and all $\delta\in \sA$ with $\q|\delta\w|$ sufficiently small: 
\begin{equation}\label{EqnPrelimForDoubling}
\Vol{\B{X}{d}{x}{\delta}}\approx \q|\det_{n_0\q( x\w)\times n_0\q( x\w)} \q(\xi_2^{-1}\delta\w)^d X\q( x\w)\w|\approx \q|\det_{n_0\q( x\w) \times n_0\q( x\w)} \delta^d X\q( x\w)\w|
\end{equation}
and so if $\q|\delta\w|$ is sufficiently small and $2\delta\in \sA$,
\begin{equation}\label{EqnDoublingConditionInMainThm}
\Vol{\B{X}{d}{x}{2\delta}}\lesssim \Vol{\B{X}{d}{x}{\delta}}.
\end{equation}
\end{cor}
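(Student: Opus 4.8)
The plan is to derive all three comparabilities from the volume estimate $\Vol{\B{X}{d}{x}{\xi_2\delta}}\approx \q|\det_{n_0\q(x,\delta\w)\times n_0\q(x,\delta\w)}\delta^d X\q(x\w)\w|$ contained in Theorem~\ref{ThmMultiBallsAtPoint}, together with the elementary remark that replacing a radius $\delta$ by $t\delta$, where $t>0$ is bounded above and below by admissible constants, affects neither the integer $n_0\q(x,\delta\w)$ nor the vector $\det_{n_0\q(x\w)\times n_0\q(x\w)}\delta^d X\q(x\w)$ by more than an admissible factor. Indeed $\q(t\delta\w)^{d_j}=t^{\q(\sd\w)_j}\delta^{d_j}$, so each $\q(t\delta\w)^{d_j}X_j$ is a nonzero multiple of $\delta^{d_j}X_j$; hence the two lists span the same subspace at each point, so that $n_0\q(x,t\delta\w)=n_0\q(x,\delta\w)$, while the $n_0\q(x\w)\times n_0\q(x\w)$ minor of $\q(t\delta\w)^d X\q(x\w)$ built from columns $j_1<\cdots<j_{n_0\q(x\w)}$ equals $\prod_i t^{\q(\sd\w)_{j_i}}$ times the corresponding minor of $\delta^d X\q(x\w)$. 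Since the coordinates of $\sd$ and the dimension $n_0\q(x\w)\leq n$ are bounded above by admissible constants, each such product is an admissible constant, so the two vectors of minors are comparable up to admissible factors.

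With this remark in hand, I would fix $x\in K$ and $\delta\in\sA$ with $\q|\delta\w|$ small enough that $\xi_2^{-1}\delta\in\sA$ (possible by the extra hypothesis, since then $\q|\xi_2^{-1}\delta\w|$ is small as well). Applying the volume estimate of Theorem~\ref{ThmMultiBallsAtPoint} with radius $\xi_2^{-1}\delta$ gives, writing $n_0\q(x\w):=n_0\q(x,\delta\w)=n_0\q(x,\xi_2^{-1}\delta\w)$,
\begin{equation*}
\Vol{\B{X}{d}{x}{\delta}}=\Vol{\B{X}{d}{x}{\xi_2\q(\xi_2^{-1}\delta\w)}}\approx \q|\det_{n_0\q(x\w)\times n_0\q(x\w)}\q(\xi_2^{-1}\delta\w)^d X\q(x\w)\w|,
\end{equation*}
which is the first $\approx$ of \eqref{EqnPrelimForDoubling}; the remark with $t=\xi_2^{-1}$, which is an admissible constant, turns the right-hand side into $\q|\det_{n_0\q(x\w)\times n_0\q(x\w)}\delta^d X\q(x\w)\w|$, giving the second $\approx$.

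For the doubling inequality, suppose $\q|\delta\w|$ is small and $2\delta\in\sA$; after shrinking the smallness threshold if necessary we may assume $\q|2\delta\w|$ is small, so that $\xi_2^{-1}\q(2\delta\w)\in\sA$ and \eqref{EqnPrelimForDoubling} holds with $2\delta$ in place of $\delta$. Combining this with \eqref{EqnPrelimForDoubling} for $\delta$ and the remark with $t=2$, I obtain
\begin{equation*}
\Vol{\B{X}{d}{x}{2\delta}}\approx \q|\det_{n_0\q(x\w)\times n_0\q(x\w)}\q(2\delta\w)^d X\q(x\w)\w|\approx \q|\det_{n_0\q(x\w)\times n_0\q(x\w)}\delta^d X\q(x\w)\w|\approx \Vol{\B{X}{d}{x}{\delta}},
\end{equation*}
which is slightly stronger than \eqref{EqnDoublingConditionInMainThm}. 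I expect no real obstacle here: the substance is entirely contained in Theorem~\ref{ThmMultiBallsAtPoint}, and the only thing requiring attention is fixing the meaning of ``sufficiently small'' so that it is preserved under multiplication by the admissible scalars $\xi_2^{-1}$ and $2$.
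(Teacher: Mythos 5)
Your argument is correct and follows essentially the same route as the paper: replace $\delta$ by $\xi_2^{-1}\delta$ in Theorem~\ref{ThmMultiBallsAtPoint} to get the first $\approx$, then observe that rescaling $\delta$ by an admissibly bounded scalar only perturbs the vector of $n_0\times n_0$ minors by admissibly bounded factors to get the second $\approx$ and the doubling. The paper phrases the doubling step more compactly by noting that $\q|\det_{n_0\times n_0}\delta^d X\q(x\w)\w|$ is the square root of a polynomial in $\delta$ with nonnegative coefficients of admissibly bounded degree, so $2\delta$ can replace $\delta$ at the cost of an admissible constant; this is the same observation as your column-by-column scaling computation, just packaged differently.
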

\begin{proof}
\eqref{EqnPrelimForDoubling} follows by replacing $\delta$ with $\xi_2^{-1}\delta$ in the
statement of Theorem \ref{ThmMultiBallsAtPoint}.
\eqref{EqnDoublingConditionInMainThm} follows since the RHS
of \eqref{EqnPrelimForDoubling} is the square root of a polynomial in $\delta$ (with positive coefficients).
\end{proof}

	\subsection{Applications and examples}\label{SectionExamplesOfMainThm}
		In this section, we present four applications/examples where
Theorem \ref{ThmMultiBallsAtPoint} applies.
The first two applications were both previously well understood, and in fact
both can be understood by the methods of \cite{NagelSteinWaingerBallsAndMetricsDefinedByVectorFields}.
The reason we include them here is to put them in
the context of Theorem \ref{ThmMultiBallsAtPoint}, and because they have been useful in the past.
The third example is included to provide a simple situation where
the methods of \cite{NagelSteinWaingerBallsAndMetricsDefinedByVectorFields}
do not apply but Theorem \ref{ThmMultiBallsAtPoint} does.
We close this section with most interesting of our applications.
In this application, we show how to lift results from the single
parameter case to the multi-parameter case.  In particular, we will
see how results like the Campbell-Hausdorff formula can
be applied even in the multi-parameter case--where, at first glance,
they seem totally inapplicable.

		\subsubsection{Weakly comparable balls}\label{SectionWeaklyCompBalls}
			In this section, we discuss the so-called ``weakly-comparable'' balls
that were used in \cite{TaoWrightLpImprovingBoundsForAverages}.
We do not attempt to proceed in the greatest possible generality,
and instead just try to present the main ideas.
Most of the conclusions of this section are contained in \cite{TaoWrightLpImprovingBoundsForAverages},
and the main purpose here is just to show how these results are a 
special case of Theorem \ref{ThmMultiBallsAtPoint}.

Let $X_1,\ldots, X_\nu$ be $\nu$ $C^\infty$ vector fields defined on $\Omega$,
with associated formal degrees $d_1,\ldots, d_\nu\in \q( 0,\infty\w)$.
Fix large constants $\kappa, N$.  Essentially, we will be considering
the balls generated by the vector fields $\delta_\mu^{d_\mu} X_\mu$,
where we restrict our attention to those $\delta=\q( \delta_1,\ldots, \delta_\nu\w)$
such that:
\begin{equation}\label{EqnDefWeaklyComp}
\delta_{\mu_2}^N \leq \kappa \delta_{\mu_1}
\end{equation}
for every $\mu_1,\mu_2$.  We call a $\delta$ satisfying (\ref{EqnDefWeaklyComp}) a ``weakly comparable'' $\delta$.

We assume that $X_1,\ldots, X_\nu$ satisfy H\"ormander's condition.  That is,
$X_1,\ldots, X_\nu$, along with their commutators of all orders, span
that tangent space at every point of $\Omega$.  Fix $K\Subset \Omega$,
a compact subset of $\Omega$, and let $\Omega_0\Subset \Omega$ be such that
$K\Subset \Omega_0$.

Let $\hd_\mu\in \q[0,\infty\w)^\nu$ be the vector that is $d_\mu$ in the $\mu$th
component, and $0$ in the other components.  For a list (or a ``word'') $w=\q( w_1, \ldots, w_r\w)$
of integers $1,\ldots, \nu$ we define:
$$\hd\q( w\w) = \sum_{j=1}^r \hd_{w_j},$$
$$X_w = \ad{X_{w_1}}\ad{X_{w_2}}\cdots \ad{X_{w_{r-1}}} X_{w_r}.$$
As before, for a $\nu$ vector $e=\q( e_1, \ldots, e_\nu\w)\in \q[0,\infty\w)^\nu
$,
define $\delta^e = \prod_{\mu=1}^\nu \delta_\mu^{e_\mu}$.

By the assumption that $X_1,\ldots, X_\nu$ satisfy H\"ormander's condition,
and by the relative compactness of $\Omega_0$, there exist $l$ lists $w^1,\ldots, w^l$
such that, for every $x\in \Omega_0$:
$$T_x\Omega = \Span{X_{w^1}\q( x\w), \ldots, X_{w^l}\q(x\w)}.$$

Let $d_0=\sup_{1\leq m\leq l} \q| \hd\q( w^m\w)\w|_1$.  Recall, $\q|v\w|_1=\sum_j \q|v_j\w|$.  Let $\q( X,d\w)$ denote the 
finite list of vector fields along with associated
formal degrees given by $\q( X_w, \hd\q(w\w)\w)$, where
$w$ ranges over all lists satisfying $\q| \hd\q( w\w)\w|_1\leq Nd_0$.

Take $\xi\in \q(0,1\w]^\nu$ so small that $\q( X,d\w)$ satisfies 
$\sC\q( x, \xi\w)$ for every $x\in K$, with $\Omega_0$ taking
the place of $\Omega$ in the definition of $\sC\q( x,\xi\w)$.

In this section, we say that $C$ is an admissible constant if $C$ can
be chosen to depend only on a fixed upper bound for $n$, a fixed upper bound for $\nu$, a fixed
upper bound for $N$ and $\kappa$, fixed upper and lower bounds for
$d_\mu$ ($1\leq \mu\leq \nu$), a fixed upper bound for $d_0$, 
a fixed lower bound for:
$$\inf_{x\in \Omega_0} \q| \det_{n\times n} \q( X_{w^1}\q( x\w) | \cdots | X_{w^l}\q(x \w) \w)\w|,$$
and fixed upper bounds for a finite number of the norms:
$$\Cjn{m}{\Omega_0}{X_\mu}, \quad 1\leq \mu\leq \nu.$$

\begin{thm}
Let $\sA=\q\{\delta\in \q[0,1\w]^\nu: \delta\ne 0, \delta\leq \xi, \delta_{\mu_2}^N\leq \kappa\delta_{\mu_1}, \forall \mu_1,\mu_2\w\}$.
Then, with this choice of $A$, the list of vector fields $\q( X,d\w)$ satisfies
the assumptions of Section \ref{SectionBallsAtAPoint}, where all of the
constants that are admissible (or even $m$-admissible) in the sense of that section are admissible
in the sense of this section.  Hence, Theorem \ref{ThmMultiBallsAtPoint}
holds for $\q( X,d\w)$.
\end{thm}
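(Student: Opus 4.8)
The plan is to verify the hypotheses of Section~\ref{SectionBallsAtAPoint} for the list $\q(X,d\w)$ with the stated choice of $\sA$, tracking that the constants that are admissible there are admissible in the sense of this section. First I would note that $\sA$ is of the required form: every $\delta\in\sA$ satisfies $\delta\ne 0$, $\delta\leq\xi$, and the weak-comparability constraints $\delta_{\mu_2}^N\leq\kappa\delta_{\mu_1}$; and we have already arranged $\sC\q(x,\xi\w)$ for $x\in K$ with $\Omega_0$ playing the role of $\Omega$. The substantive work is to produce, for each $\delta\in\sA$ and $x\in K$, the structure constants $c_{i,j}^{k,\delta,x}$ expressing $\q[\delta^{d_i}X_i,\delta^{d_j}X_j\w]$ in terms of $\delta^{d_k}X_k$ on $\B{X}{d}{x}{\delta}$, with uniform $C^m$ bounds on $\q(\delta^d X\w)^\alpha c_{i,j}^{k,\delta,x}$; plus the uniform $C^2$ (and $C^m$) bounds on the $X_j$ themselves, which are immediate from smoothness and the relative compactness of $\Omega_0$.

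The key step is the commutator analysis. Each $X_i$ in the list is some iterated bracket $X_{w}$ of the original Hörmander fields with $\q|\hd\q(w\w)\w|_1\leq Nd_0$; a bracket $\q[X_{w},X_{w'}\w]$ is again an iterated bracket $X_{w''}$ with $\hd\q(w''\w)=\hd\q(w\w)+\hd\q(w'\w)$. If $\q|\hd\q(w''\w)\w|_1\leq Nd_0$ then $X_{w''}$ is already in the list, with the correct formal degree, and the corresponding structure constant is $1$ (times $\delta^{\hd\q(w\w)+\hd\q(w'\w)-\hd\q(w''\w)}=\delta^0=1$ after the $\delta$-scaling). The real content is the ``overflow'' case $\q|\hd\q(w''\w)\w|_1>Nd_0$: here I would use that $X_{w^1},\dots,X_{w^l}$ span the tangent space uniformly on $\Omega_0$ (lower bound on the $n\times n$ determinant is admissible) to write, via Cramer's rule, $X_{w''}=\sum_m f_m X_{w^m}$ with $f_m$ smooth and admissibly bounded in every $C^m$ norm on $\Omega_0$. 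The point of the weak-comparability condition $\delta_{\mu_2}^N\leq\kappa\delta_{\mu_1}$ is precisely that the scalar weights work out: the degree $\hd\q(w''\w)$ has $\ell^1$-norm exceeding $Nd_0\geq N\q|\hd\q(w^m\w)\w|_1$, so in passing from $\delta^{\hd\q(w''\w)}X_{w''}=\sum_m \delta^{\hd\q(w''\w)-\hd\q(w^m\w)}f_m\cdot\delta^{\hd\q(w^m\w)}X_{w^m}$ the prefactor $\delta^{\hd\q(w''\w)-\hd\q(w^m\w)}$ is a product of powers $\delta_\mu^{e_\mu}$ which can be bounded by an admissible constant using $\delta_{\mu_2}^N\leq\kappa\delta_{\mu_1}$ to trade an excess power in one coordinate for a harmless power in another; this is the analogue of the computation in \cite{TaoWrightLpImprovingBoundsForAverages} and is the main obstacle to getting genuinely $\delta$-uniform bounds. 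I would carry out this weight bookkeeping carefully, showing $\q|\delta^{\hd\q(w''\w)-\hd\q(w^m\w)}\w|\lesssim 1$ uniformly for weakly-comparable $\delta$.

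Once the structure constants are in hand with $c_{i,j}^{k,\delta,x}=\delta^{\hd\q(w''\w)-\hd\q(w^m\w)}f_m\circ(\text{identity})$ or $\pm 1$, the derivative bounds $\sum_{\q|\alpha\w|\leq m}\Cjn{0}{\B{X}{d}{x}{\delta}}{\q(\delta^d X\w)^\alpha c_{i,j}^{k,\delta,x}}$ follow: the operator $\q(\delta^d X\w)^\alpha$ is a composition of the $\delta^{d_k}X_k$, each of which is an admissibly-bounded differential operator on $\Omega_0$ (again using smoothness and the Cramer's-rule representation to control the non-independent ones), and these hit a function $f_m$ that is already admissibly $C^\infty$-bounded on $\Omega_0$, while the scalar prefactor $\delta^{\hd\q(w''\w)-\hd\q(w^m\w)}$ is an admissible constant by the previous paragraph. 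Finally I would observe that everything $m$-admissible in Section~\ref{SectionBallsAtAPoint} depends only on these same data, so applying Theorem~\ref{ThmMultiBallsAtPoint} to $\q(X,d\w)$ with this $\sA$ yields the claimed conclusion, recovering the weakly-comparable ball theory of \cite{TaoWrightLpImprovingBoundsForAverages} as the special case.
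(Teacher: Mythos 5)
Your proposal is correct and follows essentially the same route as the paper: both split the commutator analysis into the in-list case, $\q|\hd\q(w_1\w)+\hd\q(w_2\w)\w|_1\leq Nd_0$, handled by the Jacobi identity (so the $\delta$-prefactor cancels to $1$), and the overflow case, $\q|\hd\q(w_1\w)+\hd\q(w_2\w)\w|_1> Nd_0$, handled by the H\"ormander spanning property together with the weak-comparability bookkeeping on the residual prefactor $\delta^{\hd\q(w_1\w)+\hd\q(w_2\w)-\hd\q(w^k\w)}$. The only cosmetic imprecision is the phrase ``$\q[X_w,X_{w'}\w]$ is again an iterated bracket $X_{w''}$''---it is a $\Z$-linear combination of iterated brackets of the same summed degree, obtained by repeated use of the Jacobi identity, as both you and the paper implicitly rely on.
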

\begin{proof}
We will show that if $w_1$ and $w_2$ are words with $\q|\hd\q( w_1\w)\w|_1, \q|\hd\q( w_2\w)\w|_1\leq Nd_0$,
we have for $\delta\in \sA$:
$$\q[\delta^{\hd\q( w_1\w)}X_{w_1}, \delta^{\hd\q( w_2\w)}X_{w_2}\w] = \sum_{\q|\hd\q( w_3\w)\w|_1\leq Nd_0} c_{w_1,w_2}^{w_3,\delta} \delta^{\hd\q( w_3\w)}X_{w_3},$$
with
$$\Cjn{m}{\Omega_0}{c_{w_1,w_2}^{w_3,\delta}}\lesssim 1.$$

If $\q|\hd\q( w_1\w)+\hd\q( w_2\w)\w|_1\leq Nd_0$, this follows
easily from the Jacobi identity.
We proceed, then, in the case when $\q| \hd\q( w_1\w)+\hd\q( w_2\w)\w|_1> Nd_0$.
Using that:
\begin{equation}\label{EqnHorComm}
\q[ X_{w_1}, X_{w_2}\w]=\sum_{k=1}^l c_{w_1,w_2}^k X_{w^k},
\end{equation}
with
$$\Cjn{m}{\Omega_0}{c_{w_1,w_2}^k}\lesssim 1,$$
and multiplying both sides of (\ref{EqnHorComm}) by:
$$\delta^{\hd\q( w_1\w)}\delta^{\hd\q( w_2\w)}$$
the result follows easily.
\end{proof}

\begin{example}\label{ExampleWeaklyCompNecessary}
An example to keep in mind where the weakly comparable hypothesis
is necessary is given by the following vector fields with formal degrees on $\R^2$:
\begin{equation*}
\q( \partial_x, \q(1,0,0\w)\w),\quad \q( e^{-\frac{1}{x^2}}\partial_y, \q( 0,1,0\w)\w),\quad \q( \partial_y, \q( 0,0,1\w)\w).
\end{equation*}
If we restrict our attention to the case when $\delta_3=0, \delta_1=\delta_2$
(which is impossible under the weakly comparable hypothesis, without
taking $\delta_1=0=\delta_2$)
then (without being precise about definitions), we are left
with the one-parameter ball of radius $\delta_1$ ``generated''
by the vector fields:
$$\partial_x,\quad e^{-\frac{1}{x^2}}\partial_y$$
and it is well known that this sort of ball cannot satisfy
any sort of doubling condition of the form (\ref{EqnDoublingConditionInMainThm}).
\end{example}

		\subsubsection{Multiple lists that span}\label{SectionMultipleLists}
			In this section, we suppose we have $\nu$ lists of $C^\infty$ vector fields
on $\Omega\subseteq \R^n$
with associated formal degrees:
$$\q(X_1^\mu, d_1^\mu\w),\ldots, \q( X_{q_\mu}^\mu, d_{q_{\mu}}^\mu \w), d_j^\mu\in \q(0,\infty\w), 1\leq \mu\leq \nu$$
and we assume that for {\it each} $\mu$, the list
$$X_1^\mu, \ldots, X_{q_\mu}^\mu$$
spans the tangent space at each point in $\Omega$.
Our goal is to consider the balls generated by the vector fields:
$$\delta_\mu^{d_j^\mu}X_j^\mu,\quad 1\leq \mu\leq \nu,\quad 1\leq j\leq q_{\mu},$$
where $\delta=\q( \delta_1,\ldots, \delta_\nu\w)$ is small.

Fix $K\Subset \Omega$, a compact subset of $\Omega$, and take
$\Omega_0\Subset \Omega$ such that $K\Subset \Omega_0$.  Define:
$$d_0:=\max_{\substack{1\leq \mu\leq \nu\\1\leq j\leq q_{\mu}}} d_j^{\mu}.$$

We define $\hd_j^\mu\in \q[0,\infty\w)^\nu$ for $1\leq \mu\leq \nu$ and $1\leq j\leq q_{\mu}$
to be the vector that is $d_j^\mu$ in the $\mu$th component
and $0$ in all the other components.
For a list $w=\q( \q( w_1,\mu_1\w),\ldots, \q( w_r, \mu_r\w)\w)$ of pairs,
where $1\leq \mu_j\leq \nu$ and $1\leq w_j\leq q_{\mu_j}$ we define
(as in Section \ref{SectionWeaklyCompBalls}):
$$\hd\q( w\w) = \sum_{j=1}^r \hd_{w_j}^{\mu_j},$$
$$X_w = \ad{X_{w_1}^{\mu_1}}\ad{X_{w_2}^{\mu_2}}\cdots \ad{X_{w_{r-1}}^{\mu_{r-1}}} X_{w_r}^{\mu_r}.$$
Let $\q( X,d\w)$ denote the list of vector fields with associated
formal degrees given by $\q( X_w, \hd\q( w\w)\w)$ where $w$
ranges over all those lists with $\q|\hd \q( w \w)\w|_\infty\leq d_0$.

Take $\xi\in \q( 0,1\w]^\nu$ so small that $\q(  X, d\w)$ satisfies
$\sC\q( x,\xi\w)$ for every $x\in K$, with $\Omega_0$ taking the place
of $\Omega$ in the definition of $\sC\q( x,\xi\w)$.

In this section, we say that $C$ is an admissible constant if $C$ can
be chosen to depend only on a fixed upper bound for $n$, fixed
upper and lower bounds for $d_j^\mu$ ($1\leq \mu\leq \nu$, $1\leq j\leq q_\mu$), 
a fixed upper bound for $\nu$,
a fixed lower bound for:
$$\inf_{\substack{x\in \Omega_0\\1\leq \mu\leq \nu}} \q|\det_{n\times n} \q(X_1^\mu\q(x\w) | \cdots| X_{q_\mu}^\mu\q( x\w)\w)\w|,$$
and fixed upper bounds for a finite number of the norms:
$$\Cjn{m}{\Omega_0}{X_{j}^\mu}, \quad 1\leq \mu\leq \nu, \quad 1\leq j \leq q_{\mu}.$$

\begin{thm}\label{ThmMultipleListsThatSpan}
Let $\sA$ be given by (\ref{EqnConnonicalA}).  Then, with this choice of $\sA$,
the list of vector fields $\q( X,d\w)$ satisfies the assumptions
of Section \ref{SectionBallsAtAPoint}, where all of the constants
that are admissible (or even $m$-admissible) in the sense of that section are admissible in the
sense of this section.  Hence, Theorem \ref{ThmMultiBallsAtPoint}
holds for $\q(X,d\w)$.
\end{thm}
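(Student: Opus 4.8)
The plan is to verify directly that the list $\q( X,d\w)$ meets the hypotheses of Section \ref{SectionBallsAtAPoint}, following the same template as the proof for weakly comparable balls in Section \ref{SectionWeaklyCompBalls}. Since each $X_w$ is a fixed iterated bracket of the $C^\infty$ vector fields $X_j^\mu$, it is $C^\infty$ on $\Omega_0$ with $\Cjn{m}{\Omega_0}{X_w}<\infty$, the bound depending only on the data an admissible constant of this section may depend on; and because $\q( X,d\w)$ satisfies $\sC\q( x,\xi\w)$ with $\Omega_0$ in place of $\Omega$, all the balls $\B{X}{d}{x}{\delta}$ lie in $\Omega_0$, so the regularity hypotheses are immediate. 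The only real content is the commutator identity with uniform control of the structure functions.

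The main step is to show that for words $w_1,w_2$ with $\q|\hd\q( w_1\w)\w|_\infty,\q|\hd\q( w_2\w)\w|_\infty\leq d_0$ and every $\delta\in \sA$ (so $\delta\in \q[0,1\w]^\nu$), we have
$$
\q[\delta^{\hd\q( w_1\w)}X_{w_1},\,\delta^{\hd\q( w_2\w)}X_{w_2}\w] = \sum_{\q|\hd\q( w_3\w)\w|_\infty\leq d_0} c_{w_1,w_2}^{w_3,\delta}\,\delta^{\hd\q( w_3\w)}X_{w_3}
$$
with $\Cjn{m}{\Omega_0}{c_{w_1,w_2}^{w_3,\delta}}\lesssim 1$ uniformly in $\delta$. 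Set $e=\hd\q( w_1\w)+\hd\q( w_2\w)$ and split on $\q|e\w|_\infty$. If $\q|e\w|_\infty\leq d_0$, the Jacobi identity expands $\q[X_{w_1},X_{w_2}\w]$ into a finite linear combination, with universal constant coefficients, of the vector fields $X_{w_3}$ for $w_3$ running over words built from the letters of $w_1$ and $w_2$; each such $w_3$ has $\hd\q( w_3\w)=e$, hence $\q|\hd\q( w_3\w)\w|_\infty\leq d_0$, and multiplying through by $\delta^e$ gives the identity with constant structure functions. If instead $\q|e\w|_\infty>d_0$, pick $\mu_0$ with $e_{\mu_0}=\q|e\w|_\infty>d_0$; since $X_1^{\mu_0},\ldots,X_{q_{\mu_0}}^{\mu_0}$ spans the tangent space on $\Omega_0$ with the determinant lower bound built into the notion of admissible constant, Cramer's rule (exactly as in \eqref{EqnXkAsSumOfSubs}) writes $\q[X_{w_1},X_{w_2}\w]=\sum_{j=1}^{q_{\mu_0}} c^j X_j^{\mu_0}$ on $\Omega_0$ with each $c^j$ a ratio of determinants, hence $C^\infty$ with $\Cjn{m}{\Omega_0}{c^j}\lesssim 1$. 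Each $X_j^{\mu_0}$ occurs in the list with degree $\hd_j^{\mu_0}$, and $\q|\hd_j^{\mu_0}\w|_\infty=d_j^{\mu_0}\leq d_0$. Writing $\delta^e X_j^{\mu_0}=\delta^{e-\hd_j^{\mu_0}}\,\delta^{\hd_j^{\mu_0}}X_j^{\mu_0}$, the vector $e-\hd_j^{\mu_0}$ has $\mu_0$-component $e_{\mu_0}-d_j^{\mu_0}\geq d_0-d_0=0$ and all other components $e_\mu\geq 0$, so it lies in $\q[0,\infty\w)^\nu$ and $\delta^{e-\hd_j^{\mu_0}}\leq 1$; hence the structure functions $c^j\delta^{e-\hd_j^{\mu_0}}$ satisfy $\Cjn{m}{\Omega_0}{c^j\delta^{e-\hd_j^{\mu_0}}}\lesssim 1$ uniformly in $\delta$. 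Since $d_{min}>0$ bounds the lengths of admissible words and there are finitely many letters, the number of allowed words $w_3$ — and the number of terms in the Jacobi expansion — is admissibly bounded, so every sum above is finite.

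It remains to carry out the routine bookkeeping that every quantity an admissible, resp.\ $m$-admissible, constant of Section \ref{SectionBallsAtAPoint} may depend on is controlled by an admissible constant of this section: the coordinates of $\sd$ lie in a fixed range $\q[c,\nu d_0\w]$ with $c>0$; $q$ is the fixed finite number of admissible words; $n,\nu$ are fixed; the $C^m$ bounds on the $X_w$ were noted above; and the bounds on $\q(\delta^d X\w)^\alpha c_{w_1,w_2}^{w_3,\delta}$ for $\q|\alpha\w|\leq m$ follow from the previous paragraph together with the observation that applying $\delta^{\hd\q( w\w)}X_w$ to a structure function is a $C^\infty$ operation introducing only factors $\delta^{\hd\q( w\w)}\leq 1$. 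Once this is in place, Theorem \ref{ThmMultiBallsAtPoint} and its corollary apply verbatim. The one point requiring care is the degree bookkeeping in the case $\q|e\w|_\infty>d_0$; this is in fact cleaner here than in the weakly comparable setting, precisely because each list spans the full tangent space in its own parameter direction, so no trading of powers of distinct $\delta_\mu$ is required.
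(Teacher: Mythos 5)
Your proof is correct and takes essentially the same approach as the paper's: split on whether $\q|\hd\q(w_1\w)+\hd\q(w_2\w)\w|_\infty$ is $\leq d_0$ (Jacobi identity) or $>d_0$ (express $\q[X_{w_1},X_{w_2}\w]$ in terms of the spanning list $X^{\mu_0}$ and then multiply by $\delta^{\hd(w_1)}\delta^{\hd(w_2)}$). The degree bookkeeping you spell out—in particular that $\delta^{e-\hd_j^{\mu_0}}\leq 1$ because $e-\hd_j^{\mu_0}\geq 0$ coordinatewise—is exactly the content of the paper's ``follows easily,'' so you have simply made explicit what the paper leaves to the reader.
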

\begin{proof}
We will show that if $w_1$ and $w_2$ are lists with $\q|\hd\q( w_1\w)\w|_\infty, \q|\hd\q(w_2\w)\w|_\infty \leq d_0$, we have for $\delta\in \sA$:
\begin{equation}\label{EqnToShowMultipleLists}
\q[\delta^{\hd\q( w_1\w)} X_{w_1}, \delta^{\hd\q( w_2\w)} X_{w_2} \w] = \sum_{\q|\hd\q( w_3\w)\w|_\infty\leq d_0} c_{w_1,w_2}^{w_3,\delta} \delta^{\hd\q( w_3\w)} X_{w_3},
\end{equation}
with
$$\Cjn{m}{\Omega_0}{c_{w_1,w_w}^{w_3,\delta}}\lesssim 1.$$

If $\q|\hd\q( w_1\w) +\hd\q( w_2\w)\w|_\infty\leq d_0$, (\ref{EqnToShowMultipleLists})
follows easily from the Jacobi identity.  We proceed, therefore, in the
case when $\q|\hd\q( w_1\w) +\hd\q( w_2\w)\w|_\infty >d_0$.
Let us assume that the $\mu$th coordinate of $\hd\q( w_1\w)+\hd\q( w_2\w)$ is greater than $d_0$.  Using that:
\begin{equation}\label{EqnIntegMultipleLists}
\q[X_{w_1},X_{w_2}\w]=\sum_{j=1}^{q_\mu} c_{w_1,w_2}^j X_{j}^\mu,
\end{equation}
and multiplying both sides of (\ref{EqnIntegMultipleLists}) by:
$$\delta^{\hd\q( w_1\w)}\delta^{\hd\q( w_2\w)}$$
(\ref{EqnToShowMultipleLists}) follows easily.
\end{proof}

\begin{rmk}
Theorem \ref{ThmMultipleListsThatSpan} also follows from the results in Section 4
of \cite{StreetAdvances} (which used the methods of \cite{NagelSteinWaingerBallsAndMetricsDefinedByVectorFields}).  In fact, the more general results
in Section 4 of \cite{StreetAdvances} 
are clearly a special case Theorem \ref{ThmMultiBallsAtPoint}.
\end{rmk}

		\subsubsection{An example where the methods of \cite{NagelSteinWaingerBallsAndMetricsDefinedByVectorFields} do not apply}
			As was already discussed in Section \ref{SectionNSW}, the methods of
\cite{NagelSteinWaingerBallsAndMetricsDefinedByVectorFields}
fail to prove Theorem \ref{ThmMultiBallsAtPoint}.  The main
issue is that the error term given by the Campbell-Hausdorff
formula cannot be {\it a priori} controlled using the methods
of \cite{NagelSteinWaingerBallsAndMetricsDefinedByVectorFields}
(see Section \ref{SectionNSW}).
Thus, if one wishes to develop an example where the methods
of \cite{NagelSteinWaingerBallsAndMetricsDefinedByVectorFields}
do not apply, one must use vector fields where the error term
is not obviously controllable.  As shown in
Section \ref{SectionLiftingSingleParam} (see also Section \ref{SectionNSW}), the results of this paper imply
that the error term is controllable.  The point of this section
is to offer an example where the methods of \cite{NagelSteinWaingerBallsAndMetricsDefinedByVectorFields} do not prove this fact.

In particular, one needs that the error term of the Campbell-Hausdorff
formula not be zero, so the main aspect of the example that follows
is that the iterated brackets of the vector fields we present
are not eventually zero (this rules out vector fields with polynomial
coefficients\footnote{As a consequence, if one is only interested
in vector fields with polynomial coefficients, then the methods
of \cite{NagelSteinWaingerBallsAndMetricsDefinedByVectorFields}
(with some adjustments) are sufficient for most purposes.}).

We work in the two-parameter situation, with $\sA$
given by \eqref{EqnConnonicalA}.  We consider the list of vector fields
on $\R^4$ with formal degrees
``generated'' by the vector fields
$$\q(\partial_x+\cos\q( s\w) \partial_y ,\q( 1,0\w)\w), \quad \q( \partial_s+\cos\q( x\w) \partial_t, \q( 0,1\w) \w).$$
More specificly, we consider the list of vector fields with formal degrees:
\begin{equation*}
\q(\partial_x+\cos\q( s\w) \partial_y ,\q( 1,0\w)\w), \: \q( \partial_s+\cos\q( x\w) \partial_t, \q( 0,1\w) \w)\: \q( \sin\q( s\w) \partial_y-\sin\q( x\w) \partial_t,\q( 1,1\w)\w),
\end{equation*}
\begin{equation*}
\q( \cos\q( x\w)\partial_t,\q( 2,1\w)\w), \: \q( \cos\q( s\w) \partial_y,\q( 1,2\w)\w),\: \q(\sin\q( x\w)\partial_t, \q( 3,1\w) \w),\: \q(\sin\q( s\w) \partial_y,\q( 1,3\w)\w).
\end{equation*}
It is immediate to verify that these vector fields satisfy
the assumptions of Theorem \ref{ThmMultiBallsAtPoint}, but (for the reasons
mentioned above) the methods of \cite{NagelSteinWaingerBallsAndMetricsDefinedByVectorFields}
are insufficient to study the balls generated by these vector fields.

		\subsubsection{Lifting results from the single parameter case and the Campbell-Hausdorff formula}\label{SectionLiftingSingleParam}
			In this section, we discuss a general method whereby one may lift
many results from the single parameter setting
of \cite{NagelSteinWaingerBallsAndMetricsDefinedByVectorFields}
to the multi-parameter setting in this paper.

To make this methodology clear, we present a concrete example
where it applies.  Indeed, this example is interesting
in its own right.

We suppose that we are given generating $C^\infty$ vector fields on $\Omega\subseteq \R^n$,
 with
$\nu$ parameter formal degrees,
$$\q( W_1,d_1\w),\ldots, \q( W_r,d_r\w).$$
For a word $w=\q( w_1,\ldots, w_l\w)$, $w_j\in \q\{1,\ldots, r\w\}$,
we define:
$$\hd\q( w\w) = \sum_{j=1}^l d_{w_j},$$
$$X_w = \ad{X_{w_1}}\cdots \ad{X_{w_{l-1}}} X_{w_l}.$$
Let $\q(X,d\w)=\q(X_1,d_1\w),\ldots, \q(X_q,d_q\w)$ denote the list of vector fields with formal
degrees given by $\q( X_w, \hd\q( w\w)\w)$ where $w=\q( w_1,\ldots, w_l\w)$
and $l\leq M$ for some fixed large $M$.  Our goal is to show, under
the smooth version of the hypotheses of Section \ref{SectionBallsAtAPoint}, that the balls
$$B_{\delta^d W}\q( x\w)$$
are comparable to the balls
$$\B{X}{d}{x}{\delta}.$$

More specificly, fix $x_0\in \Omega$, and assume $\q(X,d\w)$ satisfies
$\sC\q( x_0,\xi\w)$.
We assume that we have, for every $\delta\in\q[0,1\w)^\nu$ with $\delta\leq \xi$,
$$\q[\delta^{d_i} X_i, \delta^{d_j}X_j\w]= \sum_k c_{i,j}^{k,\delta} \delta^{d_k} X_k,$$
on $\B{X}{d}{x_0}{\delta}$.  In what follows, an admissible constant
may depend on upper bounds for $q$ and $n$, lower and upper bounds for the 
$\q|\cdot\w|_1$ norms of the
formal degrees,
upper bounds for a finite number of the norms $\Cjn{m}{\B{X}{d}{x_0}{\xi}}{X_j}$
and upper bounds for a finite number of the norms:
$$\sup_{\delta\leq \xi} \sum_{\q|\alpha\w|\leq m}\Cjn{0}{\B{X}{d}{x_0}{\delta}}{\q( \delta^d X\w)^{\alpha}c_{i,j}^{k,\delta}},$$
which we assume to be finite--in fact, we only need the above bounds
for $m$ which can be chosen to depend only on $M$ and $q$.

We have,
\begin{thm}\label{ThmChowBall}
There exists an admissible constant $\eta'>0$ such that for every $\delta\leq \xi$, we have:
\begin{equation*}
\B{X}{d}{x_0}{\eta'\delta}\subseteq B_{\delta^d W}\q( x_0\w) \subseteq \B{X}{d}{x_0}{\delta}.
\end{equation*}
\end{thm}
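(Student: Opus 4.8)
The plan is to prove the two containments separately, using Theorem \ref{ThmMultiBallsAtPoint} as the main tool to translate statements about $\B{X}{d}{x_0}{\delta}$ into statements about the map $\Phi_{x,\delta}$ and the pulled-back vector fields $Y_j^{x,\delta}$. First I would observe that the easy containment is $B_{\delta^d W}\q( x_0\w)\subseteq \B{X}{d}{x_0}{\delta}$: since each $W_j$ is literally one of the $X_l$ (the word $w=(j)$), a curve $\gamma$ with $\gamma'(t)=\sum_j a_j(t)\delta^{d_j}W_j(\gamma(t))$ and $\q\|\q|a\w|\w\|_{L^\infty}<1$ is immediately a curve of the type defining $\B{X}{d}{x_0}{\delta}$ (padding the other coefficients by $0$). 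So this direction requires only unwinding definitions.

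For the hard containment $\B{X}{d}{x_0}{\eta'\delta}\subseteq B_{\delta^d W}\q( x_0\w)$, the idea follows the classical Chow-type argument but carried out uniformly via the scaling map. Fix $\delta\leq\xi$ and write $n_0=n_0(x_0,\delta)$, $J=J(x_0,\delta)$, $\Phi=\Phi_{x_0,\delta}$ as in Theorem \ref{ThmMultiBallsAtPoint}. The key point is that after pulling back by $\Phi$, the vector fields $\delta^{d_j}X_j$ become $Y_j^{x_0,\delta}$ which, by Theorem \ref{ThmMultiBallsAtPoint}, satisfy $\Cjn{m}{B_{n_0}\q(\eta_1\w)}{Y_j^{x_0,\delta}}\lesssim_m 1$ uniformly, and the $n_0$ of them indexed by $J$ span near $0$ with Jacobian bounded below (the matrix $I+A$ has $\q\|A\w\|\leq\frac12$). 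Likewise the pullbacks $V_j^{x_0,\delta}$ of $\delta^{d_j}W_j$ satisfy uniform $C^m$ bounds. Now the commutator relations $\q[\delta^{d_i}W_i,\delta^{d_j}W_j\w]$ expand, via the definition of the list $\q(X,d\w)$ as iterated brackets and the hypothesis \eqref{EqnIntroMainAssump}, into $\delta$-weighted combinations of the $\delta^{d_l}X_l$ with admissibly bounded coefficients; hence after pullback the $V_j^{x_0,\delta}$, together with their iterated brackets, span the tangent space uniformly on $B_{n_0}(\eta_1)$ with admissible constants. One then runs the uniform Chow argument (equivalently, applies the single-parameter ball-box theorem from \cite{NagelSteinWaingerBallsAndMetricsDefinedByVectorFields}, whose constants are now admissible because all the relevant norms and the spanning determinant are controlled) in the $u$-coordinates: the ball $B_{V^{x_0,\delta}}(0)$ at unit radius contains a fixed admissible Euclidean ball $B_{n_0}(\eta')$, and conversely $B_{n_0}(\eta')\supseteq \Phi^{-1}(\B{X}{d}{x_0}{\eta'\delta})$ by the containments in Theorem \ref{ThmMultiBallsAtPoint}. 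Pushing forward by $\Phi$ and using $\Phi(B_{V}(0))\subseteq B_{\delta^d W}(x_0)$ gives $\B{X}{d}{x_0}{\eta'\delta}\subseteq B_{\delta^d W}(x_0)$.

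The main obstacle I expect is \emph{not} the geometry of Chow's theorem itself but the bookkeeping that the constants stay admissible in the sense of this subsection: one must check that when the brackets of the $\delta^d W$ are re-expressed in terms of the list $\q(X,d\w)$ and then pulled back, the resulting structure constants are bounded by admissible constants, using \eqref{EqnIntroMainAssump}, the Jacobi identity, and the fact that $\delta\leq\xi\leq 1$ so that the extra factors $\delta^{e}$ appearing are harmless ($\leq 1$). The cleanest route is probably to invoke Theorem \ref{ThmMultipleListsThatSpan} / Theorem \ref{ThmMultiBallsAtPoint} applied to the combined list containing both the $W$-generated brackets and the given $\q(X,d\w)$, observe the two lists generate comparable balls at every scale (each is built from admissibly-controlled iterated brackets of the other up to the relevant order, by the discussion in Section \ref{SectionControlEveryScale}), and conclude; this avoids re-deriving the ball-box estimate from scratch and isolates the comparison to a statement about controlling vector fields, which is exactly what the machinery of Section \ref{SectionControlUnitScale} delivers uniformly.
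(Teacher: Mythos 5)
Your proposal is correct and follows essentially the same route as the paper: prove the second containment by unwinding definitions, then handle the first by pulling back $\delta^d X$ and $\delta^d W$ under $\Phi_{x_0,\delta}$ to vector fields with uniform $C^m$ bounds and a uniformly nondegenerate spanning determinant, so that the single-parameter result of \cite{NagelSteinWaingerBallsAndMetricsDefinedByVectorFields} (Theorem \ref{ThmSingleChowBall}) applies with admissible constants, and then push forward. The paper does exactly this, invoking Theorem \ref{ThmSingleChowBall} after pullback; your first paragraph and the first route of your second paragraph match it closely.
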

The second containment in Theorem \ref{ThmChowBall} is obvious, and so
the theorem is really a statement about the first containment.
In the single parameter case, Theorem \ref{ThmChowBall} was shown in
\cite{NagelSteinWaingerBallsAndMetricsDefinedByVectorFields}.
Specificly, we have:
\begin{thm}[Theorem 4 of \cite{NagelSteinWaingerBallsAndMetricsDefinedByVectorFields}]\label{ThmSingleChowBall}
In the case $\nu=1$ and when $X_1,\ldots, X_q$ span the tangent space,
Theorem \ref{ThmChowBall} holds--so long as we allow admissible
constants to also depend on a lower bound for:
$$\q|\det_{n\times n} X\q( x_0\w)\w|.$$
\end{thm}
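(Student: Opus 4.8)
The second inclusion $B_{\delta^d W}(x_0)\subseteq\B{X}{d}{x_0}{\delta}$ is immediate: each generating field $W_j$ occurs, with the same formal degree $d_j$, among the list $\q(X,d\w)$, so every admissible control path for $\delta^d W$ is one for $\delta^d X$. For the first inclusion I would first pass to a fixed-scale statement. Apply Theorem \ref{ThmMainUnitScale} to the single-parameter list $\q(\delta^d X,d\w)$: its hypotheses hold with constants uniform in $\delta$, since $\Cjn{2}{}{\delta^{d_j}X_j}\le\Cjn{2}{}{X_j}$ for $\delta\le 1$ and since the standing assumption gives $\q[\delta^{d_i}X_i,\delta^{d_j}X_j\w]=\sum_k c_{i,j}^{k,\delta}\delta^{d_k}X_k$ with boundedly many bounded coefficients; these constants are \emph{a fortiori} admissible in the present (determinant-dependent) sense. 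This produces admissible $\eta_1,\xi_2>0$ with $\B{X}{d}{x_0}{\xi_2\delta}\subseteq\Phi_\delta\q(B_{n_0}\q(\eta_1\w)\w)$, where $\Phi_\delta\q(u\w)=\exp\q(u\cdot(\delta^d X)_{J_0}\w)x_0$. Taking $\eta'=\xi_2$ and using monotonicity of $\B{X}{d}{x_0}{s\delta}$ in the scalar $s$, it therefore suffices to prove: for every $\delta\le\xi$ and every $u\in\R^q$ with $\q|u\w|\le\eta_1$ (shrinking $\eta_1$ if needed), the point $\exp\q(\sum_{k=1}^q u_k\,\delta^{d_k}X_k\w)x_0$ lies in $B_{\delta^d W}(x_0)$.

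For this core claim, recall each $X_k$ is an iterated commutator $X_k=\mathrm{ad}(W_{i_1})\cdots\mathrm{ad}(W_{i_{l-1}})W_{i_l}$ of some order $l=l_k\le M$ with $d_k=d_{i_1}+\cdots+d_{i_l}$. The commutator identity $e^{-sV}e^{-sU}e^{sV}e^{sU}=e^{s^2[U,V]+O(s^3)}$, iterated on commutator order, writes $\exp\q(s^{l}\,\delta^{d_k}X_k\w)x$ as a composition of boundedly many flows $e^{\pm s\,\delta^{d_{i_j}}W_{i_j}}$ composed with a ``correction'' flow whose generator has formal degree strictly larger than $d_k$. Assembling all factors via the Baker--Campbell--Hausdorff formula, $\exp\q(\sum_k u_k\delta^{d_k}X_k\w)x_0$ becomes a fixed composition of flows $e^{t_m\,\delta^{d_{j_m}}W_{j_m}}x_0$ with times bounded by $C\q(\sum_k\q|u_k\w|^{1/l_k}\w)\le C\eta_1^{1/M}$, up to a correction generated by a vector field of formal degree $\ge Nd_{min}$ (truncating BCH at order $N$). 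Choosing $N$ with $Nd_{min}\ge d_{max}$, and using that $\q|\det_{n\times n}X(x_0)\w|\gtrsim 1$ — which, by a Gronwall argument exactly as in Lemma \ref{LemmaDetsDontChange}, propagates to $\q|\det_{n\times n}X(y)\w|\gtrsim 1$ on $B_{\delta^d W}(x_0)\subseteq\B{X}{d}{x_0}{\delta}$ — Cramer's rule rewrites the correction field as $\sum_k O(\delta^{d_{max}-d_k})\,\delta^{d_k}X_k=\sum_k O(1)\,\delta^{d_k}X_k$, i.e.\ again in the form of the original target with slightly smaller coefficients. A successive-approximation/contraction argument in the spirit of Section~2 of \cite{NagelSteinWaingerBallsAndMetricsDefinedByVectorFields} then shows the process converges to an exact representation of $\exp\q(\sum_k u_k\delta^{d_k}X_k\w)x_0$ as a geometrically convergent composition of flows $e^{t_m\delta^{d_{j_m}}W_{j_m}}$ with $\sum_m\q|t_m\w|\le C\eta_1^{1/M}$. (Alternatively, one computes $d\Psi_\delta(0)$ for the finite composite word-flow map $\Psi_\delta$ via BCH, sees it equals $\delta^d X(x_0)$ up to the absorbable error above, and applies the quantitative inverse function theorem, Theorem \ref{ThmInverseFunctionThm}, to conclude directly that the image of $\Psi_\delta$ contains $\B{X}{d}{x_0}{\eta'\delta}$.)

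Finally, such a composition of flows is an admissible control path for $\delta^d W$: concatenating the flows $e^{t_m\,\delta^{d_{j_m}}W_{j_m}}$ and reparametrizing onto $[0,1]$ yields a path $\gamma$ with $\gamma'(t)=\sum_j a_j(t)\,\delta^{d_j}W_j(\gamma(t))$ and $\q\| |\delta^{-d}a| \w\|_{L^\infty}\lesssim\sum_m\q|t_m\w|\le C\eta_1^{1/M}$, which is $<1$ once $\eta_1$ is a sufficiently small admissible constant; hence $\exp\q(\sum_k u_k\delta^{d_k}X_k\w)x_0\in B_{\delta^d W}(x_0)$, proving Theorem \ref{ThmSingleChowBall}. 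The main obstacle is precisely the absorption of the BCH remainder: this is the one place where the lower bound for $\q|\det_{n\times n}X(x_0)\w|$ and the hypothesis $\nu=1$ enter, because in the multi-parameter setting $\delta^{Nd_{min}}$ need not dominate $\delta^{d_k}$ coordinatewise and the remainder cannot be reabsorbed (cf.\ Section \ref{SectionNSW}) — which is why Theorem \ref{ThmSingleChowBall} must carry these extra assumptions, while the unconditional Theorem \ref{ThmChowBall} is instead deduced from the results of the present paper.
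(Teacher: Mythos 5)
The paper does not give its own proof of this statement: it is cited directly to Theorem 4 of \cite{NagelSteinWaingerBallsAndMetricsDefinedByVectorFields}, with only the remark that NSW's argument is Campbell--Hausdorff-based and that the lower bound on $\q|\det_{n\times n}X\q(x_0\w)\w|$ is essential to their way of absorbing the BCH remainder. Your sketch does faithfully reproduce that argument: reduce to an exponential chart, express each iterated-commutator flow as a composition of $\delta^d W$-flows via the commutator identity, assemble with BCH, truncate at an order $N$ with $N d_{min}\geq d_{max}$, apply Cramer's rule together with the determinant lower bound (propagated by the Gronwall argument of Lemma \ref{LemmaDetsDontChange}) to reabsorb the remainder into $\sum_k O\q(1\w)\,\delta^{d_k}X_k$, and iterate \`a la Section 2 of \cite{NagelSteinWaingerBallsAndMetricsDefinedByVectorFields}. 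You also put the determinant hypothesis and the restriction $\nu=1$ in exactly the place the paper highlights in Section \ref{SectionNSW}, which is the point of quoting this theorem. One deviation worth noting: you enter the exponential chart via Theorem \ref{ThmMainUnitScale} of the present paper rather than via NSW's own exponential-coordinate result. This creates no circularity (Theorem \ref{ThmMainUnitScale} does not rest on Theorem \ref{ThmSingleChowBall}), but it is stylistically inverted, since the statement is meant to be quotable from \cite{NagelSteinWaingerBallsAndMetricsDefinedByVectorFields} as an ingredient independent of this paper's machinery; and once you have pulled back by $\Phi_\delta$, the pulled-back frame satisfies $\q|\det Y_{J_0}\q(0\w)\w|\approx 1$ automatically, so carrying out the BCH absorption in $u$-coordinates would obviate the original determinant hypothesis altogether---which is precisely how the paper later uses Theorem \ref{ThmSingleChowBall} as a unit-scale black box inside the proof of Theorem \ref{ThmChowBall}. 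Doing the absorption at the original scale, as you do, is perfectly correct for the statement as given; it just leaves a small redundancy. The iteration details you leave implicit (geometric decay so that $\sum_m\q|t_m\w|$ stays bounded, flows remaining in the domain, reparametrizing onto $\q[0,1\w]$ with segment lengths proportional to $\q|t_m\w|$ so that $\q\|\q|\delta^{-d}a\w|\w\|_{L^\infty}\lesssim\sum_m\q|t_m\w|$) are standard and correctly pointed to \cite{NagelSteinWaingerBallsAndMetricsDefinedByVectorFields}.
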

Actually, in Theorem 4 of \cite{NagelSteinWaingerBallsAndMetricsDefinedByVectorFields},
$W_1,\ldots, W_r$ are each given the formal degree $1$, but this is not an essential point, and the methods there immediately generalize to give
Theorem \ref{ThmSingleChowBall}.
It is worth noting that the proof in \cite{NagelSteinWaingerBallsAndMetricsDefinedByVectorFields}
uses heavily the Campbell-Hausdorff formula, and therefore use of
a lower bound for $\q|\det_{n\times n} X\q( x_0\w)\w|$ is essential
for those methods.

\begin{proof}[Proof of Theorem \ref{ThmChowBall}]
Apply Theorem \ref{ThmMultiBallsAtPoint}, to obtain $\Phi_\delta$,
$\eta_1$ and $\xi_2$ as in that theorem.  To prove Theorem \ref{ThmChowBall},
it suffices to construct an admissible constant $\eta'>0$ such that:
$$\B{X}{d}{x_0}{\eta'\delta}\subseteq B_{\q( \xi_2\delta\w)^d W}\q( x_0\w);$$
rephrasing this, it suffices to show,
\begin{equation}\label{EqnChowSTS}
\B{\delta^d X}{\sd}{x_0}{\eta'}\subseteq B_{\q( \xi_2\delta\w)^d W}\q( x_0\w),
\end{equation}
for some admissible $\eta'>0$.
Let $Y$ denote the list of vector fields given by the pullback
of $\delta^d X$ under the map $\Phi_\delta$ to $B_{n_0\q( \delta\w)}\q( \eta_1\w)$, and let $W'$ denote the list
of vector fields given by the pullback of $\delta^d W$ under $\Phi_\delta$.
Pulling back \eqref{EqnChowSTS} via $\Phi_\delta$, we see that it suffices to
show that,
\begin{equation}\label{EqnChowPullBackSTS}
\B{Y}{\sd}{0}{\eta'}\subseteq B_{\xi_2^{\sd}W'}\q(0\w).
\end{equation}
However, using that the $W'$ generate the $Y$ (since this is just the pullback
of the statement that the $W$ generate the $X$), using
that $\q|\det_{n_0\q( \delta\w)\times n_0\q( \delta\w)} Y\q( 0\w)\w|\gtrsim 1$
(this follows from \eqref{EqmBallsAtAPointMatrixBound}), and using
$\xi_2\approx 1$, we may apply
Theorem \ref{ThmSingleChowBall} (in the special case when $\delta\approx 1$)
to deduce \eqref{EqnChowPullBackSTS}, completing the proof.
\end{proof}

In conclusion, if one can prove a result in the single-parameter setting
of \cite{NagelSteinWaingerBallsAndMetricsDefinedByVectorFields},
one often gets a multi-parameter result ``for free,'' merely by
pulling the multi-parameter vector fields back under the
scaling map $\Phi_\delta$ and applying the single-parameter
result.  In particular, this allows one to use the Campbell-Hausdorff
formula to prove results in the multi-parameter setting.  This same
proof method shows that the error term for the Campbell-Hausdorff formula
as discussed in Section \ref{SectionNSW} can be controlled in an
appropriate sense, even in the multi-parameter setting.

	\subsection{Control of vector fields}\label{SectionControlEveryScale}
		In Section \ref{SectionControlUnitScale}, we saw that
the conditions imposed on the commutators $\q[ X_i, X_j\w]$
in Section \ref{SectionUnitScale} were closely
related to three equivalent conditions that were defined
in Section \ref{SectionControlUnitScale} (see Remark \ref{RmkCommutatorInTermsOfControlUnitScale}).
The goal in this section is to understand the conditions
imposed on the commutators in Section \ref{SectionBallsAtAPoint}
in a similar way.  To do so, we will lift two of the three
equivalent conditions from Section \ref{SectionControlUnitScale}
into the setting of Section \ref{SectionBallsAtAPoint}.
These equivalent conditions are interesting in their own right,
and will play a role in future work.

We take all the same notation as in Section \ref{SectionBallsAtAPoint},
and define ($m$-)admissible constants in the same way.  Let
$X_{q+1}$ be a $C^1$ vector field on $\Omega$, with an
associated formal degree $0\ne d_{q+1}\in \q[0,\infty\w)^\nu$.
We will introduce conditions on $\q( X_{q+1},d_{q+1}\w)$
which will imply (informally) that one does not
``get anything new'' if $\q( X_{q+1},d_{q+1}\w)$
is added to the list $\q( X,d\w)$.
Let $\q( \hX, \hd\w)$ denote the list of vector fields
with formal degrees
$\q( X_1,d_1\w),\ldots, \q( X_{q+1},d_{q+1}\w)$.
For an integer $m\geq 1$, we define two conditions (all parameters
below are considered to be elements of $\q( 0,\infty\w)$):
\begin{enumerate}
\item $\sPo{m}{\kappa_1}{\tau_1}{\sigma_1}{\sigma_1^m}$:
\begin{itemize}
\item $\forall \delta\in \sA, x\in K, \q|\det_{n_0\q( x,\delta\w)\times n_0\q( x,\delta\w)} \q(\delta X\w)\q( x\w)\w|_{\infty} \geq \kappa_1 \q|\det_{n_0\q(x,\delta\w)\times n_0\q(x,\delta\w)} \q(\delta\hX\w)\q( x\w)\w|_{\infty}.$
\item $\forall x\in K, \q|\det_{j\times j} \hX\q( x\w) \w|=0$, $n_0\q( x,\delta\w)<j\leq n$.
\item $\forall \delta\in \sA, x\in K, \exists c_{i,q+1}^{j,x,\delta}\in C^0\q(\B{X}{d}{x}{\tau_1\delta}\w)$ such that
$$\q[\delta^{d_i} X_i,\delta^{d_{q+1}}X_{q+1}\w]=\sum_{j=1}^{q+1} c_{i,q+1}^{j,x,\delta} \delta^{d_j}X_j, \quad \text{on }\B{X}{d}{x}{\tau_1\delta},$$
with
$$\sum_{\q|\alpha\w|\leq m-1} \Cjn{0}{\B{X}{d}{x}{\tau_1\delta}}{\q(\delta X\w)^\alpha c_{i,q+1}^{j,x,\delta}}\leq \sigma_1^m, \quad \Cjn{0}{\B{X}{d}{x}{\tau_1\delta}}{c_{i,q+1}^{j,x,\delta}}\leq \sigma_1.$$
\end{itemize}
\item $\sPr{m}{\tau_3}{\sigma_3}{\sigma_3^m}$:  For every $x\in K, \delta\in \sA$, there exist $c_j^{x,\delta}\in C^0\q( \B{X}{d}{x_0}{\tau_3\delta} \w)$
such that:
\begin{itemize}
\item $\delta^{d_{q+1}}X_{q+1}=\sum_{j=1}^{q} c_j^{x,\delta} \delta^{d_j}X_j$, on $\B{X}{d}{x}{\tau_3\delta}$.
\item $\sum_{\q|\alpha\w|\leq m} \Cjn{0}{\B{X}{d}{x}{\tau_3\delta}}{\q(\delta X\w)^\alpha c_j^{x,\delta}}\leq \sigma_3^m$.
\item $\sum_{\q|\alpha\w|\leq 1} \Cjn{0}{\B{X}{d}{x}{\tau_3\delta}}{\q(\delta X\w)^\alpha c_j^{x,\delta}}\leq \sigma_3$.
\end{itemize}
\end{enumerate}

\begin{thm}
$\sPn{1}{m}\Leftrightarrow \sPn{3}{m}$
in the following sense:
\begin{enumerate}
\item $\sPo{m}{\kappa_1}{\tau_1}{\sigma_1}{\sigma_1^m} \Rightarrow$ there 
      exist admissible constants $\tau_3=\tau_3\q(\kappa_1,\tau_1,\sigma_1\w)$,
       $\sigma_3=\sigma_3\q( \kappa_1, \sigma_1\w)$, and an $m$-admissible
             constant $\sigma_3^m=\sigma_3^m\q( \kappa_1, \sigma_1^m \w)$ 
such that          $\sPr{m}{\tau_3}{\sigma_3}{\sigma_3^m}$.
\item $\sPr{m}{\tau_3}{\sigma_3}{\sigma_3^m} \Rightarrow$ there exist admissible
constants $\kappa_1=\kappa_1\q( \sigma_3\w)$, $\sigma_1=\sigma_1\q( \sigma_3\w)$
and an $m$-admissible constant $\sigma_1^m=\sigma_1^m\q( \sigma_3^m\w)$, such that 
$\sPo{m}{\kappa_1}{\tau_3}{\sigma_1}{\sigma_1^m}$.
\end{enumerate}
Furthermore, if $0<\dm$ is a fixed lower bound for $\q| d_{q+1}\w|_1$, then
under the condition $\sPr{m}{\tau_3}{\sigma_3}{\sigma_3^m}$,
we have that there exists an admissible constant
$\tau'=\tau'\q( \dm, \tau_3, \sigma_3 \w)$
such that:
$$\B{X}{d}{x}{\tau'\delta}\subseteq \B{\hX}{\hd}{x}{\tau'\delta}\subseteq \B{X}{d}{x}{\tau_3\delta}$$
for every $x\in K, \delta\in \sA$.
Finally, if $\eta'\leq \eta_1$ is small enough so that
$\Phi_{x,\delta}\q( B_{n_0\q( x,\delta\w)}\q( \eta'\w)\w) \subseteq \B{X}{d}{x_0}{\tau_3\delta}$
and we define $Y_{q+1}^{x,\delta}$ to be the pullback of $\delta^{d_{q+1}}X_{q+1}$ under
$\Phi_{x,\delta}$ to $B_{n_0\q(x,\delta\w)}\q( \eta'\w)$, then,
$$\Cjn{m}{B_{n_0\q( x,\delta\w)}\q( \eta'\w)}{Y_{q+1}^{x,\delta}}\leq \sigma_4^m$$
where $\sigma_4^m=\sigma_4^m\q( \sigma_3^m\w)$ is an $m$-admissible constant.
\end{thm}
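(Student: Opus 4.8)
The plan is to recognize this theorem as the lift, to the multi-parameter setting of Section~\ref{SectionBallsAtAPoint}, of Theorem~\ref{ThmUnitScaleEquivCond} together with Propositions~\ref{PropUnderCondsBallContain} and~\ref{PropPullBackUnderConds}. Concretely, for each fixed $x\in K$ and $\delta\in\sA$ I would apply those single-parameter results to the rescaled list $\q(\delta^d X,\sd\w)$, with the auxiliary vector field $\q(\delta^{d_{q+1}}X_{q+1},\q|d_{q+1}\w|_1\w)$ adjoined, taking $\zeta=1$ and $J_0=J\q(x,\delta\w)$ --- exactly as in the proof of Theorem~\ref{ThmMultiBallsAtPoint}. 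The identity $\B{X}{d}{x}{\delta}=\B{\delta^d X}{\sd}{x}{1}$, together with the analogous identities for the $\widetilde{B}$-balls and the $B_{\q(\cdot\w)_J}$-balls, converts every statement about $\delta$-radius multi-parameter balls here into a statement about unit-radius single-parameter balls there.

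First I would observe that, fixing $x$ and $\delta$, condition $\sPo{m}{\kappa_1}{\tau_1}{\sigma_1}{\sigma_1^m}$ of the present section is precisely condition $\sPo{m}{\kappa_1}{\tau_1}{\sigma_1}{\sigma_1^m}$ of Section~\ref{SectionControlUnitScale} applied to $\q(\delta^d X,\sd\w)$ at $x$ (with the scale $\xi$ there taken to be $1$), and similarly for $\sPr{m}{\tau_3}{\sigma_3}{\sigma_3^m}$. Hence $\sPn{1}{m}\Rightarrow\sPn{3}{m}$ (via $\sPn{2}{m}$) and $\sPn{3}{m}\Rightarrow\sPn{1}{m}$ follow at once from Theorem~\ref{ThmUnitScaleEquivCond}, applied separately for each $\q(x,\delta\w)$, with the output constants depending on exactly the same data as listed there. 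The crux is to check that any constant admissible (resp. $m$-admissible) in the sense of Section~\ref{SectionUnitScale}/\ref{SectionControlUnitScale} for $\q(\delta^d X,\sd\w)$ is admissible (resp. $m$-admissible) in the sense of Section~\ref{SectionBallsAtAPoint}, uniformly over $x\in K$ and $\delta\in\sA$; this is the identical verification made in the proof of Theorem~\ref{ThmMultiBallsAtPoint}, since the standing hypotheses of this section furnish exactly the uniform bounds $\sup_{x\in K}\Cjn{2}{\B{X}{d}{x}{\xi}}{X_j}<\infty$ and $\sup_{x,\delta}\sum_{\q|\alpha\w|\leq 2}\Cjn{0}{\B{X}{d}{x}{\delta}}{\q(\delta^d X\w)^\alpha c_{i,j}^{k,\delta,x}}<\infty$ (and their $m$-analogues) needed there. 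A minor point: the conditions $\sPn{i}{m}$ are phrased with derivatives $\q(\delta X\w)^\alpha$ along the full list, while admissibility in Section~\ref{SectionUnitScale} is phrased with $X_{J_0}^\alpha$, but these are interchangeable via the Cramer's-rule identity~\eqref{EqnXkAsSumOfSubs}, just as in the proof of Theorem~\ref{ThmUnitScaleEquivCond}.

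For the last two assertions, note that $\sPr{m}{\tau_3}{\sigma_3}{\sigma_3^m}$ implies its $m=1$ case, hence $\sPn{1}{1}$, hence $\sPn{2}{1}$ for $\q(\delta^d X,\sd\w)$; applying Proposition~\ref{PropUnderCondsBallContain} at each $\q(x,\delta\w)$ and undoing the rescaling gives $\B{X}{d}{x}{\tau'\delta}\subseteq\B{\hX}{\hd}{x}{\tau'\delta}\subseteq\B{X}{d}{x}{\tau_3\delta}$ with $\tau'$ depending only on $\tau_3$, $\sigma_3$, and a lower bound $\dm$ for $\q|d_{q+1}\w|_1$ (which plays the role of the single-parameter lower degree bound). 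Finally, for $\eta'\leq\eta_1$ small enough that $\Phi_{x,\delta}\q(B_{n_0\q(x,\delta\w)}\q(\eta'\w)\w)\subseteq\B{X}{d}{x}{\tau_3\delta}$, Proposition~\ref{PropPullBackUnderConds} applied to $\q(\delta^d X,\sd\w)$ gives $\Cjn{m}{B_{n_0\q(x,\delta\w)}\q(\eta'\w)}{Y_{q+1}^{x,\delta}}\leq\sigma_4^m$ with $\sigma_4^m$ $m$-admissible. The main obstacle is not a new idea but the uniformity bookkeeping: every estimate is obtained pointwise in $\q(x,\delta\w)$ and one must confirm it is uniform over $K\times\sA$ --- precisely the concern that led to stating Theorems~\ref{ThmMainFrobThm} and~\ref{ThmMainUnitScale} for arbitrary formal degrees.
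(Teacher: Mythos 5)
Your proposal is correct and takes essentially the same approach as the paper: the paper's proof is exactly the one-line reduction you identify, namely applying Theorem~\ref{ThmUnitScaleEquivCond} and Propositions~\ref{PropUnderCondsBallContain}, \ref{PropPullBackUnderConds} for each $x\in K$, $\delta\in\sA$ to the rescaled list $\q(\delta^d X,\sd\w)$ with $x_0=x$, $\zeta=1$, and $J_0=J\q(x,\delta\w)$. The uniformity-of-constants bookkeeping you spell out is precisely what the paper leaves implicit (as in the proof of Theorem~\ref{ThmMultiBallsAtPoint}).
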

\begin{proof}
Merely apply Theorem \ref{ThmUnitScaleEquivCond} and Propositions
\ref{PropUnderCondsBallContain} and \ref{PropPullBackUnderConds} for each $x\in K, \delta\in A$,
to the list of vector fields $\q( \delta X, \sd\w)$, taking $x_0=x$ and
$J_0=J\q( x,\delta\w)$.
\end{proof}

\begin{rmk}\label{RmkControlCommEveryScale}
Our assumption on the commutator $\q[ X_i, X_j\w]$ in Section
\ref{SectionBallsAtAPoint} was essentially that
$\q( \q[ X_i, X_j\w], d_i+d_j\w)$ satisfied condition
$\sPn{3}{m}$ for appropriate $m$.
\end{rmk}

\begin{defn}\label{DefnControl}
We say a vector field with a formal degree $\q( X_{q+1}, d_{q+1}\w)$ is
$m$-controlled by the list of vector fields $\q( X,d\w)$ provided
either of the two equivalent conditions $\sPn{1}{m}$ or $\sPn{3}{m}$
holds.  We say $\q( X_{q+1},d_{q+1}\w)$ is $\infty$-controlled by
$\q( X,d\w)$ if $\sPo{m}{\kappa_1}{\tau_1}{\sigma_1}{\sigma_1^m}$ holds
for every $m$, with $\kappa_1, \tau_1$, and $\sigma_1$ independent
of $m$ (equivalently if $\sPr{m}{\tau_3}{\sigma_3}{\sigma_3^m}$ holds
with $\tau_3$ and $\sigma_3$ independent of $m$).
\end{defn}

		\subsubsection{Examples of control}\label{SectionExamplesOfControl}
			For this section, we take all the same notation as in 
Section \ref{SectionBallsAtAPoint}, and assume that 
$\sA$ is given by (\ref{EqnConnonicalA}).
As was mentioned in Section \ref{SectionControlEveryScale} (see Remark \ref{RmkControlCommEveryScale})
our main assumption in Theorem \ref{ThmMultiBallsAtPoint}
is essentially that the commutator 
$\q( \q[ X_i, X_j\w], d_i+d_j\w)$ is ``controlled'' by $\q( X, d\w)$
in the sense of Definition \ref{DefnControl}.

In \cite{NagelSteinWaingerBallsAndMetricsDefinedByVectorFields}, a stronger
assumption was used in the single parameter case (see Section \ref{SectionNSW}).  The most obvious multi-parameter
analog of this assumption is the following:
\begin{equation}\label{EqnNSWIntegCond}
\q[ X_i, X_j\w] = \sum_{d_k\leq d_i+d_j} c_{i,j}^k X_k,
\end{equation}
where the inequality is meant coordinatewise, and the $c_{i,j}^k$
are assumed to be sufficiently smooth.  It is easy
to see that this assumption is a special case of the assumptions
in Section \ref{SectionBallsAtAPoint}:  indeed, one can take
$$c_{i,j}^{k,x,\delta}:=
\begin{cases}
\delta^{d_i+d_j-d_k} c_{i,j}^k & \text{if }d_k\leq d_i+d_j,
\\
0 & \text{otherwise}.
\end{cases}$$
One may wonder whether it is possible to get away with such
simple assumptions in applications.  This seems to not be the case,
and to exemplify the possible difficulties, in this section, we give
examples where the closely related notion of control takes a more
complicated form.

\begin{example}
This example takes place on $\R^2$ with the vector fields:
$$X_1=\partial_x, \quad X_2=x\partial_y, \quad X_3=\partial_y.$$
Create two copies of these vector fields:
$$X_1^j, X_2^j, X_3^j$$
$j=1,2,$ both copies acting on the {\it same} space.  We take $\nu=2$
and assign the formal degrees in $\q[0,\infty\w)^2$ as follows:
$$\q(X_1^1,\q(1,0\w)\w), \q(X_2^1,\q(1,0\w)\w), \q( X_3^1,\q(2,0\w)\w),\q(X_1^2,\q(0,1\w)\w),\q(X_2^2,\q(0,1\w)\w),\q( X_3^2,\q(0,2\w)\w).$$
It is clear that:
$$\q(\q[X_1^1, X_2^2\w], \q(1,0\w)+\q( 0,1\w)\w) = \q( \partial_y, \q( 1,1\w)\w)$$
is $\infty$-controlled by the other vector fields.  However, it is
easy to see that it cannot be written as in (\ref{EqnNSWIntegCond}).
In this case, one could just throw in the vector field $\q( \partial_y, \q( 1,1\w)\w)$, and then the list of vector fields would satisfy (\ref{EqnNSWIntegCond}), 
but this is not the case in the Example \ref{ExampleControlRadon}, below.
Furthermore, this process of adding in vector fields is counter to
the way in which we proceed in Section \ref{SectionMaximalFuncs}.
\end{example}

\begin{example}
Consider the vector fields with single-parameter formal degrees on $\R$ given by:
$$\q(\partial_x, 2\w), \q( x^2\partial_x, 1\w), \q( x\partial_x,1.5\w).$$
Denote them by $\q( X_j, d_j\w)$, $j=1,2,3$.
We restrict our attention to $\q|x\w|\leq 1$.  It is clear that for
every $\q|\delta\w|\leq 1$,
\begin{equation}\label{EqnExampleOfControl}
\q[\delta^{d_i}X_i, \delta^{d_j}X_j\w] = \sum_k c_{i,j}^{k,\delta} \delta^{d_k} X_k
\end{equation}
with $c_{i,j}^{k,\delta}\in C^\infty$ uniformly in $\delta$.
We claim that $\q( x\partial_x, 1.5\w)$ is $\infty$-controlled by the other two vector
fields.
Indeed, fix $x_0,\delta$, with $\delta,\q|x_0\w|\leq 1$.
By (\ref{EqnExampleOfControl}) it suffices to show that:
\begin{equation}\label{EqnExampleofControlToShow}
\q|\q(\delta^2, x_0^2\delta\w)\w|_{\infty}\geq \q|\q(\delta^2, x_0^2\delta, x_0\delta^{1.5}\w)\w|_{\infty}.
\end{equation}
Suppose $\delta^{1.5}\q|x_0\w| \geq \delta^2$.  Then $\q|x_0\w|\geq \sqrt{\delta}$.  Hence, $\delta^{1.5} \q|x_0\w| \leq \delta \q|x_0^2\w|$,
completing the proof of (\ref{EqnExampleofControlToShow}).

What this example shows is that we can write
$$\delta^{1.5} x\partial_x = c_1^{x_0,\delta} \delta^2 \partial_x + c_2^{x_0,\delta} \delta x^2 \partial_x,\text{ on } \B{X}{d}{x_0}{\tau_3\delta},$$
where $\tau_3$ can be chosen independent of $x_0,\delta$.  Note that the
choice of $c_1^{x_0,\delta},c_2^{x_0,\delta}$ depends on $x_0$ and $\delta$
in a way which is more complicated than arises from \eqref{EqnNSWIntegCond}:
it depends on the ratio of $\q|x_0\w|$ and $\sqrt{\delta}$.
\end{example}

\begin{example}\label{ExampleControlRadon}
Consider the vector fields with formal degrees on $\R$:
$$\q( \partial_x, \q( a,0\w)\w), \quad \q( \partial_x, \q( 0,b\w)\w)$$
here $a,b>0$.  In this example, we take $A=\q\{0\ne \delta, \q|\delta\w|<1\w\}$.
We will show that the above two vector fields $\infty$-control
$\q( \partial_x, \q( c,d\w)\w)$ if and only if the point $\q( c,d\w)$ lies
on or above the line going through $\q( a,0\w)$ and $\q( 0, b\w)$.  Here,
$c,d$ are any two non-negative real numbers, at least one of which
is non-zero.

By replacing $\delta=\q( \delta_1, \delta_2\w)$ with $\q( \delta_1^{\frac{1}{a}},\delta_2^{\frac{1}{b}}\w)$, it is easy to see that it suffices to
prove the result for $a=1=b$.  Hence we need to show that
$\q( \partial_x,\q( c,d\w)\w)$ is $\infty$-controlled by the above
two vector fields if and only if $c+d\geq 1$.  However, it is easy to see
that:
$$\q( \delta_1,\delta_2\w)^{\q( c,d\w)}\leq C \max\q\{\delta_1, \delta_2\w\}$$
for all $\delta$ sufficiently small, if and only if $c+d\geq 1$.
The result follows easily.
\end{example}





\section{Unit operators and maximal functions}\label{SectionMaximalFuncs}
	In this section, we wish to study maximal operators
associated to a special case of the multi-parameter balls from
Section \ref{SectionBallsAtAPoint}.

Suppose we are given $\nu$ families of $C^1$ vector fields on
$\Omega$ with associated single-parameter formal degrees:
\begin{equation*}
\q( X^\mu, d^\mu \w)=\q( \q(X^\mu_1, d^\mu_1\w), \ldots, \q(X^\mu_{q_\mu}, d^\mu_{q_\mu}\w)\w), \quad d^\mu_j\in \q( 0,\infty\w), \quad 1\leq \mu\leq \nu.
\end{equation*}

We may associate to the $X^\mu$s and $d^\mu$s a family of vector fields
with (multi-parameter) formal degrees.  Indeed, let $\q(X,d\w)$
denote the list of vector fields $X_j^\mu$, $1\leq \mu\leq \nu$, $1\leq j\leq q_{\mu}$,
with the degree of $X_j^\mu$ the element of $\q[ 0,\infty\w)^\nu$ which
is $d_j^\mu$ in the $\mu$th coordinate and is $0$ in all
other coordinates.
Define $K\Subset \Omega$ and $\xi$ as in Section \ref{SectionMultipleLists},
in terms of $\q( X,d\w)$.  
We assume that the list of vector fields $\q( X,d\w)$ satisfies
all of the assumptions of Section \ref{SectionBallsAtAPoint} (with
$\sA$ given by \eqref{EqnConnonicalA}), {\it without adding
any new vector fields to the list $\q( X,d\w)$}.\footnote{In particular,
this implies that the (one-parameter) list of vector fields $\q( X^{\mu_0}, d^{\mu_0}\w)$ satisfies the assumptions of Section \ref{SectionBallsAtAPoint},
for each $\mu_0$.
This can be seen by taking $\delta_\mu=0$ for every $\mu\ne \mu_0$.}
We 
define
admissible constants in the same way as they were defined in 
Section \ref{SectionBallsAtAPoint}.  We define,
for $\delta\in \q(0,1\w]^\nu$, $\delta\leq \xi$, $x\in K$:
$$B_{\q(X^1, d^1\w),\ldots, \q(X^\nu, d^\nu\w)}\q(x, \delta\w):=\B{X}{d}{x}{\delta}.$$
We present two interesting examples that satisfy the hypotheses of this section:
\begin{example}\label{ExHormanderWithEuclid}
Suppose $X_1,\ldots, X_m$ are $C^\infty$ vector fields satisfy H\"ormander's condition.  Use
these to generate a list of vector fields with formal degrees 
$\q( X,d\w)$ as in \cite{NagelSteinWaingerBallsAndMetricsDefinedByVectorFields} (see Section \ref{SectionNSW}).  Let $\nu=2$ and let $\q(X,d\w)$
be the list of vector fields corresponding to $\mu=1$.  Then let
$\q( \partial_1,1\w), \ldots, \q(\partial_n,1\w)$ denote the list
of vector fields corresponding to $\mu=2$.  I.e., $\mu=2$ corresponds
to the usual Euclidean vector fields.  These then satisfy
the hypotheses of the section.  The main idea in this example
is that one may write:
\begin{equation}\label{EqnEculidWithHormander}
\q[X_i, \partial_j\w]=\sum_k a_{i,j}^k \partial_k,
\end{equation}
where the $a_{i,j}^k\in C^\infty$.
One must be careful with this example.
It is tempting to think, given the results in Section \ref{SectionMultipleLists},
that one could take any two lists satisfying H\"ormander's condition,
removing the assumption that one of the lists corresponds to the
usual Euclidean vector fields.  This is not the case, since the
procedure in Section \ref{SectionMultipleLists} involved
adding more vector fields to the list $\q( X,d\w)$, namely the commutators
involving vector fields from both lists.  In this case, though,
this procedure is not necessary, due to (\ref{EqnEculidWithHormander}).
\end{example}
\begin{example}
Let $\q( X^\mu, d^\mu\w)$ be $\nu$ lists of vector fields with
formal degrees such that for each fixed $\mu$, $\q( X^\mu, d^\mu\w)$
satisfies the hypotheses of Section \ref{SectionBallsAtAPoint} (with $\nu=1$ and
$\sA$ given by \eqref{EqnConnonicalA}).
Suppose further that for $\mu_1\ne \mu_2$,
$\q[X_i^{\mu_1}, X_j^{\mu_2}\w]=0$.
Then these vector fields satisfy the hypotheses of this section.
In particular, when working on a homogeneous group, one could take
$\nu=2$, and let the $\mu=1$ vector fields correspond to
a homogeneous basis of the left invariant vector fields (with
degrees corresponding to their homogeneity) and $\mu=2$ be a similar
list but instead with the right invariant vector fields.
This was the setup in \cite{StreetAdvances}, and is discussed
in Section \ref{SectionStreet}.
\end{example}

To motivate the results in this section, let us consider
a classical example.  In this case $\Omega=\R^\nu$, $q_\mu=1$ for all $1\leq \mu\leq \nu$,
and $\q(X^\mu_1, d^\mu_1\w)=\q(\partial_\mu,1\w)$.
We have the classical ``strong'' maximal
function in $\R^{\nu}$.  This is given by:
\begin{equation}\label{EqnDefnMax}
\sM f\q(x\w) :=\sup_{\substack{\delta=\q(\delta_1,\ldots, \delta_n\w)\\ \delta_j>0}} \frac{1}{\Vol{B_{\q(\partial_1,1\w), \ldots, \q(\partial_\nu, 1\w)} \q(x,\delta\w)}}
\int_{B_{\q(\partial_1,1\w), \ldots, \q(\partial_\nu, 1\w)} \q(x,\delta\w)} \q|f\q(z\w)\w| dz.
\end{equation}
Rewriting \eqref{EqnDefnMax} in the notation of Section \ref{SectionUnitScale},
we have:
\begin{equation*}
\sM f\q( x\w) = \sup_{\delta} A_{B_{\q(\partial_1,1\w), \ldots, \q(\partial_\nu, 1\w)} \q(\cdot ,\delta\w)}\q|f\w| \q( x\w).
\end{equation*}
Perhaps the easiest way to deduce $L^p$ boundedness ($1<p\leq \infty$)
for $\sM$ is the idea of Jessen, Marcinkiewicz, and Zygmund \cite{JessenMarcinkiewiczZygmundNoteOnDifferentiability}
to bound $\sM$ by a product of the one-dimensional maximal functions,
whose $L^p$ boundedness is already understood.
To do this, one proves the simple inequality, that there exists a
$\lambda>0$ such that for every $\delta$, and every $f\geq 0$, we have:
$$A_{B_{\q(\partial_1,1\w), \ldots, \q(\partial_\nu, 1\w)} \q(\cdot,\lambda\delta\w)} f\leq C A_{\B{\partial_\nu}{1}{\cdot}{\delta_\nu}}\cdots A_{\B{\partial_1}{1}{\cdot}{\delta_1}} f$$
And then it follows immediately, that:
\begin{equation}\label{EqnMaxClassicBound}
\sM f\q( x\w) \leq C \sM_\nu \cdots \sM_1 f\q(x\w),
\end{equation}
where
$$\sM_\mu f\q(x\w) = \sup_{\delta_\mu>0} A_{\B{\partial_\mu}{1}{\cdot}{\delta_\mu}} \q|f\w|.$$
In this section, we wish to generalize (\ref{EqnMaxClassicBound}).

\begin{thm}\label{ThmMainMaxFuncs}
There exist admissible constants, $0<\tau_2<\tau_1<1$, $\sigma>0$ such
that for all $\q|\delta\w|\leq \sigma$, we have, for $f\in C\q( \Omega\w)$,
$f\geq 0$, $x\in K$:
\begin{equation*}
\begin{split}
A_{\Bmus{\cdot}{\tau_2 \delta}} f\q( x\w) &\lesssim 
A_{\B{X^\nu}{d^\nu}{\cdot}{\tau_1 \delta}} \cdots A_{\B{X^1}{d^1}{\cdot}{\tau_1 \delta}} f\q( x\w)\\
&\lesssim
A_{\Bmus{\cdot}{\delta}} f\q( x\w).
\end{split}
\end{equation*}
\end{thm}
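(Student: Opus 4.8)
The plan is to reduce Theorem \ref{ThmMainMaxFuncs} to Theorem \ref{ThmUnitOpsMainThm} by localizing in $\delta$. The key observation is that for small $\delta = (\delta_1,\ldots,\delta_\nu) \in \sA$, the multi-parameter ball $\Bmus{x}{\delta}$ equals $\B{\delta^d X}{\sd}{x}{1}$, and, using the connection between multi-parameter and single-parameter balls explained in Section \ref{SectionResults}, the list $(\delta^d X, \sd)$ is a single-parameter list to which the ``unit scale'' results of Section \ref{SectionUnitOpsAtUnitScale} apply. More precisely, for each $\mu$ we have $\B{X^\mu}{d^\mu}{x}{\tau\delta} = B_{\delta_\mu^{d^\mu} X^\mu}(x)$ scaled appropriately, so after the rescaling $X_j^\mu \rightsquigarrow \delta_\mu^{d_j^\mu} X_j^\mu$ the averaging operators $A_{\B{X^\mu}{d^\mu}{\cdot}{\tau\delta}}$ become the operators $A_{\B{Z^\mu}{d^\mu}{\cdot}{\lambda}}$ appearing in Theorem \ref{ThmUnitOpsMainThm}, with the subsets $Z^\mu$ being precisely the (rescaled) lists $X^\mu$. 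The hypothesis \eqref{EqnAllTheXjAppear} of Section \ref{SectionUnitOpsAtUnitScale} is exactly the statement that $\{(X_1,d_1),\ldots,(X_{n_0},d_{n_0})\}$ is covered by the union of the $X^\mu$; since the $X_j$ here are just the union of all the $X_j^\mu$ over $\mu$, this holds automatically (any vector field in a spanning subset at $x_0$ for the degree $\delta^d$ is one of the $X_j^\mu$, hence appears in the $\mu$th list).

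First I would fix $x \in K$ and a small $\delta \in \sA$, set $W_j^\mu = \delta_\mu^{d_j^\mu} X_j^\mu$ (single-parameter degrees $\sd$), and verify that this list satisfies the hypotheses of Section \ref{SectionUnitOpsAtUnitScale} — i.e.\ the stronger symmetric assumptions of Remark \ref{RmkSymmetricUnitScaleAssump} — with admissible constants (in the sense of Section \ref{SectionUnitOpsAtUnitScale}) that are admissible in the sense of the present section. This is where the standing assumption of this section, that $(X,d)$ satisfies all the assumptions of Section \ref{SectionBallsAtAPoint} \emph{without adding any vector fields}, does the essential work: it guarantees that the commutator relations $[\delta^{d_i}X_i,\delta^{d_j}X_j] = \sum c_{i,j}^{k,\delta,x}\delta^{d_k}X_k$ hold with uniformly controlled coefficients, and restricting to $\delta$ with $\delta_\mu = 0$ for $\mu \ne \mu_0$ gives the within-list relations $[W_i^\mu, W_j^\mu] = \sum c_{i,j}^{k,\mu} W_k^\mu$ needed for the definition of admissible constant in Section \ref{SectionUnitOpsAtUnitScale}. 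The choice $n_0 = n_0(x,\delta)$, $J_0 = J(x,\delta)$, $\zeta = 1$ matches the setup there.

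Then I would simply invoke Theorem \ref{ThmUnitOpsMainThm} to obtain admissible constants $0 < \lambda_3 \le \lambda_2 \le \lambda_1 \le \xi_2$ with
\begin{equation*}
A_{B_{\delta^d X}(\cdot,\lambda_3)} f(x) \lesssim A_{B_{W^\nu}(\cdot,\lambda_2)} \cdots A_{B_{W^1}(\cdot,\lambda_2)} f(x) \lesssim A_{B_{\delta^d X}(\cdot,\lambda_1)} f(x),
\end{equation*}
and translate this back, via $B_{W^\mu}(\cdot,\lambda) = \B{X^\mu}{d^\mu}{\cdot}{\lambda\delta}$ and $B_{\delta^d X}(\cdot,\lambda) = \Bmus{\cdot}{\lambda\delta}$, into the claimed inequality with $\tau_1 = \lambda_2$, $\tau_2 = \lambda_3$. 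Since all of $\lambda_1,\lambda_2,\lambda_3$ are admissible and bounded by $\xi_2 \le 1$, and the middle expression decreases if we replace $\lambda_1$ by the smaller $\lambda_2$ on the right using Remark \ref{RmkVolOfAllSmallBalls} (the volumes of $\B{X^\mu}{d^\mu}{\cdot}{\lambda_1\delta}$ and $\B{X^\mu}{d^\mu}{\cdot}{\lambda_2\delta}$ are comparable when both radii are admissible multiples of $\delta$), we may take a single constant $\tau_1$ on both copies of $X^\mu$. The constant $\sigma$ is the threshold on $|\delta|$ required for all the ``sufficiently small $\delta$'' clauses of Section \ref{SectionBallsAtAPoint} and the applicability of the scaling identity; it is admissible.

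The main obstacle is bookkeeping of constants rather than any genuine analytic difficulty: one must check that the admissible constants produced by Theorem \ref{ThmUnitOpsMainThm} applied to the \emph{rescaled} vector fields $(\delta^d X, \sd)$ are uniform in $x$ and $\delta$, i.e.\ admissible in the sense of the current section. This is precisely the phenomenon — already exploited throughout the paper — that $\xi_1,\xi_2 \approx 1$ and the $C^m$ bounds after pulling back by $\Phi_{x,\delta}$ are uniform in $\delta$ (Theorem \ref{ThmMultiBallsAtPoint}), so the potential $\delta$-dependence never materializes. I would spell out this uniformity check carefully — in particular that the commutator coefficients of the rescaled list, and hence the quantities $\sum_{|\alpha|\le 2}\Cjn{0}{}{(Z^\mu)^\alpha c_{i,j}^{k,\mu}}$, are bounded by constants admissible in the sense of this section, which follows directly from the hypothesis that $(X,d)$ satisfies the assumptions of Section \ref{SectionBallsAtAPoint} — and then the rest is a direct translation of notation.
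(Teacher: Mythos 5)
Your proposal is correct and follows the same route as the paper: apply Theorem \ref{ThmUnitOpsMainThm} to the rescaled single-parameter list $(\delta^d X,\sd)$ with $Z^\mu = \delta_\mu^{d^\mu}X^\mu$, check that the hypotheses are admissible uniformly in $x\in K$ and $\delta\in\sA$ (which follows from the standing assumptions of this section), and then translate $\Bs{\delta^d X}{\cdot}{\lambda}=\Bmus{\cdot}{\lambda\delta}$, $\Bs{Z^\mu}{\cdot}{\lambda}=\B{X^\mu}{d^\mu}{\cdot}{\lambda\delta}$. Your last paragraph of bookkeeping is slightly garbled though: the right-hand member of the chain produced by Theorem \ref{ThmUnitOpsMainThm} is $A_{\Bmus{\cdot}{\lambda_1\delta}}$, not the required $A_{\Bmus{\cdot}{\delta}}$, and the phrases ``replace $\lambda_1$ by the smaller $\lambda_2$ on the right'' and ``a single constant $\tau_1$ on both copies of $X^\mu$'' do not match the actual shape of the inequality (there is only one $\lambda_2$-batch of single-parameter averages in the middle and a single $\lambda_1$-ball on the right). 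The clean fix is either what the paper does — rename $\delta\mapsto\lambda_1\delta$ and take $\tau_1=\lambda_2/\lambda_1$, $\tau_2=\lambda_3/\lambda_1$ — or simply absorb the $\lambda_1$ by noting $A_{\Bmus{\cdot}{\lambda_1\delta}}f\lesssim A_{\Bmus{\cdot}{\delta}}f$ for $f\geq 0$, which is containment of balls plus volume comparability \eqref{EqnPrelimForDoubling}; either way the gap is cosmetic and the argument goes through.
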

\begin{proof}
Apply Theorem \ref{ThmUnitOpsMainThm} with $\q( Z^\mu, d^\mu\w) = \q( \delta_\mu X^\mu, d^\mu\w)$,
$\q( X,d\w)= \q( \delta X, d\w)$, and $x_0=x$, where $x\in K$.
We obtain admissible constants $\lambda_1,\lambda_2,\lambda_3$ independent
of $x,\delta$ as in that theorem.

To conclude the proof, merely take $\tau_1=\frac{\lambda_2}{\lambda_1}$,
$\tau_2=\frac{\lambda_3}{\lambda_1}$ and replace $\delta$ with
$\lambda_1\delta$.
\end{proof}

\begin{cor}\label{CorMaxIneq}
There exist admissible constants $0<\tau_2<\tau_1<1$, $\sigma>0$ such that
if we define, for $x\in K$, $f\in C\q( \Omega\w)$,
\begin{equation*}
\begin{split}
\sM f\q( x\w) &= \sup_{\q|\delta\w|\leq \tau_2\sigma} A_{\Bmus{\cdot}{\delta}}\q|f\w| \q(x\w),\\
\end{split}
\end{equation*}
and for all $x\in \Omega$ such that $\B{X^\mu}{d^\mu}{x}{\tau_1\sigma}\subset \Omega$,
\begin{equation*}
\begin{split}
\sM_{\mu} f\q( x\w) &= \sup_{0<\delta_\mu\leq \tau_1\sigma} A_{\B{X^\mu}{d^\mu}{\cdot}{\delta_\mu}} \q|f\w|\q( x\w).
\end{split}
\end{equation*}
Then we have:
\begin{equation}\label{EqnMaxIneq}
\sM f\q( x\w) \lesssim \sM_\nu \sM_{\nu-1}\cdots \sM_{1} f\q( x\w).
\end{equation}
\end{cor}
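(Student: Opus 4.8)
The plan is to run the classical Jessen--Marcinkiewicz--Zygmund argument \cite{JessenMarcinkiewiczZygmundNoteOnDifferentiability} sketched around \eqref{EqnMaxClassicBound}, with Theorem \ref{ThmMainMaxFuncs} as the single geometric ingredient. Since $\sM$ and every $\sM_\mu$ only involve $\q|f\w|$, and $\q|f\w|\in C\q(\Omega\w)$ is nonnegative whenever $f$ is continuous, I would first reduce to the case $f\geq 0$. I would take $\tau_1,\tau_2,\sigma$ to be the admissible constants produced by Theorem \ref{ThmMainMaxFuncs}, and—if necessary—shrink $\sigma$ (keeping it admissible) so that for every $x\in K$ each of the balls $\B{X^\mu}{d^\mu}{y}{\tau_1\sigma}$, as $y$ ranges over the balls encountered while unwinding $\sM_\nu\cdots\sM_1 f$ near $K$, stays inside a fixed relatively compact $\Omega_0\Subset\Omega$; this is possible from the standing $\sC$-type hypotheses and the compactness of $K$, exactly as in Section \ref{SectionMultipleLists}. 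With this choice, $\sM_\nu\sM_{\nu-1}\cdots\sM_1 f$ is well defined on $K$: for continuous nonnegative $g$, each $\sM_\mu g$ is a supremum of continuous functions, hence lower semicontinuous and measurable, and is bounded on $\Omega_0$ by $\sup_{\Omega_0} g$, so each successive average is legitimate.

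Next I would fix $x\in K$ and $\delta\in\q[0,1\w]^\nu$ with $\q|\delta\w|\leq\tau_2\sigma$. Then $\q|\delta/\tau_2\w|\leq\sigma$, so applying Theorem \ref{ThmMainMaxFuncs} with $\delta/\tau_2$ in place of $\delta$ gives
$$A_{\Bmus{\cdot}{\delta}} f\q(x\w)\lesssim A_{\B{X^\nu}{d^\nu}{\cdot}{\tau_1\delta_\nu/\tau_2}}\cdots A_{\B{X^1}{d^1}{\cdot}{\tau_1\delta_1/\tau_2}} f\q(x\w),$$
where I have used that the one-parameter ball written $\B{X^\mu}{d^\mu}{\cdot}{\tau_1\delta}$ in Theorem \ref{ThmMainMaxFuncs} is, as is evident from its proof (where $Z^\mu=\delta_\mu X^\mu$ is fed into Theorem \ref{ThmUnitOpsMainThm}), the ball of radius $\tau_1\delta_\mu/\tau_2$, and that $0<\tau_1\delta_\mu/\tau_2\leq\tau_1\sigma$ since $\delta_\mu\leq\q|\delta\w|\leq\tau_2\sigma$.

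Then I would estimate the iterated average from the inside out, using that each averaging operator $g\mapsto A_{\B{X^\mu}{d^\mu}{\cdot}{r}}g$ is monotone and positivity-preserving on nonnegative functions. From the definition of $\sM_1$, $A_{\B{X^1}{d^1}{\cdot}{\tau_1\delta_1/\tau_2}} f\leq\sM_1 f$ pointwise on the domain of $\sM_1 f$ (which, by the $\sigma$-smallness, contains $\B{X^2}{d^2}{x}{\tau_1\delta_2/\tau_2}$); applying $A_{\B{X^2}{d^2}{\cdot}{\tau_1\delta_2/\tau_2}}$, using monotonicity, and then the definition of $\sM_2$ yields
$$A_{\B{X^2}{d^2}{\cdot}{\tau_1\delta_2/\tau_2}}A_{\B{X^1}{d^1}{\cdot}{\tau_1\delta_1/\tau_2}} f\q(x\w)\leq\sM_2\sM_1 f\q(x\w).$$
Iterating $\nu$ times gives $A_{\B{X^\nu}{d^\nu}{\cdot}{\tau_1\delta_\nu/\tau_2}}\cdots A_{\B{X^1}{d^1}{\cdot}{\tau_1\delta_1/\tau_2}} f\q(x\w)\leq\sM_\nu\cdots\sM_1 f\q(x\w)$; combining with the previous display and taking the supremum over all $\delta$ with $\q|\delta\w|\leq\tau_2\sigma$ produces \eqref{EqnMaxIneq}.

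I expect the only genuine obstacle to be the bookkeeping in the first paragraph: choosing $\sigma$ admissibly small enough that every nested ball met while unwinding $\sM_\nu\cdots\sM_1 f$ on $K$ stays inside $\Omega_0$, so that all the averaging operators are legitimately defined and the monotonicity manipulations are valid. This amounts to tracking containments through finitely many compositions and is routine given the $\sC\q( x,\xi\w)$ assumptions and the compactness of $K$, but it is the one step requiring some care; everything else is the soft monotone-operator argument above, and the conclusion follows exactly as in the classical strong maximal function case.
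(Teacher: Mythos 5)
Your proposal is correct and takes essentially the same approach as the paper: the paper's entire proof of Corollary \ref{CorMaxIneq} is the one-liner ``This follows directly from Theorem \ref{ThmMainMaxFuncs},'' and what you have written is precisely the Jessen--Marcinkiewicz--Zygmund monotone-iteration unwinding of that theorem (reduce to $f\geq 0$, rescale $\delta$, bound each inner average by the corresponding $\sM_\mu$ using monotonicity, take the supremum) that the paper leaves implicit. The only soft spot, which you correctly identify, is the routine domain bookkeeping to ensure the iterated averages are defined near $K$, and this is handled by the standing $\sC$-type hypotheses just as in the surrounding sections.
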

\begin{proof}
This follows directly from Theorem \ref{ThmMainMaxFuncs}.
\end{proof}

\begin{cor}\label{CorLpMaxIneq}
Let $\sM$ be defined as in Corollary \ref{CorMaxIneq}.
Then, by possibly admissibly shrinking $\tau_2$, we have that
$\sM$ extends to a bounded map $L^p\q( \Omega\w) \rightarrow L^p\q( K\w)$,
for every $1<p<\infty$.
\end{cor}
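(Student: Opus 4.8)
The plan is to deduce the corollary from the pointwise bound \eqref{EqnMaxIneq} of Corollary \ref{CorMaxIneq} together with the $L^p$-boundedness, for $1<p<\infty$, of each of the one-parameter maximal operators $\sM_\mu$, composed along a suitable chain of nested domains. First I would fix relatively compact open sets $K=\Omega^{(0)}\Subset\Omega^{(1)}\Subset\cdots\Subset\Omega^{(\nu)}\Subset\Omega$ with the property that $\B{X^\mu}{d^\mu}{x}{\tau_1\sigma}\subset\Omega^{(\mu)}$ for every $x\in\Omega^{(\mu-1)}$ and every $1\le\mu\le\nu$. Such sets exist because the statement of Corollary \ref{CorMaxIneq} already presupposes that iterating the balls $\B{X^\mu}{d^\mu}{\cdot}{\tau_1\sigma}$ a total of $\nu$ times, starting from $K$, stays inside $\Omega$ (admissibly shrinking $\tau_2$, which plays no other role, only makes this bookkeeping easier). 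On $\Omega^{(\mu-1)}$ I would then define the local one-parameter operator $\sM_\mu^{\mathrm{loc}}g=\sup_{0<\delta_\mu\le\tau_1\sigma}A_{\B{X^\mu}{d^\mu}{\cdot}{\delta_\mu}}|g|$, which depends only on $g|_{\Omega^{(\mu)}}$; with this notation \eqref{EqnMaxIneq} reads $\sM f\q( x\w)\lesssim\sM_\nu^{\mathrm{loc}}\cdots\sM_1^{\mathrm{loc}}f\q( x\w)$ for $x\in K$ (extended from continuous to arbitrary nonnegative $L^p$ functions by a routine density argument, since the estimates in the proof of Corollary \ref{CorMaxIneq} are ultimately changes of variables).

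Next I would establish that each $\sM_\mu^{\mathrm{loc}}$ is bounded from $L^p\q(\Omega^{(\mu)}\w)$ to $L^p\q(\Omega^{(\mu-1)}\w)$ for $1<p<\infty$. The key input is Theorem \ref{ThmMultiBallsAtPoint}, applied to the single-parameter list $\q( X^\mu,d^\mu\w)$ — which satisfies the hypotheses of Section \ref{SectionBallsAtAPoint} after freezing the other components of $\delta$ at $0$, as noted above — giving the doubling bound \eqref{EqnDoublingConditionInMainThm} for the balls $\B{X^\mu}{d^\mu}{x}{\delta}$, $\delta\le\tau_1\sigma$, uniformly for $x$ in the relevant compact sets and with an admissible doubling constant, together with the engulfing/quasi-metric properties encoded in the containments of Theorem \ref{ThmMultiBallsAtPoint}. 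Consequently $\Omega^{(\mu)}$, equipped with Lebesgue measure and the quasi-metric generated by these balls, is on the relevant scales a space of homogeneous type, so the Hardy--Littlewood maximal theorem for such spaces (Coifman--Weiss) gives $\sM_\mu^{\mathrm{loc}}$ of weak type $\q(1,1\w)$; combined with the trivial $L^\infty$ bound and Marcinkiewicz interpolation this yields the strong $\q(p,p\w)$ bound, with operator norm controlled by an admissible constant.

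Finally I would simply compose: using \eqref{EqnMaxIneq} and then peeling off $\sM_\nu^{\mathrm{loc}},\ldots,\sM_1^{\mathrm{loc}}$ one at a time along the chain $L^p\q(\Omega^{(\nu)}\w)\to\cdots\to L^p\q(\Omega^{(0)}\w)=L^p\q( K\w)$, one obtains $\q\|\sM f\w\|_{L^p\q( K\w)}\lesssim\q\|f\w\|_{L^p\q(\Omega^{(\nu)}\w)}\le\q\|f\w\|_{L^p\q(\Omega\w)}$, first for $f\in C\q(\Omega\w)$ and then for all $f\in L^p\q(\Omega\w)$ by density and the sublinearity of $\sM$. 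This gives the desired extension.

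The step I expect to be the main obstacle is the second one, and within it the claim of \emph{uniformity}: one must know that the doubling constant for the one-parameter balls $\B{X^\mu}{d^\mu}{x}{\delta}$ is admissible — in particular independent of the center $x\in K$ and of which of the $\nu$ lists one is looking at — and that this persists down to arbitrarily small radii. This is not automatic from the classical single-parameter theory, which would allow the constant to depend on a lower bound for $\q|\det_{n\times n}X\q( x\w)\w|$; it is precisely here that one invokes the fact that the constants in Theorem \ref{ThmMultiBallsAtPoint} (ultimately Theorems \ref{ThmMainFrobThm} and \ref{ThmMainUnitScale}) do not require such a lower bound. Everything else — the homogeneous-space maximal inequality and the nested-domain bookkeeping — is routine.
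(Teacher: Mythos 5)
Your overall reduction to the one‑parameter operators $\sM_\mu$ via \eqref{EqnMaxIneq} is exactly the paper's, and your instinct that the doubling constant needs to be admissible (independent of a lower bound for $\q|\det_{n\times n}X^\mu(x)\w|$) is a real point that Theorem~\ref{ThmMultiBallsAtPoint} handles. But you misdiagnose the main difficulty, and the step you dismiss as routine is precisely where your argument breaks.

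The problem is your claim that ``$\Omega^{(\mu)}$, equipped with Lebesgue measure and the quasi-metric generated by these balls, is on the relevant scales a space of homogeneous type,'' followed by a direct appeal to Coifman--Weiss. This is false in general. Nothing in the hypotheses of Section~\ref{SectionMaximalFuncs} forces the single list $X^\mu=(X_1^\mu,\dots,X_{q_\mu}^\mu)$ to span the tangent space on its own — only the union over $\mu$ spans (and even that only in examples). When $X^\mu$ does not span at $x$, the ball $\B{X^\mu}{d^\mu}{x}{\delta}$ lives inside a leaf of the Frobenius foliation generated by $X^\mu$, which has dimension $n_0(x)<n$; such a ball has \emph{zero} $n$-dimensional Lebesgue measure (the averages $A_{\B{X^\mu}{d^\mu}{\cdot}{\delta}}$ are defined using the induced measure on the leaf, not $dx$). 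So the triple $(\Omega^{(\mu)},\,\text{Lebesgue measure},\,\rho_\mu)$ fails the definition of a space of homogeneous type outright — the doubling inequality $\mu(B(x,2r))\lesssim\mu(B(x,r))$ is vacuous since both sides are $0$ — and moreover $\rho_\mu$ can be $+\infty$ between points on different leaves (Remark~\ref{RmkMightNotBeAMetric}). The Coifman--Weiss maximal theorem therefore does not apply to $\Omega^{(\mu)}$ as you propose, and there is no weak-$(1,1)$ bound to interpolate with.

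What is actually needed is the content of Section~\ref{SectionHomogeneousFoliations}: one must view the foliation by $X^\mu$, prove that each \emph{leaf} is locally a space of homogeneous type uniformly (via the scaling maps $\Phi_{x,\delta}$ of Theorem~\ref{ThmMultiBallsAtPoint}, whose constants do not degenerate near singular points — this is where Theorem~\ref{ThmIntroFrob}'s uniform-Frobenius content is essential), run the Hardy--Littlewood argument leafwise (Proposition~\ref{PropLeafwiseMax}), and then integrate the leafwise $L^p$ estimates transversally back to an $L^p(\Omega)$ bound (Proposition~\ref{PropAvgIsInt}, which uses the exponential-flow maps $x\mapsto e^{t\cdot X}x$ and a change of variables to compare $\int_K f$ with $\int_{\Omega''}A_{\B{X}{d}{\cdot}{\xi'}}f$). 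That integration-over-leaves step is the genuinely nontrivial part, and it does not fall out of the quasi-metric bookkeeping you describe. So your proof has a real gap: Theorem~\ref{ThmSingleParamMaxFunc} must be proved and invoked in place of a direct Coifman--Weiss citation.
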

\begin{proof}
This would follow from \eqref{EqnMaxIneq}, provided we have that
$\sM_1,\ldots,\sM_\nu$ extend to bounded operators on $L^p$ ($1<p<\infty$).
Intuitively, this is simple, since the {\it one}-parameter balls
$\B{X^\mu}{d^\mu}{\cdot}{\delta_\mu}$ satisfy
the doubling condition
\eqref{EqnDoublingConditionInMainThm}, and we expect to be able to apply
the theory of spaces of homogeneous type to conclude the desired $L^p$
boundedness.  There is a slight technicality, though, since if (for
a fixed $\mu$), the vector fields $X^\mu$ do not span the tangent space,
then the balls $\B{X^\mu}{d^\mu}{\cdot}{\delta_\mu}$ do not endow
$\Omega$ with the structure of a space of homogeneous type:  rather, they foliate
$\Omega$ into leaves, each of which is (locally) a space of homogeneous type.
The technical details to deal with this difficulty are covered
in Section \ref{SectionHomogeneousFoliations}.
\end{proof}

\begin{rmk}
Notice, in Corollary \ref{CorLpMaxIneq}, we have left out $p=\infty$.  This is because if the vector fields
do not span the tangent space, the maximal operators may only be
{\it a priori} defined on $C\q( \Omega\w)$.
\end{rmk}

\begin{rmk}
As mentioned in Section \ref{SectionStreet}, it is likely that $\sM$
is bounded on $L^p$ ($1<p<\infty$) for a larger class of balls than is
discussed in this section.  However, Theorem \ref{ThmMainMaxFuncs}
is very tied to the assumptions of this section.
\end{rmk}

\begin{cor}\label{CorSchwartzKer}
There exists an admissible constant $\sigma_1>0$ such that
if for $\q|\delta\w|\leq \sigma_1$, we let $T_\delta$ denote
the Schwartz kernel for the operator:
\begin{equation*}
A_{\B{X^\nu}{d^\nu}{\cdot}{\delta_\nu}} \cdots A_{\B{X^1}{d^1}{\cdot}{\delta_1}},
\end{equation*}
then, for $x\in K$, $T_\delta\q( x,y\w)$ is supported on
those points $\q( x,y\w)$ such that:
\begin{equation}\label{EqnFirstSKEqn}
\inf \q\{ \tau>0 : y\in \Bmus{x}{\tau \delta}\w\}\lesssim 1,
\end{equation}
and moreover,
\begin{equation}\label{EqnSecondSKEqn}
\sup_{y} T_\delta\q( x,y\w) \approx \frac{1}{\Vol{\Bmus{x}{\delta}}}.
\end{equation}
Furthermore, there exists an admissible constant $\sigma_2>0$ such
that:
\begin{equation}\label{EqnThirdSKEqn}
T_\delta\q( x,y\w) \approx \frac{1}{\Vol{\Bmus{x}{\delta}}}, \quad y\in \Bmus{x}{\sigma_2\delta}.
\end{equation}
These inequalities are understood to be taking place on the leaf
generated by $\delta X$ passing through $x$.
\end{cor}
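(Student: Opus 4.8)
The plan is to reduce the statement to an application of Theorem~\ref{ThmUnitOpsMainThm} together with the volume comparisons of Corollary~\ref{CorMainUnitScaleTwice}/Remark~\ref{RmkVolOfAllSmallBalls}, pulled back to a single fibre via the scaling map $\Phi_{x,\delta}$ of Theorem~\ref{ThmMultiBallsAtPoint}. First I would fix $x\in K$ and $\delta$ small, set $Z^\mu = \delta_\mu^{d^\mu}X^\mu$ (so $\B{Z^\mu}{d^\mu}{\cdot}{1} = \B{X^\mu}{d^\mu}{\cdot}{\delta_\mu}$), and set $\q( X,d\w) \rightsquigarrow \q( \delta^d X, \sd\w)$. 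The subsets $\q( Z^\mu, d^\mu\w)$ satisfy \eqref{EqnAllTheXjAppear} because the $X_j$ with $j\in J\q(x,\delta\w)$ each appear in some list $X^\mu$ (this is the content of the ``without adding any new vector fields'' hypothesis of this section, which forces $n_0\q( x,\delta\w)$-dimensional spanning to be achieved within $\bigcup_\mu\{Z^\mu\}$). Then Theorem~\ref{ThmUnitOpsMainThm} applied at the point $x$ gives admissible constants $0<\lambda_3,\lambda_2,\lambda_1\le \xi_2$ and the two-sided estimate
\begin{equation*}
A_{\B{X}{d}{\cdot}{\lambda_3\delta}} f\q( x\w) \lesssim A_{\B{X^\nu}{d^\nu}{\cdot}{\lambda_2\delta}}\cdots A_{\B{X^1}{d^1}{\cdot}{\lambda_2\delta}} f\q( x\w) \lesssim A_{\B{X}{d}{\cdot}{\lambda_1\delta}} f\q( x\w),
\end{equation*}
valid uniformly in $x\in K$ and in small $\delta$, where I use $\B{X}{d}{x}{\lambda_i\delta}=\Bmus{x}{\lambda_i\delta}$. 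This already yields Theorem~\ref{ThmMainMaxFuncs} once one renames $\tau_1=\lambda_2/\lambda_1$, $\tau_2=\lambda_3/\lambda_1$ and replaces $\delta$ by $\lambda_1\delta$; the point requiring care is that the constants in Theorem~\ref{ThmUnitOpsMainThm} are \emph{pre-}admissible/admissible uniformly in the base point, which holds because $\q(\delta^d X,\sd\w)$ satisfies the hypotheses of Section~\ref{SectionUnitScale} with constants controlled by the admissible quantities of Section~\ref{SectionBallsAtAPoint} (this is exactly the observation used in the proof of Theorem~\ref{ThmMultiBallsAtPoint}).

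For Corollary~\ref{CorSchwartzKer} specifically, I would argue as follows. Write the composed operator $A_{\B{X^\nu}{d^\nu}{\cdot}{\delta_\nu}}\cdots A_{\B{X^1}{d^1}{\cdot}{\delta_1}}$ out as an iterated average and read off its Schwartz kernel $T_\delta\q( x,y\w)$: it equals a product of $\nu$ normalized indicator factors composed along intermediate points, so $T_\delta\q( x,y\w)$ is nonzero only if there is a chain $x = x_{\nu+1}\to x_\nu\to\cdots\to x_1 = y$ with $x_\mu\in \B{X^\mu}{d^\mu}{x_{\mu+1}}{\delta_\mu}$. The triangle-type inequality for the balls $\B{X^\mu}{d^\mu}{}{}$ (each of which controls, and is controlled by, $\B{X}{d}{}{\delta}$ after allowing an admissible dilation of $\delta$, using $\lambda_i\approx 1$ and the containments in Theorem~\ref{ThmMultiBallsAtPoint}) then forces $y\in\Bmus{x}{C\delta}$, which is \eqref{EqnFirstSKEqn}. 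For \eqref{EqnSecondSKEqn} I would bound the kernel above by the product of the $\nu$ reciprocal volumes $\prod_\mu \Vol{\B{X^\mu}{d^\mu}{x_{\mu+1}}{\delta_\mu}}^{-1}$, and for the lower bound apply $A_{\B{X^\nu}{d^\nu}{\cdot}{\delta_\nu}}\cdots A_{\B{X^1}{d^1}{\cdot}{\delta_1}}$ to the indicator of $\Bmus{x}{\sigma_2\delta}$ and invoke the left half of Theorem~\ref{ThmMainMaxFuncs}; combining with $\Vol{\Bmus{x}{\delta}}\approx \q|\det_{n_0\q(x,\delta\w)\times n_0\q(x,\delta\w)}\delta^d X\q( x\w)\w|\approx \prod_\mu \Vol{\B{X^\mu}{d^\mu}{x}{\delta_\mu}}$ (the latter comparison is where I need each one-parameter list to span the common $n_0$-dimensional tangent space of the leaf, again via the ``no new vector fields'' hypothesis together with Remark~\ref{RmkVolOfAllSmallBalls}) yields \eqref{EqnSecondSKEqn}. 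Finally \eqref{EqnThirdSKEqn}, the sharp lower pointwise bound on a comparable ball, follows by pulling the whole picture back via $\Phi_{x,\delta}$ to $B_{n_0\q( x,\delta\w)}\q(\eta_1\w)$, where by Theorem~\ref{ThmMultiBallsAtPoint} the pulled-back vector fields $Y_j^{x,\delta}$ have admissibly bounded $C^2$ norm and $\q\|A\q( x,\cdot\w)\w\|\le\tfrac12$, so the exponential maps $e^{u_\mu\cdot Y^\mu}$ are uniform $C^2$ diffeomorphisms near $0$ with Jacobian $\approx 1$; then the iterated average of the indicator of a small box centered at $\Phi_{x,\delta}^{-1}(y)$ is bounded below by an admissible constant times the normalization, exactly as in the proof of Lemma~\ref{LemmaUnitOpsDefinel2}.

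I would organize the write-up as: (i) reduce to a fixed fibre and fixed small $\delta$ via $\Phi_{x,\delta}$; (ii) verify the hypotheses of Section~\ref{SectionUnitScale}/\ref{SectionUnitOpsAtUnitScale} for $\q(\delta^d X,\sd\w)$ with uniform constants, citing the proof of Theorem~\ref{ThmMultiBallsAtPoint}; (iii) deduce the support statement \eqref{EqnFirstSKEqn} from the ball-containment chain; (iv) deduce the upper bound in \eqref{EqnSecondSKEqn} by crude pointwise estimation of the iterated kernel and the volume factorization; (v) deduce the matching lower bounds in \eqref{EqnSecondSKEqn} and \eqref{EqnThirdSKEqn} by testing against indicators of comparable balls and using Theorem~\ref{ThmMainMaxFuncs} together with the uniform $C^2$ control of the $Y_j^{x,\delta}$. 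The main obstacle I anticipate is (ii) combined with the volume factorization in (iv)--(v): one must be genuinely careful that the ``without adding new vector fields'' hypothesis really does guarantee that, at every $x\in K$ and every small $\delta$, the collection $\bigcup_\mu\{Z^\mu\}$ attains the full $n_0\q(x,\delta\w)$-dimensional span, so that $J\q(x,\delta\w)$ can be taken inside one of the $\mu$-lists and \eqref{EqnAllTheXjAppear} holds; without this the reduction to Theorem~\ref{ThmUnitOpsMainThm} breaks down. Everything else is bookkeeping with the admissible-constant conventions and routine change-of-variables estimates of the type already carried out in Section~\ref{SectionUnitOpsAtUnitScale}.
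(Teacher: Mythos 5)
The lower‐bound half of your argument is sound and is essentially what the paper does: apply the left inequality of Theorem~\ref{ThmMainMaxFuncs} to indicators of comparable balls, which gives the sharp pointwise lower bound~\eqref{EqnThirdSKEqn} and hence the $\gtrsim$ half of~\eqref{EqnSecondSKEqn}. However, for the support statement and the $\lesssim$ half of~\eqref{EqnSecondSKEqn} you switch to a different strategy --- ``crude pointwise estimation of the iterated kernel'' plus a ``volume factorization'' --- and both ingredients of this strategy are wrong.

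Concretely: the Schwartz kernel of $A_{\B{X^\nu}{d^\nu}{\cdot}{\delta_\nu}}\cdots A_{\B{X^1}{d^1}{\cdot}{\delta_1}}$ is an integral over $\nu-1$ intermediate points, and if you blunt-estimate the integrand you do \emph{not} get the product $\prod_\mu \Vol{\B{X^\mu}{d^\mu}{\cdot}{\delta_\mu}}^{-1}$ as a bound on the kernel --- the $x_\mu$-integrals eat the corresponding normalizing factors, so that argument yields only $\Vol{\B{X^1}{d^1}{\cdot}{\delta_1}}^{-1}$, which is not what you need. Worse, the comparison
$\prod_\mu \Vol{\B{X^\mu}{d^\mu}{x}{\delta_\mu}}\approx \Vol{\Bmus{x}{\delta}}$
is false in general, even when each single-parameter list spans the common tangent space. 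In the Heisenberg example of Section~\ref{SectionStreet} (left-invariant, right-invariant lists of degrees $1,1,2$), both one-parameter volumes are $\approx\delta_\mu^4$, so the product is $\delta_1^4\delta_2^4$, while the two-parameter ball has volume $\approx\max\q\{\delta_1,\delta_2\w\}^4$ at the identity --- these are not comparable when $\delta_1\ne\delta_2$. The same failure occurs in the trivial example $X^1=X^2=\q\{\partial_x\w\}$ on $\R$.

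The fix is to use, for the upper bound, exactly the same positivity trick you already used for the lower bound, and not to do any direct kernel estimate at all: for positive operators, an operator inequality $Sf\lesssim Tf$ for all $f\ge 0$ is equivalent to the pointwise kernel inequality $S(x,y)\lesssim T(x,y)$. Since Theorem~\ref{ThmMainMaxFuncs} gives
\begin{equation*}
A_{\B{X^\nu}{d^\nu}{\cdot}{\tau_1\delta}}\cdots A_{\B{X^1}{d^1}{\cdot}{\tau_1\delta}} f\q(x\w)\lesssim A_{\Bmus{\cdot}{\delta}} f\q(x\w),\quad f\ge 0,
\end{equation*}
reading off the kernels and renaming $\tau_1\delta$ to $\delta$ immediately yields both~\eqref{EqnFirstSKEqn} (the support of the composed kernel at $x$ sits inside $\Bmus{x}{\tau_1^{-1}\delta}$) and the $\lesssim$ half of~\eqref{EqnSecondSKEqn} (the pointwise bound by $\Vol{\Bmus{x}{\delta}}^{-1}$, using the doubling property~\eqref{EqnDoublingConditionInMainThm} to absorb the $\tau_1^{-1}$). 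The other inequality in Theorem~\ref{ThmMainMaxFuncs} then gives~\eqref{EqnThirdSKEqn} the same way, which is the part you handled correctly. There is no need for a separate chain/triangle-inequality argument for the support, nor for a factorization of volumes (which, again, does not hold).
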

\begin{proof}
(\ref{EqnFirstSKEqn}) and the $\lesssim$ part of (\ref{EqnSecondSKEqn})
follow by applying Theorem \ref{ThmMainMaxFuncs} and using the
inequality (for $f\geq 0$):
\begin{equation*}
A_{\B{X^\nu}{d^\nu}{\cdot}{\tau_1 \delta}} \cdots A_{\B{X^1}{d^1}{\cdot}{\tau_1 \delta}} f\q( x\w)
\lesssim
A_{\Bmus{\cdot}{\delta}} f\q( x\w)
\end{equation*}
and renaming $\tau_1\delta$, $\delta$.
With this new $\delta$, the other half of Theorem \ref{ThmMainMaxFuncs}
now reads:
\begin{equation*}
A_{\Bmus{\cdot}{\frac{\tau_2}{\tau_1} \delta}} f\q( x\w) \lesssim 
A_{\B{X^\nu}{d^\nu}{\cdot}{\delta}} \cdots A_{\B{X^1}{d^1}{\cdot}{\delta}} f\q( x\w),
\end{equation*}
thereby establishing (\ref{EqnThirdSKEqn}) and therefore the
$\gtrsim$ part of (\ref{EqnSecondSKEqn}).  This completes the proof.
\end{proof}

Corollary \ref{CorSchwartzKer} has an interesting corollary, to which we
now turn.  For the statement of this corollary, we restrict our
attention to the case $\nu=2$, but otherwise keep the same assumptions and
notation as in the rest of the section.

\begin{cor}
Suppose the leaf generated by $X^1$ passing through $x_0$ is the same
as the leaf generated by $X^2$ passing through $x_0$ (call this common
leaf $L$).
Then, there exists an admissible constant $\sigma_1>0$ such that
for every $x_0\in K$ and every $\delta:=\q(\delta_1,\delta_2\w)$ with $\q|\delta\w|\leq \sigma_1$, we have:
\begin{equation*}
\Vol{\B{X^1}{d^1}{x_0}{\delta_1}\cap \B{X^2}{d^2}{x_0}{\delta_2}} \approx \frac{\Vol{\B{X^1}{d^1}{x_0}{\delta_1}}\Vol{\B{X^2}{d^2}{x_0}{\delta_2}}} { \Vol{ \Bmust{x_0}{\delta}  }  }.
\end{equation*}
Here, $\Vol{\cdot}$ on the left hand side denotes the induced Lebesgue
volume on $L$.
\end{cor}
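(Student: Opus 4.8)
The plan is to read this off from Corollary~\ref{CorSchwartzKer} by evaluating, on the diagonal, the Schwartz kernel of the composition $A_{\B{X^2}{d^2}{\cdot}{\delta_2}}A_{\B{X^1}{d^1}{\cdot}{\delta_1}}$. It suffices to take $\sigma_1$ admissible, no larger than the admissible constants $\sigma_1,\sigma_2$ of Corollary~\ref{CorSchwartzKer}, and small enough that for $x_0\in K$ and $\q|\delta\w|\leq\sigma_1$ all the balls occurring below lie in a fixed relatively compact subset of $\Omega$ to which the conclusions of Section~\ref{SectionBallsAtAPoint}, applied to the one-parameter list $\q( X^1,d^1\w)$, are available. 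Since by hypothesis the leaf generated by $X^1$ through $x_0$ equals the leaf generated by $X^2$ through $x_0$ — call it $L$ — and $\mathrm{span}\q( X^1\q( y\w),X^2\q( y\w)\w)=T_yL$ for every $y\in L$, each of $\B{X^1}{d^1}{x_0}{\delta_1}$, $\B{X^2}{d^2}{x_0}{\delta_2}$, $\Bmust{x_0}{\delta}$, and each ball $\B{X^1}{d^1}{z}{\delta_1}$ with $z\in L$, is an open subset of $L$, so that all volumes and integrals below refer unambiguously to the induced Lebesgue measure on $L$.

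First I would write the kernel $T_\delta\q( x_0,w\w)$ of $A_{\B{X^2}{d^2}{\cdot}{\delta_2}}A_{\B{X^1}{d^1}{\cdot}{\delta_1}}$ explicitly from the definition of the averaging operators $A_{U,V_{\cdot}}$:
\begin{equation*}
T_\delta\q( x_0,w\w)=\frac{1}{\Vol{\B{X^2}{d^2}{x_0}{\delta_2}}}\int_{\B{X^2}{d^2}{x_0}{\delta_2}\cap\q\{z\,:\,w\in\B{X^1}{d^1}{z}{\delta_1}\w\}}\frac{1}{\Vol{\B{X^1}{d^1}{z}{\delta_1}}}\,dz.
\end{equation*}
Now set $w=x_0$. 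Carnot-Carath\'eodory balls are symmetric — reversing a path realizing $x_0\in\B{X^1}{d^1}{z}{\delta_1}$ realizes $z\in\B{X^1}{d^1}{x_0}{\delta_1}$ with the negated, time-reversed coefficients, which have the same sup-norm — so the domain of integration is exactly $\B{X^2}{d^2}{x_0}{\delta_2}\cap\B{X^1}{d^1}{x_0}{\delta_1}$. For every $z$ in this set one has $z\in\B{X^1}{d^1}{x_0}{\delta_1}$, hence $\Vol{\B{X^1}{d^1}{z}{\delta_1}}\approx\Vol{\B{X^1}{d^1}{x_0}{\delta_1}}$: by the volume estimate of Theorem~\ref{ThmMultiBallsAtPoint} for $\q( X^1,d^1\w)$ both sides are comparable to $\q|\det_{m\times m}\delta_1^{d^1}X^1\w|$ (with $m=\dim L$) evaluated at $z$, respectively at $x_0$, and this determinant does not change along a Carnot-Carath\'eodory ball (the analogue of Lemma~\ref{LemmaDetsDontChange}). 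Pulling this factor out of the integral gives
\begin{equation*}
T_\delta\q( x_0,x_0\w)\approx\frac{\Vol{\B{X^2}{d^2}{x_0}{\delta_2}\cap\B{X^1}{d^1}{x_0}{\delta_1}}}{\Vol{\B{X^2}{d^2}{x_0}{\delta_2}}\,\Vol{\B{X^1}{d^1}{x_0}{\delta_1}}}.
\end{equation*}

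On the other hand $x_0\in\Bmust{x_0}{\sigma_2\delta}$ trivially, so \eqref{EqnThirdSKEqn} gives $T_\delta\q( x_0,x_0\w)\approx\Vol{\Bmust{x_0}{\delta}}^{-1}$. Equating the two expressions for $T_\delta\q( x_0,x_0\w)$ and clearing denominators yields precisely the claimed estimate (the intersection appears in the opposite order, which is the same set). The only step that goes beyond routine unwinding of definitions and measure-theoretic bookkeeping on $L$ is the volume comparison $\Vol{\B{X^1}{d^1}{z}{\delta_1}}\approx\Vol{\B{X^1}{d^1}{x_0}{\delta_1}}$ for $z$ in the ball; this is exactly where the hypothesis that the two leaves coincide is used, since it is what guarantees that every $z$ occurring in the integral lies on the leaf of $X^1$ and that the relevant determinant has the right (constant) rank there. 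I expect this to be the main — indeed essentially the only — point requiring care; the rest is a direct application of Corollary~\ref{CorSchwartzKer}.
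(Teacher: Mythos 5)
Your proof is correct and is essentially the paper's argument run in the opposite direction: you start from the diagonal value $T_\delta(x_0,x_0)$, unwind the composition to expose the intersection, and equate with \eqref{EqnThirdSKEqn}, whereas the paper starts from the intersection volume and rewrites it as $\Vol{\B{X^1}{d^1}{x_0}{\delta_1}}\Vol{\B{X^2}{d^2}{x_0}{\delta_2}}\,T_\delta(x_0,x_0)$ before invoking Corollary~\ref{CorSchwartzKer}. The ingredients — symmetry of Carnot-Carath\'eodory balls, the volume comparison along a ball from Theorem~\ref{ThmMultiBallsAtPoint}, and the diagonal kernel estimate — are identical.
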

\begin{proof}
In the following, $dy$ will denote
the induced Lebesgue measure on $L$, and for a set $A$,
$\chi_A$ will denote the characteristic function of $A$.
\begin{equation}\label{EqnVolInter1}
\begin{split}
&\Vol{\B{X^1}{d^1}{x_0}{\delta_1}\cap \B{X^2}{d^2}{x_0}{\delta_2}} = \int \chi_{\B{X^1}{d^1}{x_0}{\delta_1}}\q( y\w) \chi_{\B{X^2}{d^2}{y}{\delta_2}}\q(x_0\w) dy\\
&\quad = \Vol{\B{X^1}{d^1}{x_0}{\delta_1}}\Vol{\B{X^2}{d^2}{x_0}{\delta_2}} 
\\&\quad\quad\times\int \frac{1}{\Vol{\B{X^1}{d^1}{x_0}{\delta_1}}}\chi_{\B{X^1}{d^1}{x_0}{\delta_1}}\q( y\w) \frac{1}{\Vol{\B{X^2}{d^2}{x_0}{\delta_2}}}\chi_{\B{X^2}{d^2}{y}{\delta_2}}\q(x_0\w) dy.
\end{split}
\end{equation}
In the above, we have used that $y\in \B{X^2}{d^2}{x_0}{\delta_2}$ if and
only if $x_0\in \B{X^2}{d^2}{y}{\delta_2}$.
We use the fact that if $y\in \B{X^2}{d^2}{x_0}{\delta_2}$, then
$$\Vol{\B{X^2}{d^2}{x_0}{\delta_2}}\approx \Vol{\B{X^2}{d^2}{y}{\delta_2}},$$
which follows from Theorem \ref{ThmMultiBallsAtPoint}, in particular (\ref{EqnDoublingConditionInMainThm}).
We then have that the RHS of (\ref{EqnVolInter1}) is:
\begin{equation*}
\begin{split}
&\approx \Vol{\B{X^1}{d^1}{x_0}{\delta_1}}\Vol{\B{X^2}{d^2}{x_0}{\delta_2}}
\\&\quad\times\int \frac{1}{\Vol{\B{X^1}{d^1}{x_0}{\delta_1}}}\chi_{\B{X^1}{d^1}{x_0}{\delta_1}}\q( y\w) \frac{1}{\Vol{\B{X^2}{d^2}{y}{\delta_2}}}\chi_{\B{X^2}{d^2}{y}{\delta_2}}\q(x_0\w) dy
\\&= \Vol{\B{X^1}{d^1}{x_0}{\delta_1}}\Vol{\B{X^2}{d^2}{x_0}{\delta_2}}T_{\delta}\q( x_0,x_0\w).
\end{split}
\end{equation*}
Where $T_\delta$ is as in Corollary \ref{CorSchwartzKer}.  Now
the result immediately follows from Corollary \ref{CorSchwartzKer}.
\end{proof}

	\subsection{Some comments on metrics}\label{SectionMetrics}
		Corollary \ref{CorSchwartzKer} has a corollary which can be phrased in terms of metrics,
and may serve to give the reader some intuition for these results.
We devote this section to this corollary, and maintain all the
same notation as in Section \ref{SectionMaximalFuncs}.

Fix $r=\q(r_1, \ldots, r_\nu\w)\in \q(0,1\w]^\nu$, and assume 
$r_\mu=1$ for some $\mu$.  Corresponding to each such $r$, we
obtain a {\it one}-parameter family of balls:
$$\B{X}{d}{x}{\delta r},$$
for $x\in \Omega$.  This one-parameter family of balls
is associated to the Carnot-Carath\'eodory metric $\rho_r$, associated to the vector
fields
$$\q\{ \q(r_\mu^{d^\mu_j}X^\mu_j, d^\mu_j\w): 1\leq \mu\leq \nu, 1\leq j\leq q_\mu \w\}.$$
This metric is defined by:
$$\rho_r\q( x,y\w):=\inf \q\{\delta>0: y\in \B{X}{d}{x}{\delta r}\w\}.$$
\begin{rmk}\label{RmkMightNotBeAMetric}
Actually, it could be that $\rho_r$ is not a metric, in that if the
$X^\mu_j$ do not span the tangent space, the distance between two points
might be $\infty$.  This will not affect any of the results in this section.
\end{rmk}
\begin{rmk}
We assumed that $\max_{\mu}r_\mu=1$ since, if we drop this assumption,
we have:
$$\delta \rho_r = \rho_{\delta^{-1} r}$$
and so every choice of $r$ can be reduced to the case when $\max_{\mu}r_\mu=1$.
\end{rmk}

For each $\mu$ we also obtain a metric, the Carnot-Carath\'eodory metric
associated to the vector fields
$$\q\{\q(X^\mu_j,d^\mu_j\w):1\leq j\leq q_\mu\w\}$$
given by
$$\rho_{\mu}\q(x,y\w):=\inf \q\{\delta>0:y\in \B{X^\mu}{d^\mu}{x}{\delta}\w\},$$
where we have the same caveat as in Remark \ref{RmkMightNotBeAMetric}.

Given two functions $\Delta_1,\Delta_2:\Omega\times \Omega\rightarrow \q[0,\infty\w]$
(one should think of $\Delta_1, \Delta_2$ as metrics) we obtain a new function:
$$\q(\Delta_1\circ\Delta_2\w)\q(x,z\w):=\inf_{y\in \Omega} \Delta_1\q( x,y\w) + \Delta_2\q( y,z\w).$$
One should think of $\Delta_1\circ \Delta_2$ as the ``distance'' between
$x$ and $z$ if one is first allowed to travel in the $\Delta_1$ metric
and then in the $\Delta_2$ metric.  Of course, even if $\Delta_1$ and
$\Delta_2$ are metrics, $\Delta_1\circ \Delta_2$ may not be
symmetric, and therefore will not be a metric.

However, in the case of the $\rho_\mu$ above, we do end up with a
quasi-metric.  Indeed, we have:
\begin{cor}\label{CorApproxMetrics}
There is an admissible constant $\sigma_2>0$ such that for every
$x\in K$ and $y\in \Omega$ such that $\rho_r\q( x,y\w)<\sigma_2$, we have:
$$\q[\q(r_1^{-1} \rho_1\w)\circ\q(r_2^{-1} \rho_2\w)\circ\cdots \circ\q(r_\nu^{-1} \rho_\nu\w)\w]\q(x,y\w)\approx \rho_r\q(x,y\w).$$
\end{cor}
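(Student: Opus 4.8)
The plan is to reduce the statement about the composite quasi-metric $\q[\q(r_1^{-1}\rho_1\w)\circ\cdots\circ\q(r_\nu^{-1}\rho_\nu\w)\w]$ to the Schwartz-kernel estimates of Corollary \ref{CorSchwartzKer}, translating back and forth between metrics and balls via the definitions $\rho_r\q(x,y\w) = \inf\q\{\delta: y\in \B{X}{d}{x}{\delta r}\w\}$ and $\rho_\mu\q(x,y\w) = \inf\q\{\delta: y\in \B{X^\mu}{d^\mu}{x}{\delta}\w\}$. The key observation is that the composite quasi-metric being $\leq \delta$ is (up to admissible constants) equivalent to saying that $y$ can be reached from $x$ by first traveling within a ball $\B{X^1}{d^1}{\cdot}{C\delta r_1}$, then $\B{X^2}{d^2}{\cdot}{C\delta r_2}$, and so on; equivalently, that the iterated support condition from \eqref{EqnFirstSKEqn} (applied with $\delta$ replaced by $\delta r$) holds for the point $\q(x,y\w)$. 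So the strategy is: (i) show $\rho_r\q(x,y\w)\lesssim \delta$ implies the composite quasi-metric is $\lesssim \delta$; (ii) show the composite quasi-metric $\lesssim\delta$ implies $\rho_r\q(x,y\w)\lesssim\delta$.

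For direction (i), suppose $\rho_r\q(x,y\w)<\delta$, i.e.\ $y\in \B{X}{d}{x}{\delta r}=\Bmus{x}{\delta r}$ (using that $\delta X = \delta^d X$ for the appropriate scaling; here I am identifying $\q(X,d\w)$ with the multi-parameter list and $\delta r$ with the radius vector). By the $\gtrsim$ half of \eqref{EqnSecondSKEqn} together with \eqref{EqnThirdSKEqn} in Corollary \ref{CorSchwartzKer}, applied with radius vector $\delta r$, the Schwartz kernel $T_{\delta r}\q(x,y\w)$ of $A_{\B{X^\nu}{d^\nu}{\cdot}{\delta r_\nu}}\cdots A_{\B{X^1}{d^1}{\cdot}{\delta r_1}}$ is positive on a comparable ball, which unwinds to the existence of a chain of points $x=z_0, z_1,\ldots, z_\nu=y$ with $z_{\mu}\in \B{X^\mu}{d^\mu}{z_{\mu-1}}{\sigma_2^{-1}\delta r_\mu}$ (or rather, one extracts this from the structure of the composed averaging operators: the kernel is supported exactly where such a chain exists). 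Hence $\rho_\mu\q(z_{\mu-1},z_\mu\w)\lesssim \delta r_\mu$, so $r_\mu^{-1}\rho_\mu\q(z_{\mu-1},z_\mu\w)\lesssim \delta$, and summing gives the composite quasi-metric $\lesssim\delta$. For direction (ii), suppose the composite quasi-metric is $<\delta$: then there are points $x=z_0,\ldots,z_\nu=y$ with $r_\mu^{-1}\rho_\mu\q(z_{\mu-1},z_\mu\w)<\delta$, i.e.\ $z_\mu\in \B{X^\mu}{d^\mu}{z_{\mu-1}}{\delta r_\mu}$. Concatenating the defining Carnot-Carath\'eodory paths (rescaled appropriately) produces a single path from $x$ to $y$ admissible for $\B{X}{d}{x}{C\delta r}$ for an admissible constant $C$ — this is essentially the triangle-inequality-type argument that one also uses to see $\B{X^\mu}{d^\mu}{\cdot}{\cdot}\subseteq \B{X}{d}{\cdot}{\cdot}$ — so $\rho_r\q(x,y\w)\lesssim\delta$.

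The main obstacle, and the place requiring care, is direction (i): extracting an honest chain of points $z_0,\ldots,z_\nu$ from the positivity of the composed kernel $T_{\delta r}\q(x,y\w)$. The composition $A_{\B{X^\nu}{d^\nu}{\cdot}{\delta r_\nu}}\cdots A_{\B{X^1}{d^1}{\cdot}{\delta r_1}}$ has kernel given by an iterated integral $\int \cdots \int \prod_\mu \Vol{\B{X^\mu}{d^\mu}{z_{\mu-1}}{\delta r_\mu}}^{-1}\chi_{\B{X^\mu}{d^\mu}{z_{\mu-1}}{\delta r_\mu}}\q(z_\mu\w)\, dz_1\cdots dz_{\nu-1}$, and \eqref{EqnFirstSKEqn} says this is supported where $y$ is reachable by such a chain up to admissible dilation of the radii. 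One must also handle the caveat of Remark \ref{RmkMightNotBeAMetric}: when the vector fields fail to span, everything takes place on the leaf through $x$ (generated by $\delta X$, equivalently by $X$ after the rescaling absorbs the constants $\delta r$ which only affect which coordinates vanish), and the volumes and kernels are with respect to the induced Lebesgue measure there; since $\rho_r\q(x,y\w)<\sigma_2$ forces $y$ onto that leaf, no genuine difficulty arises but the bookkeeping must be stated. A secondary point is verifying that the averaging operators $A_{\B{X^\mu}{d^\mu}{\cdot}{\delta r_\mu}}$ are precisely the ones appearing in Corollary \ref{CorSchwartzKer} after the substitution $\q( Z^\mu,d^\mu\w)=\q(\delta r_\mu X^\mu,d^\mu\w)$, $\q(X,d\w)=\q(\delta r^{\,d}X,d\w)$ — i.e.\ that the rescaling is compatible — which follows from the identity $\B{X^\mu}{d^\mu}{x}{\delta r_\mu}=\B{r_\mu^{d^\mu}X^\mu}{d^\mu}{x}{\delta}$ noted in Section \ref{SectionResults} and the hypotheses of Section \ref{SectionMaximalFuncs} that make Theorem \ref{ThmMainMaxFuncs} and hence Corollary \ref{CorSchwartzKer} applicable uniformly.
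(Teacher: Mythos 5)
Your proof is correct and takes essentially the same route as the paper, which simply records that the $\gtrsim$ direction (your (ii), path concatenation) is obvious and the $\lesssim$ direction (your (i)) follows from Corollary \ref{CorSchwartzKer}; your write-up fills in exactly the details the paper leaves implicit, including the leaf bookkeeping and the rescaling compatibility. One small point worth flagging: the kernel of $A_{\B{X^\nu}{d^\nu}{\cdot}{\delta r_\nu}}\cdots A_{\B{X^1}{d^1}{\cdot}{\delta r_1}}$ directly produces a chain with the $\mu$-indices in \emph{reverse} order (first step in an $X^\nu$-ball, last in an $X^1$-ball), so to get a chain in the order $\q(r_1^{-1}\rho_1\w)\circ\cdots\circ\q(r_\nu^{-1}\rho_\nu\w)$ requires you want one either applies the kernel estimate to the pair $(y,x)$ and uses symmetry of $\rho_r$ and of each $\rho_\mu$, or notes $\q(\Delta_1\circ\cdots\circ\Delta_\nu\w)(x,y)=\q(\Delta_\nu\circ\cdots\circ\Delta_1\w)(y,x)$; this is cosmetic but should be stated.
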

\begin{proof}
$\gtrsim$ is obvious.  $\lesssim$ follows from Corollary \ref{CorSchwartzKer}.
\end{proof}

\begin{rmk}
When $\q[X_j^{\mu_1}, X_k^{\mu_2}\w]=0$ for $\mu_1\ne \mu_2$
(and with a slight modification in the definition of our metrics),
 one actually
obtains equality in Corollary \ref{CorApproxMetrics} (as opposed to
$\approx$).  This follows from the proof method in Section 4.1 of \cite{StreetAdvances}.
\end{rmk}

	\subsection{Foliations whose leaves are locally spaces of homogeneous type}\label{SectionHomogeneousFoliations}
		Let $\q( X_1,d_1\w),\ldots, \q( X_q,d_q\w)$ be vector fields on an open
set $\Omega\subseteq \R^n$ with
single-parameter formal degrees $d_1,\ldots, d_q\in \q( 0,\infty\w)$.
Under the (single-parameter version of) the hypotheses in
Section \ref{SectionBallsAtAPoint}, the balls $\B{X}{d}{x}{\delta}$
satisfy the doubling property that is crucial to the theory
of spaces of homogeneous type:
\begin{equation}\label{EqnDoublingCondFol}
\Vol{\B{X}{d}{x_0}{2\delta}}\lesssim \Vol{\B{X}{d}{x_0}{\delta}},
\end{equation}
see \eqref{EqnDoublingConditionInMainThm}.
This leads one to consider the maximal operator given by,
$$\sM f\q( x\w) = \sup_{\delta>0} A_{\B{X}{d}{\cdot}{\delta}}\q|f\w|\q( x\w)
= \sup_{\delta>0} \frac{1}{\Vol{\B{X}{d}{x}{\delta}}} \int_{\B{X}{d}{x}{\delta}} \q|f\q( y\w)\w|\: dy,$$
where the supremum is only taken over $\delta$ sufficiently small.
If the vector fields $X_1,\ldots, X_q$ spanned the tangent space
at every point of $\Omega$, the balls $\B{X}{d}{x}{\delta}$ would be
open sets of positive Lebesgue measure and \eqref{EqnDoublingCondFol}
would imply that they do, in fact, endow $\Omega$ with the structure
of a space of homogeneous type.  Classical arguments then show
that $\sM$ extends to a bounded operator on $L^p$ ($1<p\leq \infty$)--this
is essentially the situation covered in
\cite{NagelSteinWaingerBallsAndMetricsDefinedByVectorFields}.

However, if the vector fields do not span the tangent space
at each point, then the balls $\B{X}{d}{x}{\delta}$ do not turn
$\Omega$ into space of homogeneous type.  Indeed, at a point $x_0$
where $X_1,\ldots, X_q$ do not span the tangent space,
the ball $\B{X}{d}{x_0}{\delta}$ does not even have positive $n$ dimensional Lebesgue
measure:  it lies on the leaf passing through $x_0$ generated
by $X_1,\ldots, X_q$.
This does not prevent $\sM$ from extending to a bounded operator
on $L^p$ ($1<p<\infty$) however, as we shall see.
Informally, the idea is that $X_1,\ldots, X_q$ foliate $\Omega$
into leaves, where each leaf (endowed with the induced Lebesgue measure)
is locally a space of homogeneous type, and the standard
theory of maximal functions may be applied.  It is crucial, here,
that we are working locally (i.e., that we are restricting
our attention to $\delta>0$ small).  If we had not restricted
our attention to local results, the space of leaves might be quite
complicated, to the extent that it would be difficult (if not impossible)
to lift the $L^p$ boundedness of $\sM$ from each leaf to $\Omega$.\footnote{Consider,
for instance, the vector field $\partial_x+\theta\partial_y$ on the manifold
$M=\R^2/\Z^2$, where $\theta\in \R\setminus \Q$.  In this case, if we denote by
$\sL$ the space of leaves, we have $L^p\q( \sL\w)=\C$.  Locally, however,
$M$ with this foliation just looks like a product space, and the corresponding
maximal function is just the standard maximal function along
one of the variables.}

\begin{rmk}
Near a non-singular point\footnote{$x_0\in \Omega$ is said to be a non-singular
point if $\dim \Span{X_1\q( x\w) ,\ldots, X_q\q( x\w)}$ is constant
in a neighborhood of $x_0$.}
of the involutive distribution spanned by $X_1,\ldots, X_q$, the boundedness of
$\sM$ follows immediately.  Indeed, in this case, the foliation
looks locally like a product space.  The maximal function
just acts on one of the product variables, and in this
variable the balls form a space of homogeneous type.
Thus, the main point of this section is to demonstrate an
easy way to deal with singular points;
however, it will not be necessary for us to make
any distinction between non-singular and singular points in our
argument.
\end{rmk}

We now turn to a formal statement of our results.  We are given a compact
set $K\Subset \Omega$, and $\xi>0$ such that $\q( X,d\w)$ satisfies
$\sC\q( x,\xi\w)$ for every $x\in K$.
We assume for every $\delta\leq \xi$, $x\in K$, we have:
$$\q[\delta^{d_i}X_i, \delta^{d_j} X_j\w]=\sum_k c_{i,j}^{k,\delta,x} \delta^{d_k} X_k,$$
on $\B{X}{d}{x}{\delta}$.  In addition, we assume:
\begin{itemize}
\item The $X_j$s are $C^2$ on $\B{X}{d}{x}{\xi}$, for every $x\in K$, and satisfy $\sup_{x\in K}\Cjn{2}{\B{X}{d}{x}{\xi}}{X_j}<\infty$.
\item For all $\q|\alpha\w|\leq 2$, $x\in K$, we have $\q(\delta^d X\w)^\alpha c_{i,j}^{k,\delta,x}\in \Cj{0}{\B{X}{d}{x}{\delta}}$, for every $i,j,k$, and every $\delta\in \sA$, and moreover:
$$\sup_{\substack{\delta\in A\\x\in K}} \sum_{\q|\alpha\w|\leq 2} \Cjn{0}{\B{X}{d}{x}{\delta}}{\q(\delta^d X\w)^\alpha c_{i,j}^{k,\delta,x}}<\infty.$$
\end{itemize}
Finally, let 
$$n_0\q( x,\delta\w)=\dim \Span{\delta^{d_1}X_1\q( x\w), \ldots, \delta^{d_q}X_q\q( x\w)}.$$

We say $C$ is an admissible constant if $C$ can be chosen to depend only
on fixed upper and lower bounds $d_{max}<\infty$, $d_{min}>0$, 
for
$d_1,\ldots, d_q$, 
a fixed upper bound for $n,q$
and a fixed upper bound for the quantities:
\begin{equation*}
\sup_{x\in K}\Cjn{2}{\B{X}{d}{x}{\xi}}{X_j}, \quad \sup_{\substack{\delta\in A\\x\in K}} \sum_{\q|\alpha\w|\leq 2} \Cjn{0}{\B{X}{d}{x}{\delta}}{\q(\delta^d X\w)^\alpha c_{i,j}^{k,\delta,x}}.
\end{equation*}

\begin{thm}\label{ThmSingleParamMaxFunc}
There exists an admissible constant $\xi'>0$, $\xi'\leq \xi$, such that
if we define, for $f\in C\q( \Omega\w)$, $x\in K$,
\begin{equation}\label{EqnDefnSingleMax}
\sM f\q( x\w) = \sup_{0<\delta\leq \xi'} A_{\B{X}{d}{\cdot}{\delta}} \q|f\w| \q( x\w),
\end{equation}
then for every $f\in C\q( \Omega\w)\cap L^p\q( \Omega\w)$,
$$\q\|\sM f\w\|_{L^p\q( K\w)}\leq C_p \q\|f\w\|_{L^p\q( \Omega\w)},$$
for every $1<p\leq \infty$.  Here, $C_p$ is an admissible constant
which may also depend on $p$.
\end{thm}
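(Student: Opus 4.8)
The plan is to reduce the $L^p$ estimate to the Hardy--Littlewood maximal theorem on spaces of homogeneous type, applied leaf by leaf to the Frobenius foliation determined by $X_1,\ldots,X_q$, and then to reassemble the leafwise bounds into a bound on $\Omega$ by a coarea argument. The reassembly is the step that contains the only real difficulty, and phrasing it through a disintegration of Lebesgue measure is precisely what lets one treat singular points of the foliation without any case distinction.

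First I would collect the geometric input. Applying Theorem \ref{ThmMultiBallsAtPoint} with $\nu=1$ to $\q(X,d\w)$ (equivalently, to $\q(\delta^d X,\sd\w)$ at each scale $\delta$) shows that, for a sufficiently small admissible $\xi'\le\xi$, the family $\{\B{X}{d}{x}{\delta}:x\in K,\ 0<\delta\le\xi'\}$ has the following properties, with all implied constants admissible and in particular uniform over leaves: each $\B{X}{d}{x}{\delta}$ is a relatively open subset of the leaf $L_x$ through $x$, it carries the induced Lebesgue measure, $\Vol{\B{X}{d}{x}{\delta}}\approx\q|\det_{n_0(x)\times n_0(x)}\delta^d X\q(x\w)\w|$ where $n_0(x)=\dim\Span{X_1\q(x\w),\ldots,X_q\q(x\w)}$; the doubling condition \eqref{EqnDoublingConditionInMainThm} holds; one has $\Vol{\B{X}{d}{y}{\delta}}\approx\Vol{\B{X}{d}{x}{\delta}}$ whenever $y\in\B{X}{d}{x}{\delta}$; and the balls satisfy the usual engulfing property (if two balls of radius $\delta$ meet, one is contained in the other of radius $C\delta$), which follows from the containments in Theorem \ref{ThmMultiBallsAtPoint} together with the quasi-triangle inequality for Carnot--Carath\'eodory balls. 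After further shrinking $\xi'$, the operator $\sM$ of \eqref{EqnDefnSingleMax} agrees near each $x_0\in K$ with the leafwise maximal operator on $L_{x_0}$, in which all balls lie in a fixed relatively compact piece of $L_{x_0}$. On each leaf the balls then endow such a piece with the structure of a space of homogeneous type, so the Vitali covering argument gives that this leafwise maximal operator is of weak type $(1,1)$, and interpolation with the trivial $L^\infty$ bound gives strong type $(p,p)$ for $1<p\le\infty$, with an admissible constant $C_p$ independent of the leaf.

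For the reassembly I would cover $K$ by finitely many neighbourhoods $V_{x_0}$, $x_0\in K$, and on $V_{x_0}$ use the chart $(u,v)\mapsto e^{v\cdot W}\Phi_{x_0}\q(u\w)$, where $\Phi_{x_0}$ is the leaf chart furnished by Theorem \ref{ThmMultiBallsAtPoint} and $W$ is a list of constant vector fields completing $X_{J\q(x_0\w)}\q(x_0\w)$ to a basis of $\R^n$; Theorem \ref{ThmMultiBallsAtPoint} guarantees this chart has Jacobian bounded above and below by admissible constants. By the classical (singular) theorem of Frobenius, Theorem \ref{ThmAppendFrob}, the leaves organise $V_{x_0}$ into a measurable partition, and disintegrating $n$-dimensional Lebesgue measure along these leaves produces conditional measures $\sigma_L$ which, by the Jacobian bounds, are comparable with admissible constants, uniform in $L$, to the induced Lebesgue measure $\mu_L$ on $L$. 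Writing $\nu$ for the resulting measure on the space of leaves,
\[
\int_{V_{x_0}}\q(\sM f\w)^p\,d\mathrm{Leb}^n\approx\int\q(\int_L\q(\sM f\big|_L\w)^p\,d\sigma_L\w)d\nu(L)\lesssim\int\q(\int_L\q(\sM f\big|_L\w)^p\,d\mu_L\w)d\nu(L),
\]
and applying the leafwise strong $(p,p)$ bound inside the $\nu$-integral, then the reverse comparison $\mu_L\lesssim\sigma_L$ and the disintegration once more, yields $\int_{V_{x_0}}\q(\sM f\w)^p\,d\mathrm{Leb}^n\le C_p\int_{\widetilde V_{x_0}}\q|f\w|^p\,d\mathrm{Leb}^n$ for a slightly larger neighbourhood $\widetilde V_{x_0}$. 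Summing over the finite cover proves the theorem for $1<p<\infty$; the case $p=\infty$ is immediate from \eqref{EqnDefnSingleMax}, since $\sM f\le\q\|f\w\|_{L^\infty}$ pointwise. Near a non-singular point the displayed identity is just Fubini in local product coordinates; the point of the disintegration is that it also covers singular points, where the leaf dimension jumps and no local product structure exists.

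The step I expect to be the main obstacle is justifying the existence of the disintegration of Lebesgue measure along the singular foliation and, crucially, the comparability $\sigma_L\approx\mu_L$ with constants that are admissible and uniform over all leaves, especially near singular points. This is where one must use the quantitative control on the leaf charts coming from Theorem \ref{ThmMultiBallsAtPoint} --- in particular the non-degeneracy estimate $\q|\det_{n_0(x)\times n_0(x)}d\Phi_{x,\delta}\q(u\w)\w|\approx\q|\det_{n_0(x)\times n_0(x)}\delta^d X\q(x\w)\w|$ --- to control how the leaves fill out a neighbourhood, rather than merely the qualitative Frobenius theorem. Everything else (the Vitali lemma, the interpolation, and the coarea bookkeeping) is routine once the uniformity over leaves supplied by Theorem \ref{ThmMultiBallsAtPoint} is in hand.
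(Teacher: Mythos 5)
Your route and the paper's diverge precisely at the point you flag as the main obstacle, and that obstacle is not closed. You propose to disintegrate $n$-dimensional Lebesgue measure along the Frobenius foliation and to show that the conditional measures $\sigma_L$ are comparable, with admissible constants, to the induced Lebesgue measures $\mu_L$ on the leaves, using the chart $(u,v)\mapsto e^{v\cdot W}\Phi_{x_0}(u)$ with $W$ a constant completion of $X_{J(x_0)}(x_0)$ to a basis. But Theorem \ref{ThmMultiBallsAtPoint} only controls the $n_0\times n_0$ determinant $\det_{n_0\times n_0} d\Phi_{x,\delta}$, not the $n\times n$ Jacobian of the extended map, and near a singular point of the foliation (where $n_0(x)$ jumps) there is no reason for that $n\times n$ Jacobian to be bounded below, nor for the $v$-slices of your chart to be leaves; the nearby leaves generically have higher dimension than $L_{x_0}$ and the constant completion $W$ becomes tangent to them. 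So the comparability $\sigma_L\approx\mu_L$ with uniform constants is exactly what is hard, and your argument does not establish it.

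The paper avoids the disintegration entirely. Its Proposition \ref{PropAvgIsInt} replaces the coarea step with a Fubini argument using the flow diffeomorphisms $x\mapsto e^{t\cdot X}x$ of $\Omega$: these are honest $n$-dimensional diffeomorphisms (not charts adapted to the foliation) whose Jacobians are $\geq \tfrac12$ for $|t|$ admissibly small by the $C^2$ estimate of Theorem \ref{ThmExpReg}, and averaging over $|t|\leq\eta'$ converts $\int_K f\,dx$ into $\int_{\Omega''}\int_{|t|\leq\eta'} f(e^{t\cdot X}x)\,dt\,dx$, which is then identified with $\int_{\Omega''} A_{\B{X}{d}{\cdot}{\xi'}}f$ up to admissible constants by the unit-scale machinery of Proposition \ref{PropUnitOpsPrelimProp}. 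Paired with the pointwise leafwise estimate of Proposition \ref{PropLeafwiseMax} (which, like your leaf-by-leaf step, pulls back under $\Phi$ and invokes the classical space-of-homogeneous-type theory on $B_{n_0}(\eta_1)$), this gives the $L^p$ bound without ever needing a measurable disintegration of Lebesgue measure along a singular foliation. If you want to salvage your approach, you would need an argument that the quotient measure theory behaves uniformly near singular points --- essentially reproving Proposition \ref{PropAvgIsInt} in disguise --- and the flow-map trick is the cleanest way to get it.
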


The main assumptions of this section are equivalent to saying
that Theorem \ref{ThmMultiBallsAtPoint} applies to the vector fields
$\q( X,d\w)$.  Let $\xi_1,\eta_1$ be admissible constants
as in the conclusions of Theorem \ref{ThmMultiBallsAtPoint}.
Define,
$$\Omega' = \bigcup_{x\in K} \B{X}{d}{x}{\frac{\xi_1}{2}},$$
$$\Omega'' = \bigcup_{x\in K} \B{X}{d}{x}{\frac{\xi_1}{4}}.$$
Theorem \ref{ThmSingleParamMaxFunc} will follow from the following
two propositions.
\begin{prop}\label{PropAvgIsInt}
There exists an admissible constant $\xi_0>0$, $\xi_0<\xi$, such that
for every $\xi'\leq \xi_0$ and every $f\in C\q( \Omega\w)$ with 
$f\geq 0$, we have:
$$\int_K f\q( x\w)\: dx \lesssim \int_{\Omega''} A_{\B{X}{d}{\cdot}{\xi'}} f\q( x\w) \: dx\lesssim \int_{\Omega'} f\q( x\w) \: dx,$$
where the implicit constants are admissible but also allowed to depend on a lower bound for $\xi'$.
\end{prop}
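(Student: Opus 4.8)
The plan is to prove Proposition \ref{PropAvgIsInt} by an application of Fubini's theorem, after unwinding the definition of the averaging operator $A_{\B{X}{d}{\cdot}{\xi'}}$ and using the volume estimates from Theorem \ref{ThmMultiBallsAtPoint} (and Remark \ref{RmkVolOfAllSmallBalls}) to control the denominators uniformly. First, I would observe that
$$\int_{\Omega''} A_{\B{X}{d}{\cdot}{\xi'}} f\q( x\w)\: dx = \int_{\Omega''} \frac{1}{\Vol{\B{X}{d}{x}{\xi'}}} \int_{\B{X}{d}{x}{\xi'}} f\q( y\w)\: dy\: dx,$$
where both the inner integral and the volume are taken with respect to the induced Lebesgue measure on the leaf through $x$ generated by $\delta^d X$ (with $\delta = \xi'$). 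The key structural point is that the leaf relation is symmetric and locally consistent: for $x \in \Omega''$ and $\xi'$ a small admissible constant, $y \in \B{X}{d}{x}{\xi'}$ if and only if $x \in \B{X}{d}{y}{\xi'}$ (both assert the existence of a sub-unit path of length $<1$ in the scaled vector fields joining the two points, which can be reversed), and moreover when this holds, $x$ and $y$ lie on the same leaf $L$, so the measure $dy$ on $L$ is the same measure used on both sides. I would need to choose $\xi_0$ admissibly small enough that $\B{X}{d}{x}{\xi'} \subseteq \Omega'$ for all $x \in \Omega''$ and all $\xi' \leq \xi_0$; this follows from the containments in Theorem \ref{ThmMultiBallsAtPoint} together with the definitions of $\Omega''$ and $\Omega'$ (and a triangle-inequality-type argument for Carnot--Carath\'eodory balls, as in the proof of Proposition \ref{PropCanPutBallInPhi}).

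For the upper bound, I would write, using the characteristic function $\chi$ of the relevant balls and Tonelli's theorem for nonnegative $f$,
\begin{equation*}
\begin{split}
\int_{\Omega''} A_{\B{X}{d}{\cdot}{\xi'}} f\q( x\w)\: dx &= \int_{\Omega''} \int \frac{\chi_{\B{X}{d}{x}{\xi'}}\q( y\w)}{\Vol{\B{X}{d}{x}{\xi'}}} f\q( y\w)\: dy\: dx\\
&= \int_{\Omega'} f\q( y\w) \q( \int_{\Omega''} \frac{\chi_{\B{X}{d}{y}{\xi'}}\q( x\w)}{\Vol{\B{X}{d}{x}{\xi'}}}\: dx\w)\: dy,
\end{split}
\end{equation*}
having used the symmetry of the leaf relation and the fact that the integrand vanishes unless $y \in \Omega'$. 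The inner parenthesized quantity is at most $\Vol{\B{X}{d}{y}{\xi'}} \cdot \sup_{x \in \B{X}{d}{y}{\xi'}} \Vol{\B{X}{d}{x}{\xi'}}^{-1}$, and by the volume estimate $\Vol{\B{X}{d}{x}{\xi'}} \approx \q|\det_{n_0\q(x,\xi'\w)\times n_0\q(x,\xi'\w)} \q(\xi'\w)^d X\q( x\w)\w|$ from Theorem \ref{ThmMultiBallsAtPoint} (cf.\ Remark \ref{RmkVolOfAllSmallBalls}) together with Lemma \ref{LemmaDetsDontChangeCarnot} (applied to the scaled vector fields $\q( \xi'\w)^d X$, whose relevant sub-list satisfies $\sC$ on the ball), the determinants at $x$ and at $y$ are comparable by admissible constants whenever $x$ and $y$ lie in a common small ball. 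Hence the inner integral is $\lesssim 1$, giving the second inequality. The first inequality of the proposition is the easier direction: for $x \in K$, the ball $\B{X}{d}{x}{\xi'}$ contains $x$ (taking the constant path), and more usefully one shows $A_{\B{X}{d}{\cdot}{\xi'}} f$ dominates a local average of $f$; integrating over $\Omega'' \supseteq K$ and again using the uniform comparability of volumes yields $\int_K f \lesssim \int_{\Omega''} A_{\B{X}{d}{\cdot}{\xi'}} f$. Actually the cleanest route for the first inequality is to note that every point of $K$ lies in a ball $\B{X}{d}{x'}{\xi'}$ for a nearby $x' \in \Omega''$, so a covering/duality argument reduces it to the same volume comparison.

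The main obstacle I anticipate is the bookkeeping around the induced Lebesgue measure on the leaves: one must be careful that "$\Vol{\cdot}$'' and "$dy$'' consistently refer to the measure on a single leaf, that the symmetry $y \in \B{X}{d}{x}{\xi'} \iff x \in \B{X}{d}{y}{\xi'}$ genuinely holds at this scale (which requires $\xi' \leq$ an admissible constant so that the condition $\sC$ propagates, and uses that reversing a sub-unit path gives a sub-unit path), and that in the Fubini step the ambient integration $\int_{\Omega''} dx$ decomposes compatibly with the leaf decomposition. A clean way to finesse this is to work entirely within the scaling charts: pull everything back via the maps $\Phi_{x,\delta}$ of Theorem \ref{ThmMultiBallsAtPoint}, where the pulled-back vector fields and volume densities are uniformly controlled, reducing the measure-theoretic comparisons to statements about the flat coordinate balls $B_{n_0}\q(\eta_1\w)$; but since the statement only claims comparability up to admissible constants, I expect the direct Fubini argument above, combined with Theorem \ref{ThmMultiBallsAtPoint} and Lemma \ref{LemmaDetsDontChangeCarnot}, to suffice without developing a full Fubini theorem for the leaf foliation globally --- everything is happening inside a single (variable) leaf at a time.
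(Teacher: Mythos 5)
The Fubini step you write down is not a legitimate application of Fubini--Tonelli, and the gap cannot be waved away. On the left you integrate $\frac{\chi_{\B{X}{d}{x}{\xi'}}(y)}{\Vol{\B{X}{d}{x}{\xi'}}}f(y)$ first against the \emph{leaf} measure $dy$ on the (possibly lower-dimensional) leaf $L_x$, and then against ambient Lebesgue $dx$ on $\Omega''$. After ``swapping,'' you integrate against ambient Lebesgue $dy$ on $\Omega'$, with an inner integral $\int_{\Omega''}\chi_{\B{X}{d}{y}{\xi'}}(x)\cdots\,dx$. But if for a given $y$ the leaf $L_y$ has dimension $n_0<n$, then $\B{X}{d}{y}{\xi'}\subseteq L_y$ has $n$-dimensional Lebesgue measure zero, so that inner integral (interpreted, as written, against ambient Lebesgue $dx$) is \emph{zero}; meanwhile the original iterated integral is not. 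To make the swap true one must reinterpret the inner $dx$ as leaf measure on $L_y$, at which point the ``Fubini'' is exactly the disintegration/coarea identity $dx_{\mathrm{Leb}}\otimes d\sigma_{L_x}=dy_{\mathrm{Leb}}\otimes d\sigma_{L_y}$ on the incidence variety of the foliation --- precisely the object you identify as the main obstacle and then say you expect not to need. ``Everything is happening inside a single leaf at a time'' is not a justification, because the outer $dx$-integration over $\Omega''$ crosses leaves.

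The paper's proof is designed to never confront this. Lemma \ref{LemmaAvgIsIntT} (proved as in Proposition \ref{PropUnitOpsPrelimProp}, i.e.\ via the scaling charts of Theorem \ref{ThmMultiBallsAtPoint}) replaces the leaf average $A_{\B{X}{d}{\cdot}{\xi'}}f(x)$ pointwise, up to admissible constants, by the \emph{ambient} exponential average $\frac{1}{(2\eta')^q}\int_{|t|\le\eta'}f(e^{t\cdot X}x)\,dt$. Lemma \ref{LemmaAvgIsIntO} then treats that quantity with an ordinary change of variables $x\mapsto e^{t\cdot X}x$ in $\R^n$ for each fixed $t$ (the Jacobian is controlled by the $C^2$ bound from Theorem \ref{ThmExpReg}), followed by genuine Fubini in $(t,x)$. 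Chaining the two lemmas proves both inequalities of the proposition with no foliation Fubini and no coarea formula. In short, the ``clean way to finesse this'' that you mention and discard is essentially the route the paper takes, and your direct swap does not bypass it. (A secondary issue: your treatment of the first inequality --- ``a covering/duality argument reduces it to the same volume comparison'' --- is also only a gesture; in the paper's scheme Lemma \ref{LemmaAvgIsIntO} yields both directions simultaneously from the same change of variables.)
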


\begin{prop}\label{PropLeafwiseMax}
We have the 
pointwise
bound, for $1<p\leq \infty$, $0<\xi'\leq \frac{\xi_1}{4}$, and $x\in \Omega''$:
$$A_{\B{X}{d}{\cdot}{\xi'}} \q|\sM f\w|^p \q( x\w) \lesssim A_{\B{X}{d}{\cdot}{2\xi'}} \q|f\w|^p\q( x\w),$$
where the implicit constant is admissible and can also depend on $p$ and
a lower bound for $\xi'$, but
not on $x$.
\end{prop}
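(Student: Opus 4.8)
\textbf{Proof plan for Proposition \ref{PropLeafwiseMax}.}

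The plan is to reduce the pointwise inequality to a leafwise statement and then invoke the classical maximal function theory on a space of homogeneous type. First I would fix $x\in\Omega''$ and let $L$ denote the leaf (generated by $X_1,\ldots,X_q$) passing through $x$; all the balls $\B{X}{d}{y}{\delta}$ appearing in the statement, for $y$ in a small neighborhood of $x$ in $L$, lie in $L$, and the averaging operators $A_{\B{X}{d}{\cdot}{\delta}}$ are taken against the induced Lebesgue measure on $L$. The key point is to show that, restricted to a controlled piece of $L$ (say $\Phi_{x,\xi'}\q( B_{n_0\q(x,\xi'\w)}\q(\eta_1\w)\w)$, using the map and constants from Theorem \ref{ThmMultiBallsAtPoint}), the balls $\B{X}{d}{y}{\delta}$ for $\delta$ small satisfy the doubling property \eqref{EqnDoublingConditionInMainThm} with admissible constants, and that they satisfy the engulfing/containment relations needed to be ``balls'' in the sense of a space of homogeneous type. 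This is exactly what Theorem \ref{ThmMultiBallsAtPoint} provides: $\Vol{\B{X}{d}{y}{2\delta}}\lesssim\Vol{\B{X}{d}{y}{\delta}}$, and the nesting of $\B{X}{d}{y}{\delta}$ between $\Phi_{y,\delta}$ of Euclidean balls of admissible radii, all uniformly in $y$ and $\delta$.

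Next I would push everything forward via $\Phi_{x,\xi'}$ to $B_{n_0\q(x,\xi'\w)}\q(\eta_1\w)$: by the estimates on $Y_j^{x,\xi'}$ and the Jacobian bound $\q|\det d\Phi_{x,\xi'}\w|\approx\q|\det_{n_0\times n_0}\xi'^d X\q(x\w)\w|$, the pulled-back balls form a space of homogeneous type on $B_{n_0\q(x,\xi'\w)}\q(\eta_1\w)$ with doubling constant and quasi-triangle constant that are admissible. On such a space, the Hardy--Littlewood maximal operator is bounded on $L^p$ for $1<p\le\infty$ with norm depending only on the doubling constant (and $p$). Now here is the mechanism for the pointwise bound: writing $g=\chi_{\B{X}{d}{x}{C\xi'}}f$ for a suitable admissible $C$, for every $y\in\B{X}{d}{x}{\xi'}$ and every $\delta\le\xi'$ one has $\B{X}{d}{y}{\delta}\subseteq\B{X}{d}{x}{C\xi'}$ (by the triangle-type inequality for these balls, which follows from Theorem \ref{ThmMultiBallsAtPoint}), hence $A_{\B{X}{d}{\cdot}{\delta}}\q|f\w|\q(y\w)=A_{\B{X}{d}{\cdot}{\delta}}\q|g\w|\q(y\w)\le\sM_L\q|g\w|\q(y\w)$ where $\sM_L$ is the leafwise maximal operator over balls of radius $\le\xi'$. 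Therefore $\sM f\q(y\w)\lesssim\sM_L g\q(y\w)$ for $y\in\B{X}{d}{x}{\xi'}$ (adjusting $\xi'$ by an admissible factor as in \eqref{EqnDefnSingleMax}), and consequently
\begin{equation*}
A_{\B{X}{d}{\cdot}{\xi'}}\q|\sM f\w|^p\q(x\w)\lesssim \frac{1}{\Vol{\B{X}{d}{x}{\xi'}}}\int_{\B{X}{d}{x}{\xi'}}\q|\sM_L g\w|^p\lesssim \frac{1}{\Vol{\B{X}{d}{x}{\xi'}}}\int \q|g\w|^p,
\end{equation*}
using the $L^p(L)$ boundedness of $\sM_L$ with admissible norm. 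Since $\int\q|g\w|^p=\int_{\B{X}{d}{x}{C\xi'}}\q|f\w|^p$ and $\Vol{\B{X}{d}{x}{C\xi'}}\approx\Vol{\B{X}{d}{x}{\xi'}}$ by doubling, the right side is $\lesssim A_{\B{X}{d}{\cdot}{C\xi'}}\q|f\w|^p\q(x\w)$; after renaming $C\xi'$ as $2\xi'$ (which is harmless since $C$ is admissible and one may shrink $\xi'$) this is the claimed bound. The case $p=\infty$ is immediate since $\sM f\le\Lppn{\infty}{\Omega}{f}$.

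The main obstacle is making the ``space of homogeneous type on the leaf'' argument genuinely uniform: one must verify that the quasi-triangle inequality constant and the doubling constant for the family $\q\{\B{X}{d}{y}{\delta}\w\}$, as $y$ ranges over the relevant neighborhood and $\delta$ over small radii, are admissible, and that the maximal function bound one quotes is the Vitali-covering-based one whose constant depends only on these. Both the doubling and the engulfing properties are consequences of Theorem \ref{ThmMultiBallsAtPoint} (especially \eqref{EqnDoublingConditionInMainThm} and the chain of containments involving $\Phi_{x,\delta}$), but some care is needed because $n_0\q(x,\delta\w)$ can jump; the fix is that within a single ball $\B{X}{d}{y}{\xi'}$ the dimension of the span is constant by Lemma \ref{LemmaDetsDontChangeCarnot} (applied in the scaled setting), so one genuinely is working on a fixed leaf with a fixed dimension, and the covering arguments go through with admissible constants. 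I would carry out the verification of these two properties first, then state the quantitative Hardy--Littlewood theorem on a space of homogeneous type, then assemble the pointwise estimate as above.
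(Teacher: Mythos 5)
Your plan is essentially the paper's own proof: push the problem into the scaling chart from Theorem \ref{ThmMultiBallsAtPoint}, observe that the pulled-back balls form a space of homogeneous type with admissible doubling constant, and quote the classical Hardy--Littlewood maximal theorem. One addition you make that is worth keeping is the explicit truncation $g=\chi_{\B{X}{d}{x}{C\xi'}}f$ together with the observation $\sM f\q(y\w)\lesssim \sM_L g\q(y\w)$ for $y\in\B{X}{d}{x}{\xi'}$; the paper obtains the same local $L^p$ bound on the chart but leaves this ``finite propagation'' step implicit when it cites the classical theory, so your version is a bit more transparent.

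There is, however, one place where your plan as written would not quite go through. You propose to use the chart $\Phi_{x,\xi'}$ for the given $x\in\Omega''$. Two difficulties: first, Theorem \ref{ThmMultiBallsAtPoint} furnishes $\Phi_{x,\delta}$ only for $x\in K$, and $\Omega''$ is strictly larger than $K$; second, the theorem gives $\Phi_{x,\xi'}\q(B_{n_0\q(x,\xi'\w)}\q(\eta_1\w)\w)\subseteq \B{X}{d}{x}{\xi'}$, so this chart is not large enough to contain the balls $\B{X}{d}{y}{\delta}$ for $y\in\B{X}{d}{x}{\xi'}$ and $\delta\leq \xi'$ (nor the $2\xi'$ ball on the right side of the target inequality), which you need in order to perform the change of variables on both sides. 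The fix, which is what the paper does, is to take $x_0\in K$ with $\B{X}{d}{x}{\xi'}\subseteq\B{X}{d}{x_0}{\xi_1}$ (such $x_0$ exists precisely by the definition of $\Omega''$) and use the chart $\Phi$ centered at $x_0$ with the full outer radius $\xi$, i.e. $\Phi = \Phi_{x_0,\xi}$. With that replacement the rest of your argument --- Jacobian comparability via $\q|\det_{n_0\times n_0} d\Phi\w|\approx\q|\det_{n_0\times n_0} X\q(x_0\w)\w|$, the SHT structure of the $Y$-balls on $B_{n_0}\q(\eta_1\w)$, the truncation, and the doubling step $\Vol{\B{X}{d}{x}{C\xi'}}\approx\Vol{\B{X}{d}{x}{\xi'}}$ --- assembles exactly as you indicated.
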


\begin{proof}[Proof of Theorem \ref{ThmSingleParamMaxFunc} given Propositions \ref{PropAvgIsInt} and \ref{PropLeafwiseMax}]
Fix $p>1$.  Let $\xi_0$ be as in Propositions \ref{PropAvgIsInt}.
Fix $\xi'>0$ an admissible
constant, such that $\xi'<\min\q\{\frac{\xi_0}{2}, \frac{\xi_1}{4}\w\}$.
Take $f\in C\q( \Omega\w)\cap L^p\q( \Omega\w)$, and consider:
\begin{equation*}
\begin{split}
\q\|\sM f\w\|_{L^p\q( K\w)}^p &= \int_K \q(\sM f\q( x\w)\w)^p \: dx\\
&\lesssim \int_{\Omega''} A_{\B{X}{d}{\cdot}{\xi'}} \q|\sM f\w|^p \q( x\w)\: dx\\
&\lesssim \int_{\Omega''} A_{\B{X}{d}{\cdot}{2\xi'}} \q|f\w|^p \q( x\w) \: dx\\
&\lesssim \int_{\Omega'} \q|f\q( x\w)\w|^p \: dx\\
&\lesssim \q\|f\w\|_{L^p\q( \Omega\w)}^p,
\end{split}
\end{equation*}
completing the proof.
\end{proof}

We now prove Proposition \ref{PropAvgIsInt}.  To do so, we need
two lemmas.

\begin{lemma}\label{LemmaAvgIsIntO}
There exists an admissible constant $\eta^0>0$ such that for every
$0<\eta'\leq \eta^0$, and every $f\in C\q( \Omega\w)$ with $f\geq 0$,
we have:
$$\int_K f\q( x\w) \: dx \lesssim \frac{1}{\q(2\eta'\w)^q} \int_{\q|t\w|\leq \eta'} \int_{\Omega''} f\q( e^{t\cdot X}x\w)\: dx\: dt\lesssim \int_{\Omega'}f\q( x\w) \: dx.$$
\end{lemma}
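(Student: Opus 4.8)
\textbf{Proof plan for Lemma \ref{LemmaAvgIsIntO}.}

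The plan is to reduce both inequalities to changes of variables governed by the scaling map $\Phi_{x,\delta}$ from Theorem \ref{ThmMultiBallsAtPoint}, applied in the single-parameter setting with $\delta\approx 1$ (so that $\delta$ plays no role and we are simply invoking Theorem \ref{ThmMainUnitScale}). First I would fix, for each $x_0\in K$, the index set $J(x_0)\in\sI{n_0(x_0)}{q}$ maximizing $\q|\det_{n_0\times n_0} X_{J}(x_0)\w|_\infty$, and consider the map $\Phi_{x_0}(u)=e^{u\cdot X_{J(x_0)}}x_0$. By Theorem \ref{ThmMainUnitScale} (via Theorem \ref{ThmMultiBallsAtPoint} with $\delta$ fixed $\approx 1$), $\Phi_{x_0}$ is a $C^1$ diffeomorphism from $B_{n_0(x_0)}(\eta_1)$ onto a neighborhood of $x_0$ in the leaf $L_{x_0}$ through $x_0$, with $\q|\det_{n_0\times n_0} d\Phi_{x_0}(u)\w|\approx \q|\det_{n_0\times n_0} X(x_0)\w|$ throughout; moreover the pullbacks $Y_k$ of $X_k$ to $B_{n_0}(\eta_1)$ satisfy $\Cjn{2}{B_{n_0}(\eta_1)}{Y_k}\lesssim 1$. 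The core computation is then: for $f\geq 0$ supported near $L_{x_0}$ and $\q|t\w|\leq\eta'$ small, the point $e^{t\cdot X}x$ (for $x\in L_{x_0}$ near $x_0$) again lies on $L_{x_0}$, and pulling back via $\Phi_{x_0}$ turns $e^{t\cdot X}$ into $e^{t\cdot Y}$ on $B_{n_0}(\eta_1)$, which by the uniform $C^2$ bound on the $Y_k$ and the uniform inverse function theorem (Theorem \ref{ThmInverseFunctionThm}) covers a full ball of admissible radius as $t$ ranges over $\q|t\w|\leq\eta'$. This is exactly the mechanism already exploited in Lemma \ref{LemmaUnitOpsDefinel2} and the proof of Proposition \ref{PropUnitOpsPrelimProp}.

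The right-hand inequality $\frac{1}{(2\eta')^q}\int_{\q|t\w|\leq\eta'}\int_{\Omega''}f(e^{t\cdot X}x)\,dx\,dt\lesssim\int_{\Omega'}f(x)\,dx$ should be the easier of the two: for each fixed $t$ with $\q|t\w|\leq\eta'$, the map $x\mapsto e^{t\cdot X}x$ is a $C^1$ diffeomorphism of $\Omega''$ onto a subset of $\Omega'$ (taking $\eta^0$ admissibly small, using $\sC(x,\xi)$ and the definition of $\Omega',\Omega''$), with Jacobian bounded above and below by admissible constants by a Gronwall argument applied to the flow ODE; integrating in $x$ gives $\int_{\Omega''}f(e^{t\cdot X}x)\,dx\lesssim\int_{\Omega'}f(x)\,dx$ uniformly in $t$, and then averaging over $\q|t\w|\leq\eta'$ costs nothing. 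For the left-hand inequality $\int_K f(x)\,dx\lesssim\frac{1}{(2\eta')^q}\int_{\q|t\w|\leq\eta'}\int_{\Omega''}f(e^{t\cdot X}x)\,dx\,dt$, I would argue leaf by leaf: fixing $x_0\in K$ and writing things in the $\Phi_{x_0}$-coordinates $B_{n_0}(\eta_1)$, the double integral on the right dominates (up to admissible constants and the volume factor $\q|\det_{n_0\times n_0}X(x_0)\w|$, which is absorbed since $\q|\det d\Phi_{x_0}\w|\approx\q|\det_{n_0\times n_0}X(x_0)\w|$ on both sides) an integral of $f\circ\Phi_{x_0}$ over a fixed admissible ball, because the $n_0$ ``honest'' flow directions among the $Y_k$ (those indexed by $J(x_0)$, which by \eqref{EqmBallsAtAPointMatrixBound} have $d(e^{t\cdot Y})$ close to the identity) sweep out such a ball as $t$ varies, exactly as in the first half of the proof of Lemma \ref{LemmaUnitOpsDefinel2}. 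Covering $K$ by finitely many such admissible neighborhoods (the number of which is admissible, since $K$ is covered by balls $\B{X}{d}{x}{\xi_0}$ of admissible ``size'') and summing completes this direction; here one uses Remark \ref{RmkVolOfAllSmallBalls} to compare $\Vol{\B{X}{d}{x_0}{\xi'}}$ with $\q|\det_{n_0\times n_0}X(x_0)\w|$.

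The main obstacle I anticipate is bookkeeping rather than conceptual: one must be careful that the flow $e^{t\cdot X}$ for $t$ in a small Euclidean ball of $\R^q$ — as opposed to the $n_0$-dimensional family $e^{u\cdot X_{J(x_0)}}$ — still stays on the leaf $L_{x_0}$ and that, after pullback by $\Phi_{x_0}$, the $q$ vector fields $Y_k$ (including the $q-n_0$ ``redundant'' ones) are uniformly $C^1$ (indeed $C^2$) so that the composite exponential map $t\mapsto e^{t\cdot Y}0$ is $C^1$ with controlled derivative; this is precisely what Theorem \ref{ThmMultiBallsAtPoint} (through Proposition \ref{PropRegularityOfYUnitScale}) guarantees, so the difficulty is only in assembling these pieces with the correct admissible dependencies and handling the normalization factor $(2\eta')^{-q}$ consistently. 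The singular points of the foliation (where $n_0(x)$ jumps) require no special treatment, since all estimates are phrased through $\Phi_{x_0}$ and the determinant comparisons, which hold uniformly by Lemma \ref{LemmaDetsDontChangeCarnot} and Theorem \ref{ThmMultiBallsAtPoint} regardless of $n_0(x_0)$.
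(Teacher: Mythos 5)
Your argument for the right-hand inequality is essentially the paper's: for each fixed $t$ with $\q|t\w|\leq\eta^0$, the ambient flow $x\mapsto e^{t\cdot X}x$ is a diffeomorphism of $\Omega''$ into $\Omega'$ with Jacobian $\approx 1$ (by Theorem~\ref{ThmExpReg} the map $x\mapsto e^{t\cdot X}x-x$ has admissibly bounded $C^2$ norm and vanishes at $t=0$), and one changes variables in $\R^n$ and averages in $t$. That part is fine.

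The left-hand inequality is where you take a genuinely different route, and it has a gap. The integral $\int_{\Omega''}\cdot\,dx$ is over the ambient $n$-dimensional Lebesgue measure; pulling back by the scaling map $\Phi_{x_0}:B_{n_0(x_0)}(\eta_1)\to L_{x_0}$ replaces this by an $n_0(x_0)$-dimensional integral on a single leaf, so you would need a disintegration of Lebesgue measure along the (possibly singular) foliation before that can be done --- a step your plan doesn't address. Worse, the proposed fix by ``covering $K$ by finitely many such admissible neighborhoods, the number of which is admissible,'' is false: $K$ is an arbitrary compact set, and the number of balls of fixed admissible radius needed to cover it depends on the size of $K$, which is not among the quantities an admissible constant may depend on, and the resulting overlaps are not controlled. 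None of this machinery is needed. The very same ambient change of variables that you invoke for the right-hand inequality also gives the left-hand one: for $\q|t\w|\leq\eta^0$ (with $\eta^0$ admissibly small) one has $K\subseteq e^{t\cdot X}\Omega''\subseteq\Omega'$ --- indeed for $x_0\in K$, $e^{-t\cdot X}x_0\in\B{X}{d}{x_0}{\xi_1/4}\subseteq\Omega''$ --- and the Jacobian lower bound $\q|\det\partial_x e^{t\cdot X}x\w|\geq\frac12$ on $\Omega''$ then gives $\int_K f\,dx\lesssim\int_{\Omega''}f(e^{t\cdot X}x)\,dx$ for each such $t$. Averaging in $t$ finishes the proof. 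No appeal to $\Phi_{x_0}$, to Lemma~\ref{LemmaUnitOpsDefinel2}, or to the leaf structure is required here; those enter only later, in Lemma~\ref{LemmaAvgIsIntT} and Proposition~\ref{PropLeafwiseMax}.
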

\begin{proof}
Note, for $\q|t\w|\leq \frac{\xi_1}{4}$, we have
$$\Omega'\supseteq e^{t\cdot X}\Omega''\supseteq K.$$
To make use of this, we choose $\eta^0\leq \frac{\xi_1}{4}$.  Furthermore,
by taking $\eta^0>0$ admissible small enough, we have for all $\q|t\w|\leq \eta^0$, and all $x\in \Omega''$,
$$\q|\det \frac{\partial}{\partial_x} e^{t\cdot X}x\w|\geq \frac{1}{2}.$$
This follows since the $C^2$ norm of $e^{t\cdot X}x-x$ is admissibly
bounded (see Theorem \ref{ThmExpReg}) and because when
$t=0$, $e^{t\cdot X}x=x$.

Putting these results together, we have from a simple change of variables,
for $\q|t\w|\leq \eta^0$,
$$\int_K f\q( x\w) \: dx \lesssim \int_{\Omega''} f\q( e^{t\cdot X} x\w) \: dx \lesssim \int_{\Omega'} f\q( x\w) \: dx.$$
Averaging both sides over $\q|t\w|\leq \eta'$ yields the proof.
\end{proof}

\begin{lemma}\label{LemmaAvgIsIntT}
Let $\eta^0$ be as in Lemma \ref{LemmaAvgIsIntO}.  There exists an
admissible constant $\xi_0>0$ such that for every
$0<\xi'\leq \xi_0$, there exist admissible constants
$0<\eta''=\eta''\q( \xi'\w)$, $0<\eta'$, $\eta'',\eta'\leq \eta^0$ such that\footnote{Here, $\eta''$ can be chosen to depend only on a fixed lower bound
for $\xi'$.}
for every $f\in C\q( \Omega\w)$, $f\geq 0$, we have:
$$\int_{\q|t\w|\leq \eta''} f\q( e^{t\cdot X}x\w) \: dt\lesssim A_{\B{X}{d}{\cdot}{\xi'}} f\q( x\w) \lesssim \int_{\q|t\w|\leq \eta'} f\q( e^{t\cdot X} x\w)\: dt,$$
for every $x\in \Omega'$.  Here, the implicit constants are allowed to depend
on a lower bound for $\xi'$.
\end{lemma}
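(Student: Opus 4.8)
The plan is to deduce both inequalities of the lemma from Proposition~\ref{PropUnitOpsPrelimProp}, applied for each fixed $x\in\Omega'$ to a rescaled copy of $\q(X,d\w)$, followed by the anisotropic change of variables $t=\rho^{d}v=\q(\rho^{d_1}v_1,\ldots,\rho^{d_q}v_q\w)$ that turns a cube in the parameters into a Euclidean ball $\q\{|t|\le\eta\w\}$. Fix a small admissible constant $\rho_0\le\xi$, to be shrunk finitely often below. For $x\in\Omega'$ and $\rho\in\q(0,\rho_0\w)$, apply Proposition~\ref{PropUnitOpsPrelimProp} to the single‑parameter list $\q(\rho^{d}X,\sd\w)$ with $\nu=1$ and the one sublist equal to all of $\rho^{d}X$ (so \eqref{EqnAllTheXjAppear} is automatic; this is legitimate since $\q(\rho^{d}X,\sd\w)$ satisfies $\sC\q(x,1\w)$ for $x\in\Omega'$, $\rho\le\rho_0$, by construction of $\Omega'$). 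The hypotheses of this section are precisely the single‑parameter case of those of Section~\ref{SectionBallsAtAPoint}, and multiplying each $X_j$ by $\rho^{d_j}\le 1$ does not increase the relevant $C^{2}$ norms or the structure constants; hence the pre‑admissible constants $0<l_3,l_2,l_1<1$ produced by Proposition~\ref{PropUnitOpsPrelimProp} may be taken admissible, independent of $x$, of $\rho$, and of $\xi'$. Using that $B_{\rho^{d}X}(x)$ at radius $r$ equals $\B{X}{d}{x}{r\rho}$ and that $e^{v\cdot\rho^{d}X}x=e^{\q(\rho^{d}v\w)\cdot X}x$, its conclusion reads
\begin{equation*}
A_{\B{X}{d}{\cdot}{l_3\rho}}f\q(x\w)\ \lesssim\ \int_{Q_q\q(l_2\w)}f\q(e^{\q(\rho^{d}v\w)\cdot X}x\w)\,dv\ \lesssim\ A_{\B{X}{d}{\cdot}{l_1\rho}}f\q(x\w).
\end{equation*}
Since $\Vol{\B{X}{d}{x}{cr}}\approx\Vol{\B{X}{d}{x}{r}}$ for admissible $c,r$ (Remark~\ref{RmkVolOfAllSmallBalls}) and $f\ge0$, one also has $A_{\B{X}{d}{\cdot}{l_i\rho}}f\q(x\w)\lesssim A_{\B{X}{d}{\cdot}{\rho}}f\q(x\w)$ for $i=1,3$.

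For the second inequality of the lemma I would apply the left half of the display with $\rho=\xi'/l_3$ (which lies in $\q(0,\rho_0\w)$ after shrinking $\xi_0$), so that $l_3\rho=\xi'$. The substitution $t=\rho^{d}v$ makes the middle integral equal to $\q(\prod_{j}\rho^{-d_j}\w)\int_{\{|t_j|\le l_2\rho^{d_j}\}}f\q(e^{t\cdot X}x\w)\,dt$. As $\rho<1$ gives $\rho^{d_j}\le\rho^{d_{min}}<1$, the box $\{|t_j|\le l_2\rho^{d_j}\}$ sits inside $\q\{|t|\le\eta'\w\}$ with $\eta':=\min\q\{\sqrt q\,l_2,\,\eta^{0}\w\}$, provided $\xi_0$ was shrunk so that $\sqrt q\,l_2\,\rho^{d_{min}}\le\eta^{0}$ for all admissible $\rho\le\rho_0$; this $\eta'$ is admissible and independent of $\xi'$. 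Enlarging the domain of integration (allowed since $f\ge0$) and bounding the Jacobian $\prod_j\rho^{-d_j}=l_3^{\,d_1+\cdots+d_q}\prod_j{\xi'}^{-d_j}\le\prod_j{\xi'}^{-d_j}$, which is controlled by a lower bound for $\xi'$, gives $A_{\B{X}{d}{\cdot}{\xi'}}f\q(x\w)\lesssim\int_{|t|\le\eta'}f\q(e^{t\cdot X}x\w)\,dt$.

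For the first inequality I would apply the right half of the display with $\rho=\xi'$, replacing $A_{\B{X}{d}{\cdot}{l_1\xi'}}f\q(x\w)$ by $A_{\B{X}{d}{\cdot}{\xi'}}f\q(x\w)$ as noted above; with $t={\xi'}^{d}v$ the middle integral becomes $\q(\prod_j{\xi'}^{-d_j}\w)\int_{\{|t_j|\le l_2{\xi'}^{d_j}\}}f\q(e^{t\cdot X}x\w)\,dt$. Setting $\eta'':=l_2\,\xi_*^{\,d_{max}}$, where $\xi_*$ is the given lower bound for $\xi'$ (and shrinking $\xi_0$ so $\eta''\le\eta^{0}$), the ball $\q\{|t|\le\eta''\w\}$ lies inside the box $\{|t_j|\le l_2{\xi'}^{d_j}\}$ because $\xi_*^{\,d_{max}}\le{\xi'}^{d_j}$ for every $j$; restricting to this smaller set and using $\prod_j{\xi'}^{d_j}\le1$ yields $\int_{|t|\le\eta''}f\q(e^{t\cdot X}x\w)\,dt\lesssim A_{\B{X}{d}{\cdot}{\xi'}}f\q(x\w)$. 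Here $\eta''$ depends only on $\xi_*$, as required.

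The genuinely substantial content lives entirely in Proposition~\ref{PropUnitOpsPrelimProp}: the normalizing factor $\Vol{\B{X}{d}{x}{\xi'}}^{-1}$ can be arbitrarily large, since no lower bound on $|\det_{n_0\times n_0}X(x)|$ is assumed, yet it exactly balances the vanishing of the density of the pushforward of Lebesgue measure under $t\mapsto e^{t\cdot X}x$ — this cancellation being the Jacobian identity $|\det d\Phi_{x,\rho}|\approx|\det_{n_0\times n_0}\rho^{d}X(x)|\approx\Vol{\B{X}{d}{x}{\rho}}$ supplied by Theorem~\ref{ThmMainUnitScale}. The only point requiring care afterwards is the radius bookkeeping: Proposition~\ref{PropUnitOpsPrelimProp} delivers averages over balls of radius merely \emph{comparable} to $\xi'$, so it must be invoked with the two rescalings $\rho=\xi'$ and $\rho=\xi'/l_3$, and this is precisely why the implicit constants, and $\eta''$, are permitted to depend on a lower bound for $\xi'$.
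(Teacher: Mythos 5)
Your proposal is correct and takes essentially the same route as the paper, which simply delegates the argument (``This follows just as in Proposition~\ref{PropUnitOpsPrelimProp}.  The straightforward modifications are left to the reader.'').  You supply those modifications explicitly: you apply Proposition~\ref{PropUnitOpsPrelimProp} as a black box to the rescaled single-parameter list $\q(\rho^d X,\sd\w)$ at each $x\in\Omega'$ and then carry out the anisotropic substitution $t=\rho^d u$, absorbing the Jacobian $\prod_j\rho^{-d_j}$ and the conversion between the anisotropic box $\q\{|t_j|\leq l_2\rho^{d_j}\w\}$ and the Euclidean balls $\q\{|t|\leq\eta'\w\}$, $\q\{|t|\leq\eta''\w\}$ into the constants that are permitted to depend on a lower bound for $\xi'$; this is exactly the bookkeeping the paper's one-line proof has in mind, and your choices of $\eta'$, $\eta''$, and the attendant shrinkings of $\xi_0$ are all legitimate.
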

\begin{proof}
This follows just as in Proposition \ref{PropUnitOpsPrelimProp}.  The
straightforward modifications are left to the reader.
\end{proof}

\begin{proof}[Proof of Proposition \ref{PropAvgIsInt}]
Take $\eta^0$, $\xi_0$ as in Lemmas \ref{LemmaAvgIsIntO} and \ref{LemmaAvgIsIntT}.
For $\xi'\leq \xi_0$, let $\eta',\eta''$ be as in the conclusion
of Lemma \ref{LemmaAvgIsIntT} (here and in the rest of the proof, all constants
are allowed to depend on a lower bound for $\xi'$--so that, in particular,
$\eta''\gtrsim 1$).
We then have, using Lemmas \ref{LemmaAvgIsIntO} and \ref{LemmaAvgIsIntT}
freely, for $f\in C\q( \Omega\w)$, with $f\geq 0$,
\begin{equation*}
\begin{split}
\int_K f\q( x\w) \: dx &\lesssim \int_{\q|t\w|\leq \eta''} \int_{\Omega''} f\q( e^{t\cdot X}x\w) \: dx\: dt\\
&\lesssim \int_{\Omega''} A_{\B{X}{d}{\cdot}{\xi'}} f\q( x\w) \: dx\\
&\lesssim \int_{\q|t\w|\leq \eta'} \int_{\Omega''} f\q( e^{t\cdot X}x\w) \: dx\: dt\\
&\lesssim \int_{\Omega'} f\q( x\w) \: dx,
\end{split}
\end{equation*}
which completes the proof.
\end{proof}

\begin{proof}[Proof of Proposition \ref{PropLeafwiseMax}]
Fix $x\in \Omega''$ and $\xi'$ as in the statement of the proposition.
In what follows all implicit admissible constants are also
allowed to depend on a lower bound for $\xi'$.
We define the maximal function $\sM$ in terms of this fixed $\xi'$,
as in \eqref{EqnDefnSingleMax}.
By definition of $\Omega''$, there exists $x_0\in K$ such that
$\B{X}{d}{x}{\xi'}\subseteq \B{X}{d}{x_0}{\xi_1}$.
Let $n_0=\dim \Span{X_1\q(x_0\w),\ldots, X_q\q( x_0\w)}$, and
let $\Phi:B_{n_0}\q(\eta_1\w) \rightarrow \B{X}{d}{x_0}{\xi}$
be the map guaranteed by Theorem \ref{ThmMultiBallsAtPoint} where
we take $\delta=\xi$ and $x=x_0$.
Note that,
$$\B{X}{d}{x}{\xi'}\subseteq \B{X}{d}{x_0}{\xi_1}\subseteq \Phi\q(B_{n_0}\q( \eta_1\w)\w).$$

Let $Y_1,\ldots, Y_q$ be the pullbacks of $X_1,\ldots, X_q$ via the
map $\Phi$, to
$B_{n_0}\q( \eta\w)$.
Note, for $u\in B_{n_0}\q( \eta\w)$ and $\delta>0$ small enough that
$\B{Y}{d}{u}{\delta}\Subset B_{n_0}\q( \eta\w)$, we have
$$ \Phi\q( \B{Y}{d}{u}{\delta} \w)  = \B{X}{d}{\Phi\q( u\w)}{\delta}.$$
Using that $\q|\det_{n_0\times n_0} d\Phi\q( u\w)\w|\approx \q| \det_{n_0\times n_0} X\q( x_0\w) \w|$,
and applying a change of variables as in \eqref{EqnChangeOfVars},
we see that
$$\Vol{\B{X}{d}{\Phi\q( u\w)}{\delta}}\approx \q|\det_{n_0\times n_0} X\q( x_0\w) \w| \Vol{\B{Y}{d}{u}{\delta}}.$$
It follows, for $f\in C\q( \Omega\w)$, with $f\geq 0$,
\begin{equation*}
\begin{split}
A_{\B{X}{d}{\cdot}{\delta}} \q( f\circ\Phi^{-1}\w) \q( \Phi\q( u\w)\w)
&= \frac{1}{\Vol{\B{X}{d}{\Phi\q( u\w)}{\delta}}} \int_{\B{X}{d}{\Phi\q( u\w)}{\delta}} f\q(\Phi^{-1}\q( y\w) \w) \: dy\\
&\approx \frac{1}{\Vol{\B{Y}{d}{u}{\delta}}} \int_{\B{Y}{d}{u}{\delta}} f\q( v\w) \:dv\\
&= A_{\B{Y}{d}{\cdot}{\delta}} f \q( u\w),
\end{split}
\end{equation*}
where, again, we have used \eqref{EqnChangeOfVars} and $dv$ denotes
Lebesgue measure on $B_{n_0}\q( \eta\w)$ and $dy$ denotes the induced
Lebesgue measure on the leaf in which $\B{X}{d}{\Phi\q( u\w)}{\delta}$ lies.
Consider, for $y\in \B{X}{d}{x_0}{\frac{\xi_1}{2}}$,
\begin{equation*}
\begin{split}
\sM \q(f\circ\Phi^{-1}\w) \q( y\w) &= \sup_{\xi'\geq \delta>0} A_{\B{X}{d}{\cdot}{\delta}} \q|f\circ \Phi^{-1}\w| \q( y\w) \\
&\approx \sup_{\xi'\geq \delta>0} A_{\B{Y}{d}{\cdot}{\delta}} \q|f\w| \q( \Phi^{-1}\q( y\w)\w)\\
&=: \sMt f \q( \Phi^{-1} \q( y\w)\w),
\end{split}
\end{equation*}
where $\sMt$ denotes the maximal function defined in terms of the Carnot-Carath\'eodory
balls defined by the vector fields $\q( Y,d\w)$.

Hence, we have,
\begin{equation*}
\begin{split}
A_{\B{X}{d}{\cdot}{\xi'}} \q|\sM \q(f\circ \Phi^{-1} \w)\w|^p \q( x\w) &\approx A_{\B{X}{d}{\cdot}{\xi'}} \q[\q|\sMt f\w|^p \circ \Phi^{-1}\w] \q(x\w)\\
&\approx A_{\B{Y}{d}{\cdot}{\xi}} \q|\sMt f\w|^p \q( \Phi^{-1}\q( x\w)\w).
\end{split}
\end{equation*}
Similarly, we have
\begin{equation*}
A_{\B{X}{d}{\cdot}{2\xi'}} \q| f\circ \Phi^{-1}\w|^p \q( x\w) \approx A_{\B{Y}{d}{\cdot}{2\xi'}} \q|f\w|^p \q( \Phi^{-1}\q( x\w)\w).
\end{equation*}
Thus, to complete the proof, it suffices to show the bound
\begin{equation*}
A_{\B{Y}{d}{\cdot}{\xi'}} \q|\sMt f\w|^p \q( \Phi^{-1}\q( x\w)\w) \lesssim 
A_{\B{Y}{d}{\cdot}{2\xi'}} \q|f\w|^p \q( \Phi^{-1}\q( x\w)\w).
\end{equation*}
Moreover, since $\Vol{\B{Y}{d}{\Phi^{-1}\q( x\w)}{\xi'}}\approx \Vol{\B{Y}{d}{\Phi^{-1}\q( x\w)}{2\xi'}}$ (in fact both are $\approx 1$, but we will not need this),
it suffices to show
\begin{equation*}
\q\|\sMt f\w\|_{L^p\q(\B{Y}{d}{\Phi^{-1}\q( x\w)}{\xi'}\w)}^p
\lesssim
\q\|f\w\|_{L^p\q(\B{Y}{d}{\Phi^{-1}\q( x\w)}{2\xi'}\w)}^p.
\end{equation*}
This is immediate from the classical theory of spaces of homogeneous
type, since the balls $\B{Y}{d}{\cdot}{\delta}$ satisfy all
the axioms of a space of homogeneous type, uniformly in
the relevant parameters.
\end{proof}

\appendix
\section{Two results from calculus}\label{AppendCalc}
In this appendix, we discuss two theorems from calculus that we will
use throughout the paper:  a uniform version of the
inverse function theorem, and how the smoothness of $e^{t_1X_1+t_2X_2+\cdots+t_{\nu}X_{\nu}}x_0$, as a function of $t_1,\ldots,t_\nu$, depends on the smoothness of
$X_1,\ldots, X_\nu$.  These results are surely familiar, in some
form or another, to the reader.  However, they play such a fundamental
role in our analysis, that we feel it is prudent to state them
in the precise form we shall use them.

For a $C^1$ vector field $Y$, one defines $E\q(t\w)=e^{tY}x_0$
to be the unique solution to the ODE $\frac{d}{dt} E\q(t\w) = Y\q(E\q(t\w)\w)$
satisfying $E\q( 0\w) =x_0$.  This unique solution always exists
for $\q|t\w|$ sufficiently small (depending on the $C^1$ norm of $Y$).  This allows us to define:
$$e^{t_1 X_1+\cdots + t_\nu X_\nu}x_0$$
for $\q|t\w|$ sufficiently small, where $t=\q( t_1,\ldots, t_\nu\w)$.
We have:

\begin{thm}\label{ThmExpReg}
Suppose $X_1,\ldots, X_\nu$ are $C^m$ vector fields ($m\geq 1$), defined on an
open set $\Omega\subseteq \R^n$.  Then, for $x_0$ fixed, the function:
$$u\q(t\w) = e^{t_1X_1+\cdots+t_\nu X_\nu}x_0-x_0$$
is $C^m$.  Moreover, the $C^m$ norm of this
function can be bounded in terms of $n,\nu$ and the $C^m$ norms of
$X_1,\ldots, X_\nu$.
\end{thm}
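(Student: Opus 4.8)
The plan is to reduce the statement to the classical fact that the time-one flow of a $C^m$ vector field depends in a $C^m$ fashion on its initial condition, with quantitative control of the $C^m$ norm, and then to prove that fact by induction on $m$ together with Gronwall estimates.

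First I would introduce a ``suspension'' trick. Set $M=n+\nu$ and, on the open set $\Omega\times\R^\nu\subseteq\R^M$, define the vector field
\[
Z\q( x,t\w) = \q( \sum_{j=1}^\nu t_j X_j\q( x\w),\ 0 \w).
\]
Since each $t_j X_j\q( x\w)$ is $C^m$ in $\q( x,t\w)$ and the last $\nu$ components of $Z$ vanish identically, $Z$ is a $C^m$ vector field whose $C^m$ norm on a fixed compact neighborhood of $\q( x_0,0\w)$ is bounded in terms of $n$, $\nu$, and the $C^m$ norms of $X_1,\ldots,X_\nu$. If $\Theta^s$ denotes its flow, then $\Theta^s\q( x_0,t\w) = \q( E\q( s,t\w),t\w)$, where $E\q( \cdot,t\w)$ solves $\frac{d}{ds}E=\sum_j t_j X_j\q( E\w)$ with $E\q( 0,t\w)=x_0$; in particular $u\q( t\w)$ is the $\R^n$-component of $\Theta^1\q( x_0,t\w)$ minus the constant $x_0$. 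Because $\sum_j t_j X_j$ tends to $0$ in $C^1$ as $t\to 0$, the standard existence and uniqueness theorem for ODEs guarantees that $\Theta^s\q( x_0,t\w)$ exists for all $s\in\q[0,1\w]$ once $\q|t\w|$ is small, and remains in a fixed compact subset of $\Omega\times\R^\nu$. Thus the theorem follows from the assertion: for a $C^m$ vector field $Z$ on an open subset of $\R^M$, its time-one flow map $y\mapsto\Theta^1\q( y\w)$, wherever it is defined on a given compact set, is $C^m$ with $C^m$ norm bounded in terms of $M$ and the $C^m$ norm of $Z$ on that set.

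Next I would prove this assertion by induction on $m$. The base case $m=0$ is continuous dependence on initial conditions, which follows from Gronwall's inequality applied to $\q|\Theta^s\q( y_1\w)-\Theta^s\q( y_2\w)\w|$. For the inductive step, assume the assertion for $m-1$ and let $Z\in C^m$; write $\phi\q( s,y\w)=\Theta^s\q( y\w)$. A standard argument with difference quotients and Gronwall's inequality (differentiating through the integral form of the ODE and using the $m=0$ case for uniform convergence) shows that $D_y\phi\q( s,y\w)$ exists and equals the solution $\Psi\q( s,y\w)$ of the variational equation $\partial_s\Psi = DZ\q( \phi\q( s,y\w)\w)\,\Psi$, $\Psi\q( 0,y\w)=I$. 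Now introduce the augmented autonomous system on $\R^M\times\mathbb{M}_{M\times M}\q( \R\w)\times\R^M$ with right-hand side $\q( Z\q( \phi\w),\ DZ\q( \phi\w)\Psi,\ 0\w)$, the last block being a frozen copy of the initial point; its right-hand side is $C^{m-1}$ since $DZ\in C^{m-1}$, so by the induction hypothesis its time-one flow map is $C^{m-1}$ in its initial data, and precomposing with the $C^\infty$ map $y\mapsto\q( y,I,y\w)$ shows that $\phi\q( 1,\cdot\w)$ and $\Psi\q( 1,\cdot\w)$ are $C^{m-1}$ in $y$. Since $D_y\phi\q( 1,\cdot\w)=\Psi\q( 1,\cdot\w)$ is $C^{m-1}$, $\phi\q( 1,\cdot\w)$ is $C^m$, completing the induction. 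For the quantitative bounds, at each order the $y$-derivatives of $\phi$ satisfy linear ODEs whose inhomogeneous terms are universal polynomial expressions (via the chain and Leibniz rules) in the lower-order $y$-derivatives of $\phi$ and in the derivatives of $Z$ up to order $m$, so iterating Gronwall's inequality bounds the $C^m$ norm of $\phi\q( 1,\cdot\w)$ in terms of $M$ and the $C^m$ norm of $Z$.

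The main obstacle is the careful execution of the inductive step: justifying that the difference quotients of $\phi$ converge, uniformly on compact sets, to the solution of the variational equation (so that $D_y\phi=\Psi$ holds rather than being a merely formal identity), and packaging the augmented system so that the induction hypothesis legitimately upgrades the regularity of $\q( \phi,\Psi\w)$ from $C^{m-1}$ to $C^m$. Everything else --- the suspension reduction, the existence up to time one for small $\q|t\w|$, and the Gronwall bookkeeping for the norm bounds --- is routine; one could alternatively quote a standard ODE text (see, for instance, Hartman) for the flow-regularity assertion, but I would include the short inductive argument since the explicit quantitative form is what is invoked repeatedly elsewhere in the paper.
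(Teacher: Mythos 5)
Your proof is correct and rests on the same initial reduction as the paper's---passing to the flow $v(\epsilon,t)=e^{\epsilon(t_1 X_1+\cdots+t_\nu X_\nu)}x_0$ and reading off $u(t)=v(1,t)-x_0$---but handles the resulting ODE-regularity statement by a different mechanism. The paper treats $t$ as a parameter in the ODE in $\epsilon$ and simply cites Dieudonn\'e, Chapter~X, for $C^m$ dependence of solutions on parameters (while also remarking that one could adapt the contraction-mapping argument of \cite{IzzoCrConvergenceOfPicardsSuccessiveApprox}, which is the device the paper actually uses in the proof of Theorem~\ref{ThmExistUniqDiffEq}). You instead suspend $t$ into the state space, so that parameter dependence becomes initial-data dependence, and then give a self-contained inductive proof that the time-one flow of a $C^m$ vector field is a $C^m$ function of the initial point, via the variational equation and an augmented first-order system. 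Both routes are sound; yours is more self-contained and makes the iterated-Gronwall bookkeeping behind the quantitative bound explicit, at the cost of somewhat more work, whereas the paper defers to the literature. Two small remarks: when you apply the induction hypothesis to the augmented system $(w,P,y)\mapsto(Z(w),DZ(w)P,0)$ you should restrict to a compact set in $(w,P,y)$-space (harmless, since $P$ solves a linear ODE with bounded coefficients over $[0,1]$ and so stays bounded); and the trailing $y$-block is in fact superfluous, since nothing in the right-hand side depends on it and $\phi(1,\cdot)$, $\Psi(1,\cdot)$ are already recovered from the first two blocks.
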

\begin{proof}
It is perhaps easiest to consider the function:
$$v\q( \epsilon, t\w) = e^{\epsilon\q( t_1X_1+\cdots+t_\nu X_\nu \w)}x_0.$$
Then, $u\q( t\w) = v\q( 1,t\w)-x_0$, and $v$ is defined by an ODE in
the $\epsilon$ variable.  That $v-x_0$ is $C^m$ (in both variables) is classical.
See \cite{DieudonneFoundationsOfModernAnalysis}, Chapter X.
Alternatively, one can modify the proof method in \cite{IzzoCrConvergenceOfPicardsSuccessiveApprox}
to this situation for a more elegant proof.
\end{proof}

\begin{rmk}\label{RmkExpMoreReg}
One could write $t$ in polar coordinates $r,\omega$, and consider
the function, $f\q( r,\omega \w) = u\q( r\omega\w)$.
Then, one has, $f\in C^m\q( r,\omega\w)$.  Moreover, one has,
for $a+\q|b\w|=m$ ($a\in \N$, $b$ a multi-index),
$$\partial_r \partial_r^a\partial_\omega^b f\q(r,\omega\w)$$
exists and is continuous.
\end{rmk}

We now turn to the inverse function theorem:
\begin{thm}\label{ThmInverseFunctionThm}
Fix an open set $U\subseteq R^n$, and fix $x_0\in U$.  Suppose
$K\subset C^1\q( U; R^n \w)$ is a compact set such that for all
$f\in K$, $\det{df}\q( x_0\w)\ne 0$, and hence, $\q| \det{df}\q( x_0\w) \w|$
is bounded away from $0$ uniformly for $f\in K$.  Then, there
exist constants $\delta_1,\delta_2>0$, such that for all $f\in K$,
\begin{itemize}
\item $f|_{B\q( x_0,\delta_1\w)}$ is a $C^1$ diffeomorphism onto its image.
\item $B\q( f\q( x_0\w),\delta_2\w) \subseteq f\q( B\q( x_0,\delta_1\w)\w)$.
\end{itemize}
here, $B\q( x_0,\delta\w)$ denotes the usual Euclidean ball centered at
$x_0$ of radius $\delta$.
\end{thm}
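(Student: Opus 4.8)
The plan is to reduce the statement to a quantitative form of the classical inverse function theorem, run the standard contraction-mapping proof while keeping track of the constants, and then invoke compactness of $K$ to see that every constant that arises is uniform over $f\in K$.

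First I would isolate the pieces of uniform control that compactness of $K$ supplies. Fix once and for all a closed ball $\overline{B}(x_0,r_0)\subset U$; restriction to $\overline{B}(x_0,r_0)$ is continuous from $C^1(U;\R^n)$ into the Banach space $C^1(\overline{B}(x_0,r_0);\R^n)$, so the image of $K$ there is compact. Since $f\mapsto \det df(x_0)$ is continuous and nonvanishing on $K$, there is $c_0>0$ with $|\det df(x_0)|\geq c_0$ for all $f\in K$; hence $\{df(x_0):f\in K\}$ is a compact subset of the invertible matrices and there is $M<\infty$ with $\|df(x_0)^{-1}\|\leq M$ for all $f\in K$. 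Moreover $\{df:f\in K\}$ is a compact, hence (Arzela--Ascoli) uniformly equicontinuous, subset of $C^0(\overline{B}(x_0,r_0);\R^{n\times n})$: for every $\varepsilon>0$ there is $\rho(\varepsilon)\in(0,r_0]$ with $\|df(x)-df(x_0)\|\leq \varepsilon$ whenever $|x-x_0|\leq \rho(\varepsilon)$ and $f\in K$.

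Next I would carry out the usual proof. Put $\varepsilon_0=\tfrac{1}{2M}$ and set $\delta_1:=\rho(\varepsilon_0)$ and $\delta_2:=\delta_1/(4M)$. For $f\in K$ and $x\in B(x_0,\delta_1)$, writing $df(x)=df(x_0)\big(I+df(x_0)^{-1}(df(x)-df(x_0))\big)$ and noting the correction term has norm $\leq M\varepsilon_0=\tfrac12$, a Neumann series shows $df(x)$ is invertible with $\|df(x)^{-1}\|\leq 2M$ throughout $B(x_0,\delta_1)$. For $y\in\R^n$ with $|y-f(x_0)|\leq\delta_2$, the map $T_y(u):=u+df(x_0)^{-1}(y-f(u))$ has $\|DT_y(u)\|=\|df(x_0)^{-1}(df(x_0)-df(u))\|\leq\tfrac12$ on $\overline{B}(x_0,\delta_1/2)$, so $T_y$ is a $\tfrac12$-contraction there; and since $|T_y(x_0)-x_0|\leq M|y-f(x_0)|\leq M\delta_2=\delta_1/4$, one gets $|T_y(u)-x_0|\leq\tfrac12|u-x_0|+\delta_1/4\leq\delta_1/2$, so $T_y$ maps $\overline{B}(x_0,\delta_1/2)$ into itself. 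The Banach fixed point theorem then produces, for each such $y$, a (unique) $u\in\overline{B}(x_0,\delta_1/2)\subset B(x_0,\delta_1)$ with $f(u)=y$; this gives $B(f(x_0),\delta_2)\subseteq f(B(x_0,\delta_1))$, the second bullet. For injectivity, given $x,x'\in B(x_0,\delta_1)$ I would write $f(x)-f(x')=df(x_0)(x-x')+\int_0^1\big(df(x'+t(x-x'))-df(x_0)\big)(x-x')\,dt$ (using convexity of the ball); the first term has norm $\geq M^{-1}|x-x'|$ and the integral has norm $\leq\tfrac{1}{2M}|x-x'|$, so $|f(x)-f(x')|\geq\tfrac{1}{2M}|x-x'|$. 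Being injective on $B(x_0,\delta_1)$ with everywhere-invertible differential there, $f|_{B(x_0,\delta_1)}$ is a $C^1$ diffeomorphism onto the open set $f(B(x_0,\delta_1))$ (the local $C^1$ inverses from the pointwise inverse function theorem patch together). Since $\delta_1,\delta_2$ were built only from $M$ and the function $\rho(\cdot)$, both depend only on $K$.

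The argument is almost entirely routine; the one step needing genuine care is the passage from compactness of $K$ to a single modulus of continuity $\rho(\cdot)$ valid for all $df$, $f\in K$, simultaneously — this is precisely where compactness (rather than mere boundedness) of $K$ is used, and it is the step I would write out in full detail.
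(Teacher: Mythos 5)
Your proof is correct and follows exactly the route the paper indicates: the paper's proof simply says this ``follows from a straight-forward modification of the proof in [Spivak], by using the Arzel\`a--Ascoli theorem,'' which is precisely what you have carried out in detail -- using compactness of $K$ (via Arzel\`a--Ascoli) to get a uniform bound $M$ on $\|df(x_0)^{-1}\|$ and a uniform modulus of continuity for $df$, and then running the standard contraction-mapping argument while tracking constants. The paper also sketches an alternative shortcut (Lipschitz derivatives via boundedness in $C^2$, citing Hubbard--Hubbard), but your version is the more general one that matches the theorem as stated.
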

\begin{proof}
This follows from a straight-forward modification of the proof in
\cite{SpivakCalculusOnManifolds}, by using the Arzel\`a-Ascoli theorem.
Alternatively, since in our proofs, we will always show that the
relevant set is a pre-compact subset of $C^1$, by showing it is a
bounded subset of $C^2$, the derivatives of the functions in our set will actually
be uniformly Lipschitz, and in this case one may use
the theorem in \cite{HubbardHubbardVectorCalculusLinearAlgebra}.
\end{proof}


\section{The Cauchy-Binet formula}\label{AppendixLinear}
In this appendix, we review the Cauchy-Binet formula and an associated
change of variables formula, that is essential to the work in this paper.
We first recall some notation from Section \ref{SectionResults}:  given two integers $1\leq m\leq n$, define $\sI{m}{n}$ 
to be the set of all lists of integers $\q( i_1,\ldots, i_m\w)$, such that:
$$1\leq i_1<i_2<\cdots<i_m\leq n.$$

Given an $n\times q$ matrix $A$, and $n_0\leq n\wedge q$, for $I\in \sI{n_0}{n}$,
$J\in \sI{n_0}{q}$ define the $n_0\times n_0$ matrix $A_{I,J}$ by
using the rows from $A$ which are listed in $I$ and the
columns from $A$ listed in $J$.  We define:
$$\det_{n_0\times n_0} A = \q( \det A_{I,J} \w)_{\substack{I\in \sI{n_0}{n}\\J\in \sI{n_0}{q}}}.$$
In particular, $\det_{n_0\times n_0} A$ is a vector (it will not be important to
us in which order the coordinates are arranged).

A special case arises when $q=n_0$.  Indeed, in this case, we have (\cite{ThrallThornheimVectorSpacesAndMatrices}, p. 127):
\begin{equation}\label{EqnDetsEqual}
\q|\det_{n_0\times n_0} A\w| = \sqrt{\det A^tA}
\end{equation}
and both of these quantities are equal to the volume of the $n_0$
dimensional parallelepiped with edges given by the columns of $A$.
This is a special case of the Cauchy-Binet formula.
Because of this, we obtain a change of variables formula which will
be of use to us.  Suppose $\Phi$ is a $C^1$ diffeomorphism from
an open subset $U$ in $\R^{n_0}$ mapping to an $n_0$ dimensional
submanifold of $\R^n$, where this submanifold is given the
Lebesgue measure, $dx$.  Then, we have:
\begin{equation}\label{EqnChangeOfVars}
\int_{\Phi\q( U\w)} f\q( x\w) dx = \int_U f\q(\Phi\q( t\w) \w) \q|\Det{n_0}{d\Phi\q(t\w)}\w| dt.
\end{equation}

\bibliographystyle{amsalpha}

\bibliography{multiparam}

\center{\it{University of Wisconsin-Madison, Department of Mathematics, 480 Lincoln Dr., Madison, WI, 53706}}

\center{\it{street@math.wisc.edu}}

\center{MSC2000: 53C17 (primary), 53C12, 42B25 (secondary)}

\center{Keywords: Sub-Riemannian geometry, Carnot-Carath\'eodory geometry, Frobenius theorem, multi-parameter, spaces of homogeneous type, maximal functions}

\end{document}